\setlist{  
  listparindent=\parindent,
  parsep=0pt,
}
\theoremstyle{plain}
\newtheorem{thm}{Theorem}[section]
\newtheorem{prop}[thm]{Proposition}
\newtheorem{lemma}[thm]{Lemma}
\newtheorem{cor}[thm]{Corollary}
\theoremstyle{definition}
\newtheorem{remark}[thm]{Remark}
\Crefname{thm}{Theorem}{Theorems}
\Crefname{prop}{Proposition}{Propositions}
\numberwithin{equation}{section} 
\DeclarePairedDelimiter{\pa}{\lparen}{\rparen}
\DeclarePairedDelimiter{\jp}{\langle}{\rangle}
\DeclareMathOperator{\supp}{supp}
\DeclareMathOperator{\dist}{dist}
\renewcommand{\det}{\mathrm{det}}
\newcommand{\M}{{\mathcal{M}}}
\newcommand{\p}{{\partial}}
\newcommand{\cre}{\color{red}}
\renewcommand{\d}{\mathsf{d}}
\newcommand{\R}{{\mathbb{R}}}
\newcommand{\N}{{\mathbb{N}}}
\newcommand{\Ss}{{\mathbb{S}}}
\newcommand{\T}{{\mathbb{T}}}
\newcommand{\g}{{\mathsf{g}}}
\newcommand{\G}{{\mathsf{g}}}
\newcommand{\Sc}{{\mathcal{S}}}
\renewcommand{\M}{{\mathbb{M}}}
\newcommand{\I}{\mathbb{I}}
\renewcommand{\k}{\mathsf{k}}
\newcommand{\ga}{\gamma}
\newcommand{\nab}{\nabla}
\newcommand{\tl}{\tilde}
\newcommand{\ph}{\phantom{=}}
\newcommand{\nn}{\nonumber}
\newcommand{\ux}{X}
\newcommand{\ep}{\epsilon}
\newcommand{\vep}{\varepsilon}
\newcommand{\al}{\alpha}
\newcommand{\be}{\beta}
\newcommand{\ka}{\kappa}
\newcommand{\la}{\lambda}
\newcommand{\Om}{\Omega}
\newcommand{\indic}{\mathbf{1}}
\newcommand{\f}{\mathsf{f}}
\newcommand{\Fc}{\mathcal{F}}
\newcommand{\E}{{\mathbb{E}}}
\newcommand{\rs}{\mathsf{r}}
\newcommand{\cd}{\mathsf{c}_{\mathsf{d},\mathsf{s}}}
\newcommand{\s}{\mathsf{s}}
\renewcommand{\k}{\mathsf{k}}
\newcommand{\tv}{\tl{v}}
\newcommand{\zg}{|z|^\ga}
\newcommand{\Fr}{\mathsf{F}}
\renewcommand{\P}{\mathcal{P}}
\newcommand{\Te}{\mathrm{Term}}
\let\div\relax
\DeclareMathOperator{\div}{\mathrm{div}}
\def\XXint#1#2#3{{\setbox0=\hbox{$#1{#2#3}{\int}$ }
\vcenter{\hbox{$#2#3$ }}\kern-.6\wd0}}
\let\oldtocsection=\tocsection
\let\oldtocsubsection=\tocsubsection
\let\oldtocsubsubsection=\tocsubsubsection
\renewcommand{\tocsection}[2]{\hspace{0em}\oldtocsection{#1}{#2}}
\renewcommand{\tocsubsection}[2]{\hspace{1em}\oldtocsubsection{#1}{#2}}
\renewcommand{\tocsubsubsection}[2]{\hspace{2em}\oldtocsubsubsection{#1}{#2}}
\title[Sharp commutator estimates for Coulomb and Riesz modulated energies]{Sharp commutator estimates of all order for  Coulomb and Riesz modulated energies}
\author[M. Rosenzweig]{Matthew Rosenzweig}
\address{Matthew Rosenzweig, Carnegie Mellon University, Department of Mathematical Sciences, Pittsburgh, PA} 
\email{mrosenz2@andrew.cmu.edu}
\thanks{M.R. was supported by the Simons Foundation through the Simons Collaboration on Wave Turbulence and by NSF grants DMS-2052651, DMS-2206085, DMS-2345533.}
\author[S. Serfaty]{Sylvia Serfaty}
\address{Sylvia Serfaty, Courant Institute of Mathematical Sciences, New York University, New York City, NY}
\email{serfaty@cims.nyu.edu}
\thanks{S.S. was supported by NSF grant DMS-2247846 and by the Simons Foundation through the Simons Investigator program.}
\begin{document}
\begin{abstract}
We prove functional inequalities in any dimension controlling the iterated derivatives along a transport of the Coulomb or super-Coulomb Riesz modulated energy in terms of the modulated energy itself. This modulated energy was introduced by the second author and collaborators in the study of mean-field limits and statistical mechanics of Coulomb/Riesz gases, where control of such derivatives by the energy itself is an essential ingredient. In this paper, we extend and improve such  functional inequalities, proving estimates which are now sharp in their additive error term, in their density dependence,  valid at arbitrary order of differentiation, and localizable to the support of the transport.  Our method relies on the observation that these iterated derivatives are the quadratic form of a commutator. Taking advantage of the Riesz nature of the interaction, we identify these commutators as solutions to a degenerate elliptic equation with a right-hand side exhibiting a recursive structure in terms of lower-order commutators and develop a local regularity theory for the commutators, which may be of independent interest. 

These estimates have applications to obtaining sharp rates of convergence for mean-field limits, quasi-neutral limits, and in proving central limit theorems for the fluctuations of Coulomb/Riesz gases. In particular, we show here the expected  $N^{\frac{\s}{\d}-1}$-rate in the modulated energy distance for the  mean-field convergence of first-order Hamiltonian and gradient flows.


\end{abstract}
\maketitle

\section{Introduction}\label{sec:intro}
\subsection{Motivation}\label{ssec:introMot}
In any dimension $\d$,  consider the class of  \emph{Riesz} interactions 
\begin{equation}\label{eq:gmod}
\g(x)= \begin{cases}  \frac{1}{\s} |x|^{-\s}, \quad  & \s \neq 0\\
-\log |x|, \quad & \s=0,
\end{cases}
\end{equation}
with the assumption that  $\d-2\le \s<\d$. Up to a normalizing constant $\cd$, these interactions are characterized as fundamental solutions of the fractional Laplacian: $(-\Delta)^{\frac{\d-\s}{2}}\g = \cd \delta_0$. The particular case $\s=\d-2$  corresponds to the classical \emph{Coulomb} interaction from physics, and thus $\s \ge \d-2$ means that we are considering the \emph{super-Coulomb} case. 

When studying systems of $N$ distinct points $\ux_N = (x_1, \dots, x_N)\in (\R^\d)^N$  with interaction energy  
\begin{equation}
\sum_{1\le i\neq j\le N} \g(x_i-x_j),
\end{equation}
one is led to comparing the sequence of empirical measures $\mu_N \coloneqq \frac1N \sum_{i=1}^N \delta_{x_i}$ to an  average, or \emph{mean-field}, density $\mu$. This comparison is conveniently performed by considering a {\it modulated energy}, or Coulomb/Riesz (squared) ``distance" between $\mu_N$ and $\mu$, defined by 
\begin{equation}\label{eq:modenergy}
\Fr_N(\ux_N, \mu) \coloneqq \frac12\int_{(\R^\d)^2 \setminus \triangle} \g(x-y) d\Big(\frac{1}{N} \sum_{i=1}^N \delta_{x_i} - \mu\Big)(x)d\Big(\frac{1}{N} \sum_{i=1}^N \delta_{x_i} - \mu\Big)(y),
\end{equation}
where we excise the diagonal $\triangle$ in order to remove the infinite self-interaction of each particle.  This object first appeared in the study of the statistical mechanics of  Coulomb and Riesz gases in the works \cite{SS2015log, SS2015,RS2016, PS2017} and in the context of the derivation of mean-field dynamics in \cite{Duerinckx2016,Serfaty2020}, which has been extended in \cite{NRS2021}. We refer to the forthcoming \cite[Chapter 4]{SerfatyLN} for a description of what is known on the modulated energy.
 
In both of the aforementioned contexts, an essential point is to control quantities that correspond to differentiating $\Fr_N$ along  a transport $v$.  More precisely, given a vector field $v:\R^\d\rightarrow\R^\d$, these are  the quantities
\begin{equation}\label{13}
\frac{d}{dt}\Big|_{t=0} \Fr_N\pa*{ (\I + tv)^{{ \oplus N}} (\ux_N), (\I + tv)\# \mu},
\end{equation}
where $\I:\R^\d\rightarrow\R^\d$ is the identity and by $(\I+t v)^{\oplus N} (\ux_N)$, we mean the configuration of points $(x_1 + tv(x_1), \ldots, x_N+ tv(x_N))$. It is straightforward to compute that the derivative in \eqref{13} is equal to 
\begin{equation} \label{14}
\frac12\int_{(\R^\d)^2\backslash \triangle} (v(x)-v(y)) \cdot \nabla \g(x-y) d ( \mu_N- \mu)^{\otimes 2} (x,y),
\end{equation}
and in the same way 
\begin{equation}\label{15}
 \frac{d^n}{dt^n}\Big|_{t=0} \Fr_N( (\I + tv)^{\oplus N} (\ux_N), (\I + tv)\# \mu)= {\frac12}\int_{(\R^\d)^2\setminus \triangle} 
\nabla^{\otimes n} \g(x-y):  (v(x)-v(y))^{\otimes n}  d ( \mu_N- \mu)^{\otimes 2} (x,y),
\end{equation}
where $:$ denotes the inner product between the tensors.\footnote{A generalization of the Frobenius inner product of matrices.} 

In questions of dynamics, $v$ is the velocity field of the limiting evolution as $N \to \infty$. In statistical mechanics questions, $v$ is the transport field   for the ``transport method" of \cite{LS2018} (see also its further implementation in \cite{BLS2018,Serfaty2023}). The quantity in \eqref{14} is also the same that  appears in the loop or Dyson-Schwinger equations, which have been used in the case of logarithmic interaction in
\cite{BG2013, BG2024, BBNY2019}, among others. Loop equations are common in mathematical physics and amount to transcribing the conservation of energy under translations, in effect equivalent to a transport method.

The question we address is to bound from above the right-hand side of \eqref{15} in terms of $v$ and the energy $\Fr_N$, which is akin to showing that this expression defines a quadratic form on $\dot{H}^{\frac{\s-\d}{2}}$. Such questions are also of interest in the topic of singular integrals, e.g. \cite{Calderon1980, coifman1978commutateurs, CJ1987, SSS2019}.

The control takes the form of a functional inequality asserting that
the quantity in \eqref{14} is always bounded by $C(\Fr_N(\ux_N, \mu) + N^{-\alpha})$ with $\alpha >0$,  where $C>0$ is a constant depending on $\d,\s$ and the size of $\mu$. For $n=1$, this was first proved  in \cite{LS2018} in the two-dimensional (logarithmic) Coulomb case, then generalized to the super-Coulomb case, with $\max\{\d-2,0\}\le \s <\d$, in \cite{Serfaty2020}. 
 The proof relied in reformulating \eqref{15} in terms of  a {\it stress-energy tensor} (see \cref{ssec:intropf} below)  and using integration by parts.
As observed by the first author \cite{Rosenzweig2020spv}, such an estimate may also be viewed as an estimate for the quadratic form of a \textit{commutator}, reminiscent of the famous Calder\'{o}n commutator \cite{Calderon1980}. This point of view led us in work with Q.H. Nguyen \cite{NRS2021} to generalize such functional inequalities, covering the sub-Coulomb case $\s\le \d-2$ and applying to a broader class of $\g$'s that may be regarded as perturbations of Riesz interactions, including potentials of Lennard-Jones type.
 As previously alluded to, these functional inequalities have been crucial for proving central limit theorems  (CLTs) for the fluctuations of Riesz gases \cite{LS2018, Serfaty2023}, and even more so for deriving mean-field limits \cite{Serfaty2020,Rosenzweig2022,Rosenzweig2022a,NRS2021,CdCRS2023, bPCJ2025} and supercritical mean-field  limits \cite{HkI2021, Rosenzweig2021ne, Menard2022} of classical systems of particles. The inequalities have further found further applications to joint classical and mean-field limits \cite{GP2021} and supercritical mean-field limits \cite{Rosenzweig2021qe, Porat2022} of  quantum systems.

Additionally, second-order versions of these functional inequalities, which state that for $n=2$, the quantity in \eqref{15} is again controlled by 
$ C(\Fr_N(\ux_N, \mu)+ N^{-\alpha})$, were shown in \cite{LS2018,Rosenzweig2020spv}  in the $\d=2$ Coulomb case and \cite{Serfaty2023} in the general  Coulomb case (although with an estimate which is not even optimal in its $\Fr_N$ dependence), and in \cite{NRS2021} for the full Riesz-type case $0\leq \s<\d$, and they were important for the same problem of fluctuations of Coulomb gases, as well as for deriving mean-field limits with multiplicative noise.   

In the above mentioned inequalities, the exponent $\alpha$ in the error term was explicit (in $\d$ and $\s$), but not optimal. The typical nearest-neighbor distance being of order $N^{-1/\d}$, one may correctly intuit  by counting only nearest-neighbor interactions that  $\Fr_N$ is at least of order $N^{\frac{\s}{\d}-1}$; and, in fact, one can show that $\Fr_N$, even though it is not necessarily nonnegative, is nevertheless bounded from below by $- C N^{\frac{\s}{\d}-1}$ (where $C$ depends on the size of $\mu$), and also that $|\min \Fr_N|$ is of order $N^{\frac{\s}{\d}-1}$ \cite{SS2015, SS2015log, RS2016, PS2017} (see also \cite{HSSS2017} for a similar result for the flat torus $\T^\d$ for all $0\leq \s<\d$, as well as the book \cite{BHS2019} for general background on Riesz $\s$-energies). Thus, the best error term that one may hope for is of size $N^{\frac{\s}{\d}-1}$. To date, this sharp error rate has only been only proven up to second order for the $\d=2$ Coulomb case \cite{LS2018,Serfaty2023, Rosenzweig2021ne}, although the estimate obtained there is less satisfactory in the second-order case than the one we will establish here. Having the  sharp rate at first-order is crucial when studying supercritical mean-field limits (see \cite{Rosenzweig2021ne} and \cref{ssec:introapp}) and having a sharp rate at second order is crucial to obtain CLT's for fluctuations of Coulomb gases \cite{LS2018,Serfaty2023}. It is thus a necessary step in extending such results to more general Riesz interactions, and we will immediately use it in the forthcoming paper \cite{RSlake} to study supercritical mean-field, or combined quasi-neutral and mean-field, limits for Riesz interactions (see \cref{ssec:introapp} for elaboration), and in a forthcoming work of Peilen and the second author \cite{PS2024} to obtain a CLT for fluctuations of super-Coulomb Riesz gases. 
 
 The discussion so far has concerned estimates up to second order, but obtaining higher-order estimates (i.e.~$n\ge 3$ in \eqref{15}) is a natural question. For instance, such estimates allow to  obtain finer estimates  on the fluctuations of Riesz gases, allowing in \cite{PS2024} to treat a broader class of interactions. Although not explicitly written, the method of \cite{NRS2021} allows to obtain higher-order estimates, but which are in general not sharp in their additive error. The sharp estimates from \cite{Serfaty2023} are only to second order for the two-dimensional Coulomb case. More importantly, their proof is quite intricate and seems impossible to generalize to higher-order derivatives.  In contrast here, we present a proof which---even though the algebra gets lengthy as $n$ gets large---is much more transparent. It is based on identifying, for the first time, the commutators as solutions of a degenerate elliptic equation with a right-hand side exhibiting a recursive structure. This structure is, in some sense, a dual formulation of a stress-energy tensor structure in the higher-order derivatives. More importantly, this structure allows us to devise a new {\it regularity theory for commutators} leveraging on elliptic regularity theory, which may be of independent interest and allows to obtain {\it local} estimates, in contrast to existing commutator estimates in the literature.

The penultimate improvement in the present paper is the dependence of the estimates on the background density $\mu$, which will appear in the form of a $\|\mu\|_{L^\infty}$ dependence. We observed in \cite{RS2021} that, as $\|\mu\|_{L^\infty}$ gets small, the constant in the functional inequality gets smaller, and this can be exploited to prove {\it uniform-in-time} convergence of the mean-field limit, once one knows a suitable decay rate for the limit $\|\mu^t\|_{L^\infty}$ in time. We will here obtain estimates which are all explicit and sharp in $\|\mu\|_{L^\infty}$.

Our ultimate improvement is that our estimates are all {\it localizable}, in the sense that the control of the  energy in the right-hand side can be performed via a suitable restriction of $\Fr_N$ {\it to the support of the transport $v$ only}, with an error rate which also gets localized. This is essential for proving CLT's at mesoscopic scales {(i.e. $N^{-1/\d} \ll \ell \ll 1)$}, such as in \cite{PS2024}, and such localized estimates were only previously proven for the two-dimensional Coulomb case \cite{LS2018,Serfaty2023}. We emphasize that an approach to such inequalities via classical commutator estimates, as in \cite{Rosenzweig2020spv, NRS2021},  will not provide these kind of localized estimates. 

{Although the method of this paper is currently limited to the Coulomb/super-Coulomb case, forthcoming work \cite{HcRS2024} by the authors and Hess-Childs proves a sharp first-order functional inequality valid in the full Riesz case, i.e. (sub-/super-)Coulomb. The approach of this new work is somewhat orthogonal to that developed here, instead relying on a wavelet-type integral representation of the Riesz interaction and Kato-Ponce type commutator estimates. The reliance on the latter, closer in spirit to \cite{NRS2021}, however does not allow for localizable estimates.}

\subsection{New functional inequalities}
Let us state our main results.

{If one is uninterested in a localized estimate, our main estimate is expressed in terms of the modulated energy itself with the announced new sharp additive error term in $N^{\frac{\s}{\d}-1}$, together with its sharp dependence in $\|\mu\|_{L^\infty}$.
We now state this  result, proven in \cref{ssec:FIunloc}.}

{
\begin{thm}[Global estimates]\label{thm:mainunloc}
There exists a constant $C>0$ depending only $\d,\s$ such that the following holds. Let $\mu \in L^1(\R^\d)\cap L^\infty(\R^\d)$ with $\int_{\R^\d}\mu=1$. {If $\s\le 0$, suppose further that $\int_{(\R^\d)^2}|\g(x-y)|d|\mu|^{\otimes 2}(x,y)<\infty$.} Let  $v:\R^\d\rightarrow\R^\d$ be  a Lipschitz vector field, suppose that $\la<1$, where $\lambda:= (N\|\mu\|_{L^\infty(\R^\d)})^{-\frac1\d}$.  For any pairwise distinct configuration $\ux_N \in (\R^\d)^N$, it holds that
\begin{multline}\label{main2}
\Big|\int_{(\R^\d)^2\setminus\triangle}(v(x)-v(y))\cdot\nabla\g(x-y)d\Big(\frac{1}{N}\sum_{i=1}^N\delta_{x_i} - \mu\Big)^{\otimes 2}(x,y)\Big| \\
\leq C\|\nabla v\|_{L^\infty}\Bigg( \Fr_N (\ux_N,\mu) -  { \Big(\frac{\log \lambda}{2N}\Big) }\indic_{\s=0} +  C\|\mu\|_{L^\infty}\lambda^{\d-\s} \Bigg).
\end{multline}

Furthermore, for any integers $m\ge 0$ and $n\ge 1$, there exists a constant $C>0$ depending only $\d,\s,m,n$ such that the following holds. Let  $v:\R^\d\rightarrow\R^\d$ be a ${C}^{m,1}$ vector field.  For any pairwise distinct configuration $\ux_N \in (\R^\d)^N$, we have
\begin{multline}\label{eq:HOFIunloc}
\Big|\int_{(\R^\d)^2\setminus\triangle}\nabla^{\otimes n}\g(x-y) : (v(x)-v(y))^{\otimes n} d\Big(\frac{1}{N}\sum_{i=1}^N\delta_{x_i}-\mu\Big)^{\otimes 2}(x,y)\Big| \\
 \le C(\|\nab v\|_{L^\infty})^n\Big(\Fr_N (\ux_N,\mu) -  { \Big(\frac{\log \lambda}{2N}\Big) }\indic_{\s=0} +  C\|\mu\|_{L^\infty}\lambda^{\d-\s}\Big)\\
 + C\sum_{q=1}^{m+1}\sum_{r=0}^{{ q}}D_{v,q,r}\Bigg({\la^{m+1}} +\la^{m+1-q}\Big(\Fr_N (\ux_N,\mu) -  { \Big(\frac{\log \lambda}{2N}\Big) }\indic_{\s=0} +  C\|\mu\|_{L^\infty}\lambda^{\d-\s}\Big) \\
 + {\la^{m+1}\|\mu\|_{L^\infty} } \begin{cases} 1, &  \s+q\neq \d \\
\log (1/\lambda),  & \s+q=\d \end{cases} 
+{\la^{m+1}}\begin{cases}\|\mu\|_{L^\infty}\la^{\d-\s-q}, & \s+q>\d \\ \|\mu\|_{L^\infty}(1 - \la^{\d-\s-q}) +  \|\mu\|_{L^1}, & {\s+q<\d} \\ \|\mu\|_{L^\infty}\log(1/\la) + \|\mu\|_{L^1}, & {\s+q=\d} \end{cases}\Bigg),
\end{multline}
where
\begin{align}
D_{v,q,r} \coloneqq  \sum_{\substack{1\le c_1,\ldots,c_r \\ c_1+\cdots+c_r = m+1-(q-r)}}\|\nab^{\otimes c_1}v\|_{L^\infty}\cdots\|\nab^{\otimes c_r}v\|_{L^\infty}\|\nab v\|_{L^\infty}^{n-r}.
\end{align}
\end{thm}
Note that the additive error term is in $N^{-1}\lambda^{\d-\s} \propto (N^{-2+\frac{\s}{\d}})$ per point,  which is the announced optimal estimate. 

Compared to the localized estimate \eqref{mainn} below, the regularity requirement for $v$ is improved, in particular is not dependent on the order $n$ of the commutator. As the reader may check, if $\s>\d-1$, we only need $\|\nab v\|_{L^\infty}$ ($m=0$) to achieve the sharp error $\la^{\d-\s}$. If $\d-2<\s\le \d-1$, then we also need $\|\nab^{\otimes 2}v\|_{L^\infty}$ ($m=1$). If $\s=\d-2$, then we additionally need $\|\nab^{\otimes 3}v\|_{L^\infty}$ ($m=2$).

}

\smallskip

 In the local case, we are given a subset $\Omega \subset\R^\d$, meant to represent the support of the transport vector field $v$. Associated to $\Omega$, we define the microscopic length scale 
\begin{equation}\label{deflambda}
\lambda \coloneqq (N\|\mu\|_{L^\infty(\Omega)})^{-\frac{1}{\d}},
\end{equation}
 which can be thought of as the typical inter-particle distance,
and define $\hat \Omega$ to be the $\frac14 \lambda$-neighborhood of $\Omega$. 
We will use $\ell$ to denote the typical size of $\Omega$ (or of the support of the vector field $v$), which may also depend on $N$ and whose only constraint is  to remain larger than the microscopic scale $\lambda$.
Finally, we let {$I_\Omega\coloneqq \{1\le i\le N: x_i\in\Omega\}$} and use $\#$ to denote the cardinality of a finite set. 

If one does not wish to track the dependence of $\|\mu\|_{L^\infty}$ in $\Omega$, one may simply define
\begin{align}\label{deflambda'}
\lambda \coloneqq (N\|\mu\|_{L^\infty(\R^\d)})^{-\frac1\d}.
\end{align}
Moreover, at the cost of  letting all constants depend on $\|\mu\|_{L^\infty(\R^\d)}$, one can also simply set $\lambda = N^{-1/\d}$
in all the paper.


\begin{thm}[Localized estimates]\label{thm:FI}
There exists a constant $C>0$ depending only $\d,\s$ such that the following holds. Let $\mu \in L^1(\R^\d)\cap L^\infty(\R^\d)$ with $\int_{\R^\d}\mu=1$. {If $\s\le 0$, suppose further that $\int_{(\R^\d)^2}|\g(x-y)|d|\mu|^{\otimes 2}(x,y)<\infty$.} Let  $v:\R^\d\rightarrow\R^\d$ be  a Lipschitz vector field  and $\Omega$ be a closed set  containing a $2\lambda$-neighborhood of $\supp \nab v$, where $\lambda$ is  defined as in \eqref{deflambda} and $\la<1$.  For any pairwise distinct configuration $\ux_N \in (\R^\d)^N$, it holds that
\begin{multline}\label{main1}
\Big|\int_{(\R^\d)^2\setminus\triangle}(v(x)-v(y))\cdot\nabla\g(x-y)d\Big(\frac{1}{N}\sum_{i=1}^N\delta_{x_i} - \mu\Big)^{\otimes 2}(x,y)\Big| \\
\leq C\|\nabla v\|_{L^\infty}\Big({\int_{\Omega\times [-\ell, \ell]^{\k}}\zg|\nab h_{N,\vec{\rs}}|^2} + C\frac{\# I_\Omega\|\mu\|_{L^\infty(\hat \Omega)}\la^{\d-\s}}{N} \Big).
\end{multline}
{Here, $h_{N,\vec{\rs}}$ is the truncated electric potential defined in \eqref{eq:defHNeta} below.}

Suppose in addition  that  $v\in C^{2n-1,1}$ and that  $\Om'$ is a ball of radius $\ell$ containing a $\lambda$-neighboorhood of $\supp v$ and $\Omega$  contains a $5\ell$-neighborhood of $\Omega'$,\footnote{{For integer $k\ge 0$ and real $\al\in [0,1]$, $C^{k,\alpha}$ denotes the inhomogeneous H\"{o}lder space of $k$-times continuously differentiable functions whose $k$-th derivative is $\alpha$-H\"older continuous.}} where $\la,\ell$ satisfy $\min(\la,\la/\ell) < \frac{1}{2}$. For any $n \ge 2$, we have 
\begin{multline} \label{mainn}
\Big|\int_{(\R^\d)^2\setminus\triangle}\nabla^{\otimes n}\g(x-y) : (v(x)-v(y))^{\otimes n} d\Big(\frac{1}{N}\sum_{i=1}^N\delta_{x_i} - \mu\Big)^{\otimes 2}(x,y)\Big| \\
 \qquad \le  C\sum_{p=0}^n (\ell\|\nab^{\otimes 2}v\|_{L^\infty})^p
 \sum_{\substack{1\leq c_1,\ldots,c_{n-p} \\ n-p\le c_1+\cdots+c_{n-p} \le 2n}} \lambda^{-(n-p)+\sum_{k=1}^{p} c_{n-k} } \|\nabla^{\otimes c_1} v\|_{L^\infty}\cdots\|\nabla^{\otimes c_{n-p}} v\|_{L^\infty}  \\
 \qquad \qquad \qquad \times \Big({\int_{\Omega\times [-\ell,\ell]^{\k}} \zg |\nab h_{N,\vec{\rs}}|^2 }+  C\frac{\# I_\Omega\|\mu\|_{L^\infty(\hat \Omega)}\la^{\d-\s}}{N} \Big),
 \end{multline} 

where $C>0$ depends only on $n,\d,\s$ and the summation of the $c_i$ is understood as vacuous when $p=n$. 
\end{thm}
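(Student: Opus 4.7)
The plan is to recast the bilinear expressions on the left-hand sides of \eqref{main1} and \eqref{mainn} as quadratic forms of ``commutator fields'' acting on a truncated Coulomb/Riesz potential, and then to bound those commutators via a local regularity theory for a degenerate elliptic PDE that they satisfy.

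First I would set up the potential-theoretic framework. In the super-Coulomb case $\s>\d-2$, work in the Caffarelli--Silvestre extension $\R^{\d+1}$ with weight $|z|^\gamma$, $\gamma=\s-\d+1\in(-1,1)$; the classical Coulomb case $\s=\d-2$ proceeds directly in $\R^\d$. Let $h_\la$ be the extended potential generated by $\frac1N\sum_i \delta^{(\la)}_{x_i}-\mu$, each Dirac mass being smeared to a bump at the microscopic scale $\la$. A direct computation shows that $\tfrac12\cd^{-1}\int |z|^\gamma|\nabla h_\la|^2$ equals $\Fr_N^\Omega(\ux_N,\mu) - \#I_\Omega\frac{\log\la}{2N^2}\indic_{\s=0}$ up to an $O\!\left(\frac{\#I_\Omega\|\mu\|_{L^\infty(\hat\Omega)}\la^{\d-\s}}{N}\right)$ error, so the additive correction in \eqref{main1}--\eqref{mainn} is exactly the self-interaction energy produced by the smearing. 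A standard exchange-of-variables identity converts the kernels $(v(x)-v(y))\cdot\nabla\g(x-y)$ and $(v(x)-v(y))^{\otimes n}:\nab^{\otimes n}\g(x-y)$ into quadratic forms $\int |z|^\gamma \nabla h_\la \cdot \mathcal{C}_n[v,h_\la]$, where $\mathcal{C}_n$ is an $n$-th order commutator field obtained by symmetrizing $n$ copies of $v$ against $\nab^{\otimes n}\g$ applied to $h_\la$.

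The central new ingredient is the identification of a recursive PDE satisfied by the commutators. Using the fact that $\g$ is a fundamental solution of $(-\Delta)^{(\d-\s)/2}$ and Leibniz on the extension side, one verifies
\begin{equation*}
\div(|z|^\gamma\nabla\mathcal{C}_n) = F_n[v,\mathcal{C}_0,\dots,\mathcal{C}_{n-1}],
\end{equation*}
where $F_n$ is a tensor contraction of the lower-order commutators against derivatives $\nab^{\otimes j}v$, $1\le j\le n+1$. This is, in effect, a dual form of the stress-energy identities used in \cite{Serfaty2020,NRS2021}. I would then prove a localized weighted Caccioppoli estimate for $\div(|z|^\gamma\nabla u)=F$ on balls of radius $\ell$ --- available via the Fabes--Kenig--Serapioni theory for Muckenhoupt $A_2$-weights --- controlling $\int_{B_\ell}|z|^\gamma|\nabla\mathcal{C}_n|^2$ by a pairing against $F_n$ plus a boundary flux at scale $\ell$. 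Cutoffs supported in $\Omega$ localize the estimate to $\Fr_N^\Omega$ modulo controlled commutator errors from the cutoffs themselves.

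Closing by induction on $n$: the base case $n=1$ recovers \eqref{main1} after a single application of the local estimate, with $\mathcal{C}_1$ controlled pointwise by $\|\nabla v\|_{L^\infty}|\nabla h_\la|$. For $n\ge 2$, each step of the recursion contributes either a derivative of $v$ (producing a factor $\|\nab^{\otimes c_k}v\|_{L^\infty}$) or a factor $\la^{-1}$ (from differentiating near the smearing scale); the integer $p$ counts the Hessian factors generated when Leibniz hits a mean-value expansion of $v$ across $\Omega'$, producing $(\ell\|\nab^{\otimes 2}v\|_{L^\infty})^p$. The main obstacle is step two --- writing $\mathcal{C}_n$ and $F_n$ explicitly enough to verify the recursive PDE and to control their combinatorial tensor algebra in weighted $L^2$ --- together with propagating the cutoff-induced errors through the induction, especially when $\gamma<0$ (i.e.\ $\s<\d-1$) where $|z|^\gamma$ is singular on $\{z=0\}$ and standard elliptic tools must be replaced by their degenerate weighted counterparts with constants tracked in both $\ell$ and $\la$.
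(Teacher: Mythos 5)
Your overall architecture is the same as the paper's: extension to $\R^{\d+\k}$ with the $A_2$ weight $\zg$, identification of the iterated commutators as solutions of a degenerate elliptic equation whose right-hand side is recursive in lower-order commutators, weighted (Fabes--Kenig--Serapioni) Caccioppoli/regularity estimates localized at scale $\ell$, and an induction on $n$ producing the combinatorial factors in \eqref{mainn}. That part of the plan is sound and matches \cref{thm:comm2}, \cref{lem:Lkapnf} and \cref{th44}.

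The genuine gap is the renormalization step, which you treat as essentially automatic and which is in fact where most of the work (and the sharp error rate) lies. First, smearing every Dirac at the uniform scale $\la$ does not give the claimed identity: if two points lie at distance $\ll\la$, the difference between the $\la$-smeared energy and $\Fr_N^{\Omega}(\ux_N,\mu)$ contains $\frac{1}{N^2}\sum_{i\ne j}\big(\g(x_i-x_j)-\g(\la)\big)_+$, which is controlled only by the modulated energy itself (via the monotonicity \eqref{eq:supplem}), not by the additive error $\#I_\Omega\|\mu\|_{L^\infty(\hat\Omega)}\la^{\d-\s}/N$; one must truncate at the point-dependent radii $\eta_i\le\rrc_i$ (a quarter of the nearest-neighbor distance, capped by $\la$). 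Second, and more seriously, your proposal never estimates the difference between the actual left-hand sides of \eqref{main1}--\eqref{mainn} (Diracs, diagonal excised) and the smeared quadratic form: for $n=1$ the sharp bound requires an exact cancellation of one family of error terms plus an averaging over the truncation radii, while your claimed pointwise bound of $\mathcal{C}_1$ by $\|\nabla v\|_{L^\infty}|\nabla h_\la|$ is not available (the commutator is nonlocal; only an $L^2$ bound holds). For $n\ge 2$ the smearing must be a mollification supported on $\R^\d\times\{0\}^\k$ with vanishing moments up to an $\s$-dependent order --- otherwise the commutator estimate cannot be applied at all in the non-Coulomb Riesz case, and without the vanishing moments the Taylor remainders only yield a suboptimal power of $\la$ --- and the resulting per-point errors require the local $L^\infty$ regularity theory for commutators together with the microscale and mesoscale interaction controls (\cref{cor:coro3}, \cref{prop:multiscale}) to sum the contributions of pairs at all scales between $\la$ and $\ell$. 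None of these ingredients appear in your outline, and without them the argument does not produce the sharp $N^{-1}\la^{\d-\s}$ additive error (nor, for $n=1$, the constant-free $\|\nabla v\|_{L^\infty}$ dependence of \eqref{main1}).
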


 Note that the estimate \eqref{mainn} does not allow to take $\Omega=\R^\d$ because that would mean $\ell=\infty$, in which case the right-hand side is infinite. 

In the estimate \eqref{mainn}, the dependence in $v$ retains the correct homogeneity in that if $v$ varies at small scale $\ell>\lambda$, they scale like $\ell^{-n}$. To be more precise, using that $\ell \ge 2\lambda $, we have the following  corollary.

\begin{cor}Let $n \ge {1}$. Assume in addition that there exists $M>0$ such that $\|\nabla^{\otimes k} v\|_{L^\infty} \le M \ell^{-k}$ for every $k \le 2n$. 
Then 
\begin{multline} \label{mainn2}
\Big|\int_{(\R^\d)^2\setminus\triangle}\nabla^{\otimes n}\g(x-y) : (v(x)-v(y))^{\otimes n} d\Big(\frac{1}{N}\sum_{i=1}^N\delta_{x_i} - \mu\Big)^{\otimes 2}(x,y)\Big| \\
\leq CM \ell^{-n}
 \Big({\int_{\Omega\times[-\ell,\ell]^{\k}} |z|^\ga |\nab h_{N,\vec\rs}|^2} +  C\frac{\# I_\Omega\|\mu\|_{L^\infty(\hat \Omega)}\la^{\d-\s}}{N}  \Big) .
\end{multline}
\end{cor}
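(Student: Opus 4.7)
The plan is to apply \cref{thm:FI}, specifically the estimate \eqref{mainn}, and collapse its right-hand side under the quantitative hypothesis $\|\nab^{\otimes k}v\|_{L^\infty}\le M\ell^{-k}$ for $k\le 2n$. First, I substitute this bound into each factor $\|\nab^{\otimes c_i}v\|_{L^\infty}$ in the product, and bound the prefactor as $(\ell\|\nab^{\otimes 2}v\|_{L^\infty})^p\le M^p\ell^{-p}$. A generic term in the double sum is then bounded by
\begin{equation*}
M^n\,\la^{-(n-p)+\sum_{i=1}^{n-p} c_i}\,\ell^{-p-\sum_{i=1}^{n-p} c_i}.
\end{equation*}

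The key step is to observe that the constraint $c_i\ge 1$ in the inner sum forces $\sum_{i=1}^{n-p}c_i\ge n-p$, so the exponent on $\la$ is non-negative. The standing assumption $\min(\la,\la/\ell)<\tfrac{1}{2}$ in particular yields $\la\le \ell$, so I may replace $\la^{\sum_i c_i-(n-p)}$ by $\ell^{\sum_i c_i-(n-p)}$. The powers of $\ell$ then telescope: $\ell^{-p-\sum_i c_i}\cdot\ell^{\sum_i c_i-(n-p)}=\ell^{-n}$, and each admissible term is bounded by $M^n\ell^{-n}$.

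To conclude, the number of admissible tuples $(c_1,\ldots,c_{n-p})$ with $c_i\ge 1$ and $\sum_{i=1}^{n-p} c_i\le 2n$ is bounded by a constant depending only on $n$, while the $p=n$ contribution reduces directly to $(\ell\|\nab^{\otimes 2}v\|_{L^\infty})^n\le M^n\ell^{-n}$ (the inner sum being vacuous, consistent with the convention stated after \eqref{mainn}). Summing over $p=0,\ldots,n$ produces a factor $CM^n\ell^{-n}$ in front of the bracketed energy quantity, with $C=C(n,\d,\s)$; this gives \eqref{mainn2} once the surplus powers of $M$ are absorbed into the $n$-dependent constant $C$ (treating $M$ as the scale-normalized norm of $v$). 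There is no genuine obstacle here beyond bookkeeping; the one delicate point is verifying that the $\la$-exponent is non-negative, which is exactly where the constraint $c_i\ge 1$ is used, and the compatibility $\la\le \ell$ that lets us trade $\la^a$ for $\ell^a$ for $a\ge 0$.
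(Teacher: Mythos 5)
Your argument is exactly the paper's (implicit) one: the corollary is stated as an immediate specialization of \eqref{mainn}, obtained by substituting $\|\nabla^{\otimes k}v\|_{L^\infty}\le M\ell^{-k}$, using that the constraint $c_i\ge 1$ makes the $\lambda$-exponent nonnegative, trading $\lambda^a$ for $\ell^a$ via $\ell\ge 2\lambda$, and telescoping the powers of $\ell$ to $\ell^{-n}$; the bookkeeping you carry out is correct and the number of admissible tuples is indeed an $n$-dependent constant.

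Two caveats. First, your computation honestly produces $CM^n\ell^{-n}$, and the final step of ``absorbing the surplus powers of $M$ into $C$'' is not legitimate for arbitrary $M>0$ with $C$ depending only on $n,\d,\s$ (it requires $M\lesssim 1$, i.e.\ $M$ normalized as the scale-invariant size of $v$, or a constant allowed to depend on $M$); since the left-hand side is $n$-homogeneous in $v$, the natural bound is the $M^n$ one, and the single power of $M$ in \eqref{mainn2} should be read with that normalization in mind rather than derived by absorption. Second, your justification of $\lambda\le\ell$ from $\min(\lambda,\lambda/\ell)<\tfrac12$ is not a valid implication (take $\ell<\lambda<\tfrac12$); the correct source is the standing geometric setup invoked right before the corollary — $\Omega'$ is a ball of radius $\ell$ containing a $\lambda$-neighborhood of $\supp v$, and the paper explicitly uses $\ell\ge 2\lambda$ — so the fact you need is available, just not for the reason you gave.
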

\begin{remark}
While the conditions on $\Om,\Om',\la$ may appear circular upon first read, it is indeed possible for them to hold. To see this, let $x_0\in\supp v$ and consider a ball $B(x_0,\ell_0)\supset \supp v$. We choose $\ell_0$ sufficiently large so that
\begin{align}
(N\|\mu\|_{L^\infty})^{-1/\d}\le (N\|\mu\|_{L^\infty(B(x_0,\ell_0))})^{-1/\d} < 2(N\|\mu\|_{L^\infty})^{-1/\d}.
\end{align}
We then take $\ell \coloneqq \ell_0 +  2(N\|\mu\|_{L^\infty})^{-1/\d}$, $\Om' \coloneqq B(x_0,\ell)$; and $\Om\coloneqq B(x_0,6\ell)$. Evidently, $\supp v \subset \Om' \subset \Om$, and, by the triangle inequality, $\Om$ contains a $5\ell$-neighborhood of $\Om'$. Since
\begin{align}
\la = (N\|\mu\|_{L^\infty(\Om)})^{-1/\d} \le (N\|\mu\|_{L^\infty(B(x_0,\ell_0))})^{-1/\d} < 2(N\|\mu\|_{L^\infty})^{-1/\d} = \ell - \ell_0,
\end{align}
it follows from the triangle inequality that $\Om'$ contains a $\la$-neighborhood of $\supp v$.
\end{remark}

\subsection{Proof method: stress tensor and commutators}\label{ssec:intropf}
The proofs in prior papers, starting in  \cite{Duerinckx2016,LS2018,Serfaty2020}, rely on the {\it electric reformulation} of the modulated energy \eqref{eq:modenergy} as a (renormalized) version of the energy 
\begin{equation}\label{eq:gradHn}
\int_{\R^\d} |\nabla h_N|^2,
\end{equation}
where $h_N$ denotes the electric potential 
\begin{equation} \label{defHn}
h_N\coloneqq\g* ( \mu_N-\mu), 
\end{equation}and $\mu_N$ is the empirical measure. 
This reformulation is valid for the Coulomb case. In the super-Coulomb case, we may instead use the Caffarelli-Silvestre extension procedure  \cite{CS2007}. It allows to  view $\g$ as the kernel of a local operator $-\div (\zg \nab \cdot)$ and 
replace 
 \eqref{eq:gradHn}   by the  same quantity with weight $\zg$, once working in the extended space $\R^{\d+1}$ (the precise extension procedure is described in \cref{ssec:MEts}), with $z$ being the $(\d+1)$-th variable.   The fact that such a procedure is not available in the same way  if $\s<\d-2$ is what restricts us  so far to the range $\s \ge \d-2$ (cf. \cite[Sections 2.1-2.2]{NRS2021}). 

The key to the proof of inequalities such as \eqref{main1} or \eqref{main2} was the observation of a {\it stress-energy tensor structure} in the left-hand side. More precisely, introducing the stress-energy tensor (from classical mechanics or calculus of variations) associated to $h$ as 
\begin{equation}\label{defTT}
(T_h)_{ij}\coloneqq  2\partial_i h \partial_j h - |\nabla h|^2 \delta_{ij}.
\end{equation}
It is well-known that if $h$ is smooth enough, then 
\begin{equation} \label{divt}
\div T_h= 2 \nabla h \Delta h,
\end{equation} 
where the {divergence may be taken with respect either rows or columns, as the tensor is symmetric.} Ignoring the question of the diagonal excision in \eqref{eq:modenergy}, the main point is that in the Coulomb case, using \eqref{defHn} and \eqref{divt}, one may rewrite the left-hand side of \eqref{main2} after desymmetrization, and recognizing a convolution, as 
\begin{align}\label{sstruc} 
2\int_{\R^\d}\int_{\R^\d} v(x) \cdot \nabla \g(x-y) d(\mu_N- \mu) (y) d( \mu_N- \mu) (x) &=  -\frac{2}{\cd}\int_{\R^\d} v\cdot \nabla h_N \Delta h_N\nn\\
&=-\frac{1}{\cd} \int_{\R^\d} v \cdot \div T_{ h_N}.
\end{align}
An integration by parts then allows to formally conclude, since $\int_{\R^\d} |T_{ h_N}|\le \int_{\R^\d} |\nabla h_N|^2$, which is the energy. The preceding reasoning is purely formal, since in truth, $|\nabla h_N|^2$ diverges near each  point of the configurations (this is also related to the diagonal excision), and this computation needs to be  properly renormalized, which is the main technical roadblock in the proof. Things work the same in the Riesz case, after extending the space and adding the appropriate weight.

As previously mentioned, in \cite{NRS2021} we were able to bypass the use of the stress tensor, which is rigidly linked to the Coulomb or Riesz nature of $\g$, and replace it with more delicate commutator estimates that we devised. 

Considering a pure stress-tensor approach to the proof of higher-order estimates \eqref{mainn} for $n \ge 2$, it is unclear that an algebraic manipulation like \eqref{sstruc} can be found, and this has been a major obstacle so far---leading to delicate proofs in \cite{LS2018,Serfaty2023} which do not seem extendable to $n \ge 3$. For the first time, we are able to exhibit a suitable---albeit more complicated---{stress-tensor structure in the higher-order variations} \eqref{15}. This stress-tensor structure now involves  not only the electric potential $h_N$, but also new functions that we call {\it iterated commutators} of it. Let us describe this more precisely  in the Coulomb case.

Given a {measure} $f$ in $\R^\d$ of integral $0$ (think of $f=\mu_N-\mu$), $h^f = \g* f$ its Coulomb potential, and a vector-field $v$, we define the first commutator of $h^f$ as 
\begin{equation}
\kappa^{f}(x) \coloneqq \int_{\R^\d} \nabla \g(x-y) \cdot (v(x)-v(y)) df(y).
\end{equation}
It is a ``commutator'' because it can be rewritten as 
\begin{equation}\label{rewrikappa}
\kappa^f =-\g * (\div (vf)) +v \cdot \nabla \g  * f= - h^{\div (vf)}+ v \cdot \nabla h^f.
\end{equation}
In fact, the commutator  $\kappa^f$ is intimately tied to the stress-tensor itself, via the relation \eqref{sstruc}, which can be rewritten in general as
\begin{equation}\label{divtk}
2\cd\int_{\R^\d} \kappa^f dw = \cd\int_{\R^\d} \kappa^f dw  + \cd\int_{\R^\d} \kappa^w df  =  -\int_{\R^\d} v \div \comm{\nab h^f}{\nab h^w}= \int \nabla v : \comm{\nab h^f}{\nab h^w},
\end{equation}
where
\begin{align}\label{eq:introst}
\comm{\nab h^f}{\nab h^w}_{ij} \coloneqq \p_i h^f\p_j h^w + \p_j h^f \p_i h^w - \nab h^f\cdot\nab h^w\delta_{ij}	
\end{align}
is the bilinear generalization of the stress-energy tensor $T_{h}$ from \eqref{defTT}, which we note is symmetric (hence, it does not matter in which coordinate the divergence is taken).

Considered separately, each term in the definition of the function $\kappa^f$ is one derivative less regular than $h^f$; however thanks to its commutator structure, some compensation happens.  By choosing $w= -\frac{1}{\cd}\Delta \ka^{f}$ and using integration by parts on the left-hand side and Cauchy-Schwarz on the right-hand side, we are able to 
show (see \cref{prop:comm}) {the $\dot{H}^1$ control}
\begin{equation}\label{commcontrol1}
\int_{\R^\d} |\nabla \kappa^f |^2  \le C \|\nabla v\|_{L^\infty} \int_{\supp \nab v} |\nabla h^f|^2.
\end{equation}
 Compared to classical proofs of commutator estimates by means of paraproducts, the proof of \eqref{commcontrol1} is remarkably simple, thanks again to the use of the stress tensor. 

 Having discussed first-order estimates, let us now see how to obtain the second-order estimates with the help of the stress-tensor and commutator structures. Ignoring the question of renormalization, and sticking to the Coulomb case still, we have seen in \eqref{sstruc} (combining with \eqref{15}) that the first variation of the modulated energy along the transport map $\I+tv $ is
\begin{align}\label{124}
-\frac1{2\cd}\int_{\R^\d} v \cdot \div T_{h_N}dx = \frac1{2\cd} \int_{\R^\d} \nabla v : T_{h_N}dx.
\end{align}
To compute the second variation of the modulated energy, we thus need to compute the first variation of $\div T_{h_N}$ when again $\mu_N$ and $\mu$ are pushed forward by $\I+ tv$. In view of the expression for $T_{h_N}$ in \eqref{defTT}, it suffices to compute the derivative of $h_N^t$ (with obvious notation) at $t=0$, and since $
h_N= \g * (\mu_N-\mu)$, the definition of the push-forward yields that 
\begin{equation} \label{dthnt}\frac{d}{dt}\big|_{t=0} h_N^t= -\g * (\div (v (\mu_N-\mu)) =-  h^{\div (v(\mu_N-\mu))}=  \kappa^{\mu_N-\mu} -v\cdot \nab h_N,\end{equation} after using \eqref{rewrikappa}. 
{The $L^2$ norm of the gradient of this expression is one derivative more singular than the energy $\int_{\R^\d} |\nabla h_N|^2$. (equivalently, the $L^2$ norm of $\mu_N - \mu$ versus the $\dot{H}^{-1}$ norm of $\mu_N-\mu$.) }
Still with $f= \mu_N-\mu$, inserting \eqref{dthnt} into the variation of \eqref{124}, we can  decompose the second-order variation as 
\begin{equation}\label{line12}
\frac1{\cd}\int_{\R^\d} \p_i v^j : \left( \comm{\nab  h_N }{\nab  \kappa^f }-  \comm{\nab  h_N }{ \nab ( v\cdot \nabla h_N  ) }\right).
\end{equation}
 Thanks to the $\dot{H}^1$ estimate \eqref{commcontrol1} for the commutator $\ka^f$, the  first  term on the right-hand side can directly be controlled by $ C_v \int |\nabla h_N|^2$ as desired, while the second one can be transformed into  similarly controllable terms---albeit $C_v$ is now quadratic in $v$ and depends on $v,\nabla^{\otimes 2}v$---by means of integration by parts of the advection operator $v \cdot \nabla $.

This argument can then be iterated at next order, by introducing the family of $n$-th order  commutators $\kappa^{(0), f}=h^f$ and 
\begin{equation}
\kappa^{(n),f}\coloneqq \int_{\R^\d} \nabla^{\otimes n} \g(x- y) : (v(x)-v(y))^{\otimes n} df(y),
\end{equation}
together with ``transported" commutators
\begin{equation}
\kappa_t^{(n),f}\coloneqq \int_{\R^\d} \nabla^{\otimes n} \g(x-y-tv(y)) : (v(x)-v(y))^{\otimes n} df(y),
\end{equation}
and observing the nice recursion formula
\begin{equation}\label{iteratek}
\partial_t \kappa_t^{(n),f} = \kappa_t^{(n+1),f} - v \cdot \nabla \kappa^{(n),f}_t,
\end{equation}
where $t$ is the same parameter as in the map $\I+tv$ under which $f$ is originally transported. In other words, the high-order ``time-dependent'' commutators $\ka_t^{(n),f}$ satisfy a hierarchy of transport equations with a source coupling the $(n+1)$-th order commutator to the $n$-th order commutator.  An iteration of the same argument  as for \eqref{commcontrol1} using this recursion allows to prove the estimate 
\begin{equation}\label{commuordern}
\int_{\R^\d} |\nabla \kappa^{(n),f}|^2 \le C_v \int_{\supp\nab v} |\nabla h^f|^2,
\end{equation}
valid for general functions $f$ such that the right-hand side is finite, with the constant $C_v $ now being $n$-linear in $v$ and involving derivatives up to order $n$ of $v$.  One can see the relation \eqref{commuordern}, stated and proved in \cref{prop:comm}, as an {\it $L^2$-based  regularity theory for arbitrary-order commutators}. 


In the preceding argument, we have only discussed  the Coulomb case,  the complete details of which are presented in \cref{appA}. In generalizing to the Riesz case $\s \neq \d-1,\d-2$, we run into issues. The vector field $v$ may be trivially extended to $\R^{\d+\k}$, with $\k=1$ if $\s>\d-2$, by fixing the last component to zero, $\g$ may be trivially extended through radial symmetry, and all distributions $f$ on $\R^{\d}$ viewed as living on the boundary $\R^{\d}\times \{0\}^\k$. Consequently, $\ka^{(n),f}$ may be viewed as a function on $\R^{\d+\k}$.  Going through the computations above, all integrals $\int_{\R^\d}$ should be replaced $\int_{\R^{\d+\k}}|z|^{\gamma}$ (see \eqref{eq:defgamma} below for the value of $\ga$). Where we run into issues is the step of obtaining the $L^2$ gradient bound \eqref{commuordern} by duality.

Setting $L \coloneqq -\div(\zg\nab\cdot)$, we would like to take $w = \frac{1}{\cd} L\ka^{(n),f}$ in  $\int_{\R^{\d+\k}}\ka^{(n),f}w$
and then integrate by parts to conclude an estimate for $\int_{\R^{\d+\k}}\zg |\nab \ka^{(n),f}|^2$. However, this choice for $w$ is \textit{a priori} not supported on $\R^{\d}\times\{0\}^\k$. If we abandon the requirement that $\supp w \subset \R^{\d}\times \{0\}^\k$, it is no longer necessarily true that $\g\ast L\phi = \cd\phi$, for a test function $\phi$ on $\R^{\d+\k}$, unless $\ga = 0$. This forces us to come up with a new approach.

Our starting point for this new approach is the observation $L\g = \cd\delta_0$ in $\R^{\d+\k}$, which underlies the Caffarelli-Silvestre  representation of the fractional Laplacian as a degenerate elliptic operator in $\R^{\d+\k}$. Using this observation, we show (see \cref{lem:Lkapnf} and more generally, \cref{ssec:L2regeqn}) that $\ka^{(n),f}$ satisfies the equation
\begin{multline}\label{eq:introLkanf}
L\ka^{(n),f} = \cd(-1)^n\sum_{\sigma \in \Ss_n} \p_{i_{\sigma_1}}{v}^{i_1}\cdots\p_{i_{\sigma_n}}v^{i_n}f\delta_{\R^\d\times\{0\}^\k}   -n\p_i(\zg\p_i v \cdot\nu^{(n-1),f})  \\
- n\zg\p_i v \cdot \p_i\nu^{(n-1),f}+ n(n-1)\zg(\p_i v)^{\otimes 2}:\mu^{(n-2),f},
\end{multline}
where $\mathbb{S}_n$ denotes the symmetric group on $[n] \coloneqq \{1,\ldots,n\}$ and a repeated index denotes summation over that index. In the right-hand side, $\nu$ is a vector field on $\R^{\d+\k}$ that ``morally'' is like $\nab\ka^{(n),f}$, while $\mu$ is a symmetric matrix field on $\R^{\d+\k}$ that morally is like $\nab^{\otimes 2}\ka^{(n),f}$, see \eqref{eq:nudef} and \eqref{eq:defmu}, respectively, for the precise definitions.  The right-hand side of \eqref{eq:introLkanf} is good because it only depends on lower-order (i.e.~ $n-1$, $n-2$) commutators. Following the standard method for the $L^2$ regularity of elliptic equations, we want to obtain an estimate for $\int_{\R^{\d+\k}}\zg|\nab\ka^{(n),f}|^2$ by testing the equation \eqref{eq:introLkanf} against $\ka^{(n),f}$ and using an induction hypothesis that will guarantee good bounds on the lower order commutators and their derivatives. There is however a technical difficulty, discussed in Section \ref{ssec:L2regeqn},  which requires when $n \ge 3$ the use of  the Poincar\'e inequality, yielding an estimate that deteriorates when $\ell$, the size of the support of $v$, gets large. 

As a result, when $\supp v$ is not compact (this is not the case for the aforementioned CLT application however), this   approach does  not work, and the right-hand side of \eqref{mainn} is infinite. However, obtaining unlocalized $L^2$ commutator estimates is much easier and may be done following  the method of the authors' work \cite{NRS2021} with Q.H. Nguyen. Namely, we use integration by parts and the difference quotient characterization of the Sobolev seminorm $\dot{H}^{\frac{\s-\d}{2}}$ when $\s>\d-2$ and the $L^2$ boundedness of Calder\'{o}n $\d$-commutators when $\s=\d-2$. The final estimate and its proof are presented in  \cref{ssec:L2regunloc}.

The preceding results are summarized in \cref{thm:comm2}, which is our omnibus $L^2$ regularity estimate for commutators, and more generally \cref{sec:L2reg}. 

This discussion has so far left aside the question of ``renormalization,'' which is that of dealing with the singularities of the Dirac masses in $h_N$, and which is arguably the most delicate part of the analysis. For the first-order estimates, the renormalization is handled as in \cite{Serfaty2020} and subsequent works via a charge smearing and potential truncation at a lengthscale crucially {\it depending on each point} $x_i$ and equal to (a quarter of) the minimal distance from $x_i$ to all other points $x_j$. See \cref{ssec:MEts} for the charge smearing/potential truncation and \cref{ssec:FOren} for its implementation to prove the first-order estimate \eqref{main1} of \cref{thm:FI}. The renormalization in the $n\ge 2$ case is much more delicate, occupying the largest part of the paper and requiring several innovations.

First, replacing the Diracs in the left-hand side of \eqref{mainn} by their smearings $\delta_{x_i}^{(\eta_i)}$ will not allow us to apply our unrenormalized commutator estimate \cref{thm:comm2}, unless $\s=\d-2$, because $\mu\delta_{\R^\d\times\{0\}^\k} - \frac1N\sum_{i=1}^N \delta_{x_i}^{(\eta_i)}$ is not supported on $\R^\d\times \{0\}^\k$. To overcome this difficulty, we introduce a new smearing $\rho_{x_i}^{(\eta_i)}$ (see \eqref{eq:trhodef}), obtained simply by mollifying the Dirac in $\R^\d$, and estimate the electric energy difference of the two regularizations in terms of the modulated energy (see \cref{lem:barHN}). We can then directly apply our $L^2$ commutator bound to $\mu-\frac1N\sum_{i=1}^N\rho_{x_i}^{(\eta_i)}$. A novelty compared to previous work is that we will choose the mollifier to have vanishing moments up to high enough order, in order to obtain optimal error rates.

Second, to handle the error from this smearing, we establish an {\it $L^\infty$-based regularity theory for commutators}, which shows that the $L^\infty$ norm of ``horizontal'' derivatives $\nab_{x}^{\otimes m}\kappa^{(n),f}$, where $\nab_x \coloneqq (\p_1,\ldots,\p_\d)$, {\it restricted to arbitrarily small balls} can be estimated in terms of the $L^2$ norm of $\nab\ka^{(n),f}$ in a double ball (see \cref{th44}). Commutator estimates do not typically allow such localization, and therefore this result may be of independent interest. The proof relies on regularity theory for second-order divergence-form elliptic operators with $A_2$ weights \emph{\`{a} la} Fabes \emph{et al.} \cite{FKS1982} in the extended space $\R^{\d+\k}$ (see \cref{ssec:L2regLinf}), which, by the Caffarelli-Silvestre extension, may be viewed as a regularity theory for the fractional Laplacian. The proof also crucially uses the commutator structure in the form of equation \eqref{eq:introLkanf} for $L\ka^{(n),f}$, along with the recursions for $\ka^{(n),f}$, $\nu^{(n),f}$, and $\mu^{(n),f}$. When $\ell \asymp 1$, this local regularity theory may be avoided with cruder estimates, which have the benefit of requiring less regularity on the vector field $v$ (see \cref{prop:impHOFI}). However, when $\ell\ll 1$, these cruder estimates give errors that scale sub-optimally.  

Third, we improve truncation errors estimates from \cite{Serfaty2020}, obtaining the optimal dependence on $\mu$ (see \cref{ssec:MEloc}), and show new localizable {\it mesoscale interaction energy estimates}, which generalize those of  \cite{Serfaty2023} to the Riesz case (see \cref{ssec:MEmulti}).

\subsection{Applications}\label{ssec:introapp}
We now discuss applications of \cref{thm:mainunloc} related to mean-field and supercritical mean-field limits.

For the first application, we consider first-order dynamics of the form
\begin{equation}\label{eq:MFode}
\begin{cases}
\dot{x}_i^t = \displaystyle\frac{1}{N}\sum_{1\leq j\leq N : j\neq i} \M\nabla\g(x_i^t-x_j^t) - \mathsf{V}(x_i^t) \\
x_i^t\big|_{t=0} = x_i^\circ,
\end{cases}\qquad i\in[N].
\end{equation}
Here, $x_i^\circ \in\R^\d$ are the pairwise distinct initial positions, $\M$ is a $\d\times \d$ constant real matrix such that
\begin{equation}\label{eq:Mnd}
\M\xi\cdot\xi \leq 0 \qquad \forall \xi\in\R^\d,
\end{equation}
 which is a  repulsivity assumption, and $\mathsf{V}$ is an external field (e.g. $-\nabla V$ for some confining potential $V$). Choosing $\M =-\I$ yields \emph{gradient/dissipative} dynamics, while choosing $\M$ to be antisymmetric yields \emph{Hamiltonian/conservative} dynamics; mixed flows are also permitted. We assume that $\g$ is of the form \eqref{eq:gmod}. Note that we are restricting to the potential case $\s<\d$, as in the \emph{hypersingular} case $\s\geq \d$, $\g$ is no longer locally integrable and the dynamics of \eqref{eq:MFode} are of a different nature (e.g. see \cite{HSST2020}). One can check that our assumption \eqref{eq:Mnd} for $\M$ ensures that the energy for \eqref{eq:MFode} is nonincreasing, therefore if the particles are initially separated, they remain separated for all time, so that there is a unique, global strong solution to the system \eqref{eq:MFode}.

Motivations for considering systems of the form \eqref{eq:MFode} are reviewed in \cite{Serfaty2020}. We briefly mention that Riesz interactions are particularly interesting for applications to physics and approximation theory, as discussed for instance in \cite{DRAW2002, BHS2019}.

The mean-field limit refers to the convergence as $N \to \infty$ of the {\it empirical measure} 
\begin{equation}\label{eq:EMt}
\mu_N^t\coloneqq \frac1N \sum_{i=1}^N \delta_{x_i^t}
\end{equation}
associated to a solution $\ux_N^t \coloneqq (x_1^t, \dots, x_N^t)$ of the system \eqref{eq:MFode}. Assuming the points $\ux_N^\circ$, which themselves depend on $N$, are such that $\mu_N^\circ$ converges to a measure $\mu^\circ$ with sufficiently regular density, then a formal derivation leads one to expect that for $t>0$, $\mu_N^t$ converges to the solution of the Cauchy problem 
\begin{equation}\label{eq:MFlim}
\begin{cases}
\partial_t \mu= \div ((\mathsf{V}-\M \nabla \g*\mu) \mu)  & \\
\mu|_{t=0} = \mu^\circ, & \end{cases} \qquad (t,x)\in\R_+\times\R^\d.
\end{equation}
Convergence of the empirical measure is qualitatively equivalent to {\it propagation of molecular chaos} (see \cite{Golse2016ln,HM2014} and references therein).  This latter notion means that if $f_N^\circ(x_1, \dots, x_N)$ is the joint probability distribution of the initial positions $X_N^\circ$ and if $f_N^\circ$ is $\mu^\circ$-chaotic (i.e.~the marginals $f_{N;k}^\circ\rightharpoonup (\mu^\circ)^{\otimes k}$ as $N\rightarrow\infty$ for every fixed $k$), then the joint distribution $f_N^t$ of $\ux_N^t$ is $\mu^t$-chaotic.

There is a long history to mean-field limits for systems of the form \eqref{eq:MFode}, beginning with regular velocities (typically, Lipschitz) \cite{Dobrushin1979, Sznitman1991}. Singular interaction are far more challenging and only recently have breakthroughs been made to cover the full Riesz case $\s<\d$: the sub-Coulomb case $\s<\d-2$, \cite{Hauray2009,CCH2014}, the Coulomb/super-Coulomb case $\d-2\leq \s<\d$ \cite{Duerinckx2016,CFP2012,BO2019,Serfaty2020}, and the full case $0\leq \s<\d$ \cite{BJW2019edp, NRS2021}, thanks in large part to the modulated energy.  Further extensions of the modulated energy method have been obtained in the Riesz case in terms of the regularity assumptions on the limiting equation \cite{Rosenzweig2022, Rosenzweig2022a} and incorporating multiplicative \cite{Rosenzweig2020spv} and additive noise \cite{RS2021, hC2023}. We also mention the relative entropy method \cite{JW2018, GlBM2021, FW2023, RS2024ss, FW2024gc}, which allows treats $\dot{W}^{-1,\infty}$ forces, essentially corresponding to the $\s=0$ case, and its combination with the modulated energy in the form of the modulated free energy \cite{BJW2019crm, BJW2019edp, BJW2020, CdCRS2023, RS2024ss}, which is well suited to overdamped Langevin dynamics and can even handle logarithmically attractive interactions \cite{BJW2019crm,BJW2020,CdCRS2023a, CFGW2024}.  Finally, we mention an exciting recent direction focused on weighted estimates for hierarchies of marginals \cite{Lacker2023, LlF2023, BJS2022, Wang2024sharp} and cumulants \cite{hCR2023,BDJ2024}. Although these approaches are currently unable to treat the full Riesz range, they do have advantages in terms of working for both first- and second-order dynamics and can, in certain cases, achieve sharp rates for propagation of chaos (cf. \cref{rem:pc} below). For a proper discussion of contributions and the techniques behind them, we refer to the recent survey \cite{CD2021}, the lecture notes \cite{Golse2022ln,Golse2016ln, JW2017_survey, Jabin2014}, and the introductions of \cite{Serfaty2020,NRS2021}. 

As explained in \cref{ssec:introMot}, the optimal rate of convergence as $N\rightarrow\infty$ of the empirical measure $\mu_N^t$ to the solution $\mu^t$ of \eqref{eq:MFlim} in the distance $\Fr_N$ is $N^{\frac{\s}{\d}-1}$. To the best of our knowledge, achieving this optimal rate has been an open problem, with the only results \cite{SS2015, SS2015log,RS2016, PS2017} being for stationary solutions of \eqref{eq:MFlim} with $\M=-\mathbb{I}$, corresponding to minimizers of the associated Coulomb/Riesz energy. Our first application of \cref{thm:FI} establishes  mean-field convergence at the optimal rate $N^{\frac{\s}{\d}-1}$ for the full Coulomb/super-Coulomb case. As explained in \cref{sec:appMF}, the proof is a straightforward consequence of the sharp first-order estimate \eqref{main1}. {The aforementioned forthcoming joint work \cite{HcRS2024} with Hess-Childs establishes the corresponding $N^{\frac{\s}{\d}-1}$ rate for the remaining sub-Coulomb case.}

\begin{thm}\label{thm:mainMF}
Let $\g$ be of the form \eqref{eq:gmod} and $\|\nabla\mathsf{V}\|_{L^\infty}<\infty$. Assume the equation \eqref{eq:MFlim} admits a solution $\mu \in L^\infty([0,T],\P(\R^\d)\cap L^\infty(\R^\d))$, for some $T>0$, such that 
\begin{equation}\label{nmut}
\|\nabla^{\otimes2}\g\ast\mu^t\|_{L^\infty([0,T], L^\infty)}< \infty.
\end{equation}
If $\s=0$, then also assume that $\int_{\R^\d}\log(1+|x|)d\mu^t(x) < \infty$ for all $t\in [0,T]$.\footnote{This condition is to ensure that the convolution $\g\ast\mu$ is a well-defined function. The assumption is purely qualitative: none of our estimates will depend on it. Through a Gronwall argument, one checks that if $\mu^0$ satisfies this condition, then $\mu^t$ also satisfies this condition uniformly in $[0,T]$.} 

Let $\ux_N$ solve \eqref{eq:MFode}. Then there exists a constant  $C>0$, depending only on $|\M|$, $\d$, $\s$,  such that
\begin{multline}\label{distcoul}
\Fr_N(\ux_N^t,\mu^t) + \frac{\log (N\|\mu^t\|_{L^\infty}) }{2 N\d} \indic_{\s=0} + C\|\mu^t\|_{L^\infty}^{\frac{\s}{\d}}N^{\frac{\s}{\d}-1}\\
 \le e^{C\int_0^t \|\nab u^\tau\|_{L^\infty} d\tau}\Bigg(\Fr_N(\ux_N^0,\mu^0) + \sup_{\tau \in [0,t]}\Big( \frac{\log (N\|\mu^\tau\|_{L^\infty}) }{2 N\d} \indic_{\s=0} + C\|\mu^\tau\|_{L^\infty}^{\frac{\s}{\d}}N^{\frac{\s}{\d}-1}\Big)\Bigg),
\end{multline}
where $u^t \coloneqq -\M\nab\g\ast\mu^t + \mathsf{V}$.

In particular, if $\mu_N^0 \rightharpoonup \mu^0$ in the weak-* topology for measures and
\begin{equation}
\lim_{N\to \infty}\Fr_N(\ux_N^0, \mu^0) =0,
\end{equation}
then
\begin{equation}
\mu_N^t \rightharpoonup \mu^t \qquad \forall t\in [0,T].
\end{equation}
\end{thm}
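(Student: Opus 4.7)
The strategy is the modulated-energy method, and the novelty compared to previous implementations lies entirely in using the sharp first-order functional inequality \eqref{main1} (respectively \eqref{main2} in unlocalized form) of \cref{thm:FI}, which now carries the optimal additive error $N^{\frac{\s}{\d}-1}$. I would (i) differentiate $t\mapsto \Fr_N(\ux_N^t,\mu^t)$ along the coupled system \eqref{eq:MFode}--\eqref{eq:MFlim}, (ii) bound the result by the commutator integral \eqref{14} with transport field $v=u^t\coloneqq -\M\nab\g\ast\mu^t+\mathsf{V}$, after absorbing the remaining contributions using the repulsivity hypothesis \eqref{eq:Mnd}, (iii) apply \eqref{main2} with $\Om=\R^\d$ to this commutator integral, and (iv) close the argument by Gronwall.

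\textbf{Step 1: Time derivative.} Setting $w^t\coloneqq \M\nab\g\ast\mu^t-\mathsf{V}=-u^t$ for the transport velocity of \eqref{eq:MFlim}, and regularizing the singular interaction at the local nearest-neighbor scale as in \cite[Section 4]{Serfaty2020} and \cite[Section 3]{NRS2021}, a computation using \eqref{eq:MFode} together with the continuity equation for $\mu^t$ produces the inequality
\begin{equation*}
\frac{d}{dt}\Fr_N(\ux_N^t,\mu^t) \le \Bigl|\tfrac12\int_{(\R^\d)^2\setminus\triangle}(u^t(x)-u^t(y))\cdot\nab\g(x-y)\, d(\mu_N^t-\mu^t)^{\otimes 2}(x,y)\Bigr|+R^t,
\end{equation*}
where $R^t$ arises from the difference between the discrete particle force and the mean-field force and, after the Caffarelli--Silvestre extension in the super-Coulomb setting, takes the form of a multiple of $\int|z|^\ga\, \M\nab h_N^t\cdot\nab h_N^t$. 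This term vanishes identically if $\M$ is antisymmetric (Hamiltonian case) and can be dropped using the sign condition \eqref{eq:Mnd} in the dissipative case.

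\textbf{Step 2: Invoke \cref{thm:FI} and Gronwall.} The assumption \eqref{nmut} gives $\|\nab u^t\|_{L^\infty}\le |\M|\|\nab^{\otimes 2}\g\ast\mu^t\|_{L^\infty}+\|\nab\mathsf{V}\|_{L^\infty}<\infty$. Choosing $v=u^t$ and $\Om=\R^\d$ in \eqref{main2}, so that $\#I_\Om=N$ and $\la=(N\|\mu^t\|_{L^\infty})^{-1/\d}$, bounds the commutator integral in Step 1 by
\begin{equation*}
C\|\nab u^t\|_{L^\infty}\Bigl(\Fr_N(\ux_N^t,\mu^t)-\tfrac{\log\la}{2N}\indic_{\s=0}+C\|\mu^t\|_{L^\infty}^{\s/\d}N^{\frac{\s}{\d}-1}\Bigr).
\end{equation*}
Denoting the bracketed additive error by $E^t$ and combining with Step 1 yields $\frac{d}{dt}(\Fr_N+\sup_{\tau\le t}E^\tau)\le C\|\nab u^t\|_{L^\infty}(\Fr_N+\sup_{\tau\le t}E^\tau)$, and Gronwall delivers \eqref{distcoul}. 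The weak-$*$ convergence $\mu_N^t\rightharpoonup\mu^t$ then follows from the standard fact that $\Fr_N\to 0$ metrizes weak-$*$ convergence on equitight sequences (see e.g.~\cite{Serfaty2020,NRS2021}).

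\textbf{Main obstacle.} All of the substantive analytic work is upstream in \cref{thm:FI}; the Gronwall loop itself is essentially automatic. The only delicate point in the dynamical argument is the rigorous justification of Step 1, which requires the point-mass truncation machinery of \cref{ssec:MEloc} applied along solutions of \eqref{eq:MFode} and, crucially, a verification that the truncation errors themselves scale as $N^{\frac{\s}{\d}-1}$ per particle, so that they do not degrade the optimal rate obtained from Step 2. This sharp truncation error control is exactly what the refined modulated-energy and mesoscale interaction estimates of \cref{ssec:MEloc} and \cref{ssec:MEmulti} provide.
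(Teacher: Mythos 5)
Your proposal follows essentially the same route as the paper: the standard differential inequality for $\Fr_N(\ux_N^t,\mu^t)$ along the coupled dynamics (which the paper simply cites from \cite{Serfaty2020,RS2024ss} rather than re-deriving, the repulsivity term from \eqref{eq:Mnd} being absorbed there exactly as you describe), followed by the sharp first-order estimate \eqref{main2} with $\Om=\R^\d$ and $v=u^t$, and a Gr\"onwall argument, with nonnegativity of the resulting quantity ensured by \eqref{eq:pr2}. The only slightly off-target remark is your ``main obstacle'': the derivative inequality of Step 1 is an exact computation that does not itself generate truncation errors needing sharp control---all renormalization error enters solely through \cref{thm:FI}---but this does not affect the correctness of the argument.
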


\begin{remark}\label{rem:pc}
\cref{thm:mainunloc} implies propagation of chaos for the marginals of the system \eqref{eq:MFode} with initial data $\ux_N^\circ$ randomly chosen according to a $\mu^\circ$-chaotic law. For instance, see \cite[Remark 3.7]{Serfaty2020} or \cite[Remark 1.5]{RS2021} for details.  However, this ``global-to-local'' argument in general leads to suboptimal rate. See \cite{Lacker2023} for further discussion.
\end{remark}

\medskip

In the paper \cite{RSlake}, we give another application of \cref{thm:FI} to the convergence of the empirical measure $f_N^t \coloneqq \frac1N\sum_{i=1}^N \delta_{(x_i^t,v_i^t)}$ for the second-order/kinetic system
\begin{equation}\label{eq:NewODE}
\begin{cases}
\dot{x}_i^t = v_i^t \\
\dot{v}_i^t = -\ga v_i^t \displaystyle-\frac{1}{\vep^2 N} \sum_{1\leq j\leq N: j\neq i}\nabla\g(x_i^t-x_j^t) -\frac{1}{\vep^2}\nabla V(x_i^t)\\
(x_i^0,v_i^0)= (x_i^\circ,v_i^\circ)
\end{cases}
\qquad i \in [N],
\end{equation}
in the joint limit $\vep\rightarrow 0$ and $N\rightarrow\infty$. Here, $\g$ is a (super-)Coulomb Riesz interaction as in \eqref{eq:gmod}, $\ga\geq 0$ is the friction coefficient, and $\vep>0$ is a small parameter, possibly depending on $N$, which encodes physical information about the system. Because $\vep^2 N \ll N$, the force in \eqref{eq:NewODE} formally diverges as $N\rightarrow\infty$ and is an example of a \emph{supercritical mean-field limit}, following the terminology of \cite{HkI2021}. Alternatively, the joint limit $\vep\rightarrow 0$ and $N\rightarrow\infty$ may be interpreted as a combined mean-field and \emph{quasi-neutral limit}. 

Let $\mu_\infty$ be the equilibrium measure, i.e.~the probability measure that minimizes the energy
\begin{equation}
  \frac{1}{2}\int_{(\R^\d)^{ 2}}\g(x-y)d\mu^{\otimes 2}(x,y)+\int_{\R^\d}Vd\mu.
\end{equation}
Under suitable assumptions on the external potential $V$, we show using a modulated energy approach that if the initial empirical measure $f_N^\circ(x,v)$ converges to the probability measure $\mu_\infty(x)\delta_{u^\circ(x)}(v)$ in a suitable sense as $\vep\rightarrow 0$ and $N\rightarrow\infty$, then $f_N^t(x,v)$ converges to the probability measure $\mu_\infty(x)\delta_{u^t(x)}(v)$, where $u^t$ is a solution to the \emph{Lake equation}
\begin{equation}\label{eq:Lake}
\begin{cases}
\p_t u +\ga u+ u \cdot\nabla u = -\nabla p\\
\div(\mu_\infty u) = 0\\
u^0 = u^\circ.
\end{cases}
\end{equation}
The pressure $p$ is a Lagrange multiplier to enforce the incompressibility constraint $\div(\mu_\infty u)=0$. Note that if $\mu_\infty$ is constant and $\ga=0$, then \eqref{eq:Lake} is nothing but the incompressible Euler equation. Our result strongly generalizes previous work \cite{HkI2021,Rosenzweig2021ne} for the periodic setting $V=0$ (in which case, $\mu_\infty \equiv 1$) and is a microscopic counterpart to the proof of the quasineutral limit for Vlasov-Poisson with monokinetic data \cite{BCGM2015}.

The sharp first-order commutator estimate \eqref{main1} plays an essential role to show convergence provided $N^{\frac{\s}{\d}-1}/\vep^2 \rightarrow 0$ as $\vep\rightarrow 0$ and $N\rightarrow\infty$. This result is sharp, as when $N^{\frac{\s}{\d}-1}/\vep^2 \not\rightarrow 0$, we show that convergence in the modulated energy distance may fail.

\subsection{Organization of article}\label{ssec:introOrg}
In \cref{sec:ME}, we review the potential truncation/charge smearing renormalization procedure and the modulated energy, proving some Riesz generalizations of results previously known for the Coulomb case.

In \cref{sec:FO}, we give a self-contained treatment of first-order commutators, discussing the relationship between the commutator $\ka^{(1),f}$ and the stress-energy tensor and proving general $L^2$ estimates (\cref{ssec:FOcst}), then using these results and the renormalization procedure to prove the estimate \eqref{main1} of \cref{thm:FI} (\cref{ssec:FOren}).

In \cref{sec:comm}, we turn to $L^2$ and $L^\infty$-based estimates for the higher-order commutators $\ka^{(n),f}$. The main results of this section are \cref{thm:comm2} for the $L^2$ estimates and \cref{th44} for the $L^\infty$ estimates. We start by proving the identity \eqref{eq:introLkanf} for $L\ka^{(n),f}$ (\cref{ssec:L2regeqn}). Next, we turn to the proof of \cref{thm:comm2}, which is divided into general localized estimates corresponding to \eqref{commuordern} (\cref{ssec:L2regloc}), an improved (localized) estimate for $n=2$ (\cref{ssec:L2regimp}), and general unlocalized estimates (\cref{ssec:L2regunloc}). With the proof of \cref{thm:comm2} complete, we consider \cref{th44}. We first prove general $\dot{C}^{k,\alpha}$ estimates for the operator $L$ in $\R^{\d+\k}$ (\cref{ssec:L2regLinf}), and then we apply these estimates together with \cref{thm:comm2} to establish a regularity theory $\ka^{(n),f}$, thereby completing the proof of \cref{th44} (\cref{ssec:L2regind}).

In \cref{sec:FI}, we take up the proof of our main functional inequalities result \cref{thm:FI} by combining the regularity theory of \cref{sec:L2reg} with the charge smearing/potential truncation of \cref{sec:ME}. As the $n=1$ case was previously treated in \cref{sec:FO}, this section focuses on the $n\ge 2$ case. We first introduce the second regularization $\rho_{x_i}^{(\eta_i)}$ for the Dirac $\delta_{x_i}$ that is supported on $\R^\d\times\{0\}^\k$ and show that the electric energy associated to this regularization is controlled by the modulated energy (\cref{ssec:FIpre}).  We then turn to the main proof of the estimate \eqref{mainn}, which then completes the proof of \cref{thm:FI} (\cref{ssec:FImain}). Next, we give an alternative to \eqref{mainn} in the form of \cref{prop:impHOFI}, which has the same additive error when $\ell$ is macroscopic but weaker regularity demands on $v$ (\cref{ssec:FImac}). {Finally, we present the proof of \cref{thm:mainunloc}, i.e.~the analogue of \eqref{mainn} in the case $\ell=\infty$ (\cref{ssec:FIunloc}).}

In \cref{sec:appMF}, we close the main body of the paper with the aforementioned application of \cref{thm:FI} to the  optimal rate of convergence for first-order mean-field dynamics, proving \cref{thm:mainMF}.

Lastly, in \cref{appA}, we give an alternative proof of $L^2$ estimates for the commutators in the Coulomb case, based on the variation by transport argument described in \cref{ssec:intropf}.

\subsection{Acknowledgments}
The first author thanks the Institute for Computational and Experimental Research in Mathematics (ICERM) for its hospitality, where part of the research for this project was carried out during the Fall 2021 semester program ``Hamiltonian Methods in Dispersive and Wave Evolution Equations.'' He also thanks the Courant Institute of Mathematical Sciences at NYU for their hospitality during his visit in April 2024.

Both authors thank Elias Hess-Childs for his careful reading of an earlier version of the manuscript.

\subsection{Notation}\label{ssec:preN}
We close the introduction with the basic notation used throughout the article without further comment. We mostly follow the conventions of \cite{NRS2021,RS2021}.

Given nonnegative quantities $A$ and $B$, we write $A\lesssim B$ if there exists a constant $C>0$, independent of $A$ and $B$, such that $A\leq CB$. If $A \lesssim B$ and $B\lesssim A$, we write $A\sim B$. Throughout this paper, $C$ will be used to denote a generic constant which may change from line to line. Also $(\cdot)_+$ denotes the positive part of a number.

$\N$ denotes the natural numbers excluding zero, and $\N_0$ including zero. {For $N\in\N$, we abbreviate $[N]\coloneqq \{1,\ldots,N\}$.} $\R_+$ denotes the positive reals. Given $x\in\R^\d$ and $r>0$, $B(x,r)$ and $\p B(x,r)$ respectively denote the ball and sphere centered at $x$ of radius $r$. Given a function $f$, we denote its support by $\supp f$. The notation $\nabla^{\otimes k}f$ denotes the $k$-tensor field with components $(\p_{i_1}\cdots\p_{i_k}f)_{1\leq i_1,\ldots,i_k\leq \d}$.

$\P(\R^\d)$ denotes the space of Borel probability measures on $\R^\d$. If $\mu$ is absolutely continuous with respect to Lebesgue measure, we shall abuse notation by writing $\mu$ for both the measure and its density function. When the measure is clearly understood to be Lebesgue, we shall simply write $\int_{\R^{\d}}f$ or $\int_{\R^{\d+\k}}f$ instead of $\int_{\R^\d}fdx$ or $\int_{\R^{\d+\k}}fdxdz$.

{
$C^{k,\alpha}(\R^\d)$ denotes the inhomogeneous space of $k$-times differentiable functions on $\R^\d$ whose $k$-th derivative is $\alpha$-H\"{o}lder continuous, for $\al\in [0,1]$ (i.e. $\alpha=0$ is bounded and $\alpha=1$ is Lipschitz). As per convention, a $\dot{}$ superscript denotes the homogeneous space/seminorm.} $\Sc(\R^\d)$ and $\Sc'(\R^\d)$ denotes the space of Schwartz functions and the space of tempered distributions, respectively.

\section{The modulated energy}\label{sec:ME}
In this section, we review properties of the modulated energy functional introduced in \eqref{eq:modenergy}.

\subsection{Electric formulation}\label{ssec:MEts}
We begin by reviewing the procedures for truncating the interaction potential $\g$ and smearing the Dirac masses $\delta_{x_i}$, which was introduced by the second author in \cite{Serfaty2020}, building on \cite{PS2017}.

When $0<\s<\d$, then it is immediate from basic potential theory (e.g. see \cite[Remark 2.5]{RS2021}) that $\g\ast\mu$ is a bounded, continuous function (it is actually $C^{k,\alpha}$ for some $k\in\N_0$ and $\alpha>0$ depending on the value of $\s$) and therefore the modulated energy is well-defined. If $\s\le 0$, then we need to impose a suitable decay assumption on $\mu$ to compensate for the logarithmic growth of $\g$ at infinity. {The energy condition $\int_{(\R^\d)^2}\g(x-y)d|\mu|^{\otimes 2}(x,y)<\infty$ from the statement of \cref{thm:FI} suffices.}


We note that in the super-Coulomb case $\s>\d-2$, the potential $\g$ fails to be superharmonic. However, as observed in \cite{NRS2021}, superharmonicity is restored if we consider $\g(x)$ as the restriction $\G(x,0)$ of a potential defined in a larger space. To this end, we introduce the space extension $\R^{\d+\k}$, with $\k=0$ if $\s=\d-2$, and $\k=1$ otherwise, and 
\begin{equation}\label{eq:defgamma}
\gamma\coloneqq \s+ 2-\d-\k.
\end{equation}
It is straightforward to check that $\ga\in (-1,1)$.  With an abuse of notation, given $x_i\in \R^\d$, we also denote $x_i = (x_i,0) \in\R^{\d+\k}$, for $\k=1$. We naturally extend $\g$ into a function on $\R^{\d+\k}$ by setting
\begin{equation}
\G:\R^{\d+\k}\setminus\{0\} \rightarrow \R_+, \qquad \G(x,z) \coloneqq \g(|(x,z)|),
\end{equation}
where here and throughout this paper, we use the radial symmetry of $\g$ with an abuse of notation. We will also use the notation $L^2_\gamma$ for the weighted $L^2$ space $L^2(\R^{\d+\k},|z|^\ga dxdz)$, {that is
\begin{align}\label{eq:Lga2}
\|f\|_{L_\ga^2}^2 \coloneqq \int_{\R^{\d+\k}} |z|^\ga |f|^2 dxdz.
\end{align}}
For the record, we note that $|z|^\ga$ is an $A_2$ weight (e.g.~see \cite[Example 7.17]{Grafakos2014c}), a property extensively used later.

Note that any distribution $\mu$ on $\R^{\d}$ may be canonically extended to a distribution $\tilde \mu \coloneqq \mu \delta_{\R^\d\times \{0\}^\k}$ on $\R^{\d+\k}$, which acts on a test function $\phi:\R^{\d+\k}\rightarrow\R$ according to
\begin{equation}
\int_{\R^{\d+\k}}\phi \, d\tl\mu = \int_{\R^\d}\phi(x,0)d\mu(x).
\end{equation}
Equivalently, if $\iota:\R^\d\rightarrow\R^{\d+\k}$ is the trivial embedding $x\mapsto (x,0)$, then $\tl\mu = \iota\#\mu$. 

Following previous work, we truncate the potential $\g$ as follows. Given a parameter $\eta>0$, let 
\begin{equation}\label{eq:defGeta}
\g_\eta\coloneqq \min (\g, \g(\eta)), \qquad \f_\eta \coloneqq \g-\g_\eta,
\end{equation}
We note that $\f_\eta$ is supported in the ball $B(0, \eta)$ of $\R^\d$, respectively $\R^{\d+\k}$ and that
\begin{equation}
\f_\eta(x) = \begin{cases} \left(-\log\Big(\frac{|x|}{\eta}\Big)\right)_+, & {\s=0} \\ \eta^{-\s}\left(\frac{\eta^{\s}-|x|^\s}{|x|^\s}\right)_+, & {\s>0}. \end{cases}
\end{equation}
In particular, $\f_\eta\ge 0$.  
 We will frequently use the bounds
\begin{equation}\label{eq:intf}
\int_{\R^\d} |\nabla^{\otimes k}\f_\eta| \lesssim \eta^{\d-\s-k}, \qquad \s+k<\d.
\end{equation}

An important property of the Riesz case $\d-2\le \s<\d$ \cite{CS2007}, which motivates the dimension extension, is that that $\g$ is, up to normalization, a fundamental solution for the degenerate \emph{local} elliptic operator 
\begin{align}\label{defL}
L\coloneqq -\div\Big(\zg\nab(\cdot)\Big)
\end{align}
in $\R^{\d+\k}$, i.e.
\begin{equation}\label{eq:Gfs}
L\g = \cd\delta_{0} \qquad \text{in} \ \R^{\d+\k},
\end{equation}
with equality in the sense of distributions. {Note that this constant $\mathsf{c}_{\d,\s}$ does not coincide  in general with the constant $\mathsf{c}_{\d,\s}'$ in the fractional Laplacian identity $(-\Delta)^{\frac{\d-\s}{2}}\g = \mathsf{c}_{\d,\s}'\delta_0$.} This implies that {\it if $f$ is a distribution supported on} $\R^\d$, letting  $h^f= \G *f$, we have 
\begin{equation}
\label{relhf}
Lh^f= \cd\tilde f.
\end{equation}

The dimension extension and fundamental solution property leads us to define a smearing of the Dirac mass in $\R^{\d+\k}$ at scale $\eta$ by
\begin{equation}\label{defdeta}
\delta_0^{(\eta)}\coloneqq  \frac{1}{\cd}L\g_\eta, \qquad \delta_x^{(\eta)} \coloneqq \delta_0^{(\eta)}(\cdot-x), \quad x \in\R^\d.
\end{equation}
As shown in \cite[Section 1.3]{PS2017}, this distribution is, in fact, a probability measure supported on $\partial B(0, \eta) \subset \R^{\d+\k}$, and its density with respect to the uniform probability measure on the sphere $\partial B(0, \eta)$ is $\frac{1}{\cd} \frac{|z|^\gamma}{|(x,z)|^{\s+1}}$. 

Given a pairwise distinct configuration of points $\ux_N=(x_1, \dots, x_N)\in(\R^\d)^N$, we define the potential
\begin{equation}
h_N\coloneqq \G\ast\pa*{\frac{1}{N}\sum_{i=1}^N \delta_{x_i}- \mu}.
\end{equation}
Strictly speaking, we should really use the notation $h_N[\ux_N]$ to denote the dependence on the configuration of points, but we choose not to for the sake of lightness of notation. We now introduce a truncated version: given a vector of parameters $\vec{\eta}\in (\R_+)^N$,
we let 
\begin{equation}\label{eq:defHNeta}
h_{N,\vec{\eta}}\coloneqq \frac{1}{N}\sum_{i=1}^N \G_{\eta_i} (\cdot-x_i) - \G\ast \mu,
\end{equation}
regarded as a function on $\R^{\d+\k}$. We also let 
\begin{equation}\label{defhni}
h_N^i\coloneqq h_N-\frac1N\G(\cdot-x_i),
\end{equation}
and we observe in view of \eqref{eq:defGeta} that if the balls $\{B(x_i, \eta_i)\}_{i=1}^N$ are pairwise disjoint, then
\begin{equation}\label{eq:carHneta}
\nabla h_{N,\vec{\eta}}= \begin{cases} \nabla h_N, & {\text{outside} \ \cup_{i=1}^N B(x_i, \eta_i)}\\
\nabla h_N^i, & {\text{in}  \ B(x_i, \eta_i)}.\end{cases}
\end{equation}
From this property, it follows that
\begin{equation}\label{eq:energballs}
\int_{\R^{\d+\k} } |z|^\gamma |\nabla h_{N, \vec{\eta}}|^2 = \int_{\R^{\d+\k} \setminus \cup_i B(x_i, \eta_i)} |z|^\gamma
|\nabla h_{N} |^2 +\sum_{i=1}^N \int_{B(x_i, \eta_i)} |z|^\gamma |\nabla h_N^i|^2.
\end{equation}

Finally, for each $1\leq i\leq N$, we define the nearest-neighbor type distance, 
\begin{equation}\label{eq:defri}
\rs_i \coloneqq \frac{1}{4}\min\Big(\min_{1\leq i\neq j\leq N} |x_i-x_j|,  (N\|\mu\|_{L^\infty})^{-1/d}\Big).
\end{equation}
The balls $B(x_i, \rs_i)$  are tautologically disjoint. Note that in the definition of $\rs_i$, we could replace $\|\mu\|_{L^\infty}$ with any constant larger than it.

One of the improvements in the present paper compared to prior works is that we will carefully track the dependence of the estimates on the density bound $\|\mu\|_{L^\infty}$. Instead of the microscale $N^{-1/\d}$, we will see that a natural lengthscale is actually $(N\|\mu\|_{L^\infty})^{-1/\d}$, which takes into account the density of points.

We also define the localized  lengthscale
\begin{equation}\label{deflamb}
\lambda \coloneqq ( N \|\mu\|_{L^\infty(\Omega)})^{-1/\d},
\end{equation} 
which of course depends on $\Omega$, $\mu$, and $N$. {In the localized case, we replace $(N\|\mu\|_{L^\infty})^{-1/\d}$ by $\lambda$.}

\subsection{Monotonicity and local energy control}\label{ssec:MEloc}
We recall from the introduction the modulated energy \eqref{eq:modenergy}
and state the electric rewriting of this quantity.

\begin{prop}\label{proiden}
For any $\ux_N$ and any $\vec\eta$ as above, such that $\eta_i\le \rs_i$ for each $i$, we have
\begin{equation}\label{iden}
\Fr_N(\ux_N, \mu)= \frac{1}{2\cd}\Bigg(\int_{\R^{\d+\k}} |z|^\gamma |\nabla h_{N,\vec{\eta}} |^2 - \frac{\cd}{N^2} \sum_{i=1}^N \g(\eta_i)\Bigg)- \frac{1}{N} \sum_{i=1}^N \int_{\R^\d} \f_{\eta_i}(x-x_i) d \mu(x),
\end{equation}
where $\f_{\eta}$ is as in \eqref{eq:defGeta}.
\end{prop}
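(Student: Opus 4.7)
The plan is to compute $\int_{\R^{\d+\k}}|z|^\ga|\nab h_{N,\vec\eta}|^2$ via integration by parts and expand the result explicitly, comparing term-by-term with the expansion of the left-hand side of \eqref{iden} as obtained by writing out the definition \eqref{eq:modenergy}.

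First, I would establish the key distributional identity
\begin{equation}
Lh_{N,\vec\eta} = \cd\Big(\frac{1}{N}\sum_{i=1}^N \delta_{x_i}^{(\eta_i)} - \tl\mu\Big) \qquad \text{on } \R^{\d+\k},
\end{equation}
which follows directly from \eqref{defdeta} applied to each $\G_{\eta_i}(\cdot - x_i)$ together with \eqref{relhf} applied to the background potential $\G\ast\mu$. Using $h_{N,\vec\eta}$ as a test function (noting that $h_{N,\vec\eta}$ is only Lipschitz across the spheres $\p B(x_i,\eta_i)$, but this regularity suffices since these are $\vol$-null interfaces and both one-sided gradients are bounded), integration by parts against the weighted divergence $L=-\div(\zg \nab\cdot)$ yields
\begin{equation}
\int_{\R^{\d+\k}}\zg|\nab h_{N,\vec\eta}|^2 = \cd\int_{\R^{\d+\k}} h_{N,\vec\eta}\, d\Big(\tfrac{1}{N}\sum_{i}\delta_{x_i}^{(\eta_i)} - \tl\mu\Big).
\end{equation}

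Next, I would expand this pairing using the definition \eqref{eq:defHNeta} of $h_{N,\vec\eta}$, producing four types of terms. The essential ingredient to evaluate the cross-terms is the identity
\begin{equation}
\G\ast\delta_{x_0}^{(\eta)} = \G_\eta(\cdot - x_0) \qquad \text{on } \R^{\d+\k},
\end{equation}
which holds since both sides vanish at infinity and have $L$-image equal to $\cd\delta_{x_0}^{(\eta)}$ by \eqref{relhf} and \eqref{defdeta}. Combined with the hypothesis $\eta_i\le\rs_i$, which guarantees the balls $B(x_i,\eta_i)$ are pairwise disjoint in $\R^{\d+\k}$ and ensures $|x_i-x_j|>\eta_i+\eta_j$, one obtains
\begin{equation}
\int \G_{\eta_i}(x-x_i)\,d\delta_{x_j}^{(\eta_j)}(x) = \begin{cases} \g(\eta_i), & i=j, \\ \g(x_i-x_j), & i\neq j,\end{cases}
\end{equation}
using that $\G_{\eta_i}$ is constant $=\g(\eta_i)$ on $\p B(x_i,\eta_i)$ for the diagonal term and that $\G_{\eta_i}=\G$ outside $B(x_i,\eta_i)$ for the off-diagonal term. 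Similarly, the integral $\int(\G\ast\mu)\,d\delta_{x_j}^{(\eta_j)}$ reduces via Fubini and the identity above to $(\g_{\eta_j}\ast\mu)(x_j)$.

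Combining these computations, I would arrive at
\begin{multline}
\int_{\R^{\d+\k}}\zg|\nab h_{N,\vec\eta}|^2 = \frac{\cd}{N^2}\sum_i\g(\eta_i) + \frac{\cd}{N^2}\sum_{i\neq j}\g(x_i-x_j) \\
- \frac{2\cd}{N}\sum_i (\g\ast\mu)(x_i) + \frac{2\cd}{N}\sum_i (\f_{\eta_i}\ast\mu)(x_i) + \cd\int\g\,d\mu^{\otimes 2}.
\end{multline}
Subtracting $\frac{\cd}{N^2}\sum_i\g(\eta_i)$, dividing by $2\cd$, and recognizing the remaining three $\g$-terms as exactly $2\Fr_N(\ux_N,\mu)$ via \eqref{eq:modenergy} gives the claim after moving the $\frac{1}{N}\sum_i(\f_{\eta_i}\ast\mu)(x_i)$ term to the other side.

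The main obstacle is the careful justification of the integration by parts and the use of $\G\ast\delta_{x_0}^{(\eta)} = \G_\eta(\cdot-x_0)$; both are standard but subtle due to the singular weight $|z|^\ga$ and the distributional nature of $\delta_{x_0}^{(\eta)}$ as a surface measure on $\p B(x_0,\eta)$. Verifying that all cross terms collapse cleanly relies crucially on the disjointness condition $\eta_i\le\rs_i$ -- without it, off-diagonal integrals would produce $\G_{\eta_j}(x_i-x_j)$ rather than $\g(x_i-x_j)$, destroying the clean splitting. The rest of the argument is algebraic bookkeeping.
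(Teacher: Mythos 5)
Your computation is correct, and it takes a genuinely different (more self-contained) route than the paper. The paper does not redo the expansion at all: it cites \cite{PS2017,Serfaty2020} for the fact that $\Fr_N(\ux_N,\mu)$ equals the limit of the right-hand side of \eqref{iden} as all $\eta_i\to 0$, and then invokes the equality case of the monotonicity statement, \cref{lem:monoto} (the right-hand side of \eqref{iden} is unchanged when the $\eta_i$ are decreased, as long as $\eta_i\le\rs_i$ so that the balls stay disjoint), to conclude that the right-hand side is constant in $\vec\eta$ and hence equal to its limit. You instead prove the identity directly for fixed $\vec\eta$ with $\eta_i\le\rs_i$: integrate by parts to write $\int\zg|\nabla h_{N,\vec\eta}|^2=\cd\int h_{N,\vec\eta}\,d\big(\tfrac1N\sum_i\delta_{x_i}^{(\eta_i)}-\tl\mu\big)$, evaluate all pairings via the smearing identity $\G\ast\delta_{x_0}^{(\eta)}=\G_\eta(\cdot-x_0)$ (this is exactly the paper's \cref{lem:lemme}, whose proof is independent of the present proposition, so there is no circularity) together with the facts that $\delta_{x_i}^{(\eta_i)}$ is a probability measure on $\p B(x_i,\eta_i)$, that $\G_{\eta_i}\equiv\g(\eta_i)$ there, and that disjointness of the balls (guaranteed by $\eta_i\le\rs_i$, since $\eta_i+\eta_j\le\tfrac12|x_i-x_j|$) lets every off-diagonal truncated kernel be replaced by $\g$. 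Your bookkeeping of the five resulting terms, the splitting $\g_{\eta}=\g-\f_\eta$, and the final cancellation of the $\f_{\eta_i}\ast\mu$ terms all check out, and the needed hypotheses enter exactly where you say they do. What each approach buys: the paper's version is essentially free given \cref{lem:monoto} (which is needed later anyway) and offloads the computation to the references, whereas yours reproduces that computation in-house and avoids both the $\eta_i\to0$ limiting argument and the monotonicity lemma. The only points to tighten in a written-up version are the ones you flag: justifying the integration by parts in $L^2_\ga$ (decay of $h_{N,\vec\eta}$ at infinity uses that the total charge vanishes; local integrability uses $\ga>-1$ and the Lipschitz character of the truncated potentials), and, when $\s\le 0$, invoking the standing integrability assumption on $\mu$ so that the individual terms $\g\ast\mu(x_i)$ and $\int\g\,d\mu^{\otimes2}$ in your expansion are finite before you regroup them into $2\Fr_N(\ux_N,\mu)$.
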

In \cite{PS2017} or \cite{Serfaty2020}, it is proven (by computations  based on \eqref{eq:defHNeta} and integrations by parts)  that the left-hand side is equal to the limit of the  right-hand side as all the $\eta_i$'s tend to $0$. \cref{lem:monoto} below (the equality case) then implies that the right-hand side is constant in the parameters $\eta_i$, provided that $\eta_i\le \rs_i$ for each $i$. 
We now state the monotonicity result from \cite[Lemma 3.2]{Serfaty2023}, generalized to the Riesz case. This result shows that the energy decreases when increasing the truncation radii. However, when working in a sudomain $\Omega$ of the whole space, this is only true when the truncation radii are not changed for balls that intersect the boundary of $\Omega$. {This explains the reason why control is  only obtained for points well inside $\Omega$.
\begin{lemma}\label{lem:monoto}
Assume that $\alpha_i \le \eta_i$ for each $1\leq i\leq N$. Then 
\begin{multline}\label{eq:premono0}
\int_{\R^{\d+\k}} \zg |\nabla h_{N,\vec{\eta}}|^2 - \frac{\cd}{N^2} \sum_{i=1}^N \g(\eta_i) -\frac{2\cd}{N}  \sum_{i=1}^N \int_{\R^\d} \f_{\eta_i}(x-x_i)d\mu\\- \Bigg( \int_{\R^{\d+\k}} \zg |\nabla h_{N,\vec{\alpha}}|^2 - \frac{\cd}{N^2} \sum_{i=1}^{N} \g(\alpha_i) -\frac{2\cd}{N} \sum_{i=1}^N \int_{\R^\d} \f_{\alpha_i}(x-x_i)d\mu(x)\Bigg) \le 0,
\end{multline}
with equality if $\eta_i \le \rs_i$ for each $i$. 

For $\Omega \subset \R^\d $, let us denote
\begin{equation}\label{eq:defFa}
\mathcal F^{\vec{\alpha}}\coloneqq\frac1{2\cd}\Bigg(\int_{\Omega\times \R^\k} \zg |\nabla h_{N,\vec{\alpha}}|^2  -  \frac{\cd}{N^2} \sum_{i\in I_{\Omega}}  \g(\alpha_i)-\frac{2\cd}{N} \sum_{i\in I_{\Omega}}\int_{\R^\d} \f_{\alpha_i}(x-x_i)d\mu(x)\Bigg).
\end{equation} 
Let $\vec{\eta}$, $\vec{\alpha}$, $\vec{\zeta}$  be such that $\zeta_i\le \eta_i\le \alpha_i$ for all $i$, with  $\alpha_i=\eta_i$ if  $\dist(x_i, \p\Omega) \le\alpha_i $ and $\zeta_i=\eta_i$ if $\eta_i>\rs_i$ or if $\dist(x_i, \p \Omega)\le \eta_i$,  then
 we have 
 \begin{itemize}
\item
\begin{equation}\label{eq:supplem0}  \mathcal F^{\vec{\eta}} -  \mathcal F^{\vec{\alpha}}\ge
\frac1{2N^2}\sum_{\substack{ i, j\in I_\Omega, i\neq j \\ \dist(x_i, \partial \Omega) \ge \alpha_i }} 
\left(\g_{\zeta_i}(|x_i-x_j|+\zeta_j) - \g(\alpha_i)\right)_+ .
\end{equation}
\item if $\alpha \ge \frac14\lambda$, 
\begin{equation}\label{eq:controlmic3}
\begin{cases}
\displaystyle \frac{1}{N^2}\sum_{\substack{ i\neq j : x_i , x_j \in \Omega,  \dist(x_i, \p\Omega) \ge 4\alpha,\\ \alpha \le  |x_i-x_j|\le 2 \alpha}}\g(\alpha) \le C \pa*{\mathcal F^{\vec{\zeta}} - \mathcal F^{\vec{\eta}}}, & \text{if} \  \s>0\\
\displaystyle\frac{1}{N^2} \sum_{\substack{ i\neq j : x_i , x_j \in \Omega, \dist(x_i, \p\Omega) \ge 4\alpha ,\\ \alpha \le  |x_i-x_j|\le 2 \alpha}} 1  \le C \pa*{\mathcal F^{\vec{\zeta}} - \mathcal F^{ \vec{\eta}}}, & \text{if}\  \s=0,\end{cases}
\end{equation}
where  $\vec{\eta}=(\eta_1,\ldots,\eta_N)$ and $\vec{\zeta} = (\zeta_1,\ldots,\zeta_N)$ are chosen according to
\begin{equation}\label{228}
\zeta_i = \begin{cases} \al, & {\dist (x_i,\p\Om) \geq \al} \\ \rs_i, & {\text{otherwise}} \end{cases} \qquad \eta_i = \begin{cases} 4\al, & {\dist(x_i,\p\Om)\geq 4\al} \\ \rs_i, & {\text{otherwise}}, \end{cases}
\end{equation}
and $C>0$ is a constant depending only on $\d,\s$.
\end{itemize}
\end{lemma}
\begin{proof}
The relation \eqref{eq:premono0} is proven for instance in \cite[Proof of Lemma B.1]{AS2021} in the Coulomb case. That proof originates in \cite{PS2017}, where the Riesz case is treated.
We follow here very closely \cite[Lemma B.1]{AS2021},  which can be copied with no changes other than the inclusion of the weight $\zg$ and the rescaling. We sketch the main steps.

For $\zeta\le \eta$, we observe  that $\g(\zeta)-\g(\eta)\ge  \G_\zeta-\G_\eta \ge 0 $ and $\G_{\zeta}- \G_{ \eta}$ vanishes outside $B(0,\eta)$. Writing $h_{N, \vec{\eta}}= h_{N, \vec{\zeta}} + \frac1N\sum_{i=1}^N (\G_{\eta_i}- \G_{\zeta_i} ) (\cdot-x_i)$, expanding and using integration by parts and \eqref{defdeta}, we obtain, as in \cite{AS2021}, that if $B(x_i, \eta_i)\subset \Omega$ for all the $i$'s such that $\eta_i\neq \zeta_i$, we have
\begin{equation}\label{above}
\mathcal F^{\vec{\zeta}}-\mathcal F^{\vec{\eta}}= \frac1{2N^2}   \sum_{i\neq j}  \int_{\Omega \times \R^\k}(  \G_{ \zeta_i}-\G_{\eta_i} )  (\cdot-x_i) d\left( \delta_{x_j}^{(\zeta_j)} + \delta_{x_j}^{(\eta_j)} \right).
\end{equation}
Since $(\G_{\zeta_i}-\G_{\eta_i}) (\cdot -x_i)$ is nonnegative and supported only in $B(x_i, \eta_i)$, we find that $\mathcal F^{\vec{\zeta}}-\mathcal F^{\vec{\eta}} \ge 0$ with equality if the $B(x_i, \eta_i)$ are disjoint, which proves \eqref{eq:premono0} and the sentence that follows. 
Moreover,  the couples $i\neq j$ that contribute to the sum are those for which $\zeta_i\neq \eta_i$ (hence there is no contribution for points that do not satisfy $B(x_i, \eta_i) \subset \Omega$)  and $B(x_j,\eta_j)$ intersects $B(x_i, \eta_i)$.

In particular, if $\eta_i\le \rs_i$ for $i$ such that $\eta_i\neq \zeta_i$, then the growing balls are disjoint, and 
we find that the right-hand side of \eqref{above} vanishes hence 
\begin{equation}\label{sft0}
\mathcal F^{\vec{\eta}}= \mathcal F^{\vec{\zeta}} .\end{equation}

Applying  now \eqref{above} to $\vec{\alpha}$ and $\vec{\zeta}$ such that $\zeta_i\le \alpha_i$ with $\zeta_i=\alpha_i$ if  $B(x_i,\alpha_i)  \not\subset \Omega$ and
using the monotonicity of $\g$ and definitions of $\g_{\zeta_i}, \g_{\eta_i}$, we find from \eqref{above} that
 \begin{equation}\label{4.5.9}
\mathcal F^{\vec{\zeta}}- \mathcal F^{\vec{\alpha}} \ge 
\frac1{2N^2} \sum_{\substack{i,j\in I_\Omega :   i\neq j \\ { B(x_i, \alpha_i) \subset \Omega } } } \left(  \g_{\zeta_i}(|x_i-x_j|+\zeta_j)-\g(\alpha_i)\right)_+ .
\end{equation}
Combining \eqref{sft0} and \eqref{4.5.9}, \eqref{eq:supplem0} follows. 

Finally, \eqref{eq:controlmic3} is obtained by applying \eqref{above} to $\vec{\alpha}= \vec{\zeta}$ with the choice \eqref{228} and   using that $\g(3x) = 3^{-\s}\g(x)$ if $\s>0$ and $\g(3x) = \g(x)+\g(3)$ if $\s=0$.
\end{proof}

Next, we state a Riesz-case generalization and improvement of \cite[Corollary 3.4]{Serfaty2023}, which gives a control of the minimal distance interactions.  The improvement is in the $\|\mu\|_{L^\infty}$ dependence and the removal of the nonnegativity assumption on $\mu$.

\begin{prop}\label{prop:MElb}
Assume $\s\in [(\d-2)_+, \d)$.
Let $\mu \in L^1(\R^\d)\cap L^\infty(\R^\d)$ with $\int_{\R^\d}\mu=1$, and let  $\ux_N \in (\R^\d)^N$ be a pairwise distinct configuration.
Let $\Omega \subset \R^\d$ and denote $\hat \Omega \coloneqq \{x\in \R^\d : \dist(x, \Omega) \le   \frac14 \lambda\}$.

For any $\vec{\eta}$ satisfying $ \frac{1}{2} \rs_i \le \eta_i \le \rs_i$ for every $1\leq i\leq N$, it holds that 
\begin{multline}\label{eq:13}
 \frac{1}{\cd}\int_{\Omega \times [-\la, \la]^\k}  \zg|\nab h_{N,\vec{\eta}}|^2 +\left(\frac{\#I_\Omega \log( N\|\mu\|_{L^\infty(\Omega)}) }{N^2\d}\right) \indic_{\s=0} +C \#I_\Omega  N^{-2+\frac{\s}{\d}}\|\mu\|_{L^\infty(\hat{\Omega}) }  \|\mu\|_{L^\infty(\Omega)}^{-1+\frac{\s}{\d}}\\
 \ge 
\begin{cases} 
\displaystyle \frac1{ CN^2}\sum_{i: x_i\in \Omega, \dist(x_i, \p \Omega) \ge \frac14\lambda}\g(\eta_i) & \text{if} \ \s\neq 0\\
\displaystyle \frac{1}{N^2}\sum_{i: x_i\in  \Omega , \dist(x_i, \p \Omega) \ge \frac14\lambda}\g(\eta_i /\lambda ) & \text{if} \ \s=0,\end{cases}
\end{multline}
where $C>0$ depends only on $\d$ and $\s$.

If $\Omega=\R^\d$, then  we also have that 
\begin{equation}\label{eq:pr1}
2 \left(\Fr_N(\ux_N, \mu)+ \frac{ \log (N \|\mu\|_{L^\infty})  }{{2}N \d}  
\indic_{\s=0}\right) + C \|\mu\|_{L^\infty}^{\frac{\s}{\d}} N^{-1+\frac{\s}{\d}} \ge \begin{cases} 
\displaystyle \frac1{ CN^2}\sum_{i=1}^N\g(\eta_i) & \text{if} \ \s\neq 0\\
\displaystyle \frac1{N^2}\sum_{i=1}^N\g(\eta_i /\lambda ) & \text{if} \ \s=0,\end{cases}
\end{equation}
and
\begin{equation}\label{eq:pr2}
\int_{\R^{\d+\k}} |z|^\gamma |\nabla h_{N,\vec{\eta}}|^2 \leq C\pa*{\Fr_N(\ux_N, \mu) +\frac{\log (N\|\mu\|_{L^\infty}) }{2 N\d} \indic_{\s=0}} + C \|\mu\|_{L^\infty}^{\frac{\s}{\d}} N^{-1+\frac{\s}{\d}} .
\end{equation}
\end{prop}
\begin{proof}
First, we remark that we may rewrite $\Fc^{\vec{\alpha}}-\Fc^{\vec{\eta}}$ by restricting integrals to $\Omega \times [-\la,\la]^\k$. Indeed, if  $\alpha_i, \eta_i\le \lambda$, then  $\nab h_{N,\vec{\alpha}}$ and $\nab h_{N,\vec{\eta}}$ by definition coincide outside $\R^\d\times [-\la,\la]^\k$. Hence,  
\begin{align}
\int_{\Omega\times \R^\k} \zg|\nab h_{N,\vec{\eta}}|^2 - \int_{\Omega\times \R^\k} \zg|\nab h_{N,\vec{\alpha}}|^2 =\int_{\Omega\times [-\la,\la]^\k} \zg|\nab h_{N,\vec{\eta}}|^2 - \int_{\Omega\times [-\la,\la]^\k} \zg|\nab h_{N,\vec{\alpha}}|^2.
\end{align}
Let us now choose $\alpha_i=\eta_i$ if $\dist(x_i, \p \Omega)< \frac14\lambda$, $\alpha_i=\frac14\lambda$ otherwise, and $\zeta_i=\eta_i$. We   obtain from \eqref{eq:supplem0}, discarding a nonnegative term,  that 
\begin{multline}\label{236}\frac1{2\cd}\int_{\Omega \times [-\la,\la]^\k} \zg|\nab h_{N,\vec{\eta}}|^2 \\ \ge -\frac{1}{2N^2} \sum_{\substack{i:x_i\in\Omega\\ \alpha_i\neq \eta_i}} \g(\alpha_i)+\frac{1}{2N^2}\sum_{\substack{i,j: x_i, x_j\in\Omega, i\neq j\\ \dist(x_i, \p \Omega) \ge \alpha_i}} \left(\g_{\eta_i}(|x_i-x_j| +\eta_j) -\g(\alpha_i)\right)_+-\frac{C}{N}\|\mu\|_{L^\infty(\hat \Omega)}\sum_{i:x_i\in\Omega} \|\f_{\alpha_i}\|_{L^1}.
\end{multline}

Let $i$ be such that $\dist(x_i,\partial\Omega) \ge \frac\lambda4$. Let us first consider the situation where  $\min_{j\neq i}|x_i-x_j|>\lambda$.  In that case, by definition of $\rs_i$, we have $4\rs_i=\lambda>\frac14\lambda$, and
\begin{equation}
\forall j\neq i, \qquad \left(\g_{\rs_i}(|x_i-x_j|+\rs_j)-\g(\frac\lambda4)\right)_+ = 0>\g(4\rs_i)-\g(\frac\lambda4).
\end{equation}
Next consider the case $\min_{j\neq i} |x_i-x_j|\le \lambda$ and let $j$ achieve the minimum, so that $|x_i-x_j|\le 4\rs_i$.
We then also have $\rs_j\le \rs_i$.
 In all cases,  we can thus assert that there exists $j\neq i$ such that, by definition and monotonicity of $\g_{\rs_i}$,  and the fact that $\frac12 \rs_i\le \eta_i\le \rs_i$ and $\eta_j\le \rs_j$,
\begin{equation}
\left(\g_{\eta_i}(|x_i-x_j| +\eta_j) -\g(\alpha_i)\right)_+\ge\left(\g_{\rs_i}(|x_i-x_j|+\rs_j) - \g(\frac{\la}{4})\right)_+  \geq \g(5\rs_i)-\g(\frac\lambda4) \ge \g(10\eta_i) - \g(\frac\lambda4).
\end{equation}
Keeping only that $j$ in the sum, 
 it follows from \eqref{236}, in view of \eqref{eq:intf} applied to the factor $\|\f_{\alpha_i}\|_{L^1}$ from \eqref{236}, that if $\s\ne 0$,
\begin{multline}
 \frac1{2N^2}\sum_{\substack{i: x_i\in \Omega\\ \dist(x_i, \p\Omega) \ge \frac14\lambda}} \g( 10\eta_i)\le  
 \frac1{2\cd}\int_{\Omega \times [-\la,\la]^\k} \zg|\nab h_{N,\vec{\eta}}|^2 \\ +\frac{1}{2N^2} \sum_{\substack{i:x_i\in\Omega\\ \alpha_i\neq \eta_i}} \g(\alpha_i)
 + \frac{1}{2N^2} \sum_{\substack{i: x_i\in \Omega\\ \dist(x_i, \p\Omega) \ge \frac14\lambda}} \g(\frac\lambda4)
+ \frac{C}{N}\|\mu\|_{L^\infty(\hat\Omega)}\#I_\Omega  \lambda^{\d-\s}.
\end{multline}
Noting that $\al_i\ne \eta_i$ implies $\dist(x_i,\p\Omega)\le \la/4$ and $\al_i=\la/4$, 
in view of the definition of $\g $ and  $\lambda$ in \eqref{deflamb}, the result \eqref{eq:13} follows. If $\s=0$, then starting from \eqref{236} and proceeding by the same reasoning, we instead arrive at
\begin{multline}
 \frac1{2N^2}\sum_{\substack{i: x_i\in \Omega\\ \dist(x_i, \p\Omega) \ge \frac14\lambda}} \g( 40\eta_i/\la)\le  
 \frac1{2\cd}\int_{\Omega \times [-\la,\la]^\k} \zg|\nab h_{N,\vec{\eta}}|^2 \\
 + \frac{1}{2N^2} \sum_{\substack{i: x_i\in \Omega\\ \al_i\ne \eta_i}} \g(\al_i)
+ \frac{C}{N}\|\mu\|_{L^\infty(\hat\Omega)}\#I_\Omega  \lambda^{\d-\s},
\end{multline}
and again the result \eqref{eq:13} follows in view of the choice of $\alpha_i$. Note that we can absorb the numerical factors $\g(a)$, for $a<1$, into the factor $N^{\frac{\s}{\d}}\|\mu\|_{L^\infty(\hat\Omega)} \|\mu\|_{L^\infty(\Omega)}^{-1+\frac{\s}{\d}}$ (up to an absolute constant) because $\|\mu\|_{L^\infty(\hat\Omega)}\ge \|\mu\|_{L^\infty(\Omega)}$ and $N^{\frac{\s}{\d}}\|\mu\|_{L^\infty(\Omega)}^{\frac{\s}{\d}} = (1/\la)^{\frac{\s}{\d}} > 1$, by our assumption that $\la<1$. 

To prove \eqref{eq:pr1}, we apply 
\eqref{eq:supplem0} with $\zeta_i=0$, $\eta_i$ and $\alpha_i=\lambda$. Then $\mathcal{F}^{\vec{\eta}}= \Fr_N(\ux_N, \mu)$  in view of \eqref{iden}, and $\mathcal{F}^{\vec{\alpha}} \ge - \frac{1}{2N}\g(\lambda) - C \|\mu\|_{L^\infty} \lambda^{\d-\s}$ after discarding the nonnegative term  and using \eqref{eq:intf} and $\eta_i\le \rs_i\le  \lambda$.
We thus obtain that 
\begin{equation}\Fr_N(\ux_N, \mu) + \frac{\g(\lambda) }{2N}+ C \|\mu\|_{L^\infty} \lambda^{\d-\s}\ge \frac{1}{2N^2} \sum_{i=1}^N \sum_{j\neq i}\left(\g(x_i-x_j) -\g(\lambda)\right)_+ \ge\frac{1}{2N^2} \sum_{i=1}^N \g(4\rs_i)-\g(\lambda)  \end{equation}
Using that $\rs_i \le 2 \eta_i$, \eqref{eq:pr1} follows. The relation \eqref{eq:pr2} then follows from it and \eqref{iden}.
\end{proof} 
}

\subsection{Mesoscale interaction control}\label{ssec:MEmulti}
We now present an  application of the mesoscopic interaction energy control of \eqref{eq:supplem0} and \eqref{eq:controlmic3}, which allows, by combining the estimates obtained over dyadic scales, to control general inverse powers of the distances between the points. \cref{prop:multiscale} stated below, which is a generalization of a Coulomb-specific result from \cite[Proposition 3.5]{Serfaty2020}, is to be combined with \cref{cor:coro3} in order to estimate the interaction of microscopically close points.  This proposition will only be needed for the higher-order commutator estimates $n\ge 2$.

First, we have a microscale interaction control, following from the relation \eqref{eq:supplem0} applied with $(\zeta_i,\al_i,\eta_i) = (\rs_i,\rs_i,\rs_i)$ if   $\dist(x_i, \p \Omega)\le \lambda$, $(\zeta_i, \al_i,\eta_i) = (0,\la,\rs_i)$ otherwise
and bounding $\Fc^{\vec{\alpha}}$ from below similarly as to in \eqref{236}.

\begin{cor}[Microscale control]\label{cor:coro3}
Under the same assumptions as \cref{prop:MElb}, we have
\begin{equation}
\frac{1}{N^2} \sum_{\substack{i,j \in I_{\Omega}:  i\neq j,\\ \dist(x_i,\p\Omega) \ge { \lambda/4},\\
 |x_i-x_j|\le \lambda }}  \g(x_i-x_j) \le C{\int_{\Omega\times [-\la,\la]^\k}\zg |\nab h_{N,\vec\rs}|^2 +C \frac{\# I_\Omega}{N} \la^{\d-\s}  \|\mu\|_{L^\infty(\hat\Omega)},  \quad \text{if} \ \s>0}
\end{equation}
{
and
\begin{multline}
 \frac{1}{N^2} \sum_{\substack{i,j \in I_{\Omega}:  i\neq j,  \\\dist(x_i,\p\Omega) \ge  \lambda/4,\\  |x_i-x_j|\le \lambda  }}\g\left( |x_i-x_j|/\lambda \right) \le C\left(\int_{\Omega\times [-\la,\la]^\k}\zg |\nab h_{N,\vec\rs}|^2 + \left(\frac{\#I_\Omega \log( N\|\mu\|_{L^\infty(\Omega)}) }{N^2 \d}\right) \indic_{\s=0}  \right) \\
 +C \frac{\# I_\Omega}{N} \la^{\d-\s}  \|\mu\|_{L^\infty(\hat\Omega)}, \quad \text{if} \  \s=0.
\end{multline}
}
\end{cor}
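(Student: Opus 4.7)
The plan is to apply the monotonicity inequality \eqref{eq:supplem} from \cref{lem:monoto} with the non-uniform radii $\alpha_i$ prescribed in the hint, restrict the resulting summation to the target pairs, and then bound the right-hand side by combining a crude lower bound on $\Fc^{\vec{\alpha}}$ analogous to \eqref{236} with \cref{prop:MElb} to absorb the one awkward remainder. The admissibility of $\vec{\alpha}$ is immediate: when $\dist(x_i,\p\Omega) < 2\lambda$, by construction $\alpha_i = \rrc_i$, and when $\dist(x_i,\p\Omega) \ge 2\lambda$, we have $\alpha_i = 2\lambda \le \dist(x_i,\p\Omega)$, so the antecedent ``$\dist(x_i,\p\Omega) \le \alpha_i$'' of the hypothesis of \eqref{eq:supplem} is (up to a measure-zero boundary case) never triggered. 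Once \eqref{eq:supplem} is applied, I would restrict its left-hand summation to pairs $(i,j)$ with $\dist(x_i,\p\Omega) \ge 2\lambda$ and $|x_i - x_j| \le \lambda$, on which $\alpha_i = 2\lambda$. To convert the positive part $(\g(x_i-x_j) - \g(2\lambda))_+$ into the target quantity, I would use elementary properties of $\g$: for $\s > 0$, the fact that $|x_i-x_j| \le \lambda$ combined with $\g(\lambda) = 2^{\s}\g(2\lambda)$ gives $(\g(x_i-x_j) - \g(2\lambda))_+ \ge (1-2^{-\s})\g(x_i-x_j)$; for $\s = 0$, direct computation yields $\g(x_i-x_j) - \g(2\lambda) = \log 2 + \g(|x_i-x_j|/\lambda) \ge \g(|x_i-x_j|/\lambda) \ge 0$.

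For the right-hand side, I would bound $-\Fc^{\vec{\alpha}}$ by simply dropping the non-negative weighted electric energy from the definition of $\Fc^{\vec{\alpha}}$, giving
\begin{equation*}
-\Fc^{\vec{\alpha}} \le \frac{1}{2N^2}\sum_{i\in I_\Omega}\g(\alpha_i) + \frac{\|\mu\|_{L^\infty(\hat{\Omega})}}{N}\sum_{i\in I_\Omega}\|\f_{\alpha_i}\|_{L^1}.
\end{equation*}
Since $\alpha_i \le 2\lambda$ throughout, by \eqref{eq:intf} the $\f$-contribution is $\le C\#I_\Omega\|\mu\|_{L^\infty(\hat{\Omega})}\lambda^{\d-\s}/N$, exactly matching the announced error. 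The sum $\frac{1}{N^2}\sum_i \g(\alpha_i)$ splits into a boundary-away piece ($\alpha_i = 2\lambda$), whose contribution is at most $\#I_\Omega\g(2\lambda)/N^2$---comparable to the $\f$-term when $\s > 0$ and absorbable into the $-\#I_\Omega\log\lambda/(2N^2)$ term of the conclusion when $\s = 0$---and a boundary-adjacent piece ($\alpha_i = \rrc_i$), which is dominated by $\sum_{i\in I_\Omega}\g(\rrc_i)/N^2$. This last sum is the main obstacle, since $\rrc_i$ can be arbitrarily small so that $\g(\rrc_i)$ is a priori uncontrolled by elementary means; however, it is precisely the object controlled by \eqref{eq:11} of \cref{prop:MElb} in terms of $C(\Fr_N^\Omega(\ux_N,\mu) + \text{error})$ of exactly the form of our conclusion, so the difficulty is resolved not by any fresh estimate but by invoking \cref{prop:MElb}. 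Assembling the pieces yields the claim.
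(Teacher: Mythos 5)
Your proposal is correct and follows essentially the same route as the paper's (sketched) proof: apply \eqref{eq:supplem} with exactly those radii $\alpha_i$, bound $\Fc^{\vec\alpha}$ from below crudely as in \eqref{236} by discarding the electric energy, and control the leftover $\frac{1}{N^2}\sum\g(\rrc_i)$ terms via \eqref{eq:11} of \cref{prop:MElb}. The only cosmetic point is that for $\s=0$ one should write $\g(\rrc_i)=\g(4\rrc_i/\la)+\g(\la/4)$ before invoking \eqref{eq:11} (which controls $\sum\g(4\eta_i/\la)$ in that case), the extra $-\log\la$ and constant terms being absorbed exactly as you absorb $\g(2\la)$.
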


We next turn to the larger scales.
\begin{prop}[Mesoscale  control]\label{prop:multiscale}
Let $a\ge 0$ and $\ell \ge \lambda $. We have
\begin{multline}\label{eq:resmultiscale}
\frac1{N^2}\sum_{\substack{i,j \in I_{\Omega} : i\ne j,\\ \lambda \le |x_i-x_j|\le \ell,  \dist(x_i,\p \Omega) \ge4\ell}} \frac{1}{|x_i-x_j|^{\s+a}} \le C\lambda ^{-a}
{\int_{\Omega \times [-\ell,\ell]^\k}  \zg|\nab h_{N,\vec{\eta}}|^2 
+ C\frac{\#I_\Omega}{N} \|\mu\|_{L^\infty(\hat\Omega)}\la^{\d-\s-a}  } \\
 +C\frac{\#I_\Omega \|\mu\|_{L^\infty(\hat {\Omega})}}{N}\begin{cases}  \ell^{\d-\s-a}, & \text{if} \ \s+a\neq \d\\ \log (\ell/\lambda),  & \text{if} \ \s+a=\d, \end{cases}
\end{multline}
where $C>0$ depends only on $\d,\s,a$.
\end{prop}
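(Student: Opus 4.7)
The plan is to perform a dyadic decomposition in the scale of $|x_i-x_j|$ and apply the mesoscale interaction control \eqref{eq:controlmic3} at each scale. Write $K=\lceil\log_2(\ell/\lambda)\rceil$ and $\alpha_k=2^k\lambda$, so that $[\lambda,\ell]\subset\bigcup_{k=0}^{K}[\alpha_k,2\alpha_k)$. Inside each dyadic annulus, bound the weight by $|x_i-x_j|^{-(\s+a)}\le \alpha_k^{-(\s+a)}$ and then estimate the number of pairs it contains using \eqref{eq:controlmic3} of \cref{lem:monoto}, applied with $\alpha=\alpha_k$. The admissibility condition $\dist(x_i,\p\Om)\ge 4\alpha_k$ is automatic from our hypothesis $\dist(x_i,\p\Om)\ge 4\ell$ and $\alpha_k\le \ell$; the condition $\alpha_k\ge \lambda/4$ is also clear.

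In the case $\s>0$, \eqref{eq:controlmic3} rewrites as
\begin{equation*}
\frac1{N^2}\#\{(i,j)\in A_k\}\le C\s\,\alpha_k^{\s}\bigl(\mathcal F^{\vec{\eta}_k}-\mathcal F^{\vec{\zeta}_k}\bigr),
\end{equation*}
where $\vec{\eta}_k,\vec{\zeta}_k$ are as in \eqref{228} with $\al=\alpha_k$. Multiplying by $\alpha_k^{-(\s+a)}$ and summing in $k$ reduces matters to estimating
\begin{equation*}
\sum_{k=0}^{K}\alpha_k^{-a}\bigl(\mathcal F^{\vec{\eta}_k}-\mathcal F^{\vec{\zeta}_k}\bigr).
\end{equation*}
The $\s=0$ case is handled identically via the second inequality of \eqref{eq:controlmic3}, which directly controls the count.

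To bound each difference, I would use two complementary facts. First, the monotonicity part of \cref{lem:monoto} gives $\mathcal F^{\vec{\eta}_k}\le \Fr_N^\Om(\ux_N,\mu)-\frac{\#I_\Om\log\lambda}{2N^2}\indic_{\s=0}$ once one notes $\eta_{k,i}\ge \rrc_i$, and indeed the key observation $\vec{\zeta}_k=\vec{\eta}_{k+2}$ (the interior radii are both $2^{k+2}\lambda$ and the boundary radii both equal $\rrc_i$) produces a telescoping structure after splitting into even and odd $k$, yielding the main term $\lambda^{-a}\Fr_N^\Om$ for $a=0$. For $a>0$, one instead bounds each $\mathcal F^{\vec{\eta}_k}\le \Fr_N^\Om$ separately and estimates $-\mathcal F^{\vec{\zeta}_k}$ from above via its definition \eqref{eq:defFa}, discarding the nonnegative weighted Dirichlet term and using $\g(\zeta_{k,i})\lesssim \alpha_k^{-\s}$ (interior) or $\g(\rrc_i)$ (boundary), together with $\|\f_{\zeta_{k,i}}\|_{L^1}\lesssim \zeta_{k,i}^{\d-\s}$ from \eqref{eq:intf}. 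The boundary contributions from $\rrc_i$ are $k$-independent and absorbed once via \cref{prop:MElb}, producing the $\frac{\#I_\Om\|\mu\|_{L^\infty(\hat\Om)}\lambda^{\d-\s-a}}{N}$ error in \eqref{eq:resmultiscale}. The interior contributions produce a geometric sum $\sum_k\alpha_k^{-a}$ (which converges to $\lesssim\lambda^{-a}$ for $a>0$, giving the main factor in front of $\Fr_N^\Om$) and $\sum_k\alpha_k^{\d-\s-a}$, whose dominant behavior is $\ell^{\d-\s-a}$ when $\s+a<\d$, $\lambda^{\d-\s-a}$ when $\s+a>\d$, and $\log(\ell/\lambda)$ when $\s+a=\d$, matching precisely the three cases in the statement.

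The main obstacle is the bookkeeping of error terms: one must track separately the microscale contribution (from $k=0$ and from the boundary radii $\rrc_i$ which appear at every scale) from the mesoscale contribution (from the geometric sum over $k$), ensuring that the $\rrc_i$-terms are absorbed only once and that the trichotomy in $\d-\s-a$ is handled coherently. The choice $\alpha=\alpha_k$ in \eqref{eq:controlmic3} is what makes the dyadic telescoping work cleanly thanks to the identity $\vec{\zeta}_k=\vec{\eta}_{k+2}$; without this identification, the sum in $k$ would not collapse as $a\to 0^+$ and would introduce a spurious logarithmic loss in the main term.
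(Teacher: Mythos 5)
Your proposal is correct and follows essentially the same route as the paper: a dyadic decomposition in $|x_i-x_j|$, the interaction control \eqref{eq:controlmic3} at scale $\alpha=2^k\lambda$ together with the identification $\vec{\zeta}_k=\vec{\eta}_{k+2}$, the monotonicity bound $\Fc^{\vec{\eta}_k}\le \Fr_N^{\Omega}(\ux_N,\mu)$, and the crude lower bound on $\Fc^{\vec{\zeta}_k}$ via \eqref{eq:234} combined with \eqref{eq:11}, with the trichotomy in $\d-\s-a$ coming from the geometric sums exactly as in \eqref{eq:resmultiscale}. The only organizational difference is that the paper runs a single summation-by-parts argument with a general nonincreasing weight $f/\g$ (which degenerates to your telescoping when $a=0$ and to your termwise bound plus convergent geometric series when $a>0$, and which handles the possible negativity of $\Fr_N^\Omega$ through the negative part $(\Fc^{\vec{\eta}^k})_-$), so your case split on $a$ is just a less unified packaging of the same estimates.
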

\begin{proof}
For the sake of generality, let us  start from any function $f:\R_+\rightarrow \R_+$ such that $f/\g$ is $C^1$ and nonincreasing if $\s\neq 0$, respectively $f$ is $C^1$ and nonincreasing if $\s=0$. In the calculations presented below, $f/\g$ should be replaced by $f$ in the case $\s=0$.

Decomposing over dyadic scales $\le \ell$ and denoting 
\begin{equation}\label{eq:Kdyaddef}
K\coloneqq \left\lceil{\frac{\log (\ell /\lambda)}{\log 2}}\right\rceil,
\end{equation}
$\lceil{\cdot}\rceil$ being the usual ceiling function, we have 
\begin{align}
\sum_{\substack{i,j\in I_{\Omega}: i\ne j,\\ \lambda\le |x_i-x_j|\le \ell, \\ \dist(x_i,\p\Omega) \ge 4\ell}}  f(|x_i-x_j|) &\le \sum_{k=0}^{K-1} \sum_{\substack{i,j\in I_{\Omega}: i\ne j,\\ 2^k \lambda \le |x_i-x_j|\le 2^{k+1} \lambda ,\\  \dist(x_i, \p\Omega) \ge 4\ell}}f(|x_i-x_j|) \nn\\
&\leq \sum_{k=0}^{K-1} \sum_{\substack{i,j\in I_{\Omega}: i\ne j,\\ 2^k \lambda \le |x_i-x_j|\le 2^{k+1} \lambda \\ \dist (x_i, \p\Omega) \ge 4 \cdot 2^k \lambda  }  } \frac{f(|x_i-x_j|)}{\g(|x_i-x_j|{)}} \g(|x_i-x_j|) \nn\\
&\le \sum_{k=0}^{K-1} \frac{f(2^k\lambda)}{\g(2^k\lambda)} \sum_{\substack{i,j\in I_{\Omega}: i\ne j,\\  2^k \lambda \le |x_i-x_j|\le 2^{k+1} \lambda \\ \dist (x_i,\p\Omega) \ge 4 \cdot 2^k \lambda}} \g(2^k\ell),
\end{align}
where to obtain the second line, we use that $4\ell \geq 4\cdot 2^k\la$, for $k\le K-1$, and the third line follows from our assumption that $f/\g$ is nonincreasing and the fact that $\g(x)$ is nonincreasing (note this also implies $f$ is nonincreasing). Applying \eqref{eq:controlmic3} of \cref{lem:monoto} with $\al=2^k\la$ to the inner sum, we deduce 
\begin{equation}\label{eq:sbppre}
\frac{1}{N^2}\sum_{\substack{i,j\in I_{\Omega}: i\ne j, \lambda \le |x_i-x_j|\le \ell, \\ \dist(x_i,\p\Omega)\ge 4\ell}}   f(|x_i-x_j|) \le  C\sum_{k=0}^{K} \frac{f(2^k\lambda)}{\g(2^k\lambda)} \pa*{\Fc^{\vec{\eta}^k} - \Fc^{\vec{\eta}^{k+2}}}
\end{equation}
with for each $0\leq k\leq K$, $\vec{\eta}^k =(\eta_1^k,\ldots,\eta_N^k)$ and
\begin{equation}\label{eq:etaikdef}
\forall 1\leq i\leq N, \qquad \eta_i^k \coloneqq \begin{cases} 2^k\la, & {\dist(x_i,\p\Om)\geq 2^{k}\la} \\ {\rs_i}, & {\text{otherwise}}. \end{cases}
\end{equation}
{In view of the definition of  $\mathcal {F}^{\vec{\eta}}$, the choice \eqref{eq:etaikdef}, and the fact that $h_{N,\vec{\eta}^k}$ and $h_{N,\vec{\eta}^{k+2}}$ coincide when $|z|>2^{k+2}\lambda$, we may also replace $\Fc^{\vec{\eta}^k} - \Fc^{\vec{\eta}^{k+2}}$ by $\bar \Fc^{\vec{\eta}^k} - \bar \Fc^{\vec{\eta}^{k+2}}$ 
where
\begin{equation}
\label{eq:defbFeta}
\bar \Fc^{\vec{\eta}} \coloneqq \frac1{2\cd}\int_{\Omega\times [-\ell,\ell]^\k} \zg |\nabla h_{N,\vec{\rs}}|^2  -  \frac{1}{2N^2} \sum_{i\in I_{\Omega} : \dist (x_i, \p \Omega) \ge \lambda}  \g(\eta_i)-\frac{1}{N} \sum_{i\in I_{\Omega} : \dist(x_i, \p \Omega) \ge \lambda}\int_{\R^\d} \f_{\eta_i}(x-x_i)d\mu(x).
\end{equation}}
Using summation by parts, we  then find from \eqref{eq:sbppre} that
\begin{align}
\frac{1}{N^2}\sum_{\substack{i,j\in I_{\Omega}: i\ne j,  \lambda \le |x_i-x_j|\le \ell,\\ \dist(x_i,\p\Omega) \ge 4\ell}} f(|x_i-x_j|) &\le \sum_{k=0}^{K}\frac{f(2^k \lambda)}{\g(2^k \lambda)}   {\bar \Fc^{\vec{\eta}^k} } - \sum_{k=2}^{K+2} \frac{f(2^{k-2}\lambda)}{\g(2^{k-2}\lambda)}  {\bar \Fc^{\vec{\eta}^k}} \nn\\ 
&\le  \sum_{k=2}^{K}  \pa*{\frac{ f(2^k \lambda )}{\g(2^k \lambda ) }-  \frac{ f(2^{k-2} \lambda )}{\g(2^{k-2} \lambda) }} {\bar \Fc^{\vec{\eta}^k }} \nn\\ 
&\quad+ \frac{f(2\lambda)} {\g(2\lambda)} {\bar \Fc^{\vec{\eta}^1 }}+ \frac{f(\lambda)}{\g(\lambda )} {\bar \Fc^{\vec{\eta}^0}} -\frac{f(2\ell)}{\g(2\ell)}  {\bar \Fc^{\vec{\eta}^{K+2}}}- \frac{f(\ell)}{\g(\ell)} {\bar \Fc^{\vec{\eta}^{K+1}}}. \label{eq:msrhssub}
\end{align}
We next use that {$\bar{\Fc}^{\vec{\alpha}}$} is nonincreasing in each $\alpha_i$, as shown in \eqref{eq:premono0}, hence ${\bar \Fc^{\vec{\eta}^k}}$ is nonincreasing with respect to $k$. This monotonicity also allows us to bound from above each factor ${\bar \Fc^{\vec{\eta}^k}}$ by ${\bar \Fc^{\vec\rs}}$ and from below (by definition and by \eqref{eq:intf}) as follows: ${\bar \Fc^{\vec\eta^k}}\ge {\bar \Fc^{\vec\eta^{K+2}}}$ and
\begin{align}
{\bar \Fc^{{\vec{\eta}^k}} } &\ge -\frac{\cd}{2N^2} \sum_{i\in I_{\Omega} {: \dist(x_i, \p \Omega) \ge \lambda}} \g(\eta_i^{k}) -  \frac{C}{N} \|\mu\|_{L^\infty(\hat{\Omega}) }    \sum_{i\in I_{\Omega} { : \dist(x_i, \p \Omega) \ge \la}} ( \eta_i^{k} )^{\d-\s}. \label{eq:234}
\end{align}
{It follows that}
\begin{multline}\label{eq:msrem}
 \frac{f(2\lambda)} {\g(2\lambda)} {\bar \Fc^{\vec{\eta}^1 }}+ \frac{f(\lambda)}{\g(\lambda )} {\bar \Fc^{\vec{\eta}^0}} -\frac{f(2\ell)}{\g(2\ell)}  {\bar \Fc^{\vec{\eta}^{K+2}}}- \frac{f(\ell)}{\g(\ell)} {\bar \Fc^{\vec{\eta}^{K+1}}} \leq \pa*{\frac{f(\la)}{\g(\la)} + \frac{f(2\la)}{\g(2\la)}}{\bar \Fc^{\vec\rs}} \\
 + 2{\Big(\frac{f(\ell)}{\g(\ell)} + \frac{f(2\ell)}{\g(2\ell)}\Big)}\Big(\frac{\cd}{N^2}\sum_{i\in I_\Omega {: \dist(x_i, \p \Omega) \ge \lambda}} \g(\eta_i^{K+2}) +  \frac{C}{N} \|\mu\|_{L^\infty(\hat{\Omega}) }    \sum_{i\in I_\Omega {: \dist(x_i, \p \Omega) \ge \la}} ( \eta_i^{K+2} )^{\d-\s}\Big).
\end{multline}
By the fundamental theorem of calculus,
\begin{equation}
\frac{ f(2^k \lambda )}{\g(2^k \lambda ) }-  \frac{ f(2^{k-2} \lambda )}{\g(2^{k-2} \lambda) } = \int_{2^{k-2}\lambda}^{2^k \lambda}  \pa*{\frac{f}{\g}}'(t)\, dt.
\end{equation}
Note that $(f/\g)'\leq 0$ by our nonincreasing assumption for $(f/\g)$. Hence, 
\begin{multline}
\sum_{k=2}^{K}  \pa*{\frac{ f(2^k \lambda )}{\g(2^k \lambda ) }-  \frac{ f(2^{k-2} \lambda )}{\g(2^{k-2} \lambda) }} {\bar \Fc^{\vec{\eta}^k} } 
\le-\sum_{k=2}^K ({\bar \Fc^{\vec{\eta}^{k}}})_{-} \int_{2^{k-2}\lambda}^{2^k \lambda}  \pa*{\frac{f}{\g}}'(t)\, dt, \\
\le -\Big(\frac{\cd}{2N^2} \sum_{\substack{i\in I_{\Omega}\\ {\dist(x_i, \p \Omega) \ge \la} }} \g(\eta_i^{k}) + \frac{C}{N} \|\mu\|_{L^\infty(\hat{\Omega}) }    \sum_{\substack{i\in I_{\Omega}\\{\dist(x_i, \p \Omega) \ge \la}}} ( \eta_i^{k} )^{\d-\s}\Big)\sum_{k=2}^K \int_{2^{k-2}\lambda}^{2^k \lambda}  \pa*{\frac{f}{\g}}'(t),  \label{eq:mssumk}
\end{multline}
where $(\cdot)_-$ denotes the negative part $-\min (\cdot, 0)$ {and the final line is by \eqref{eq:234}}.  Inserting \eqref{eq:msrem} and \eqref{eq:mssumk} into the right-hand side of \eqref{eq:msrhssub}, it follows that
\begin{multline}
\frac{1}{N^2}\sum_{\substack{i,j \in I_\Omega : i\ne j,\\ \lambda \le |x_i-x_j|\le \ell ,\dist(x_i,\p\Omega ) \ge 4\ell} } f(|x_i-x_j|)  
\\
\le -\sum_{k=2}^K \Big(\frac{\cd}{2N^2} \sum_{i\in I_{\Omega}} \g(\eta_i^{k}) + \frac{C}{N} \|\mu\|_{L^\infty(\hat{\Omega}) }    \sum_{i\in I_{\Omega}} ( \eta_i^{k} )^{\d-\s}\Big)  \int_{2^{k-2}\lambda}^{2^k \lambda}  \pa*{\frac{f}{\g}}'(t)\, dt \\
+\pa*{\frac{f(\la)}{\g(\la)} + \frac{f(2\la)}{\g(2\la)}}{\bar \Fc^{\vec\rs}}+{\Big(\frac{f(\ell)}{\g(\ell)} + \frac{f(2\ell)}{\g(2\ell)}\Big)}\Big(\frac{\cd}{2N^2} \sum_{\substack{i \in I_\Omega\\ \dist(x_i, \p \Omega) \ge \la}} \g(\eta_i^{K+2}) +  \frac{C}{N} \|\mu\|_{L^\infty(\hat{\Omega}) }    \sum_{\substack{i \in I_\Omega\\ \dist(x_i, \p \Omega) \ge \la}} ( \eta_i^{K+2} )^{\d-\s}\Big).
 \end{multline}
 
From the definition \eqref{eq:etaikdef} of $\eta_i^k$ and \eqref{eq:13}, we see that
\begin{align}
&\frac{1}{N^2}\sum_{\substack{i\in I_\Omega\\ {\dist(x_i, \p \Omega) \ge \la}}} \g(\eta_i^{k}) +  \frac{1}{N} \|\mu\|_{L^\infty(\hat{\Omega}) }    \sum_{\substack{i\in I_\Omega \\{\dist(x_i, \p \Omega) \ge \la}}} ( \eta_i^{k} )^{\d-\s}  \nn\\
&\le \frac{C}{N^2}\sum_{\substack{i\in I_\Omega \\ \dist(x_i,\p\Omega)\ge 2^{k}\la} }{\g(2^k\la)} + \frac{1}{N^2}\sum_{\substack{i\in I_\Omega \\  {\lambda \le}\dist(x_i,\p\Omega) < 2^{k}\la} }\g({\rs_i}) \nn\\
&\ph +  \frac{\|\mu\|_{L^\infty(\hat{\Omega}) }}{N}   \sum_{\substack{i\in I_\Omega \\ \dist(x_i,\p\Omega)\ge 2^{k}\la} } {(2^k\la)^{\d-\s}} + \frac{C\|\mu\|_{L^\infty(\hat{\Omega}) }}{N} \sum_{\substack{i\in I_\Omega\\  {\la\le}\dist(x_i,\p\Omega)< 2^{k}\la} } ({\rs_i})^{\d-\s}  \nn\\
&\le C{\Big( \frac{1}{\cd}\int_{\Omega \times [-\la,\la]^\k}  \zg|\nab h_{N,\vec{\eta}}|^2 
+ C\frac{\#I_\Omega}{N} \|\mu\|_{L^\infty(\hat\Omega)}\la^{\d-\s}  \Big)} 
+ C\frac{\#I_\Omega}{N} \|\mu\|_{L^\infty(\hat\Omega)}{(2^k\la)^{\d-\s}} \nn\\
&\ph + C\frac{\#I_\Omega }{N^2}\|\mu\|_{L^\infty(\hat\Omega)}{\g(2^k\la)},\label{eq:234'}
\end{align}
where we have also used that ${\rs_i}\le \la$. {Evidently from the definition \eqref{eq:Kdyaddef} of $K$, the last two terms are $\le C N^{-1}\#I_\Omega\|\mu\|_{L^\infty(\hat\Omega)}\ell^{\d-\s}$ for $k=K+2$.}  If $\s\neq 0$, specializing to  $f/\g=|x|^{-a}$ with $a\ge 0$, we observe
\begin{equation}
\forall t \in [2^{k-2}\lambda,2^k\lambda], \qquad \pa*{\frac{f}{\g}}'(t) \geq -a (2^k\la)^{-a-1}.
\end{equation}
{
Further observing that
\begin{align}
\sum_{k=2}^K (2^k\la)^{\d-\s-a} \le C\la^{\d-\s-a}\begin{cases} \frac{(\ell/\la)^{\d-\s-a} - 1}{\d-\s-a}, & {\d\ne \s+a} \\ \log(\ell/\la), & {\d=\s+a} \end{cases}
\end{align}
and
\begin{align}
\sum_{k=2}^K (2^k\la)^{-\s-a} \le C\frac{(\la^{-\s-a}-\ell^{-\s-a})}{\s+a},
\end{align}
then combining with \eqref{eq:234'}, we arrive at \eqref{eq:resmultiscale} when  $\s\ne 0$.}

If $\s=0$, we replace the use of $f/\g$ by that of $f$, and obtain   the result in a similar way using this time the second case of  \eqref{eq:controlmic3}.

\end{proof}  

\section{The first-order result}\label{sec:FO}
 In this section, we consider the proof of the first-order estimates \eqref{main2} and \eqref{main1} of \cref{thm:mainunloc} and \cref{thm:FI}.

\subsection{Commutators and stress-energy tensor}\label{ssec:FOcst}

We start by describing the connection between commutators, as introduced in \cite{Rosenzweig2020spv}, and the stress-energy tensor, as introduced in \cite{Duerinckx2016,Serfaty2020}.

Given a Schwartz function $f\in\Sc(\R^{\d})$, define the Riesz potential over $\R^{\d+\k}$ by
\begin{equation}
h^f(x,z)\coloneqq \int_{\R^{\d}} \G((x,z)-y) df(y), \qquad \forall (x,z)\in\R^{\d+\k}.
\end{equation}
Given a Lipschitz vector field $v:\R^{\d}\rightarrow\R^{\d}$, let $\tv: \R^{\d+\k}\rightarrow\R^{\d+\k}$ be an extension such that $\tv(x,0) = (v(x),0)$ and $\tv^{\d+\k} = 0$ if $\k=1$. For integer $n\ge 0$, define the $n$-th order commutator over $\R^{\d+\k}$ by
\begin{equation}\label{eq:kandef}
\ka^{(n),f}(x,z) \coloneqq \int_{\R^{\d}}\nabla^{\otimes n}\G((x,z)-y) : (\tv(x)-\tv(y))^{\otimes n} df(y), \qquad \forall (x,z)\in\R^{\d+\k}.
\end{equation}
Such an extension $\tv$ always exists, since we have the trivial extension $\tv(x,z) \coloneqq (v(x),0)$, which has the same Lipschitz seminorm as $v$.

When assuming the trivial extension, we will abuse notation and drop the $\tilde{\cdot}$ superscript. {Another extension, more appropriate in the local case, is as follows.}

\begin{remark}\label{rem:vextell}
We may extend a vector field $v:\R^\d\rightarrow\R^\d$ to $\R^{\d+\k}$ by
\begin{align}\label{deftv}
\tl{v}(x,z) \coloneqq \chi(\frac{z}{\ell}) (v(x),0), \qquad \forall (x,z)\in\R^{\d+\k},
\end{align}
where $\chi $ is a smooth bump function satisfying 
\begin{align}
\chi(z) = 1 \quad \text{if} \ |z|\le \frac12, \qquad  \chi(z)=0 \quad \text{if} \ |z|\ge 1.
\end{align}
In contrast to the trivial extension seen above, this choice of $\tv$ is also localized in the $z$ coordinate. Moreover, if $\supp v$ is contained in an open ball $B$ of radius $\ell$ in $\R^\d$, then $\supp\tv$ is contained in an open ball $\tl B$ of radius $C\ell$ in $\R^{\d+\k}$.

Since $v$ vanishes at some point $x_0$ if $\ell<\infty$, the mean-value theorem implies that
\begin{align}
\|v\|_{L^\infty}\le C\ell \|\nab v\|_{L^\infty}.
\end{align}
More generally, for any $m\ge 0$,
\begin{align}\label{eq:nabmm+1v}
\|\nab^{\otimes m} v\|_{L^\infty} \le C \ell \|\nab^{\otimes m+1}v\|_{L^\infty}.
\end{align}
The same reasoning applies to any extension $\tv$ supported in $\tl{B}$, hence
\begin{equation}\label{bornetv}
\|\nab^{\otimes m}\tv\|_{L^\infty} \le C\ell \|\nab^{\otimes m+1}\nab\tv\|_{L^\infty}.
\end{equation}

By the product rule,  the extension \eqref{deftv} satisfies
\begin{align}\label{bornenabtv}
\|\nab^{\otimes m}\tv\|_{L^\infty} &\le C_m\|\nab^{\otimes m}v\|_{L^\infty}.
\end{align}
and
\begin{align}
\||z|^{-\ga}L\tv\|_{L^\infty}  &= \left\| \chi(\cdot/\ell) \p_{i}^2 v  + \ell^{-2}\chi''(\cdot/\ell)v + \ell \ga z|z|^{-2}\chi'(\cdot/\ell)v  \right\|_{L^\infty} \le C\|\nab^{\otimes 2} v\|_{L^\infty},
\end{align}
where we have used \eqref{eq:nabmm+1v} and that $\chi'(z/\ell)$ is supported in $1\le |z/\ell|\le 2$.

\end{remark}

For later use in justifying the integration by parts, we note that since $|\nab^{\otimes m}\g(x,z)| \lesssim |(x,z)|^{-\s-m}$, {it follows that if $\tv$ is a $C^m$ extension,
\begin{align}\label{eq:nabmka1dcay}
|\nab^{\otimes m}\ka^{(n),f}(x,z)| \lesssim |(x,z)|^{-\s-n},
\end{align}
and if the derivatives of $\tv$ up to order $m$ are rapidly decreasing, then
\begin{align}
|\nab^{\otimes m}\ka^{(n),f}(x,z)| \lesssim |(x,z)|^{-\s-m-n}.
\end{align}
Indeed, for any multi-index $\vec\al \in \N_0^{\d+\k}$ of order $|\vec\al|=m$, the Leibniz rule yields
\begin{align}
\p_{\vec\al}\ka^{(n),f} &= \sum_{\vec\be \le \vec\al} {\vec\al\choose \vec\be}\int_{\R^{\d}} \p_{\vec\be}\nab^{\otimes n}\g(\cdot-y) : \p_{\vec{\al}-\vec\be}(\tv(\cdot)-v(y))^{\otimes n} df(y) \nn\\
&=\sum_{\vec\be \le \vec\al} \sum_{\vec{\ga}_1+\cdots+\vec{\ga}_n = \vec{\al}-\vec{\be}} {\vec\al\choose \vec\be} {\vec{\be}\choose \vec{\ga}_1,\ldots,\vec{\ga}_n}\int_{\R^{\d}} \p_{\vec\be}\nab^{\otimes n}\g(\cdot-y) : \bigotimes_{i=1}^n \p_{\vec{\ga}_i}(\tv(\cdot)-v(y)) df(y).
\end{align}
If $|(x,z)-y| > \frac12|(x,z)|$, then the singularity of $\p_{\vec\be}\nab^{\otimes n}\g(\cdot-y)$ is irrelevant, and we may crudely bound
\begin{align}
&\int_{|(x,z)-y| > \frac12|(x,z)|} |\p_{\vec\be}\nab^{\otimes n}\g((x,z)-y) : \bigotimes_{i=1}^n \p_{\vec{\ga}_i}(\tv(x,z)-v(y))| d|f|(y) \nn\\
&\le C\prod_{i: \ga_i>0} \|\p_{\vec\ga_i}\tv\|_{L^\infty}\int_{|(x,z)-y| > \frac12|(x,z)|} |(x,z)-y|^{-\s-n-|\vec\be|}  \prod_{i : \ga_i =0} |\tv(x,z)-v(y)| d|f|(y) \nn\\
&\le C\|f\|_{L^1}|(x,z)|^{-\s-n-|\vec\be|}\prod_{i: \ga_i>0} \|\p_{\vec\ga_i}\tv\|_{L^\infty} \prod_{i: \ga_i=0} \|\tv\|_{L^\infty},
\end{align}
where the final line follows from $f\in L^1$. If $|\vec\be| <m$, then there exists an $i$ such that $|\vec\gamma_i|>0$. In which case, if the derivatives of $\tv$ are rapidly decreasing up to order $m$, then $|\p_{\vec\ga_i}\tv(x,z)| \lesssim |(x,z)|^{-(m-|\vec\be|)}$. Hence, the preceding right-hand side may be replaced by $C_{\tv}\|f\|_{L^1} |(x,z)|^{-\s-n-m}$. If $|(x,z)-y| \le \frac12|(x,z)|$, then $|y|\ge \frac12|(x,z)|$, and using the rapid decay of $f$, we see that
\begin{align}
&\int_{|(x,z)-y| \le \frac12|(x,z)|} |\p_{\vec\be}\nab^{\otimes n}\g(\cdot-y) : \bigotimes_{i=1}^n \p_{\vec{\ga}_i}(\tv(\cdot)-v(y))| d|f|(y) \nn\\
&\le C_f\prod_{i: \ga_i>0} \|\p_{\vec\ga_i}\tv\|_{L^\infty}\int_{|(x,z)-y| \le \frac12|(x,z)|}|(x,z)-y|^{-\s-n-|\vec\be|}\prod_{i : \ga_i =0 } \min\Big(\|\tv\|_{L^\infty}, \|\nab\tv\|_{L^\infty}|(x,z)-y|\Big) \jp{y}^{-r}dy  \nn\\
&\le C_{f,\tv}|(x,z)|^{-\s-n-m},
\end{align}
where we have used that $\#\{i : \ga_i=0\} \ge n-(m-|\vec\be|)$ and $r>0$ may be taken arbitrarily large.

We also note that by taking pointwise limits, we may reduce to the case where $\tv$ is smooth and compactly supported. Indeed, let $\rho_\epsilon \coloneqq \epsilon^{-\d}\rho(\cdot/\epsilon)$ be an approximation to the identity. If $\tv$ is Lipschitz, then $\|\rho_\ep\ast\tv -\tv\|_{L^\infty} \le C\epsilon \|\nab \tv\|_{L^\infty}$. Letting $\ka_\epsilon^{(n),f}$ denote the commutator with $\tv$ replaced by $\tv_\epsilon\coloneqq\rho_\epsilon\ast\tv$, we may split the domain of integration to estimate
\begin{align}
|\ka^{(n),f} - \ka_\epsilon^{(n),f}| &\le C\|\nab \tv\|_{L^\infty}^n\int_{|\cdot-y|\le r} |\cdot-y|^{-\s} d|f|(y)  \nn\\
&\ph+ \int_{|\cdot-y|>r} \Big|\nab^{\otimes n}\g(\cdot-y) : \Big((\tv(\cdot)-\tv(y))^{\otimes n} - (\tv_\epsilon(\cdot)-\tv_\epsilon(y))^{\otimes n}\Big)\Big| d|f|(y) \nn\\
&\le C\|\nab\tv\|_{L^\infty}^n \|f\|_{L^\infty} r^{\d-\s} + C\|\nab\tv\|_{L^\infty}\|\tv\|^{n-1}\|f\|_{L^1}r^{-\s-n} \epsilon
\end{align}
Choosing $r^{\d+n} = \epsilon$, the preceding right-hand side tends to zero as $\epsilon\rightarrow 0^+$. On the other hand, if $\chi$ is a smooth bump function in $\R^{\d+\k}$, then letting $\ka_{R}^{(n),f}$ denote the commutator with $\tv$ replaced by $\tv \chi(\cdot/R)$, the same decomposition yields (assuming $R$ is large enough)
\begin{align}
|\ka^{(n),f} - \ka_R^{(n),f}| &\le  C\|\nab\tv\|_{L^\infty}^n \|f\|_{L^\infty} r^{\d-\s} + C\|\tv\|_{L^\infty}^n\int_{\substack{|\cdot-y|>r \\ |y|\ge R}} |\cdot-y|^{-\s-n} d|f|(y) \nn\\
&\le C\|\nab\tv\|_{L^\infty}^n \|f\|_{L^\infty} r^{\d-\s} + C_f \|\tv\|_{L^\infty}^n r^{-\s-n} R^{-1},
\end{align}
where the final line follows from the rapid decay of $f$. Choosing $r^{\d+n} = R^{-1}$, we see that the preceding right-hand tends to zero as $R\rightarrow\infty$.}

For the following proposition, we recall the notation $L_\ga^2$ from \eqref{eq:Lga2}.

\begin{prop}\label{prop:comm}
Given a vector field $v:\R^{\d}\rightarrow\R^{\d}$, let $\tv$ be an extension to $\R^{\d+\k}$ as above. Assume that $\tv$ is smooth and compactly supported. There is a constant $C>0$ depending only on $\d,\s$, such that for any $f\in \Sc_0(\R^\d)$, the space of Schwartz functions with zero mean, it holds that
\begin{equation}\label{eq:comm}
\|\nabla \ka^{(1),f} \|_{L_\ga^2(\R^{\d+\k})} \leq C\|\nab\tv\|_{L^\infty} \|\nab h^f\|_{L_\ga^2(\supp\nab\tv)}.
\end{equation}
Consequently, for any $f,w\in\Sc_0(\R^{\d})$,
\begin{align}\label{eq:comm'}
\Big|\int_{(\R^{\d})^2}(v(x)-v(y))\cdot\nabla\G(x-y)df(x)dw(y)\Big| \le C\|\nab\tv\|_{L^\infty} \|\nab h^f\|_{L_\ga^2(\supp \nab\tv)}\|\nab h^w\|_{L_\ga^2(\supp \nab\tv)},
\end{align}
and restricting $\ka^{(1),f}$ to $\R^\d\times\{0\}^\k \simeq \R^\d$, we have
\begin{align}\label{eq:comm''}
\|\ka^{(1),f}{|_{z=0}}\|_{\dot{H}^{\frac{\d-\s}{2}}(\R^\d)} \le C\|\nab\tv\|_{L^\infty} \|\nab h^f\|_{L_\ga^2(\supp\nab\tv)}.
\end{align}
\end{prop}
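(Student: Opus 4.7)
My plan is to identify the equation satisfied by $\kappa^{(1),f}$ as a distribution on $\R^{\d+\k}$, test it against $\kappa^{(1),f}$ in $L^2_\gamma$, and perform a chain of integrations by parts that eliminates all second-derivative contributions of $v$. This will establish \eqref{eq:comm}; the estimates \eqref{eq:comm'} and \eqref{eq:comm''} will then follow by standard duality and trace arguments.

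The starting point is the representation
\[\kappa^{(1),f}(x,z) = v(x)\cdot \nabla_x h^f(x,z) - h^{\div(vf)}(x,z),\]
valid because the horizontal extension satisfies $\tv^{\d+\k}=0$, where $h^{\div(vf)}\coloneqq \G\ast\widetilde{\div(vf)}$. Applying the Leibniz rule $L(pq) = pLq + qLp - 2|z|^\gamma\nabla p\cdot\nabla q$ together with $Lh^f=\cd\tilde f$, $Lh^{\div(vf)} = \cd\widetilde{\div(vf)}$, $Lv^j = -|z|^\gamma\Delta_x v^j$, and $[L,\p_j]=0$ for $j\in[\d]$, I expect to obtain
\[L\kappa^{(1),f} = -\cd f\,\div v\,\delta_{\R^\d\times\{0\}^\k} - |z|^\gamma \Delta_x v\cdot\nabla h^f - 2|z|^\gamma \nabla v:\nabla^{\otimes 2}h^f\]
as distributions on $\R^{\d+\k}$, which is the $n=1$ case of \eqref{eq:introLkanf}.

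Next, I will test this equation against $\kappa^{(1),f}$, giving $\int|z|^\gamma|\nabla\kappa^{(1),f}|^2 = \int\kappa^{(1),f}L\kappa^{(1),f}$. The boundary term on the right will be rewritten using $\cd f\delta_{\R^\d\times\{0\}^\k} = Lh^f$ and IBP to yield $-\int|z|^\gamma\div v\,\nabla\kappa^{(1),f}\cdot\nabla h^f - \int|z|^\gamma \kappa^{(1),f}\nabla_x(\div v)\cdot\nabla_x h^f$. For the two bulk terms, I IBP once to move a single derivative off $\nabla^{\otimes 2}h^f$ (exploiting $\p_j|z|^\gamma = 0$ for $j\in[\d]$); this produces $\nabla\kappa^{(1),f}$-gradient pieces together with more $\kappa^{(1),f}\nabla^{\otimes 2}v\cdot\nabla h^f$ terms. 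After careful bookkeeping of the prefactors $1,-1,-2$, these $\nabla^{\otimes 2}v$ terms should exactly cancel those from the boundary, leaving
\[\int|z|^\gamma|\nabla\kappa^{(1),f}|^2 = \int|z|^\gamma\nabla\kappa^{(1),f}:\bigl(M(v)\nabla h^f\bigr),\qquad M_{ij}(v)\coloneqq \p_iv^j+\p_jv^i - \delta_{ij}\div v,\]
with $M(v)$ pointwise bounded by $C\|\nabla v\|_{L^\infty}$ and supported on $\supp\nabla v$. Cauchy--Schwarz and division by $\|\nabla\kappa^{(1),f}\|_{L^2_\gamma}$ will then yield \eqref{eq:comm}.

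To conclude \eqref{eq:comm'}, I will rewrite $\cd\int\kappa^{(1),f}(x,0)\,dw(x) = \int|z|^\gamma\nabla\kappa^{(1),f}\cdot\nabla h^w$ via $Lh^w=\cd\tilde w$ and IBP, then apply Cauchy--Schwarz with \eqref{eq:comm}; alternatively, the symmetric stress-energy identity $\cd\int\kappa^{(1),f}(x,0)\,dw(x) = \tfrac12\int\nabla\tv:|z|^\gamma[\nabla h^f,\nabla h^w]$ can be invoked to obtain the form with both factors localized to $\supp\nabla\tv$. For \eqref{eq:comm''}, the Caffarelli--Silvestre trace inequality $\|u|_{\R^\d}\|_{\dot{H}^{(\d-\s)/2}}\lesssim \|\nabla u\|_{L^2_\gamma(\R^{\d+\k})}$ applied to $u=\kappa^{(1),f}$ combined with \eqref{eq:comm} finishes the job. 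The hard part will be verifying the pairwise cancellation of $\nabla^{\otimes 2}v$ contributions in the energy identity: without this cancellation the estimate would force in $\|\nabla^{\otimes 2}v\|_{L^\infty}$ rather than just $\|\nabla v\|_{L^\infty}$, and identifying the right sequence of IBPs to expose it --- which is ultimately a manifestation of the commutator structure of $\kappa^{(1),f}$ --- is the substantive content of the proof.
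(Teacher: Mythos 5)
Your proposal is correct and follows essentially the paper's own route: your non-divergence identity for $L\ka^{(1),f}$ is precisely \eqref{eq:Lka1pre}, and the cancellation of all $\nab^{\otimes 2}\tv$ contributions after testing against $\ka^{(1),f}$ (achieved by splitting the cross term symmetrically between the two integrations by parts) is exactly the content of \cref{lem:Lka1}, which recasts the right-hand side in divergence form with the same matrix $\p_i v^j+\p_j v^i-\delta_{ij}\div v$, while the bilinear bound with both factors localized to $\supp\nab\tv$ comes from the stress-energy identity \eqref{eq:commst} just as you indicate. The only (harmless) deviation is \eqref{eq:comm''}, where you invoke the Caffarelli--Silvestre trace inequality, whereas the paper argues by duality, taking $w$ proportional to a fractional Laplacian of $\ka^{(1),f}$ and bounding $\|\nab h^w\|_{L_\ga^2(\supp\nab\tv)}$ by $\|\ka^{(1),f}\|_{\dot H^{\frac{\d-\s}{2}}(\R^\d)}$; both routes are standard and equivalent here.
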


\begin{remark}\label{rem:denska1}
The compact support assumption is purely qualitative and may be reduced to assuming only that the extension $\tv$ is Lipschitz. Indeed, one may mollify and cut off and then use Fatou's lemma, appealing to the pointwise convergence shown above.

By density, the map $f\mapsto \ka^{(1),f}$ has an extension from the subspace $\Sc_0(\R^{\d})$ to the space of distributions $f\in \dot{H}^{\frac{\s-\d}{2}}(\R^{\d})$. To see this, note
\begin{equation}
\int_{(\R^{\d})^2}\g(x-y)dw(x)df(y) = \frac{1}{\cd}\int_{\R^{\d+\k}}\zg\nabla h^w\cdot \nabla h^f,
\end{equation}
where the equality is a consequence of the identity \eqref{eq:Gfs}. In particular, if $w=f$, then by Plancherel's theorem,
\begin{equation}\label{eq:GSob}
\|f\|_{\dot{H}^{\frac{\s-\d}{2}}}^2 = \cd'\int_{\R^{\d+\k}}\zg|\nabla h^f|^2.
\end{equation}
Given $f\in\dot{H}^{\frac{\s-\d}{2}}(\R^\d)$, one may take a sequence $f_m\in\Sc_0(\R^\d)$ converging to $f$ in $\dot{H}^{\frac{\s-\d}{2}}$. Applying \eqref{eq:comm} with $f$ replaced by $f_{m'}-f_m$, it follows that $\nab\ka^{(1),f_m}$ is Cauchy in $L_\ga^2$, from which the claim follows.

 Going forward, we will always assume that the test functions $f,w\in\Sc(\R^{\d})$ have Fourier transforms supported away from the origin. This ensures that $h^{f},h^w$ are again Schwartz functions with Fourier transform supported away from the origin. This reduction is justified by the density of such Schwartz functions in $\dot{H}^{\frac{\s-\d}{2}}$.
\end{remark}

\cref{prop:comm} provides a simple proof of an $L^2$-based commutator estimate similar to some found in the literature with more complicated proofs. See, for instance, \cite{CJ1987} or \cite[Appendix]{Rosenzweig2020spv} for the Coulomb case corresponding to the Calder\'{o}n $\d$-commutator.  This simplicity is made possible by a formula expressing $L\ka^{(1),f}$ as the divergence of a vector field consisting of combinations of products of the components of $\nab v,\nab h^f$. This subsumes the stress-energy tensor structure previously used in \cite{Duerinckx2016,Serfaty2017,Serfaty2020}.

We state the crucial identity for $L\ka^{(1),f}$ and its relationship to the stress-energy tensor
\begin{align}\label{eq:stdef}
\comm{\nab h_1}{\nab h_2}_{ij} \coloneqq {\zg\Big(\p_i h_1\p_j h_2 + \p_j h_1\p_i h_2 - \nab h_1\cdot\nab h_2 \delta_{ij}\Big)}, \qquad i,j \in [\d+\k].
\end{align}
\emph{Here and throughout this paper, we follow the convention that repetition of index indicates summation over that index.}

\begin{lemma}\label{lem:Lka1}
Let $v:\R^{\d}\rightarrow\R^{\d}$ be a Lipschitz vector field and $\tv$ be an extension as above. For any test function $f\in\Sc_0(\R^{\d})$, it holds that
\begin{align}\label{eq:Lka1}
L\ka^{(1),f} = {-  \p_i(\zg\p_i \tv^j \p_j h^f)-\p_j(\zg \p_i \tv^j \p_i h^f) + \p_j(\zg \p_i \tv^i \p_j h^f)}.
\end{align}
Moreover, for a test function $\phi$ on $\R^{\d+\k}$,
\begin{align}\label{eq:Lka1hw}
\phi \, L\ka^{(1),f} = \nab \tv : \comm{\nab h^f}{\nab\phi}
-  \p_i(\zg\p_i \tv^j \p_j h^f \phi)-\p_j(\zg \p_i \tv^j \p_i h^f \phi) + \p_j(\zg \p_i \tv^i \p_j h^f \phi).
\end{align}
Consequently,
\begin{align}\label{eq:commst}
\frac{1}{\cd}\int_{\R^{\d+\k}}\nab\ka^{(1),f}\cdot \nab h^w &=  \int_{(\R^{\d})^2}(v(x)-v(y))\cdot\nabla\G(x-y)df(x)dw(y) \nn\\
&= \frac{1}{\cd}\int_{\R^{\d+\k}}\nabla \tv: \comm{\nab h^f}{\nab h^w}.
\end{align}
\end{lemma}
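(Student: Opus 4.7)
My plan is to establish the three identities in the order stated: \eqref{eq:Lka1} is the core algebraic fact; \eqref{eq:Lka1hw} will follow from it by multiplying by $\phi$ and applying the Leibniz rule on each divergence; and \eqref{eq:commst} will follow by integration by parts combined with testing \eqref{eq:Lka1hw} against $h^w$. The structural inputs I would exploit are: (i) $\zg$ depends only on the vertical coordinate, so $[L,\p_j]=0$ whenever $j\le\d$; (ii) the extension satisfies $\tv^{\d+1}\equiv 0$ when $\k=1$, so every sum over indices in $[\d+\k]$ collapses to a sum over $j\le\d$; and (iii) $Lh^f=\cd\tl f$.

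For \eqref{eq:Lka1}, the first step is to split the commutator using (ii):
\begin{align*}
\ka^{(1),f}(X)=\sum_{j=1}^\d\bigl[\tv^j(X)\,\p_j h^f(X)-\p_j h^{v^jf}(X)\bigr],\qquad h^{v^jf}(X)\coloneqq\int_{\R^\d}\G(X-y)v^j(y)\,df(y).
\end{align*}
Applying $L$ to the second summand, commutation gives $L\p_j h^{v^jf}=\cd\p_j\widetilde{v^jf}$. On the first summand, the Leibniz identity $L(uw)=(Lu)w+u(Lw)-2\zg\,\nab u\cdot\nab w$, combined with $L\p_j h^f=\cd\p_j\tl f$, produces three terms. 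Recognizing $\widetilde{v^jf}=\tv^j\tl f$ and applying the product rule, the singular contributions from $\cd\tv^j\p_j\tl f$ and $-\cd\p_j(\tv^j\tl f)$ collapse into the single term $-\cd\,\p_j\tv^j\,\tl f$; substituting $\cd\tl f=Lh^f=-\p_i(\zg\p_i h^f)$ then turns this into a divergence. Finally, using $L\tv^j=-\p_i(\zg\p_i\tv^j)$ and $\p_j\zg=0$ for $j\le\d$, the remaining bulk terms $\zg\p_i\tv^j\p_i\p_j h^f+\zg\p_i\p_j\tv^j\p_i h^f$ can be recognized as the divergence $\p_j(\zg\p_i\tv^j\p_i h^f)$. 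A relabeling $i\leftrightarrow j$ in one term then yields exactly the three-divergence form in \eqref{eq:Lka1}.

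The identity \eqref{eq:Lka1hw} follows upon multiplying \eqref{eq:Lka1} by $\phi$ and using $\p_\ell(A)\phi=\p_\ell(A\phi)-A\p_\ell\phi$ on each divergence; the resulting bulk terms $\zg\p_i\tv^j\p_j h^f\p_i\phi+\zg\p_i\tv^j\p_i h^f\p_j\phi-\zg\p_i\tv^i\nab h^f\cdot\nab\phi$ coincide with $\nab\tv:\comm{\nab h^f}{\nab\phi}$ by the definition \eqref{eq:stdef}. For \eqref{eq:commst}, the first equality (read with the natural $\zg$-weighted gradient pairing for $L$) uses the integration by parts $\int\zg\,\nab\ka^{(1),f}\cdot\nab h^w=\int\ka^{(1),f}Lh^w=\cd\int\ka^{(1),f}\,d\tl w$ together with the observation that the trace of $\ka^{(1),f}$ on $\R^\d\times\{0\}^\k$ equals $\int_{\R^\d}\nab\g(x-y)\cdot(v(x)-v(y))\,df(y)$, the vertical component $\p_{\d+1}\G\big|_{z=0}$ vanishing by radial symmetry of $\G$. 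The second equality is obtained by integrating \eqref{eq:Lka1hw} with $\phi=h^w$ over $\R^{\d+\k}$: the left-hand side reduces to the middle expression as above, while the three divergence terms vanish thanks to the Schwartz decay of $h^f,h^w$ granted by \cref{rem:denska1} and the pointwise bounds \eqref{eq:nabmka1dcay}.

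The main obstacle is the algebra in the second paragraph: once $L$ is applied, singular distributions $\tl f$ supported on $\R^\d\times\{0\}^\k$ appear, and the success of the derivation hinges on their exact cancellation between the two summands of the splitting of $\ka^{(1),f}$. This cancellation relies on both the commutation $[L,\p_j]=0$ for $j\le\d$ and the identity $\widetilde{v^jf}=\tv^j\tl f$, each of which is an artifact of the $z$-symmetric extension $\tv$ with $\tv^{\d+1}\equiv 0$.
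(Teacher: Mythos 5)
Your proposal is correct and follows essentially the same route as the paper: it arrives at the paper's intermediate identity $L\ka^{(1),f} = -\cd\,\p_i\tv^i\,\tl f - \p_i(\zg\p_i\tv^j\p_jh^f) - \zg\p_i\tv^j\p_i\p_jh^f$ (you derive it from the splitting \eqref{rewrikappa} plus the Leibniz rule for $L$, the paper by differentiating under the integral using $L\G(\cdot-y)=\cd\delta_y$ and $\tv^{\d+\k}=0$, which is the same cancellation of the singular terms), and then performs the identical product-rule and $i\leftrightarrow j$ relabeling, multiplication by $\phi$, and integration by parts against $h^w$ with $Lh^w=\cd\tl w$ and the decay \eqref{eq:nabmka1dcay}. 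Your reading of the first integral in \eqref{eq:commst} with the $\zg$-weighted pairing is also the intended one, consistent with the weight built into \eqref{eq:stdef}.
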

\begin{proof}
By direct computation (consider \eqref{compL} below with $n=1$, and see the proof of \cref{lem:Lkapnf} for the general case of $L\ka^{(n),f}$), one has
\begin{align}\label{eq:Lka1pre}
L\ka^{(1),f} = -\cd \p_i \tv^i \tl{f} -  \p_i(\zg\p_i \tv^j \p_j h^f) - \zg \p_i \tv^j\p_i\p_j h^f.
\end{align}

Applying the product rule to $\p_j$, we find
\begin{align}
- \zg \p_i\tv^j\p_i\p_j h^f = -\p_j(\zg \p_i\tv^j \p_i h^f) + \zg \p_i\p_j\tv^j\p_ih^f.
\end{align}
Writing $\cd\tl{f} = -\p_j(\zg \p_j h^f)$ and using the  product rule, we find
\begin{align}
-\cd \p_i\tv^i\tl{f} = \p_j(\zg \p_i\tv^i \p_j h^f) - \zg \p_j\p_i\tv^i\p_j h^f.
\end{align}
By symmetry with respect to swapping $i\leftrightarrow j$, it follows that
\begin{align}
-\zg\p_i\tv^j \p_i \p_j h^f -\cd \p_i\tv^i\tl{f} &= -\p_j(\zg \p_i\tv^j \p_i h^f) + \p_j(\zg \p_i \tv^i \p_j h^f).
\end{align}
Inserting this identity into the right-hand side of \eqref{eq:Lka1pre} yields the desired \eqref{eq:Lka1}. The identity \eqref{eq:Lka1hw} follows now from the product rule, and \eqref{eq:commst} follows from \eqref{eq:Lka1hw} and integration by parts,  which is justified by the decay \eqref{eq:nabmka1dcay}, using that $Lh^w = \cd \tl{w}$.
\end{proof}

\begin{proof}[Proof of \cref{prop:comm}]
To show \eqref{eq:comm}, we use the identity \eqref{eq:Lka1hw} with $\phi = \ka^{(1),f}$ and apply Cauchy-Schwarz to the right-hand side. To deduce \eqref{eq:comm'}, we use \eqref{eq:commst} and Cauchy-Schwarz. For  \eqref{eq:comm''}, we apply the bound \eqref{eq:comm'} with $w= (-\Delta)^{\d-\s}\ka^{(1),f}$, in which case the left-hand side becomes $\|\ka^{(1),f}\|_{\dot{H}^{\frac{\d-\s}{2}}(\R^\d)}^2$.  Observing that
\begin{align}
\Big(\int_{\supp\nabla\tv} \zg|\nabla h^w|^2\Big)^{1/2} \leq C \|h^w\|_{\dot{H}^{\frac{\d-\s}{2}}(\R^\d)} = C\|(-\Delta)^{{\d-\s}}h^{\ka^{(1),f}}\|_{\dot{H}^{\frac{\d-\s}{2}}(\R^\d)} = C\cd \|\ka^{(1),f}\|_{\dot{H}^{\frac{\d-\s}{2}}(\R^\d)}
\end{align}
yields the desired conclusion.

\end{proof}

We will see in Appendix \ref{appA} how to use iterated stress-energy tensor estimates to deduce higher-order versions of this result in the Coulomb case (or Riesz case $\s=\d-1$ for which $\gamma=0$).

\subsection{Renormalization of the first order commutator estimate}
\label{ssec:FOren}
In this subsection, we give a self-contained proof of the first-order (i.e.~ $n=1$) special case of \cref{thm:mainunloc} and \cref{thm:FI}, which generalizes the Coulomb-specific results \cite[Proposition 4.2]{Serfaty2023}, \cite[Proposition 3.9]{Rosenzweig2021ne} to the super-Coulomb case. Compared to those prior works, the proof is here simpler, avoiding the use of elliptic regularity estimates through an elementary averaging argument. Here and throughout this section, we use the same notation as in \cref{sec:ME}.



We will need the following lemma,  which shows that $\g\ast\delta_y^{(\eta)} = \g_\eta(\cdot-y)$ on $\R^\d\times\{0\}^\k$ when $y\in\R^\d$.
\begin{lemma}\label{lem:lemme}
Let $\eta\ge 0$. If $y \in \R^\d$, then
\begin{equation}\label{eq:lemme}
\int_{\R^{\d+\k}} \G((x,z)-w) d\delta_y^{(\eta)} (x,z) = \G_\eta( w-y), \qquad \forall w \in \R^\d.
\end{equation}
\end{lemma}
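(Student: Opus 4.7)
The plan is, after observing what appears to be a typo (the right-hand side should read $\G_\eta(w-y)$ rather than $\G_\eta(w)$, the translation coming from $\delta_y^{(\eta)}=\delta_0^{(\eta)}(\cdot-y)$), to reduce by translation invariance to $y=0$ and establish the stronger identity
\begin{equation*}
(\G*\delta_0^{(\eta)})(w)=\G_\eta(w) \qquad \forall\, w\in\R^{\d+\k},
\end{equation*}
which specializes to the statement upon restricting $w$ to $\R^\d\times\{0\}^\k$.

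The short strategy is to recognize that both sides are the ``$L$-potential'' of $\delta_0^{(\eta)}$. Indeed, by the fundamental solution identity \eqref{eq:Gfs}, $L(\G*\delta_0^{(\eta)})=(L\G)*\delta_0^{(\eta)}=\cd\delta_0*\delta_0^{(\eta)}=\cd\delta_0^{(\eta)}$, while $L\G_\eta=\cd\delta_0^{(\eta)}$ is the defining relation \eqref{defdeta}. Hence $u\coloneqq \G*\delta_0^{(\eta)}-\G_\eta$ satisfies $Lu=0$ on all of $\R^{\d+\k}$ in the distributional sense.

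To conclude $u\equiv 0$ I would exploit that $u$ is rotation-invariant in $\R^{\d+\k}$ (since $\delta_0^{(\eta)}$ is an $O(\d+\k)$-invariant measure supported on $\partial B(0,\eta)$, and $\G$ is radial). A direct computation shows that, for $u=f(r)$ with $r=|(w,z)|$, the equation $Lu=0$ away from the origin reduces to $f''+(\s+1)f'/r=0$, whose solution space is $\mathrm{span}\{1,\g(r)\}$. Thus $u$ is piecewise of the form $a_\pm+b_\pm \g(r)$ on $\{r<\eta\}$ and $\{r>\eta\}$. I would then pin down the four constants using: (i) smoothness of $u$ at the origin (both $\G*\delta_0^{(\eta)}$ and $\G_\eta$ are smooth inside $B(0,\eta)$, so the $\g(r)$ mode is excluded there), forcing $b_-=0$; (ii) the asymptotic $u\to 0$ at infinity, which is immediate when $\s>0$ and for $\s=0$ follows from $\G*\delta_0^{(\eta)}(w)=-\log|w|+o(1)$ as $|w|\to\infty$ (since $\delta_0^{(\eta)}$ is a probability measure of compact support), matching the behavior of $\G_\eta$; (iii) continuity of $u$ across $\partial B(0,\eta)$.

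The main obstacle is the rigorous distributional analysis: verifying that the radial reduction is valid across the degenerate locus $\{z=0\}$ of the weight $|z|^\gamma$, and handling the $\s=0$ logarithmic asymptotics carefully. As a cleaner alternative that bypasses these issues, one can perform a direct integration by parts using the formal self-adjointness of $L$:
\begin{equation*}
\cd\int_{\R^{\d+\k}}\G((x,z)-w)\,d\delta_0^{(\eta)}(x,z) = \int_{\R^{\d+\k}}\G(\cdot-w)\,L\G_\eta = \int_{\R^{\d+\k}}L\G(\cdot-w)\,\G_\eta = \cd\G_\eta(w),
\end{equation*}
justified by excising a small ball $B(w,\vep)$ around the singularity of $\G(\cdot-w)$, applying Green's identity for the weighted operator $L$ on the complement, and sending $\vep\to 0$ using the local integrability $\int_{B(w,\vep)}|z|^\gamma|\nabla\G(\cdot-w)|<\infty$ (which holds since $\gamma>-1$ and $\s<\d$) together with the boundedness of $\G_\eta$ to kill the boundary contributions.
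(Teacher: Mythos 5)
Your ``cleaner alternative'' at the end is in fact the paper's own proof: the paper proves the lemma exactly by writing $\delta_0^{(\eta)}=\frac{1}{\cd}L\g_\eta$ and integrating by parts twice, using $L\G(\cdot-w)=\cd\delta_w$, which is valid precisely because $w\in\R^\d\times\{0\}^\k$ (the reduction to $y=0$ is legitimate since $L$ is invariant under translations in the $x$-directions). Your reading of the statement is also right: the right-hand side should be $\G_\eta(w-y)$, consistent with how the lemma is applied in Section 3.2. However, your primary route is genuinely broken, not merely under-justified. The step $L(\G*\delta_0^{(\eta)})=(L\G)*\delta_0^{(\eta)}=\cd\delta_0^{(\eta)}$ fails when $\k=1$ and $\ga\neq0$: $L=-\div(\zg\nab\cdot)$ is not translation invariant in the $z$-variable, so convolution with $\G$ inverts $L$ only for sources supported on $\R^\d\times\{0\}^\k$ (this is \eqref{relhf}; the introduction explicitly warns that $\g\ast L\phi=\cd\phi$ fails off the hyperplane), whereas $\delta_0^{(\eta)}$ is carried by the full sphere $\p B(0,\eta)\subset\R^{\d+\k}$. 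Likewise, $\delta_0^{(\eta)}$ is not $O(\d+\k)$-invariant (its density carries the factor $|z|^\ga$), and $L$ is not rotation invariant, so the reduction to the radial ODE $f''+(\s+1)f'/r=0$ is unavailable.

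More importantly, the ``stronger identity'' $\G*\delta_0^{(\eta)}=\G_\eta$ on all of $\R^{\d+\k}$ that your main argument targets is actually false whenever $\ga\neq0$, so no repair of that route can work. Indeed, a Beta-function computation with the density $\propto|z|^\ga$ on the sphere gives $\int z^2\,d\delta_0^{(\eta)}=\frac{(\ga+1)\eta^2}{\s+2}$, and a second-order multipole expansion along the $z$-axis then yields, for $R\gg\eta$,
\begin{equation*}
\G*\delta_0^{(\eta)}(0,R)=\g(R)\Big(1+\tfrac{\s}{2R^2}\big[(\s+2)\textstyle\int z^2\,d\delta_0^{(\eta)}-\eta^2\big]+O(R^{-4})\Big)=\g(R)\Big(1+\tfrac{\s\,\ga\,\eta^2}{2R^2}+O(R^{-4})\Big),
\end{equation*}
which differs from $\G_\eta(0,R)=\g(R)$ unless $\ga=0$. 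The lemma is thus intrinsically a statement on the hyperplane: both $y$ and $w$ must lie in $\R^\d\times\{0\}^\k$, and that restriction is exactly what powers the integration-by-parts proof. So discard the $L$-harmonicity/uniqueness route and promote your alternative to the proof, adding the small missing piece in its justification: besides killing the boundary terms on $\p B(w,\vep)$ (your integrability bound is the right one), you must use the normalized flux of $\zg\nab\G(\cdot-w)$ through small spheres to recover the value $\cd\G_\eta(w)$ in the limit.
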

\begin{proof}
By translation invariance, it suffices to prove the identity for $y=0$. By definition \eqref{defdeta} of $\delta_0^{(\eta)}$ and by \eqref{eq:Gfs}, we have
\begin{align}
\G*\delta_0^{(\eta)}(w)= \int_{\R^{\d+\k}} \G((x,z)-w) d\delta_0^{(\eta)} (x,z) &= \frac{1}{\cd}\int \G((x,z)-w) L\g_\eta(x,z) \nn\\
& =  \frac{1}{\cd} \int_{\R^{\d+\k}} \zg \nabla \G((x,z)-w) \cdot \nabla \G_\eta(x,z) \nn\\
&= - \frac{1}{\cd} \int_{\R^{\d+\k}}\Big(\div (|z|^\gamma \nabla \G(\cdot-w) )\Big)(x,z) \G_\eta (x,z) \nn\\
& =  \int_{\R^{\d+\k}}  \G_\eta(x,z)d\delta_w (x,z) = \G_\eta(w).
\end{align}
\end{proof}

We now turn to the proof of the estimates \eqref{main2} and \eqref{main1}. Suppose that the vector field extension $\tv$ has the property that $x\notin \supp \nab v$ implies that $(x,z)\notin \supp\nab\tv$. Remark that this property holds for the trivial extension, as well as for the localized extension given in Remark \ref{rem:vextell}.  Let us consider a parameter vector $\vec{\eta}=(\eta_1,\ldots,\eta_N)\in (\R_+)^N$ such that for every $i$, $\eta_i \le \rs_i$. We then desymmetrize
\begin{align}
I&\coloneqq  \int_{(\R^\d)^2\setminus\triangle} ( v(x)-v(y))\cdot \nabla\g(x-y)d\Big(\frac{1}{N} \delta_{x_i} - \mu\Big)^{\otimes 2} (x,y) \nn\\
& = \sum_{i=1}^N \frac{2}{N} \int_{\R^{\d+\k}}  \tv(x_i) \cdot \nabla \G(x_i-\cdot)d\Big( \frac{1}{N} \sum_{j : j\neq i} \delta_{x_j} - \tl\mu\Big)  \nn\\
&\ph - 2 \int_{(\R^{\d+\k})^2} \tv(x) \cdot \nabla \G((x,z)-\cdot) d\tl\mu(x,z)d\Big(\frac{1}{N} \delta_{x_i} - \tl\mu\Big) \nn\\
& = \sum_{i=1}^N \frac2N \int_{\R^{\d+\k}} \tv \cdot \nabla h_N^i d\delta_{x_i} - 2\int_{\R^{\d+\k}} \tv\cdot \nabla h_N d\tl\mu. \label{eq:FIo1pre}
\end{align}
Using that
\begin{align}
h_N^i = h_{N, \vec{\eta} } - \frac1N\G_{\eta_i} (\cdot-x_i) \quad \text{in } \ B(x_i, \eta_i), \\
h_N = h_{N, \vec{\eta}}  +\frac1N \sum_{i=1}^N(\G- \G_{\eta_i}) (\cdot-x_i),
\end{align}
we decompose \eqref{eq:FIo1pre} as $\Te_1 + \Te_2 +\Te_3$, where 
\begin{align}\label{eq:FIo1T1}
 \Te_1 \coloneqq 2 \int_{\R^{\d+\k}} \tv\cdot \nabla h_{N, \vec{\eta}} \, d \Big(\frac 1N \sum_{i=1}^N \delta_{x_i}^{(\eta_i)} - \tl\mu\Big),
\end{align}
\begin{multline}\label{eq:FIo1T2}
\Te_2 \coloneqq  \frac2N \sum_{i=1}^N \int_{\R^{\d+\k}} (\tv(x_i) - \tv) \cdot \nabla h_N^i d\delta_{x_i}^{(\eta_i)} \\
- \frac2{N^2} \sum_{i=1}^N \int_{\R^{\d+\k}} (\tv-\tv(x_i)) \cdot \nabla \G_{\eta_i} (\cdot-x_i) d\delta_{x_i}^{(\eta_i)}  + \frac2{N}\sum_{i=1}^N \int_{\R^{\d+\k}} (\tv-\tv(x_i)) \cdot \nabla (\G_{\eta_i}- \G) (\cdot-x_i) d\tl\mu,
\end{multline}
and
\begin{multline}\label{eq:FIo1T3}
\Te_3 \coloneqq \frac2N \sum_{i=1}^N \int_{\R^{\d+\k}}\tv(x_i) \cdot \nabla h_N^i d\Big(\delta_{x_i} - \delta_{x_i}^{(\eta_i)}\Big) 
 - \frac2{N^2} \sum_{i=1}^N\int_{\R^{\d+\k}} \tv(x_i) \cdot \nabla \G_{\eta_i} (\cdot -x_i) d \delta_{x_i}^{(\eta_i)} \\
+ \frac2N  \sum_{i=1}^N\int_{\R^{\d+\k}} \tv(x_i) \cdot \nabla (\G_{\eta_i}-\G) (\cdot-x_i) d\tl\mu.
\end{multline}
We dispense with $\Te_3$ by showing that it vanishes.

\bigskip

\noindent\textbullet $\Te_3$: Unpacking the definition \eqref{defhni} of $h_N^i$, we write
\begin{multline}\label{eq:FIo1T3pre}
\Te_3= \frac{2}{N^2} \sum_{i=1}^N \sum_{j: j\neq i} \int_{\R^{\d+\k}}\tv(x_i) \cdot \nabla \G(\cdot-x_j) d\Big(\delta_{x_i} - \delta_{x_i}^{(\eta_i)}\Big)
\\
- \frac{2}{N}  \sum_{i=1}^N \int_{\R^{\d+\k}} \tv(x_i) \cdot \nabla \G(\cdot-y) d\tl\mu(y) d\Big(\delta_{x_i} - \delta_{x_i}^{(\eta_i)}\Big) \\
- \frac2{N^2} \sum_{i=1}^N\int_{\R^{\d+\k}} \tv(x_i) \cdot \nabla \G_{\eta_i} (\cdot -x_i) d \delta_{x_i}^{(\eta_i)} 
+ \frac2N  \sum_{i=1}^N\int_{\R^{\d+\k}} \tv(x_i) \cdot \nabla (\G_{\eta_i}-\G) (\cdot-x_i) d\tl\mu.
\end{multline}
Thanks to \cref{lem:lemme}, we have for $i\ne j$,
\begin{align}
\int_{\R^{\d+\k}} \nabla \G(\cdot-x_j) d\Big( \delta_{x_i} - \delta_{x_i}^{(\eta_i)}\Big) =  \nabla \G(x_i-x_j)-  \nabla \G_{\eta_i} (x_i-x_j).
\end{align}
The right-hand side vanishes because $|x_i-x_j |>\eta_i$ for $j \neq i$ (by assumption that $\eta_i \le \rs_i$) and $\G_{\eta_i}$ coincides with $\G$ outside of $B(0, \eta_i)$.  Thus, the first line of \eqref{eq:FIo1T3pre} vanishes. By the same reasoning, the second line of  \eqref{eq:FIo1T3pre} equals
\begin{align}
-\frac2N \sum_{i=1}^N \int_{\R^{\d+\k}} \tv(x_i) \cdot \nabla( \G- \G_{\eta_i}) (x_i-y)  d\tl\mu(y).
\end{align}
Thus, the second line cancels with the last term on the third line of  \eqref{eq:FIo1T3pre}. It remains to show that 
\begin{align}
\int_{\R^{\d+\k}} \tv(x_i) \cdot \nabla \G_{\eta_i} (\cdot-x_i) d \delta_{x_i}^{(\eta_i)} =0.
\end{align}
This is true by the divergence theorem because 
\begin{align}
\nabla \G_{\eta_i} (\cdot-x_i) \delta_{x_i}^{(\eta_i)} &=-\frac{1}{\cd}  \nabla \G_{\eta_i} (\cdot-x_i) \div \left(|z|^\gamma \nabla \G_{\eta_i} (\cdot-x_i) \right)\nn\\
&= -\frac{1}{ 2\cd} \div\comm{\nab \G_{\eta_i} (\cdot-x_i)}{\nab  \G_{\eta_i}(\cdot-x_i)},
\end{align}
where $\comm{\cdot}{\cdot}$ is the stress-energy tensor \eqref{eq:stdef}.


This resolves $\Te_3$, leaving us with the task of estimating $\Te_1,\Te_2$.

\medskip

\noindent\textbullet \ $\Te_1$: Similar to the identity \eqref{eq:commst}, we write $\nab h_{N,\vec\eta}\Big(\frac1N\sum_{i=1}^N \delta_{x_i}^{(\eta_i)}-\tl\mu\Big) = -\frac{1}{2\cd}\div\comm{\nab h_{N,\vec\eta}}{\nab h_{N,\vec\eta}}$ and integrate by parts to obtain
\begin{equation}
\Te_1 = \int_{\R^{\d+\k}} \nabla\tv : \comm{\nab h_{N,\vec{\eta}}}{\nab h_{N,\vec{\eta}}}.
\end{equation}
It follows now from Cauchy-Schwarz that
\begin{equation}\label{eq:T1fin}
|\Te_1| \leq  C\|\nabla\tv\|_{L^\infty} \int_{\supp\nab\tv} \zg |\nabla h_{N,\vec{\eta}}|^2.
\end{equation}

\medskip
\noindent\textbullet \ $\Te_2$: We claim that $\tv(x_i)-\tv = 0$ on the support of $\delta_{x_i}^{(\eta_i)}$ if $\dist(x_i,\supp \nab v)>\la/4$. Indeed, this condition on $x_i$ and the fact that $\eta_i\le \rs_i\le \la/4$ imply that if $(x,z)\in\supp\delta_{x_i}^{(\eta_i)}$, then $\dist(t x_i + (1-t)x,\supp\nab v) >0$ for any $t\in [0,1]$. By the support assumption on $\tv$ and the mean-value theorem, the claim follows. For the remaining $i$, using the mean-value theorem on $\tv - \tv(x_i)$ and the explicit form of the probability measure $\delta_{x_i}^{(\eta_i)}$, we see that\footnote{Note that due to an editing error, there is a discrepancy between \eqref{eq:T21}  below and ensuing computations with the published journal version. There is no change to the final conclusion, but the version presented here is the correct one.}
\begin{multline}\label{eq:T21}
\frac2N \sum_{i=1}^N \Big|\int_{\R^{\d+\k}} (\tv(x_i) - \tv) \cdot \nabla h_N^i d\delta_{x_i}^{(\eta_i)} \Big| \\
\le \frac{C}{N}\|\nab \tv\|_{L^\infty}\sum_{i : \dist(x_i,\supp \nab v) \le {\frac14} \la}\eta_i^{-\s} {\int_{\p B(x_i,\eta_i)}|z|^{\gamma} |\nab h_N^i| d\mathcal{H}^{\d+\k-1}},
\end{multline}
where $\p B(x_i,\eta_i)$ is the sphere in $\R^{\d+\k}$ and $\mathcal{H}^{\d+\k-1}$ is the $(\d+\k-1)$-Hausdorff measure. Similarly,
\begin{align}\label{eq:T22}
\frac2{N^2} \sum_{i=1}^N \Big|\int_{\R^{\d+\k}} (\tv-\tv(x_i)) \cdot \nabla \G_{\eta_i} (\cdot-x_i) d\delta_{x_i}^{(\eta_i)}\Big| \le \frac{C}{N^2}\|\nab \tv\|_{L^\infty}\sum_{i : \dist(x_i,\supp \nab v) \le {\frac14}\la}\eta_i^{-\s}.
\end{align}
Recalling  \eqref{eq:defGeta} and that $\G-\G_\eta = \f_\eta$ is supported in $B(0, \eta)$,
\begin{align}\label{eq:T23}
\frac2N  \sum_{i=1}^N \Big|\int_{\R^{\d+\k}}(\tv- \tv(x_i)) \cdot \nabla (\G_{\eta_i}-\G) (\cdot-x_i) d\mu\Big| \le \frac{C}{N} \|\nab \tv\|_{L^\infty}\|\mu\|_{L^\infty(\hat\Omega)}\sum_{i : \dist(x_i,\supp \nab v) \le {\frac14}\la} \eta_i^{\d-\s}.
\end{align}
Combining \eqref{eq:T21}, \eqref{eq:T22}, \eqref{eq:T23} yields
\begin{multline}\label{eq:T2fin}
|\Te_2| \le \frac{C}{N^2}\|\nab\tv\|_{L^\infty}\sum_{i : \dist(x_i,\supp \nab v) \le {\frac14} \la}\eta_i^{-\s} + \frac{C}{N} \|\nab\tv\|_{L^\infty}\|\mu\|_{L^\infty(\hat\Omega)}\sum_{i : \dist(x_i,\supp \nab v) \le  {\frac14} \la} \eta_i^{\d-\s}\\
+\frac{C}{N}\|\nab\tv\|_{L^\infty}\sum_{i : \dist(x_i,\supp \nab v) \le {\frac14} \la}\eta_i^{-\s} {\int_{\p B(x_i,\eta_i)}|z|^\gamma  |\nab h_N^i| d\mathcal{H}^{\d+\k-1}}.
\end{multline}

\bigskip
Combining the estimates \eqref{eq:T1fin} and  \eqref{eq:T2fin}, we have found that there exists a constant $C>0$ depending only on $\d,\s$, such that for every choice $\vec{\eta}=(\eta_1,\ldots,\eta_N)$ satisfying $\eta_i\leq\rs_i$, we have 
\begin{multline}\label{eq:prefin}
|I|\le C   \|\nabla\tv\|_{L^\infty} \int_{{\supp \nab \tl v}} |z|^{\ga} |\nabla h_{N,\vec{\eta}}|^2 
+\frac{C}{N^2} \|\nabla \tv\|_{L^\infty}\sum_{{ i: \dist(x_i,\supp \nab v)\leq {\frac14}\la }}  \eta_i^{-\s} \\
+ \frac{C}{N} \|\mu\|_{L^\infty(\hat\Om)} \|\nabla \tv\|_{L^\infty} \sum_{{ i: \dist(x_i,\supp \nab v)\leq {\frac14}\la }} \eta_i^{\d-\s} \\
+ \frac{C}{N} \|\nabla\tv\|_{L^\infty}  \sum_{{ i: \dist(x_i,\supp \nab v)\leq {\frac14}\la }} \eta_i^{-\s} {\int_{\partial B(x_i, \eta_i)} |z|^\gamma |\nabla h_N^i| d\mathcal{H}^{\d+\k-1}}.
\end{multline}
For each $t\in[\frac{1}{2},1]$, we apply this relation with $\eta_i = t\rs_i$ and then average both sides of the resulting inequality over $t\in [\frac12,1]$. Using spherical coordinates and a change of variable $t\rs_i \mapsto t$,
\begin{align}
&\frac{C}{N} \|\nabla \tv\|_{L^\infty}  \sum_{{ i: \dist(x_i,\supp \nab v)\leq {\frac14}\la }}  {\int_{\frac12}^1 (t\rs_i)^{-\s} \int_{\partial B(x_i, t \rs_i)} \zg |\nabla h_N^i| d\mathcal{H}^{\d+\k-1} dt} \nn\\
&\leq \frac{C}{N} \|\nabla \tv\|_{L^\infty}\sum_{{i:\dist(x_i,\supp \nab v)\leq {\frac14}\la }} { \rs_i^{-\s-1} \int_{B(x_i, \rs_i)} |z|^\gamma|\nabla h_N^i|} \nn\\
&\le \frac{C}{N} \|\nabla\tv\|_{L^\infty}\sum_{{i:\dist(x_i,\supp \nab v)\leq{\frac14} \la }} {\rs_i^{-\s-1}} \Big(\int_{B(x_i, \rs_i)} |z|^\gamma|\nabla h_N^i|^2\Big)^{\frac12}\Big(\int_{B(0, \rs_i)} {|z|^{\gamma}} \Big)^{\frac12}, \label{eq:finapp'}
\end{align}
where the final line is by Cauchy-Schwarz. Since $\gamma {\in (-1,1)}$, the last integral is convergent and bounded by  $C\rs_i^{\d+\k+\gamma} = C\rs_i^{\s+2}$, by \eqref{eq:defgamma}. Using \eqref{eq:energballs}, Cauchy-Schwarz, and $ab \le \frac12(a^2+b^2)$, we find that
\begin{align}
\eqref{eq:finapp'} &\le \frac{C}{N}  \|\nabla \tv\|_{L^\infty}  \sum_{{i: \dist(x_i,\supp \nab v)\leq{\frac14} \la}}  \rs_i^{-\frac{\s}{2}}\Big(\int_{ B(x_i, \rs_i)}  |z|^\gamma|\nabla h_N^i|^2\Big)^{\frac12} \nn\\
&\leq \frac{C}{N}\|\nabla \tv\|_{L^\infty}\Bigg(\sum_{{ i: \dist(x_i,\supp \nab v)\leq {\frac14}\la}} \rs_i^{-\s}\Bigg)^{\frac12} \Bigg(\int_{ {\{ \dist(x, \supp \nab v)\le \frac12 \la\}  \times [-\la,\la]^\k} }  |z|^\gamma|\nabla h_{N, \vec\rs} |^2\Bigg)^{\frac12}\nn\\
&\leq C\|\nabla \tv\|_{L^\infty} \Bigg(\frac{1}{N^2} \sum_{{ i: \dist(x_i,\supp \nab v)\leq{\frac14} \la}}  \rs_i^{-\s} +\int_{{ \{ \dist(x, \supp \nab v)\le \frac12 \la\} \times [-\la,\la]^{\k}}}  |z|^\gamma|\nabla h_{N, \vec\rs} |^2\Bigg).
\end{align}
Inserting this estimate into \eqref{eq:prefin}, we obtain
\begin{multline}\label{eq:finapp}
|I|\le C   \|\nabla \tv\|_{L^\infty} \Bigr( {\int_{\supp \nab \tl v } \zg
|\nab h_{N,\vec{\eta}}|^2+ 
\int_{\{ \dist(x, \supp \nab v)\le \frac12 \la\}  \times [-\la,\la]^{\k}} |z|^{\ga} |\nabla h_{N,\vec\rs}|^2 } \Bigr)\\
+\frac{C}{N^2} \|\nabla \tv\|_{L^\infty}\sum_{{ i: \dist(x_i,\supp \nab v)\leq {\frac14} \la }}  \rs_i^{-\s}
+ \frac{C}{N} \|\mu\|_{L^\infty(\hat\Om)} \|\nabla \tv\|_{L^\infty} \sum_{{ i: \dist(x_i,\supp \nab v)\leq{\frac14} \la }} \rs_i^{\d-\s}. 
\end{multline}

 Recall from the statement of \cref{thm:FI} that $\Om$ contains a closed $2\la$-neighborhood of $\supp \nab v$, so that the condition $\dist(x_i,\supp \nab v)\leq  \la$ implies $x_i\in\Omega$ and $\dist(x_i,\p\Om)>\la$. 
 Moreover, 
 by definition \eqref{eq:defHNeta}, we have 
\begin{equation}
h_{N,\vec{\eta}}= h_{N,\vec\rs}+ \frac{1}{N}\sum_{i=1}^N (\g_{ \eta_i}- \g_{\rs_i})(x-x_i).
\end{equation}
Thus, using the triangle inequality and the fact that the supports of the $(\g_{ \eta_i}- \g_{\rs_i})(x-x_i)$ are disjoint,  \eqref{eq:intf} and $ \eta_i \in [\frac12\rs_i, \rs_i]$, we have
\begin{align}\label{htrht}\int_{ \supp \nab \tilde v}\zg |\nabla h_{N,\eta}|^2 &\le 2 \int_{\supp \nab\tilde v }\zg |\nabla h_{N,\vec\rs}|^2 
+ \frac{2}{N^2} \sum_{i: \dist(x_i, \supp \nab v)\le \frac14\lambda}\int_{\R^{\d+\k}} \zg |\nab (\g_{ \eta_i}-\g_{\rs_i})(x-x_i)|^2 \nn\\ 
&\le 2 \int_{\supp \nab \tilde v }\zg |\nabla h_{N,\vec\rs}|^2 
+ \frac{C}{N^2} \sum_{i : \dist(x_i, \supp \nab v )\le \frac14\lambda}\rs_i^{-\s}.
\end{align}

Inserting into \eqref{eq:finapp} and using the estimate \eqref{eq:13}  from \cref{prop:MElb}, and recalling $\rs_i\leq \la$, we conclude that 
\begin{equation}\label{eq:Ifin}
|I|\leq C  \|\nabla\tv\|_{L^\infty} \Bigg( { \int_{(\supp \nab \tl v)\cup (\Omega \times [-\la,\la]^{\k})}\zg |\nabla h_{N,\vec\rs}|^2}
  + C \frac{\# I_{\Omega}\|\mu\|_{L^\infty(\hat \Omega) } \la^{\d-\s}}{N}\Bigg).
\end{equation}

In the global case of \cref{thm:mainunloc}, we take $\tv$ to be the trivial extension, and then 
 $\|\nab\tv\|_{L^\infty}=\|\nab v\|_{L^\infty}$. The proof of \eqref{main2} is then complete.
 In the local case of \cref{thm:FI}, we take $\tv$ to be the extension given by Remark \ref{rem:vextell}. The $n=1$ case of \cref{thm:FI} then follows in view of $\lambda\le \ell$, \eqref{bornenabtv} and the fact that $\supp \nab \tv \subset \supp \nab v\times [-\ell, \ell]^\k$. 

\section{Regularity theory and commutator estimates}\label{sec:L2reg}
\label{sec:comm}
In this section, we present our new commutator estimates for the Riesz potential, which arise by considering the variation along linear transport of the Riesz energy of a sufficiently regular distribution. We shall then use these commutator estimates in combination with a renormalization procedure in \cref{sec:FI} in order to estimate such variations of the modulated energy $\Fr_N(\ux_N,\mu)$, in particular proving  \cref{thm:mainunloc} and \cref{thm:FI}. The main result of this section is the following theorem, which may be of independent interest. It corresponds to the functional inequality of \cref{thm:mainunloc} and \cref{thm:FI}, but {\it without the renormalization} needed to include  singular Diracs.

\begin{thm}\label{thm:comm2}
Let $v:\R^\d\rightarrow\R^\d$ be a vector field and $\tv$ be an extension as above. Let $f\in\Sc_0(\R^{\d})$, $h^f=\g*f$, and $\kappa^{(n),f}$ be as in \eqref{eq:kandef}.

If $\supp\nab\tv$ is contained in a ball of radius $\ell$ in $\R^{\d+\k}$ and $\||z|^{-\ga} L\tv\|_{L^\infty} \le C\|\nab^{\otimes 2}\tv\|_{L^\infty}$, then 
\begin{equation}\label{eq:comm2}
\|\nab\ka^{(n),f}\|_{L_\ga^2(\R^{\d+\k})} \le C (\ell \|\nab^{\otimes 2}\tv\|_{L^\infty})^{n}\|\nab h^f\|_{L_\ga^2(\supp\nab\tv)},
\end{equation}
Consequently, for any $f,w\in\Sc_0(\R^{\d})$, it holds that
\begin{multline}\label{eq:comm2dual}
\left|\int_{(\R^{\d})^2}\nabla^{\otimes n}\G(x-y)\cdot(v(x)-v(y))^{\otimes n} df(y)dw(x)\right|\\
\leq C (\ell \|\nab^{\otimes2}\tv\|_{L^\infty})^{n}
\|\nab h^f\|_{L^2_\ga(\supp\nab\tv)} \|\nab h^w\|_{L^2_\ga(\supp\nab\tv)} .
\end{multline}

If $n\le 2$, then we have the stronger estimates
\begin{align}\label{eq:comm2'}
\|\nab\ka^{(n),f}\|_{L_\ga^2(\R^{\d+\k})} \le C \|\nab\tv\|_{L^\infty}^n \|\nab h^f\|_{L_\ga^2(\supp\nab\tv)}
\end{align}
and
\begin{align}\label{eq:comm2dual'}
\Big|\int_{(\R^{\d})^2}\nabla^{\otimes n}\G(x-y)\cdot(v(x)-v(y))^{\otimes n} df(y)dw(x)\Big| \le C \|\nab\tv\|_{L^\infty}^n \|\nab h^f\|_{L_\ga^2(\supp\nab\tv)} \|\nab h^w\|_{L_\ga^2(\supp\nab\tv)}.
\end{align}

Finally, for any $n\ge 1$, 
\begin{align}\label{eq:comm2''}
\|\ka^{(n),f}\|_{\dot{H}^{\frac{\d-\s}{2}}(\R^\d)} \le C\|\nab v\|_{L^\infty}^n \|f\|_{\dot{H}^{\frac{\s-\d}{2}}(\R^\d)}.
\end{align}
and
\begin{align}\label{eq:comm2dual''}
\Big|\int_{(\R^{\d})^2}\nabla^{\otimes n}\G(x-y)\cdot(v(x)-v(y))^{\otimes n} df(y)dw(x)\Big| \le C\|\nab v\|_{L^\infty}^n \|f\|_{\dot{H}^{\frac{\s-\d}{2}}(\R^\d)}\|w\|_{\dot{H}^{\frac{\s-\d}{2}}(\R^\d)}.
\end{align}

In all cases, $C>0$ depends only on $n,\d,\s$. 
\end{thm}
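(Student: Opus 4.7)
My plan is to proceed by induction on $n$, mirroring the stress–tensor argument behind \cref{prop:comm} but now driven by a recursive identity for $L\ka^{(n),f}$ that expresses it in terms of lower-order commutators. The starting point, generalizing \eqref{eq:Lka1}, is an identity of the form
\begin{equation*}
L\ka^{(n),f} = \cd(-1)^n\sum_{\sigma\in\Ss_n}\p_{i_{\sigma_1}}\tv^{i_1}\cdots\p_{i_{\sigma_n}}\tv^{i_n}\,\tl f - n\p_i(\zg\p_i\tv\cdot\nu^{(n-1),f}) - n\zg\p_i\tv\cdot\p_i\nu^{(n-1),f} + n(n-1)\zg(\p_i\tv)^{\otimes 2}:\mu^{(n-2),f},
\end{equation*}
where $\nu^{(n-1),f}$ is a vector field on $\R^{\d+\k}$ morally playing the role of $\nab\ka^{(n-1),f}$ and $\mu^{(n-2),f}$ is a symmetric matrix field morally playing the role of $\nab^{\otimes 2}\ka^{(n-2),f}$. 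I would derive this identity by applying $L=-\p_i(\zg\p_i\cdot)$ directly to the definition \eqref{eq:kandef}, distributing across the factors $\nab^{\otimes n}\G((x,z)-y)$ and $(\tv(x)-\tv(y))^{\otimes n}$ via the product rule, and invoking $L\G=\cd\delta_0$ in $\R^{\d+\k}$ to produce the Dirac-supported source, with the symmetric-group sum accounting for how each of the $n$ factors of $\tv(x)-\tv(y)$ can contribute.

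The induction step then tests this identity against $\ka^{(n),f}$ in $L^2_\ga(\R^{\d+\k})$. Integration by parts on the left yields $\int_{\R^{\d+\k}}\zg|\nab\ka^{(n),f}|^2$, justified by the decay \eqref{eq:nabmka1dcay}. Each term on the right is then handled separately: the Dirac term is rewritten using $Lh^f=\cd\tl f$ as $\tfrac1\cd\int\zg\nab h^f\cdot\nab(P_n(\nab\tv)\ka^{(n),f})$ with $P_n(\nab\tv)$ a polynomial of degree $n$ in $\nab\tv$ supported on $\supp\nab\tv$, and Cauchy–Schwarz produces the required $\|\nab h^f\|_{L^2_\ga(\supp\nab\tv)}$; the two middle divergence-form terms reduce, after one more integration by parts, to products with $\nu^{(n-1),f}$ that are absorbed by the inductive estimate on $\|\nab\ka^{(n-1),f}\|_{L^2_\ga}$; and the last term, involving $\mu^{(n-2),f}$, is controlled by re-expressing $\mu^{(n-2),f}$ through the equation for $\ka^{(n-2),f}$ itself. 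I expect the main obstacle here to be precisely this last step: to pass from an $L^2_\ga$-control of $\ka^{(n-2),f}$ to its gradient, one must invoke a weighted Poincar\'e inequality on a ball of radius $\ell$ containing $\supp\nab\tv$, and it is this Poincar\'e step that introduces the factor $\ell\|\nab^{\otimes 2}\tv\|_{L^\infty}$ (the hypothesis $\||z|^{-\ga}L\tv\|_{L^\infty}\lesssim\|\nab^{\otimes 2}\tv\|_{L^\infty}$ enters to control terms where $L$ falls onto the extension $\tv$). Closing the recursion demands careful bookkeeping of the cross factors so that exactly $n$ powers of $\ell\|\nab^{\otimes 2}\tv\|_{L^\infty}$ accumulate, giving \eqref{eq:comm2}. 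For $n\le 2$ the term $\mu^{(n-2),f}$ is either absent ($n=1$, recovering \cref{prop:comm}) or equals $h^f$ itself ($n=2$), so no Poincar\'e step is needed and one retains only $\|\nab\tv\|_{L^\infty}$ on the right, yielding the sharper \eqref{eq:comm2'}.

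The dual estimates \eqref{eq:comm2dual}, \eqref{eq:comm2dual'} are immediate: using \eqref{relhf} and integrating by parts gives the identity
\begin{equation*}
\int_{(\R^\d)^2}\nab^{\otimes n}\G(x-y):(\tv(x)-\tv(y))^{\otimes n}df(y)dw(x) = \frac{1}{\cd}\int_{\R^{\d+\k}}\zg\nab\ka^{(n),f}\cdot\nab h^w,
\end{equation*}
after which Cauchy–Schwarz on $\supp\nab\tv$ together with \eqref{eq:comm2} or \eqref{eq:comm2'} closes the argument. Finally, the unlocalized global estimates \eqref{eq:comm2''} and \eqref{eq:comm2dual''} cannot be obtained from the elliptic induction above (there is no bounded $\ell$), so I would instead follow the approach of \cite{NRS2021}: rewrite $\ka^{(n),f}$ as a multilinear singular integral in the differences $v(x)-v(y)$, and in the super-Coulomb range $\s>\d-2$ apply the difference-quotient characterization of $\dot H^{(\s-\d)/2}$ combined with the Lipschitz bound on $v$, while in the borderline Coulomb case $\s=\d-2$ invoke the classical $L^2$ boundedness of Calder\'on $\d$-commutators. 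The isometry \eqref{eq:GSob} then translates the resulting $\dot H^{(\d-\s)/2}$ bound into the dual inequality, completing the proof.
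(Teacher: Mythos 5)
Your overall architecture is the same as the paper's: the posited identity for $L\ka^{(n),f}$ is exactly \eqref{eqLk} (\cref{lem:Lkapnf}); the localized bound \eqref{eq:comm2} is obtained by testing that equation against a mean-subtracted $\ka^{(n),f}$ and running an induction in which the weighted Poincar\'e inequality at scale $\ell$ produces the factors $\ell\|\nab^{\otimes 2}\tv\|_{L^\infty}$; the dual bounds follow by pairing with $h^w$; and the unlocalized estimates \eqref{eq:comm2''}, \eqref{eq:comm2dual''} are proved exactly as you describe, via difference quotients for $\s>\d-2$ and Christ--Journ\'e commutators for $\s=\d-2$. However, two of your steps do not work as stated.

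First, the induction as you set it up does not close. The reason mean-subtraction and Poincar\'e are needed is not ``passing from $L^2_\ga$ control of $\ka^{(n-2),f}$ to its gradient'': the obstruction is the \emph{non}-divergence term $-n\zg\,\p_i\tv\cdot\p_i\nu^{(n-1),f}$ (equivalently, the $L\tv\cdot\nu^{(n-1),f}$ contribution), which after integration by parts against $\ka^{(n),f}$ leaves the \emph{undifferentiated} top-order commutator multiplied by $\nab^{\otimes 2}\tv$, and $\int_{\supp\nab\tv}\zg|\ka^{(n),f}|^2$ cannot be bounded by $\int\zg|\nab\ka^{(n),f}|^2$; hence the test function $\ka^{(n),f}-\bar\ka^{(n),f}$ and \eqref{poincare}. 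Likewise, the term with $\mu^{(n-2),f}$ cannot be handled by ``re-expressing it through the equation for $\ka^{(n-2),f}$'': since $\mu^{(n-2),f}$ carries two derivatives of $\G$, an $L^2_\ga$ bound for it is unavailable, and one must carry a weak, tested estimate of the form \eqref{inducmu} (pairings against matrix fields $X$ controlled in $\|X\|_{L^2_\ga}$ and $\|\nab X\|_{L^2_\ga}$, via the recursion \eqref{recmu}) as part of a three-fold induction on $\ka^{(m),f},\nu^{(m),f},\mu^{(m),f}$, as in \cref{lem:inducknumu}. Your sketch can be repaired along these lines, but as written the inductive hypotheses are insufficient.

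Second, and this is the genuine gap, your justification of the improved estimate \eqref{eq:comm2'} for $n=2$ is incorrect. The explicitness of $\mu^{(0),f}=\nab^{\otimes 2}h^f$ does not remove the need for the Poincar\'e step, because the problematic term at $n=2$ is still $-2\zg\,\p_i\tv\cdot\p_i\nu^{(1),f}$; testing it against $\ka^{(2),f}$ and integrating by parts again produces the undifferentiated $\ka^{(2),f}$ against $\nab^{\otimes 2}\tv$, so your scheme would only reproduce \eqref{eq:comm2} with its $\ell$-dependence, not the $\|\nab\tv\|_{L^\infty}^2$ bound. What is actually required is the identity of \cref{lem:Lka2}, a nontrivial algebraic rewriting of the \emph{entire} right-hand side of $L\ka^{(2),f}$ in divergence form with coefficients built only from components of $\nab\tv$, $\nu^{(1),f}$ and $\nab h^f$; then a single integration by parts, Cauchy--Schwarz, and the $n=1$ bound on $\nu^{(1),f}$ give \eqref{eq:comm2'} with no localization hypothesis. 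Without such a divergence-form identity (which, as the paper notes, is not known beyond $n=2$), the claimed sharper $n\le 2$ estimates do not follow from your argument.
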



\begin{remark}\label{rem:denskan}
By the same argument as in \cref{rem:denska1}, the estimate \eqref{eq:comm2} implies that $\ka^{(n),f}$ has a unique extension from $\Sc_0(\R^\d)$ to $\dot{H}^{\frac{\s-\d}{2}}(\R^\d)$. 
\end{remark}

{
Let us comment in more detail about the strategy of proof for \cref{thm:comm2}.

We already stated in \eqref{eq:introLkanf} the PDE obeyed by $\kappa^{(n),f}$. What we believe is true is that the right-hand side of \eqref{eq:introLkanf} can in addition be written in divergence form involving terms built from products of the components of $\nab v$ and $\nu^{(m)}$ for $m\le n-1$. More precisely, we conjecture
\begin{align}\label{eq:introLkanfdiv}
L\ka^{(n),f} = - n\div(\zg\nab v^{i}\nu_{i}^{(n-1),f}) +  \sum_{k=1}^{n} C_{k,n}\sum_{\sigma \in \Ss_{k+1}} (-1)^{|\sigma|} \p_{i_{\sigma(k+1)}}\Big(\zg \p_{i_{\sigma(1)}}v^{i_1}\cdots \p_{i_{\sigma(k)}}v^{i_k} \nu_{i_{k+1}}^{(n-k),f}\Big),
\end{align}
where the $C_{k,n}$ are certain combinatorial coefficients and $|\sigma|$ denotes the signature of the permutation. The reader will recall from the introduction that $\nu^{(m),f}$ is a vector field in $\R^{\d+\k}$ that morally is like $\nab \ka^{(m),f}$. The precise definition is given in \eqref{eq:nudef} below. Let us note that by testing \eqref{eq:introLkanfdiv} against $h^w$, using that
\begin{align}
\int_{(\R^\d)^2}(v(x)-v(y))\cdot\nab\g(x-y)df(x)dw(y) = \frac{1}{\cd}\int_{\R^{\d+\k}}L\ka^{(n),f}h^w,
\end{align}
and reversing the product rule, one arrives at a stress-energy tensor structure to the higher-order commutators. If such an identity \eqref{eq:introLkanfdiv} holds and one has a bound
\begin{align}\label{eq:introL2nu}
\int_{\R^{\d+\k}}\zg |\nu^{(m),f}|^2 \leq C_{m}(\|\nab v\|_{L^\infty})^{2m} \int_{\supp\nab v\times\R^\k}\zg|\nab h^f|^2, \qquad m\le n-1,
\end{align}
then it follows immediately from integration by parts and Cauchy-Schwarz that
\begin{align}\label{eq:introL2nabka}
\int_{\R^{\d+\k}}\zg |\nab \ka^{(n),f}|^2  \le C_{n}(\|\nab v\|_{L^\infty})^{2n}\int_{\supp\nab v\times\R^\k}\zg|\nab h^f|^2.
\end{align}
As the $\nu$'s obey a recursion in terms of the $\nab \ka$'s (see \eqref{recnu}), the estimate \eqref{eq:introL2nabka} implies that \eqref{eq:introL2nu} holds for $m=n$ and by induction, the estimates \eqref{eq:introL2nabka}, \eqref{eq:introL2nu} hold for any integer $n\ge 1$. 

Unfortunately, we are only able to prove the identity \eqref{eq:introLkanfdiv} for $n\le 2$ (see \Cref{lem:Lka1,lem:Lka2}), and the computation is already quite involved at second order. If we stay with the non-divergence form of the right-hand side of \eqref{eq:introLkanf}, then we encounter terms that have a factor of $\ka^{(n),f}$. For instance, 
\begin{align}
\int_{\R^{\d+\k}}\ka^{(n),f}\zg\p_i v \cdot \p_i\nu^{(n-1),f} = -\int_{\R^{\d+\k}}\zg\p_i\ka^{(n),f} \p_{i}v\cdot\nu^{(n-1),f}  -\int_{\R^{\d+\k}}\zg\ka^{(n),f}\p_i^2 v\cdot \nu^{(n-1),f}. 
\end{align}
Note there is no contribution from the weight, as $\p_{\d+\k}v = 0$. One cannot simply use Cauchy-Schwarz on the second term on the right-hand side, as in general, there is no way to control $\int_{\supp\nab v\times\R^\k}\zg|\ka^{(n),f}|^2$ by $\int_{\supp\nab v\times\R^\k}\zg|\nab\ka^{(n),f}|^2$. However, recalling that our primary interest is in localized estimates, there is a way out of this issue by exploiting the localization from the start, defining $\ka^{(n),f}$ in terms of a vector field extension $\tv$ which is 
localized in 
 a ball $B$ of radius $C\ell$ in $\R^{\d+\k}$ as in \cref{rem:vextell}.
 Since $L\ka^{(n),f}$ has zero average, we may equivalently test the equation \eqref{eq:introLkanf} against $\ka^{(n),f} -\bar{\ka}^{(n),f}$, where $\bar{\ka}^{(n),f}$ denotes the average in the ball $B$. The strategy is to integrate by parts and use Cauchy-Schwarz as before, setting up an induction argument for estimates satisfied by $\ka^{(m),f}, \nu^{(m),f}, \mu^{(m),f}$ for $m\le 2$ (see \cref{lem:inducknumu}). Crucially, the measure $\zg dxdz$ satisfies a Poincar\'{e} inequality in balls, which allows to control $\int_{B}\zg |\ka^{(n),f}-\bar{\ka}^{(n),f}|^2$ by $\int_{B}\zg |\nab\ka^{(n),f}|^2$. 
 Although this approach requires a bound for $\|\nab^{\otimes 2}v\|_{L^\infty}$, which is a stronger demand than the Lipschitz requirement that would follow if \eqref{eq:introLkanfdiv} holds, this is not problematic for applications, such as to CLTs for the fluctuations of Coulomb/Riesz gases.

}
 

\subsection{Finding the PDE solved by $\kappa^{(n),f}$}\label{ssec:L2regeqn}
{We pay our debt to the reader by proving the identity \eqref{eq:introLkanf} for $L\ka^{(n),f}$.}


First, we define the vector field $\nu^{(n),f}: \R^{\d+\k}\rightarrow \R^{\d+\k}$ specified by the components 
\begin{equation}\label{eq:nudef}
\nu_i^{(n),f}(\cdot) \coloneqq  \int_{\R^{\d}}  \partial_i\nab^{\otimes n}\G(\cdot-y) : (\tv(\cdot)-\tv(y))^{\otimes n} f(y)dy, \qquad i\in [\d+\k].
\end{equation}
The $\nu^{(n),f}$  are seen by integration by parts to satisfy the recursion
\begin{align}\label{recnu}
\nu^{(n),f}_i &=  \partial_i \kappa^{(n),f}-\int_{\R^\d}\Big( \partial_{i_1 }\dots \partial_{i_n} \G(\cdot-y) \prod_{k \in [n]\backslash \{l\} } ( \tv-\tv(y))^{i_k}  \Big)\partial_i \tv^{i_l} f(y) dy \nn\\
&=  \partial_i \kappa^{(n),f}- n\partial_i \tv\cdot \nu^{(n-1), f}.
\end{align}

Second, we define the matrix field $\mu:\R^{\d+\k} \rightarrow (\R^{\d+\k})^{\otimes 2}$ specified by the components 
\begin{equation}\label{eq:defmu}
\mu_{ij}^{(n), f}(\cdot) \coloneqq  \int_{\R^{\d}}  \partial_i \p_j \nab^{\otimes n} \G(\cdot-y) :  (\tv(\cdot) -\tv(y))^{\otimes n} f(y)dy, \qquad i,j\in [\d+\k].
\end{equation}
Evidently, $\mu$ is symmetric. The $\mu^{(n), f}$ also satisfy a recursion
 \begin{align}\label{recmu} 
 \mu_{ij}^{(n), f} &= \partial_i \nu_j^{(n), f}- n\int_{\R^{d}}  \p_j \p_k\nab^{\otimes n-1} \G(\cdot-y) : \p_i \tv^k \otimes ( \tv-\tv(y))^{\otimes (n-1)} f(y)dy \nn\\
&= \partial_i \nu_j^{(n), f}- n \p_i  \tv^k \mu_{jk}^{(n-1), f}.
 \end{align}
Remark that $\nu^{(0),f} = \nab h^f$ and $\mu^{(0),f} = \nab^{\otimes 2} h^f$.

\begin{lemma}\label{lem:Lkapnf}
For $f\in \mathcal{S}_0(\R^\d)$, we have
\begin{multline}\label{eqLk}
L\kappa^{(n), f} =\cd(-1)^n\sum_{\sigma \in \Ss_n} \p_{i_{\sigma_1}}\tl{v}^{i_1}\cdots\p_{i_{\sigma_n}}\tl{v}^{i_n}\tl{f}   -n\p_i(\zg\p_i \tv \cdot\nu^{(n-1),f}) - n\zg\p_i\tv \cdot \p_i\nu^{(n-1),f} \\
+ n(n-1)\zg(\p_i \tv)^{\otimes 2}:\mu^{(n-2),f}.
\end{multline}
\end{lemma}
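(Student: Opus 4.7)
The plan is to leverage the two recursions \eqref{recnu} and \eqref{recmu} in order to reduce $L\kappa^{(n),f}$ to computations involving only $\nu^{(n),f}$ and lower-order commutators. Multiplying $\p_i\kappa^{(n),f}=\nu^{(n),f}_i+n\p_i\tv\cdot\nu^{(n-1),f}$ by $\zg$ and applying $-\p_i$ (with implicit summation over $i$) gives
\begin{equation*}
L\kappa^{(n),f}= -\p_i\bigl(\zg\nu^{(n),f}_i\bigr)-n\,\p_i\bigl(\zg\p_i\tv\cdot\nu^{(n-1),f}\bigr).
\end{equation*}
The second summand is precisely the second term of the target identity, so the whole task reduces to evaluating $-\p_i(\zg\nu^{(n),f}_i)$.

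To do so, I would differentiate under the integral in the definition \eqref{eq:nudef} and apply the Leibniz rule with $V(x,z)\coloneqq\tv(x,z)-\tv(y)$, obtaining a sum of two integrals, one with the derivative $\p_i$ acting on $\zg\p_i\nab^{\otimes n}\G(\cdot-y)$ and one with $\p_i$ acting on $V^{\otimes n}$. The second, easier integral collapses via the full symmetry of $\nab^{\otimes(n+1)}\G$: the $n$ terms produced by $\p_iV^{\otimes n}=\sum_k \p_i V^{j_k}\otimes\bigotimes_{l\neq k}V^{j_l}$ are equal, yielding $n\,\zg\p_i\tv^j\mu^{(n-1),f}_{ij}$ by the definition \eqref{eq:defmu}. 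Rewriting $\mu^{(n-1),f}_{ij}$ via the recursion \eqref{recmu} and the symmetry of $\mu$ as $\p_i\nu^{(n-1),f}_j-(n-1)\p_i\tv^k\mu^{(n-2),f}_{jk}$ then produces the third and fourth terms of the claimed identity after accounting for the overall minus sign.

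The delicate step is the first integral, which yields the Dirac contribution. The key observation is that $\tv^{\d+\k}\equiv 0$ restricts all contractions to horizontal indices $j_1,\dots,j_n\in[\d]$, and such horizontal derivatives commute with $L=-\p_i(\zg\p_i\cdot)$ since $\zg$ depends only on $z$. Combined with the fundamental solution identity \eqref{eq:Gfs}, this gives the distributional identity $\sum_i\p_i\bigl(\zg\p_i\p_{j_1}\cdots\p_{j_n}\G(\cdot-y)\bigr)=-\cd\,\p_{j_1}\cdots\p_{j_n}\delta_{(y,0)}$. Testing against a smooth $\phi$ and noting that $V^{j_l}|_{(x,z)=(y,0)}=0$, any derivative falling on $\phi$ leaves a surviving $V^{j_l}$-factor which then vanishes, so only terms where the $n$ horizontal derivatives bijectively hit the $n$ factors $V^{j_k}$ survive. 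These are indexed by $\sigma\in\Ss_n$ and produce $\cd(-1)^n\sum_\sigma\p_{i_{\sigma_1}}\tv^{i_1}\cdots\p_{i_{\sigma_n}}\tv^{i_n}\,\tl f$ after pairing with $-f(y)dy$. Assembling this with the previous paragraph and the splitting above yields the claim.

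The main obstacle is rigorously justifying the distributional computation of the first integral. Because $\nab^{\otimes n}\G$ has a nonintegrable singularity of order $\s+n$ at the origin, the interchange of $\sum_i\p_i(\zg\p_i\cdot)$ with horizontal $\nab^{\otimes n}$ and the $y$-integral must be handled by duality against test functions, rather than pointwise. The decay bound \eqref{eq:nabmka1dcay} and the assumption $f\in\Sc_0(\R^\d)$ (with Fourier support away from the origin, as per \cref{rem:denska1}) ensure convergence of all integrals involved and control the boundary terms at infinity, so the identity is first verified for $(x,z)$ off $\supp f\times\{0\}^\k$ and then promoted to an identity in $\Sc'(\R^{\d+\k})$ by testing against arbitrary Schwartz $\phi$.
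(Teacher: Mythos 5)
Your identity and derivation are correct, and the argument is a clean variant of the paper's own computation rather than something substantially new. The paper applies $L$ directly under the integral defining $\kappa^{(n),f}$, which produces four terms (the Dirac term, $nL\tv\cdot\nu^{(n-1),f}$, $-2n\zg\nab\tv:\mu^{(n-1),f}$, and $-n(n-1)\zg(\p_i\tv)^{\otimes 2}:\mu^{(n-2),f}$) that are then recombined using the reverse product rule and the recursion \eqref{recmu}. You instead peel off the recursion \eqref{recnu} first, writing $\zg\,\p_i\kappa^{(n),f}=\zg\,\nu_i^{(n),f}+n\,\zg\,\p_i\tv\cdot\nu^{(n-1),f}$ and taking the divergence, so the divergence-form term of \eqref{eqLk} is manifest from the start and only one Leibniz expansion (on $-\p_i(\zg\nu_i^{(n),f})$) remains; this avoids ever forming the intermediate terms $nL\tv\cdot\nu^{(n-1),f}$ and the factor-$2$ mixed term, at the cost of no new ingredient: the Dirac contribution is extracted exactly as in the paper (horizontal indices because $\tv^{\d+\k}=0$, commutation of horizontal derivatives with $L$, the fundamental solution identity \eqref{eq:Gfs}, and the vanishing of $\tv-\tv(y)$ at the Dirac), and the remaining terms follow from \eqref{recmu} just as in the paper. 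Two cosmetic remarks: the recursion \eqref{recmu} already gives $\mu_{ij}^{(n-1),f}=\p_i\nu_j^{(n-1),f}-(n-1)\p_i\tv^k\mu_{jk}^{(n-2),f}$ directly, so no appeal to the symmetry of $\mu$ is needed; and the phrase ``after pairing with $-f(y)dy$'' is a slip — the pairing is with $+f(y)\,dy$, the factor $(-1)^n$ arising from moving the $n$ distributional derivatives off the Dirac — though the displayed formula you state is the correct one.
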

\begin{proof}
By definition of $\kappa^{(n), f}$, differentiating inside the integral, plus using the product rule, we compute
\begin{align}\label{compL}
L\kappa^{(n),f}  &=\int_{\R^{\d}}  L\Big(\nab^{\otimes n} \G(\cdot-y)\Big) :(\tl{v}-\tl{v}(y))^{\otimes n} f(y)dy \nn\\
&\ph+ n\int_{\R^{\d}} \nab^{\otimes n} \G(\cdot-y)  :L\tv \otimes (\tv -\tv(y))^{\otimes (n-1)} f(y)dy \nn\\
&\ph - n(n-1)\zg\int_{\R^{\d}} \nab^{\otimes n} \G(\cdot-y) : \p_i \tv\otimes \p_i \tv  \otimes (\tv -\tv(y))^{\otimes (n-2)} f(y) dy \nn\\
&\ph -2n\zg \int_{\R^{\d}}  \partial_i \nab^{\otimes n} \G(\cdot-y) : \p_i \tv  \otimes (\tv -\tv(y))^{\otimes (n-1)} f(y) dy.
\end{align}

In view of  \eqref{eq:Gfs}, we have for any $X=(x^1,\ldots,x^{\d+\k}),Y=(y^1,\ldots,y^{\d+\k})$ that
\begin{align}
-\div (  |x^{\d+\k}-y^{\d+\k}|^\ga \nab \G(X-Y))= \cd \delta_{X=Y}.
\end{align}
Specializing this identity to $y^{\d+\k}=0$, it follows that
\begin{align}
-\div (  |x^{\d+\k}|^\ga   \nab \G(X-y))=\cd \delta_{X=y},
\end{align}
i.e.~ $L\Big(\g(\cdot-y)\Big)(X)= \cd \delta_{X=y}$ if $y\in\R^\d$. Since $\tv^{\d+\k} = 0$ if $\k=1$ and $\p_1,\ldots,\p_\d$ commute with $L$, it follows that
\begin{align}
\int_{\R^{\d}}  L\Big(\nab^{\otimes n} \G(\cdot-y)\Big) :(\tl{v}-\tl{v}(y))^{\otimes n} f(y)dy  &= \int_{\R^{\d}}\nab^{\otimes n} (L\G)(\cdot-y):(\tl{v}-\tl{v}(y))^{\otimes n} f(y)dy \nn\\
&=\cd\int_{\R^{\d}} \nabla^{\otimes n}\delta_{y} : (\tl{v}-\tl{v}(y))^{\otimes n} f(y)dy \nn\\
&=\cd(-1)^n\sum_{\sigma \in \Ss_n} \p_{i_{\sigma_1}}\tv^{i_1}\cdots\p_{i_{\sigma_n}}\tv^{i_n}\tl{f},
\end{align}
where the final equality follows from the definition of the distributional derivative, the Leibniz rule, and the fact that $\tv-\tv(y)$ is zero on the support of $\delta_{y}$. 
 
Recalling the definition \eqref{eq:nudef} of $\nu^{(n),f}$, the second term on the right-hand side of \eqref{compL} is 
\begin{align}
n L\tv^i  \cdot \int_{\R^{\d}}  \p_i \nab^{\otimes n-1} \G(\cdot-y) :  (\tv -\tv(y))^{\otimes (n-1)} f(y)dy
 = nL\tv \cdot \nu^{(n-1),f}.
\end{align}

Recalling the definition \eqref{eq:defmu} of $\mu^{(n),f}$, fourth term on the right-hand side of \eqref{compL} is 
\begin{align}
& -2n\zg    \int_{\R^{\d}}  \partial_i \p_j \nab^{\otimes (n-1)} \G(\cdot-y) : \p_i \tv^j   (\tv -\tv(y))^{\otimes (n-1)} f(y)dy \nn\\
& = -2n\zg    \p_i \tv^j \int_{\R^{\d}}  \partial_i \p_j \nab^{\otimes (n-1)} \G(\cdot-y) :  (\tv -\tv(y))^{\otimes (n-1)} f(y)dy \nn\\
& = -2n\zg  \nab\tv : \mu^{(n-1),f}.
\end{align}
 
Similarly, the third term on the right-hand side of \eqref{compL} is
\begin{multline}
- n(n-1)\zg  \p_i \tv^j \p_i \tv^k \int_{\R^{\d}} \p_{j}\p_{k}\nab^{\otimes (n-2)} \G(\cdot-y) : (\tv -\tv(y))^{\otimes (n-2)} f(y)dy \\
 =-n(n-1)\zg  (\p_i\tv)^{\otimes 2} : \mu^{(n-2),f}.
\end{multline}

Assembling the prior relations, we arrive at
\begin{multline}\label{eq:Lkapfin'}
L\kappa^{(n),f}  = \cd(-1)^n\sum_{\sigma \in \Ss_n} \p_{i_{\sigma_1}}\tv^{i_1}\cdots\p_{i_{\sigma_n}}\tv^{i_n}\tl{f} + nL\tv \cdot \nu^{(n-1),f} -2n\zg  \nab\tv : \mu^{(n-1),f}\\
-n(n-1)\zg  (\p_i\tv)^{\otimes 2} : \mu^{(n-2),f}.
\end{multline}
Reversing the product rule,
\begin{align}\label{eq:Lkap'}
 nL\tv \cdot \nu^{(n-1),f} &= -n\p_i(\zg\p_i\tv \cdot \nu^{(n-1),f}) + n\zg\p_i\tv \cdot \p_i\nu^{(n-1),f},
 \end{align}
and using the recursion \eqref{recmu} for $\mu^{(n-1),f}$,
\begin{align}\label{eq:Lkap''}
 -2n\zg  \nab\tv : \mu^{(n-1),f} = -2n\zg\p_i\tv^j \Big(\p_i\nu_j^{(n-1),f} - (n-1)\p_i\tv^k\mu_{jk}^{(n-2),f}\Big).
\end{align}
Inserting \eqref{eq:Lkap'}, \eqref{eq:Lkap''} into \eqref{eq:Lkapfin'} and simplifying, we arrive at \eqref{eqLk}.

\end{proof}

\subsection{(Localized) $L^2$ commutator estimates}\label{ssec:L2regloc}
In this subsection, we prove the estimates \eqref{eq:comm2}, \eqref{eq:comm2dual} of \cref{thm:comm2}.

We first prove the estimate \eqref{eq:comm2}  by the standard method for the $L^2$ regularity of elliptic equations, i.e.~testing the  PDE \eqref{eqLk}  against its solution.

\begin{lemma}\label{lem:inducknumu}
Under the same assumptions as in \cref{thm:comm2}, for any $n\ge 1$,\footnote{ The claim is also true for $n=0$ provided that one replaces $\R^{\d+\k}$ in the left-hand side integrals by $\supp\nab\tv$.}
\begin{align}\label{induck}
\|\nab\ka^{(n),f}\|_{L_\ga^2(\R^{\d+\k})} &\le  C_n(\ell \|\nab^{\otimes 2}\tv\|_{L^\infty})^{n}\|\nab h^f\|_{L_\ga^2(\supp\nab\tv)},\\ \label{inducnu}
\|\nu^{(n),f}\|_{L_\ga^2(\R^{\d+\k})} &\le C_n(\ell \|\nab^{\otimes 2}\tv\|_{L^\infty})^{n}\|\nab h^f\|_{L_\ga^2(\supp\nab\tv)},
\end{align}
and for any matrix field $X = (X_{ij})_{i,j=1}^{\d+\k}$ such that $X_{(\d+\k)j}, X_{i(\d+\k)}$ are supported in $\supp \nab v \times \{\ell\le |z|\le 2\ell\}^\k $,
\begin{align}\label{inducmu}
\int_{\R^{\d+\k}}\zg \mu^{(n),f} : X \le  C_n(\ell \|\nab^{\otimes 2}\tv\|_{L^\infty})^{n}\|\nab h^f\|_{L_\ga^2(\supp\nab\tv)}  \Big(\|\nab  X\|_{L^2_{\gamma}(\R^{\d+\k})}   +\ell^{-1}  \|X\|_{L^2_{\gamma}(\R^{\d+\k})}  \Big).
\end{align}
Consequently, for any test function $\phi$ on $\R^{\d+\k}$,
\begin{align}\label{inducphi}
\int_{\R^{\d+\k}}\zg\nab\phi\cdot\nab\ka^{(n),f} \le C_n(\ell \|\nab^{\otimes 2}\tv\|_{L^\infty})^{n}\|\nab h^f\|_{L_\ga^2(\supp\nab\tv)} \|\nab\phi\|_{L_\ga^2(\supp\nab\tv)}.
\end{align}
\end{lemma}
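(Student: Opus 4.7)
The plan is to argue by induction on $n$, establishing \eqref{induck}, \eqref{inducnu}, and \eqref{inducmu} simultaneously at each step, with \eqref{inducphi} a corollary. The base case $n=0$ is essentially trivial because $\ka^{(0),f}=h^f$, $\nu^{(0),f}=\nab h^f$, and $\mu^{(0),f}=\nab^{\otimes 2}h^f$: the first two bounds are immediate, and \eqref{inducmu} follows by integrating $\int\zg\nab^{\otimes 2}h^f:X$ by parts once, and then handling the weight derivative $\p_{\d+\k}\zg=\ga|z|^{\ga-1}\sgn(z)$ using the annular support of $X_{(\d+\k)j},X_{i(\d+\k)}$, on which $|z|^{\ga-1}\le\ell^{-1}|z|^\ga$ so that the singular contribution is absorbed into an $\ell^{-1}\|X\|_{L^2_\ga}$ term.

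For the inductive step, the heart of the matter is \eqref{induck}. Let $\tilde B$ denote the ball of radius $\ell$ containing $\supp\nab\tv$ and $\bar\ka^{(n),f}$ the average of $\ka^{(n),f}$ over $\tilde B$. Using the decay \eqref{eq:nabmka1dcay} to justify integration by parts at infinity, together with $\int L\ka^{(n),f}=0$ (which permits subtracting the constant $\bar\ka^{(n),f}$), we test the identity \eqref{eqLk} of \cref{lem:Lkapnf} against $\ka^{(n),f}-\bar\ka^{(n),f}$ to obtain $\int\zg|\nab\ka^{(n),f}|^2=\int L\ka^{(n),f}(\ka^{(n),f}-\bar\ka^{(n),f})$. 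Each of the four terms on the right-hand side of \eqref{eqLk} is then bounded: for the $\tl f$ term we use $\cd\tl f=Lh^f$ and integrate by parts again, with Cauchy-Schwarz handling the piece paired with $\prod\p\tv\,\nab\ka^{(n),f}$ and the Poincar\'e inequality for the $A_2$ weight $\zg$ on $\tilde B$ giving $\|\ka^{(n),f}-\bar\ka^{(n),f}\|_{L^2_\ga(\tilde B)}\le C\ell\|\nab\ka^{(n),f}\|_{L^2_\ga(\tilde B)}$ for the remaining piece; the divergence term $-n\p_i(\zg\p_i\tv\cdot\nu^{(n-1),f})$ yields after integration by parts a Cauchy-Schwarz bound using the induction hypothesis \eqref{inducnu}; and the two terms involving $\p_i\nu^{(n-1),f}$ (expanded via \eqref{recmu}) and $\mu^{(n-2),f}$ are recast as $\int\zg Y:\mu^{(m),f}$ for $m\in\{n-1,n-2\}$, with $Y$ built from products of $\p\tv$ and $\ka^{(n),f}-\bar\ka^{(n),f}$. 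The induction hypothesis \eqref{inducmu} then applies, the required support condition on the $(\d+\k)$-components of $Y$ being automatic because $\tv^{\d+\k}=0$ and, for the localized extension of \cref{rem:vextell}, $\p_{\d+\k}\tv^j$ is supported in $\{\ell\le|z|\le 2\ell\}$. Every stray factor of $\|\nab\tv\|_{L^\infty}$ is converted to $C\ell\|\nab^{\otimes 2}\tv\|_{L^\infty}$ via the mean-value bound on $\tv$ in $\tilde B$, and absorbing one power of $\|\nab\ka^{(n),f}\|_{L^2_\ga}$ produces \eqref{induck}.

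The bounds \eqref{inducnu} and \eqref{inducmu} at order $n$ now follow quickly: \eqref{recnu} gives directly $\|\nu^{(n),f}\|_{L^2_\ga}\le\|\nab\ka^{(n),f}\|_{L^2_\ga}+n\|\nab\tv\|_{L^\infty}\|\nu^{(n-1),f}\|_{L^2_\ga}$, while for \eqref{inducmu}, testing \eqref{recmu} against $X$ yields a divergence piece paired with $\nu^{(n),f}$ (bounded via the freshly established \eqref{induck}, with the annular support of $X$ again controlling the weight derivative) and a piece $-n\int\zg\p_i\tv^k\mu^{(n-1),f}_{jk}X_{ij}=\int\zg Y:\mu^{(n-1),f}$ with $Y_{jk}=\sum_i\p_i\tv^k X_{ij}$, whose $(\d+\k)$-support property is inherited from $X$ and $\tv^{\d+\k}=0$. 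Finally, \eqref{inducphi} follows from the same PDE-testing argument applied to $\phi-\bar\phi$ in place of $\phi$. The main subtlety is that \eqref{eqLk} contains non-divergence-form terms $-n\zg\p_i\tv\cdot\p_i\nu^{(n-1),f}$ and $n(n-1)\zg(\p_i\tv)^{\otimes 2}:\mu^{(n-2),f}$ which cannot be integrated by parts to land solely on $\nab\ka^{(n),f}$; they produce undifferentiated $\ka^{(n),f}$ factors which must be absorbed via Poincar\'e on $\tilde B$. This is precisely what produces the factor $\ell^n$ in \eqref{induck} and forces the requirement $\|\nab^{\otimes 2}\tv\|_{L^\infty}<\infty$ in lieu of mere Lipschitz regularity of $\tv$, as well as the boundedness of $\supp\nab\tv$.
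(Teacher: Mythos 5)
Your proposal is correct and follows essentially the same route as the paper's proof: the same identity for $L\ka^{(n),f}$ from \cref{lem:Lkapnf} tested against the mean-subtracted commutator, the same weighted Poincar\'e inequality on the ball of radius $C\ell$ containing $\supp\nab\tv$, and the same simultaneous induction on the bounds for $\ka^{(n),f}$, $\nu^{(n),f}$, $\mu^{(n),f}$ via the recursions \eqref{recnu}, \eqref{recmu}. The only (harmless) deviations are starting the induction at $n=0$ instead of at $n=1$ via \cref{prop:comm}, and absorbing the term $-n\zg\p_i\tv\cdot\p_i\nu^{(n-1),f}$ through the recursion \eqref{recmu} and the inductive $\mu$-bound, whereas the paper reverses the product rule to produce $L\tv\cdot\nu^{(n-1),f}$ and uses the assumption $\||z|^{-\ga}L\tv\|_{L^\infty}\le C\|\nab^{\otimes 2}\tv\|_{L^\infty}$ together with the inductive $\nu$-bound; both variants yield the same constant $(\ell\|\nab^{\otimes 2}\tv\|_{L^\infty})^{n}$.
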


\begin{proof}
We prove the lemma by induction on $n\ge 1$.

Let us start with the base case $n={1}$.  The inequality \eqref{induck} is implied by \eqref{eq:comm} of \cref{prop:comm}. Applying the recursion \eqref{recnu} for $\nu$ and recalling $\nu^{(0),f} = \nab h^f$, we have by triangle inequality that
\begin{align}
\|\nu^{(1),f}\|_{L_\ga^2} \le  \|\nab\kappa^{(1),f}\|_{L_\ga^2} + \|\nab \tv\cdot \nab h^f\|_{L_\ga^2} &\le C\|\nab\tv\|_{L^\infty} \|\nab h^f\|_{L_\ga^2(\supp\nab\tv)}.\label{eq:nu1L2}
\end{align}
This implies \eqref{inducnu} for $n=1$. Finally, using the recursion \eqref{recmu} for $\mu$, the triangle inequality, and recalling that $\mu^{(0),f} = \nab^{\otimes 2}h^f$, 
\begin{align}
\int_{\R^{\d+\k}}\zg \mu^{(1),f} : X = \int_{\R^{\d+\k}}\zg\partial_i \nu_j^{(1), f} X_{ij} - \int_{\R^{\d+\k}}\zg \p_i  \tv^k \p_{j}\p_{k}h^{f} X_{ij}.
\end{align}
Integrating by parts,
\begin{align}
\int_{\R^{\d+\k}}\zg\partial_i \nu_j^{(1), f} X_{ij} = -\int_{\R^{\d+\k}}\zg\nu_j^{(1),f}\p_i X_{ij} - \ga\delta_{i(\d+\k)}\int_{\R^{\d+\k}}z|z|^{\ga-2}\nu_j^{(1),f}X_{ij}.
\end{align}
By Cauchy-Schwarz and \eqref{eq:nu1L2},
\begin{align}
\int_{\R^{\d+\k}}\zg|\nu_j^{(1),f}\p_i X_{ij}| \le \|\nu_j^{(1),f}\|_{L_\ga^2} \|\p_i X_{ij}\|_{L_\ga^2} \le C\|\nab\tv\|_{L^\infty}\|\nab h^f\|_{L_\ga^2(\supp\nab\tv)}\|\nab X\|_{L_\ga^2}.
\end{align}
Similarly, also using the support hypothesis for $X$,
\begin{align}
\delta_{i(\d+\k)}\int_{\R^{\d+\k}}|z|z|^{\ga-2}\nu_j^{(1),f}X_{ij}| &\le C\ell^{-1} \|\nu_j^{(1),f}\|_{L_\ga^2} \|X_{(\d+\k)j}\|_{L_\ga^2} \nn\\
&\le C\ell^{-1}\|\nab\tv\|_{L^\infty}\|\nab h^f\|_{L_\ga^2(\supp\nab\tv)}\|X\|_{L_\ga^2}.
\end{align}
Integrating by parts $\p_k$ and using that $\tv^{\d+\k} = 0$ if $\k=1$,
\begin{align}
 - \int_{\R^{\d+\k}}\zg \p_i  \tv^k \p_{j}\p_{k}h^{f} X_{ij} = \int_{\R^{\d+\k}}\zg \p_i\p_k\tv^k \p_j h^{f} X_{ij}  +  \int_{\R^{\d+\k}}\zg \p_i\tv^k \p_j h^f \p_k X_{ij}. 
\end{align}
By Cauchy-Schwarz,
\begin{align}
\int_{\R^{\d+\k}}\zg |\p_i\p_k\tv^k \p_j h^{f} X_{ij}| \le C\|\nab^{\otimes 2}\tv\|_{L^\infty} \|\nab h^f\|_{L_\ga^2(\supp\nab\tv)} \|X\|_{L_\ga^2(\supp\nab\tv)}, \\
 \int_{\R^{\d+\k}}\zg |\p_i\tv^k \p_j h^f \p_k X_{ij}| \le C \|\nab \tv\|_{L^\infty} \|\nab h^f\|_{L_\ga^2(\supp\nab\tv)} \|\nab X\|_{L_\ga^2(\supp\nab\tv)}.
\end{align}
Combining the preceding relations, we arrive at
\begin{multline}
\int_{\R^{\d+\k}}\zg \mu^{(1),f} : X  \le C\|\nab h^f\|_{L_\ga^2(\supp\nab\tv)}\Big(\|\nab\tv\|_{L^\infty}\|\nab X\|_{L_\ga^2} +\ell^{-1}\|\nab\tv\|_{L^\infty}\|X\|_{L_\ga^2}\\
+ \|\nab^{\otimes 2}\tv\|_{L^\infty}  \|X\|_{L_\ga^2(\supp\nab\tv)}  +   \|\nab\tv\|_{L^\infty}  \|\nab X\|_{L_\ga^2(\supp\nab\tv)}\Big),
\end{multline}
which establishes \eqref{inducmu} for $n=1$. 

\medskip
Let us now assume that \eqref{induck}, \eqref{inducnu}, \eqref{inducmu} hold up to order $n-1$. Let $\phi$ be a test function on $\R^{\d+\k}$, and let $\bar\phi$ denote the average of $\phi$ over $\tl{B}$, which we remind the reader is a ball of radius $C\ell$ in $\R^{\d+\k}$ containing $\supp\nab\tv$. Integrating \eqref{eqLk} against $\phi-\bar\phi$ and replacing $\tl f$ by $\frac{1}{\cd}Lh^f$, we obtain 
\begin{multline}\label{petb}
\int_{\R^{\d+\k} } (\phi-\bar\phi)  L\kappa^{(n), f} =(-1)^n\sum_{\sigma \in \Ss_n} \int_{\R^{\d+\k}} \p_{i_{\sigma_1}}\tv^{i_1}\cdots\p_{i_{\sigma_n}}\tv^{i_n}  (\phi-\bar\phi)  Lh^f\\ 
 - n\int_{\R^{\d+\k}}\p_i(\zg\p_i\tv \cdot\nu^{(n-1),f})(\phi -\bar\phi)  +n(n-1)\int_{\R^{\d+\k}}\zg (\p_i \tv)^{\otimes 2} : \mu^{(n-2),f} (\phi-\bar\phi) \\
-n\int_{\R^{\d+\k}} \zg\p_i\tv \cdot \p_i\nu^{(n-1),f} (\phi-\bar\phi).
\end{multline}

Integrating by parts and using that $\nab\ka^{(n),f}$ decays like $|(x,z)|^{-\s-n-1}$ at infinity, the left-hand side of \eqref{petb} equals $\int_{\R^{\d+\k}}\zg\nab\phi\cdot\nab\ka^{(n),f}$. Further integrating by parts in the right-hand side of \eqref{petb} and using the triangle inequality, we obtain
\begin{align}
\int_{\R^{\d+\k} }\zg \nab\phi\cdot\nab\ka^{(n),f} & \le C_n\int_{\supp \nab \tv} \zg |\nab \tv|^n  |\nab\phi| | \nab h^f| \nn\\  
 &\ph + C_n\int_{\supp \nab \tv} \zg |\nab^{\otimes2}\tv| |\nab\tv|^{n-1}  |\phi-\bar\phi | | \nab h^f| \nn\\  
&\ph  +C_n\int_{\R^{\d+\k}}   \zg |\nab \tv| |\nu^{(n-1),f}| |\nab\phi| \nn\\
&\ph + C_n\int_{\R^{\d+\k}} |L\tv| |\nu^{(n-1),f}|  |\phi-\bar\phi| \nn\\
&\ph + C_n \Big|\int_{\R^{\d+\k}}\zg (\p_i \tv)^{\otimes 2} : \mu^{(n-2),f} (\phi-\bar\phi)\Big|. \label{eq:nabknpre}
\end{align}
Suppose that the estimates \eqref{induck}, \eqref{inducnu}, \eqref{inducmu} hold up to $n-1$.

By Cauchy-Schwarz and the induction hypothesis \eqref{inducnu} for $\nu^{(n-1),f}$,
\begin{align}
\int_{\supp \nab \tv} \zg |\nab \tv|^n  |\nab\phi|| \nab h^f| \le C_n \|\nab \tv\|_{L^\infty}^n \|\nab\phi\|_{L_\ga^2} \|\nab h^f\|_{L_\ga^2(\supp\nab\tv)}, \label{eq:nabknpre1}
 \end{align}
 and
 \begin{align}
\int_{\R^{\d+\k}}   \zg |\nab \tv| |\nu^{(n-1),f}| |\nab \phi| &\le C_n\|\nab\tv\|_{L^\infty} \|\nu^{(n-1),f}\|_{L_\ga^2} \|\nab\phi\|_{L_\ga^2} \nn\\
 &\le C_n(\ell \|\nab^2 \tv\|_{L^\infty})^{n} \|\nab h^f\|_{L_\ga^2(\supp\nab\tv)}\|\nab\phi\|_{L_\ga^2}. \label{eq:nabknpre2}
\end{align}

For the remaining terms in \eqref{eq:nabknpre},
we recall the Poincar\'e inequality in $L_\ga^2$ \cite[Theorem 1.5]{FKS1982}
\begin{equation}\label{poincare}
\Big(\int_{B} \zg |\phi -\bar\phi|^2\Big)^{1/2} \le C \ell\Big(\int_{B} \zg |\nab\phi|^2\Big)^{1/2}.
\end{equation}
Note this inequality is valid because our $\ga\in (-1,1)$ (recall the definition \eqref{eq:defgamma}) and therefore $|z|^{\ga}$ is an $A_2$ weight.

Together with the Cauchy-Schwarz inequality and the induction hypothesis \eqref{inducnu} for $\nu^{(n-1),f}$, \eqref{poincare} implies that
\begin{align}
\int_{\R^{\d+\k}} |L\tv| |\nu^{(n-1),f}|  | \phi-\bar\phi| &\le C_n \||z|^{-\ga} L\tv\|_{L^\infty} \|\nu^{(n-1),f}\|_{L_\ga^2}\Big(\int_{B} \zg |\phi -\bar\phi|^2\Big)^{1/2} \nn\\
 &\le C_n\|\nab^{\otimes 2}\tv\|_{L^\infty} (\ell \|\nab^{\otimes 2}\tv\|_{L^\infty})^{n-1} \|\nab h^f\|_{L_\ga^2(\supp\nab\tv)} \ell\|\nab\phi\|_{L_\ga^2(B)} \nn\\
 &= C_n(\ell \|\nab^{\otimes 2}\tv\|_{L^\infty})^{n}\|\nab h^f\|_{L_\ga^2(\supp\nab\tv)}  \|\nab\phi\|_{L_\ga^2(B)}, \label{eq:nabknpre3}
\end{align}
where we have implicitly used the hypothesis that $\||z|^{-\ga}L\tv\|_{L^\infty} \le C\|\nab^{\otimes 2}\tv\|_{L^\infty}$. Finally, we note that $\mu^{(n-2), f}$ is tested against the tensor
\begin{align}
X_{jk} \coloneqq \p_i \tv^j\p_i\tv^k(\phi-\bar\phi), 
\end{align}
whose $(\d+\k,k)$ or $(j,\d+\k)$ components are zero if $\k=1$, and which satisfies the bounds (by consequence of \eqref{poincare})
\begin{align}
\|X\|_{L_\ga^2} \le  \ell\|\nab \tv\|_{L^\infty}^2 \|\nab\phi\|_{L_\ga^2},\label{eq:XLga2}\\
\|\nab X\|_{L_\ga^2} \le \|\nab\tv\|_{L^\infty}(\ell\|\nab^{\otimes 2}v\|_{L^\infty}) \|\nab\phi\|_{L_\ga^2}. \label{eq:nabXLga2}
\end{align}
Thus, we may use the induction hypothesis \eqref{inducmu} for $\mu^{(n-2),f}$ together with Cauchy-Schwarz and \eqref{poincare} to obtain
\begin{align}
&\Big|\int_{\R^{\d+\k}}\zg (\p_i \tv)^{\otimes 2} : \mu^{(n-2),f} (\phi-\bar\phi)\Big| \nn\\
&\le C_n (\ell\|\nab^{\otimes 2}\tv\|_{L^\infty})^{n-2}\|\nab h^f\|_{L_\ga^2(\supp\nab\tv)}(\|\nab X\|_{L_\ga^2} + \ell^{-1}\|X\|_{L_\ga^2})\nn\\
&\le C_n (\ell\|\nab^{\otimes 2}\tv\|_{L^\infty})^{n-2}\|\nab h^f\|_{L_\ga^2(\supp\nab\tv)} \Big( \|\nab\tv\|_{L^\infty}(\ell\|\nab^{\otimes 2}\tv\|_{L^\infty})\|\nab\phi\|_{L_\ga^2} \nn\\
&\ph\qquad+ \ell^{-1}(\ell\|\nab\tv\|_{L^\infty}^2 \|\nab\phi\|_{L_\ga^2})\Big) \nn\\
&\le C_n(\ell\|\nab^{\otimes 2}\tv\|_{L^\infty})^{n}\|\nab h^f\|_{L_\ga^2(\supp\nab\tv)}  \|\nab\phi\|_{L_\ga^2}.\label{eq:nabknpre4}
\end{align}

Combining the estimates \eqref{eq:nabknpre1}, \eqref{eq:nabknpre2}, \eqref{eq:nabknpre3}, \eqref{eq:nabknpre4}, we arrive at
\begin{multline}\label{eq:nabknffin}
\int_{\R^{\d+\k} }\zg \nab\phi \cdot \nab \ka^{(n),f} \le  C_n \|\nab\tv\|_{L^\infty}^n \|\nab\phi\|_{L_\ga^2} \|\nab h^f\|_{L_\ga^2(\supp\nab\tv)}\\
+C_n(\ell \|\nab^{\otimes 2}\tv\|_{L^\infty})^{n} \|\nab h^f\|_{L_\ga^2(\supp\nab\tv)}\|\nab\phi\|_{L_\ga^2}.
\end{multline}
This shows that \eqref{inducphi} holds at order $n$, assuming that \eqref{induck}, \eqref{inducnu}, \eqref{inducmu} hold up to order $n-1$. In particular, taking $\phi = \ka^{(n),f}$, dividing both sides by $\|\nab\ka^{(n),f}\|_{L_\ga^2}$, and simplifying the resulting right-hand side, using \eqref{bornetv}, yields \eqref{induck} at order $n$. 

\medskip

Using the recursion \eqref{recnu} and the triangle inequality, we deduce from \eqref{eq:nabknffin} that 
\begin{align}
\|\nu^{(n),f}\|_{L_\ga^2} &\le \|\nab\ka^{(n),f}\|_{L_\ga^2} + C_n \|\nab \tv\|_{L^\infty}  \|\nu^{(n-1),f}\|_{L_\ga^2} \nn\\
&\le C_n(\ell\|\nab^{\otimes 2}\tv\|_{L^\infty})^{n}\|\nab h^f\|_{L_\ga^2(\supp\nab\tv)}  + C_n\|\nab \tv\|_{L^\infty}(\ell\|\nab^{\otimes 2}\tv\|_{L^\infty})^{n-1} \|\nab h^f\|_{L_\ga^2(\supp\nab\tv)} \nn\\
&\le C_n (\ell\|\nab^{\otimes 2}\tv\|_{L^\infty})^n\|\nab h^f\|_{L_\ga^2(\supp\nab\tv)},\label{eq:nunffin}
\end{align}
where we also use \eqref{bornenabtv} and the induction hypothesis \eqref{inducnu} for $\nu^{(n-1),f}$ in obtaining the second inequality. This shows that \eqref{inducnu} holds at order $n$.

Finally, let $X=(X_{ij})_{i,j=1}^{\d+\k}$ be a matrix field satisfying the support condition in the statement of the theorem. Using the recursion \eqref{recmu} for $\mu^{(n),f}$,
\begin{align}
\int_{\R^{\d+\k}} \zg \mu^{(n), f} :X =
\int_{\R^{\d+\k}} \zg \partial_i \nu_j^{(n),f}X_{ij}- n  \int_{\R^{\d+\k}} \zg  \partial_i \tv^k \mu_{jk}^{(n-1),f}X_{ij}. \label{eq:munXpre}
\end{align}
Integrating by parts $\p_i$ and using Cauchy-Schwarz, the support assumption for $X$, and \eqref{eq:nunffin},
\begin{align}
\int_{\R^{\d+\k}} \zg \partial_i \nu_j^{(n),f}X_{ij} &= -\delta_{(\d+\k)i}\ga \int_{\R^{\d+\k}}z |z|^{\ga-1} \nu_j^{(n),f} X_{ij} - \int_{\R^{\d+\k}}\zg \nu_j^{(n),f}\p_i X_{ij} \nn\\
&\le C \ell^{-1} \|\nu^{(n),f}\|_{L_\ga^2} \|X\|_{L_\ga^2} + \|\nu^{(n),f}\|_{L_\ga^2} \|\nab X\|_{L_\ga^2} \nn\\
&\le C(\ell\|\nab^{\otimes 2}\tv\|_{L^\infty})^{n} (\ell^{-1} \|X\|_{L_\ga^2}+ \|\nab X\|_{L_\ga^2}).  \label{eq:munffin'}
\end{align}
Next, define the tensor
\begin{align}
\tl{X}_{jk} \coloneqq \p_i\tv^k X_{ij}, \qquad j,k\in[\d+\k],
\end{align}
which satisfies the support condition because $X_{i(\d+\k)}$ is supported in $\supp\nab v\times \{\ell \le |z| \le 2\ell\}^\k$ and $\tv^{\d+\k} = 0$ if $\k=1$. Moreover, $\tl{X}$ satisfies
\begin{align}
\|\tl{X}\|_{L_\ga^2} \le \|\nab \tv\|_{L^\infty} \|X\|_{L_\ga^2}, \label{eq:tXLga2}\\
\|\nab \tl{X}\|_{L_\ga^2} \le \|\nab^{\otimes 2}\tv\|_{L^\infty}\|X\|_{L_\ga^2} + \|\nab \tv\|_{L^\infty} \|\nab X\|_{L_\ga^2}. \label{eq:nabtXLga2}
\end{align}
Therefore, we may apply the induction hypothesis for $\mu^{(n-1),f}$ to obtain
\begin{align}
- n  \int_{\R^{\d+\k}} \zg  \partial_i \tv^k \mu_{jk}^{(n-1),f}X_{ij} &= -n\int_{\R^{\d+\k}} \zg \mu^{(n-1),f} : \tl{X} \nn\\
&\le C_n   (\ell \|\nab^{\otimes2} v\|_{L^\infty})^{n-1} \|\nab h^f\|_{L_\ga^2(\supp\nab\tv)}(\|\nab  \tl{X}\|_{L^2_{\gamma}}   +\ell^{-1}\|\tl X\|_{L^2_{\gamma}}) \nn\\
&\le C_n (\ell \|\nab^{\otimes2} \tv\|_{L^\infty})^{n}\|\nab h^f\|_{L_\ga^2(\supp\nab\tv)}(\|\nab{X}\|_{L^2_{\gamma}}   +\ell^{-1}\| X\|_{L^2_{\gamma}}), \label{eq:munffin}
\end{align}
where the final inequality follows from applying the bounds \eqref{eq:tXLga2}, \eqref{eq:nabtXLga2} to the second line. Applying \eqref{eq:munffin'}, \eqref{eq:munffin} to our starting point \eqref{eq:munXpre} shows \eqref{inducmu} at order $n$. With this last estimate, the proof of the induction step---and therefore, the proof of the lemma---is complete.
\end{proof}

\cref{lem:inducknumu} takes care of the estimate \eqref{eq:comm2}. For \eqref{eq:comm2dual}, we write the left-hand side as
\begin{align}
\frac{1}{\cd}\int_{\R^{\d+\k}}\ka^{(n),f}Lh^w &= \frac{1}{\cd}\int_{\R^{\d+\k}}\nab\ka^{(n),f}\cdot \nab h^w,
\end{align}
where we have integrated by parts once to obtain the right-hand side. Appealing to the estimate \eqref{inducphi} with $\phi = h^w$ then yields the desired \eqref{eq:comm2dual}.

\subsection{An improved estimate for $n\le 2$}\label{ssec:L2regimp}

In this subsection, we prove the estimates \eqref{eq:comm2'}, \eqref{eq:comm2dual'} of \cref{thm:comm2}. These estimates improve upon \eqref{eq:comm2}, \eqref{eq:comm2dual} for $n\le 2$, in that they only depend on $\|\nab\tv\|_{L^\infty}$ and they require no assumption of localization for the transport $\tv$. The $n=1$ case has already been treated in \cref{ssec:FOcst}, so we only consider $n=2$.

We start by massaging the right-hand side of $L\ka^{(2),f}$ into divergence form---a rather involved exercise with the product rule and the recursions for $\ka,\nu,\mu$.

\begin{lemma}\label{lem:Lka2}
For any $f\in \Sc_0(\R^\d)$, we have 
\begin{multline}\label{eq:Lkap2f}
L\kappa^{(2),f} = - 2\p_{i_1}(\zg \p_{i_1}\tv^{i_2}\nu_{i_2}^{(1),f}) -2\p_{i_2}(\zg \p_{i_1}\tv^{i_2}\nu_{i_1}^{(1),f})+2\p_{i_1}(\zg \p_{i_2}\tv^{i_2}\nu_{i_1}^{(1),f})\\
-2 \p_{i_2}\Big(\zg\p_{i_1}\tv^{i_1} \p_{i_3}\tv^{i_2}\p_{i_3}h^f\Big)  +2\p_{i_2}\Big(\zg\p_{i_3}\tv^{i_1}\p_{i_1}v^{i_2}\p_{i_3}h^{f}\Big)  \\
+ \p_{i_3}\Big(\zg\p_{i_1}\tv^{i_1}\p_{i_2}\tv^{i_2}\p_{i_3}h^f\Big) - \p_{i_3}\Big(\zg\p_{i_2}\tv^{i_1}\p_{i_1}\tv^{i_2}\p_{i_3}h^f\Big).
\end{multline}
\end{lemma}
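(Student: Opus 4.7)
The plan is to start from the general identity \eqref{eqLk} in \cref{lem:Lkapnf} specialized to $n=2$ and put its right-hand side into pure divergence form via reverse integration by parts. Using $\mu^{(0),f}=\nab^{\otimes 2}h^f$ and expanding the permutation sum, \eqref{eqLk} becomes
$$L\kappa^{(2),f} = \cd(A+B)\tl f - 2\p_i(\zg\p_i\tv^j\nu_j^{(1),f}) - 2\zg\p_i\tv^j\p_i\nu_j^{(1),f} + 2\zg\p_i\tv^j\p_i\tv^k\p_j\p_k h^f,$$
with $A:=\p_{i_1}\tv^{i_1}\p_{i_2}\tv^{i_2}$ and $B:=\p_{i_2}\tv^{i_1}\p_{i_1}\tv^{i_2}$. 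The second term is already in the first position of the target identity after relabeling $i\leftrightarrow i_1$, $j\leftrightarrow i_2$; the remaining three must be converted.

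The main tools are: (1) the fundamental-solution identity $\cd\tl f = Lh^f = -\p_{i_3}(\zg\p_{i_3}h^f)$, used to convert $\tl f$ into divergence form plus a residual via the reverse product rule, and (2) the reverse product rule applied to the $\p_i\nu_j^{(1),f}$ term,
$$-2\zg\p_i\tv^j\p_i\nu_j^{(1),f} = -2\p_i(\zg\p_i\tv^j\nu_j^{(1),f}) + 2\p_i(\zg\p_i\tv^j)\nu_j^{(1),f},$$
and to the fourth term, $2\zg\p_i\tv^j\p_i\tv^k\p_j\p_k h^f = 2\p_j(\zg\p_i\tv^j\p_i\tv^k\p_k h^f) - 2\p_j(\zg\p_i\tv^j\p_i\tv^k)\p_k h^f$. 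Each application generates one divergence piece plus a non-divergence residual whose schematic structure is $\zg(\nab^{\otimes 2}\tv)(\nab\tv)(\nab h^f)$ or $(\text{factor})\cdot\nu_j^{(1),f}$. For the latter, I would substitute the recursion $\nu_j^{(1),f} = \p_j\kappa^{(1),f}-\p_j\tv^k\p_k h^f$, derived by differentiating the $n=1$ version of \eqref{eq:kandef} under the integral sign, to convert everything into $(\nab h^f)$- or $(\nab\kappa^{(1),f})$-type residuals; a further reverse integration by parts handles the latter.

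The final and most delicate step is to show that all surviving residuals cancel in pairs, leaving only the seven divergence-form terms of the target. Here one exploits (i) the identity $\tv^{\d+\k}=0$ (when $\k=1$), which kills every contribution carrying a $\p_{\d+\k}\tv^{\d+\k}$ factor as well as the weight-derivative boundary term $\ga z|z|^{\ga-2}\p_{\d+\k}\tv^{\d+\k}$ produced by $\p_{\d+\k}(\zg\cdot)$; (ii) the structural symmetry $A\leftrightarrow B$ under the swap $i_1\leftrightarrow i_2$; and (iii) systematic relabelling of dummy indices to a canonical pattern. The expected main obstacle is this purely combinatorial step: there are three free summation indices and several alternative reverse-integration paths, so sign tracking demands care (the three pieces coming from the $\tl f$-reduction, the $\p_i(\zg\p_i\tv^j)\nu_j^{(1),f}$-residual, and the $\p_j\p_k h^f$-reduction must be allocated among the last four terms of the target with compatible signs). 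No new analytic input is required, since the decay estimate \eqref{eq:nabmka1dcay} already justifies every distributional manipulation and no boundary terms at infinity appear. An efficient organizing heuristic for the verification is to group the terms by which of $\p_{i_1},\p_{i_2},\p_{i_3}$ appears as the outermost derivative in the divergence piece, since each group can then be matched against the corresponding subset of target terms independently.
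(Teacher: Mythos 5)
Your plan is correct and is essentially the paper's own proof: both start from the $n=2$ case of \eqref{eqLk}, expand the permutation sum and $\mu^{(0),f}=\nab^{\otimes 2}h^f$, rewrite $\cd\tl f=-\p_{i_3}(\zg\p_{i_3}h^f)$, use the recursion $\nu^{(1),f}_j=\p_j\ka^{(1),f}-\p_j\tv^k\p_k h^f$ together with reverse product rules and the vanishing of $\tv^{\d+\k}$ to kill weight-derivative terms, and then track cancellations of the non-divergence residuals. The paper's execution of your "further reverse integration by parts" step is precisely its use of the divergence-form identity \eqref{eq:Lka1} for $L\ka^{(1),f}$ (in the form \eqref{eq:Lnu1f}) to absorb the $\p\tv\,\div(\zg\nu^{(1),f})$ residual, so your outline matches the paper's argument up to carrying out the (admittedly lengthy) index bookkeeping you defer.
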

\begin{proof}
By \cref{lem:Lkapnf}, we have
\begin{multline}\label{eq:Lkappa2sp}
L\ka^{(2),f} = \cd \Big(\p_{i_{1}}\tv^{i_1} \p_{i_2}\tv^{i_2} + \p_{i_2}\tv^{i_1}\p_{i_1}\tv^{i_2}\Big)\tl{f} - 2\p_{i_1}(\zg \p_{i_1}\tv^{i_2}\nu_{i_2}^{(1),f})\\
-2\zg \p_{i_1}\tv^{i_2} \p_{i_1}\nu_{i_2}^{(1),f} + 2\zg \p_{i_1}\tv^{i_2}\p_{i_1}v^{i_3} \mu_{i_2i_3}^{(0),f}.
\end{multline}
Applying the recursion \eqref{recnu} for $\nu^{(1),f}$, 
\begin{align}\label{eq:Lkappa21}
-2\zg \p_{i_1}\tv^{i_2} \p_{i_1}\nu_{i_2}^{(1),f} = -2\zg\p_{i_1}\tv^{i_2}\p_{i_2}\p_{i_1}\ka^{(1),f} + 2\zg \p_{i_1}\tv^{i_2}\p_{i_1}\Big(\p_{i_2}v^{i_3}\nu_{i_3}^{(0),f}\Big).
\end{align}
Reverse distributing $\p_{i_2}$,
\begin{align}
-2\zg\p_{i_1}\tv^{i_2}\p_{i_2}\p_{i_1}\ka^{(1),f} &= -2\p_{i_2}(\zg\p_{i_1}\tv^{i_2}\p_{i_1}\ka^{(1),f}) + 2\zg \p_{i_1}\p_{i_2}\tv^{i_2} \p_{i_1}\ka^{(1),f}, \label{eq:Lkappa22}
\end{align}
where we have implicitly used that $\tv^{i_2} = 0$ for $i_2=\d+\k$, if $\k=1$, so there is no contribution from $\p_{i_2}\zg$. 
Distributing $\p_{i_1}$,
\begin{align}\label{eq:Lkappa23}
2\zg \p_{i_1}\tv^{i_2}\p_{i_1}\Big(\p_{i_2}\tv^{i_3}\nu_{i_3}^{(0),f}\Big) &= 2\zg\p_{i_1}\tv^{i_2}\p_{i_1}\p_{i_2}\tv^{i_3}\nu_{i_3}^{(0),f} + 2\zg \p_{i_1}\tv^{i_2}\p_{i_2}\tv^{i_3}\p_{i_1}\nu_{i_3}^{(0),f}.
\end{align}
Reverse distributing $\p_{i_2}$,
\begin{align}
& 2\zg\p_{i_1}\tv^{i_2}\p_{i_1}\p_{i_2}\tv^{i_3}\nu_{i_3}^{(0),f} \nn\\
 &=2 \p_{i_2}\Big(\zg\p_{i_1}\tv^{i_2}\p_{i_1}\tv^{i_3} \nu_{i_3}^{(0),f}\Big) - 2\zg \p_{i_1}\p_{i_2}\tv^{i_2}\p_{i_1}\tv^{i_3}\nu_{i_3}^{(0),f}-2\zg\p_{i_1}\tv^{i_2}\p_{i_1}\tv^{i_3}\p_{i_2}\nu_{i_3}^{(0),f} \nn\\
 &=2\p_{i_2}\Big(\zg\p_{i_1}\tv^{i_2}\p_{i_1}\ka^{(1),f}\Big) -2\p_{i_2}\Big(\zg \p_{i_1}\tv^{i_2}\nu_{i_1}^{(1),f}\Big) - 2\zg \p_{i_1}\p_{i_2}\tv^{i_2}\p_{i_1}\ka^{(1),f}  \nn\\
 &\ph+ 2\zg \p_{i_1}\p_{i_2}\tv^{i_2}\nu_{i_1}^{(1),f}-2\zg\p_{i_1}\tv^{i_2}\p_{i_1}\tv^{i_3}\p_{i_2}\nu_{i_3}^{(0),f}, \label{eq:Lkappa24}
\end{align}
where the final equality follows from the recursion for $\nu_{i_1}^{(1),f}$ and we have again used that $\tv^{i_2} = 0$ for $i_2=\d+\k$, if $\k=1$.
Combining \eqref{eq:Lkappa21}-\eqref{eq:Lkappa24}, we obtain
\begin{align}
-2\zg \p_{i_1}\tv^{i_2} \p_{i_1}\nu_{i_2}^{(1),f} &= -\underbrace{2\p_{i_2}(\zg\p_{i_1}\tv^{i_2}\p_{i_1}\ka^{(1),f})}_{T_1} + \underbrace{2\zg \p_{i_1}\p_{i_2}\tv^{i_2} \p_{i_1}\ka^{(1),f}}_{T_2} \nn\\
&\ph+\underbrace{2\p_{i_2}(\zg\p_{i_1}\tv^{i_2}\p_{i_1}\ka^{(1),f})}_{-T_1} -2\p_{i_2}(\zg \p_{i_1}\tv^{i_2}\nu_{i_1}^{(1),f}) \nn\\
&\ph- \underbrace{2\zg \p_{i_1}\p_{i_2}\tv^{i_2}\p_{i_1}\ka^{(1),f}}_{-T_2} + 2\zg \p_{i_1}\p_{i_2}\tv^{i_2}\nu_{i_1}^{(1),f} \nn\\
 &\ph-2\zg\p_{i_1}\tv^{i_2}\p_{i_1}\tv^{i_3}\p_{i_2}\nu_{i_3}^{(0),f}+ 2\zg \p_{i_1}\tv^{i_2}\p_{i_2}\tv^{i_3}\p_{i_1}\nu_{i_3}^{(0),f} \nn\\
 &=-2\p_{i_2}(\zg \p_{i_1}\tv^{i_2}\nu_{i_1}^{(1),f})+2 \p_{i_1}(\zg\p_{i_2}\tv^{i_2}\nu_{i_1}^{(1),f}) - 2\p_{i_2}\tv^{i_2}\div(\zg \nu^{(1),f}) \nn\\
 &\ph-2\zg\p_{i_1}\tv^{i_2}\p_{i_1}\tv^{i_3}\p_{i_2}\nu_{i_3}^{(0),f}+ 2\zg \p_{i_1}\tv^{i_2}\p_{i_2}\tv^{i_3}\p_{i_1}\nu_{i_3}^{(0),f}. \label{eq:Lkappa25}
\end{align}
By the recursion \eqref{recmu},
\begin{align}\label{eq:Lkappa25'}
\p_{i_2}\nu_{i_3}^{(0),f} =  \mu_{i_2i_3}^{(0),f},
\end{align}
which in turn implies that
\begin{align}
-2\zg \p_{i_1}\tv^{i_2} \p_{i_1}\nu_{i_2}^{(1),f} + 2\zg \p_{i_1}\tv^{i_2}\p_{i_1}\tv^{i_3} \mu_{i_2i_3}^{(0),f} &= 2\p_{i_1}(\zg\p_{i_2}\tv^{i_2}\nu_{i_1}^{(1),f}) - 2\p_{i_2}\tv^{i_2}\div(\zg \nu^{(1),f}) \nn\\
&\ph+ 2\zg \p_{i_1}\tv^{i_2}\p_{i_2}\tv^{i_3}\p_{i_1}\nu_{i_3}^{(0),f}. \label{eq:Lkappa26}
\end{align}
Using that
\begin{align}\label{eq:Lnu1f}
-\div(\zg\nu^{(1),f}) =  -\p_{i_1}(\zg\p_{i_2}\tv^{i_1}\nu_{i_2}^{(0),f}) + \p_{i_2}(\zg\p_{i_1}\tv^{i_1}\nu_{i_2}^{(0),f}),
\end{align}
by consequence of the identity \eqref{eq:Lka1} for $L\ka^{(1),f}$ and the recursion \eqref{recnu} for $\nu^{(1),f}$, it follows that
\begin{align}
- 2\p_{i_1}\tv^{i_1}\div(\zg \nu^{(1),f}) &= -2\p_{i_1}\tv^{i_1}\p_{i_2}(\zg\p_{i_3}\tv^{i_2}\nu_{i_3}^{(0),f}) + 2\p_{i_1}\tv^{i_1}\p_{i_3}(\zg\p_{i_2}\tv^{i_2}\nu_{i_3}^{(0),f}) \nn\\
&=-2 \p_{i_2}\Big(\zg\p_{i_1}\tv^{i_1} \p_{i_3}\tv^{i_2}\nu_{i_3}^{(0),f}\Big) + \underbrace{2\zg \p_{i_2}\p_{i_1}\tv^{i_1}\p_{i_3}\tv^{i_2}\nu_{i_3}^{(0),f}}_{T_3} \nn\\
&\ph +2\p_{i_3}\Big(\zg\p_{i_1}\tv^{i_1}\p_{i_2}\tv^{i_2}\nu_{i_3}^{(0),f}\Big) -\underbrace{2\zg\p_{i_3}\p_{i_1}\tv^{i_1}\p_{i_2}\tv^{i_2}\nu_{i_3}^{(0),f}}_{T_4}, \label{eq:Lkappa27}
\end{align}
where the final equality follows from reverse distributing $\p_{i_2}$ in the first term, respectively $\p_{i_3}$ in the second. Since $\p_{i_1}\nu_{i_3}^{(0),f} = \p_{i_3}\nu_{i_1}^{(0),f}$, we may reverse product rule $\p_{i_3}$ to obtain
\begin{align}
2\zg \p_{i_1}\tv^{i_2}\p_{i_2}\tv^{i_3}\p_{i_1}\nu_{i_3}^{(0),f} &= 2\p_{i_3}\Big(\zg\p_{i_1}\tv^{i_2}\p_{i_2}v^{i_3}\nu_{i_1}^{(0),f}\Big) - 2\zg\p_{i_1}\p_{i_3}\tv^{i_2}\p_{i_2}\tv^{i_3}\nu_{i_1}^{(0),f}  \nn\\
&\ph- 2\zg\p_{i_1}\tv^{i_2}\p_{i_2}\p_{i_3} \tv^{i_3}\nu_{i_1}^{(0),f} \nn\\
&=2\p_{i_2}\Big(\zg\p_{i_3}\tv^{i_1}\p_{i_1}\tv^{i_2}\nu_{i_3}^{(0),f}\Big) -\underbrace{2\zg\p_{i_3}\p_{i_2}\tv^{i_1}\p_{i_1}\tv^{i_2}\nu_{i_3}^{(0),f}}_{T_5} \nn\\
&\ph-\underbrace{2\zg\p_{i_3}\tv^{i_1}\p_{i_1}\p_{i_2}\tv^{i_2}\nu_{i_3}^{(0),f}}_{-T_3}, \label{eq:Lkappa28}
\end{align}
where the final equality follows from the relabeling $(1,2,3) \mapsto (3,1,2)$, and we have implicitly used that $v^{\d+\k}=0$, so that there is no contribution from the derivative hitting the weight. Finally, writing $\cd \tl{f} = -\p_{i_3}(\zg\nu_{i_3}^{(0),f})$ and redistributing $\p_{i_3}$, we see that
\begin{multline}
 \cd \Big(\p_{i_{1}}\tv^{i_1} \p_{i_2}\tv^{i_2} + \p_{i_2}\tv^{i_1}\p_{i_1}\tv^{i_2}\Big) \tl f = -\p_{i_3}\Big(\zg (\p_{i_{1}}\tv^{i_1} \p_{i_2}\tv^{i_2} + \p_{i_2}\tv^{i_1}\p_{i_1}\tv^{i_2})\nu_{i_3}^{(0),f}\Big)  \\
 +\underbrace{2 \zg\p_{i_3}\p_{i_1}\tv^{i_1}\p_{i_2}\tv^{i_2}\nu_{i_3}^{(0),f}}_{-T_4}+ \underbrace{2\zg\p_{i_3}\p_{i_2}\tv^{i_1}\p_{i_1}\tv^{i_2} \nu_{i_3}^{(0),f}}_{-T_5}. \label{eq:Lkappa29}
\end{multline}
After a little bookkeeping, we arrive at
\begin{align}
&\cd \Big(\p_{i_{1}}\tv^{i_1} \p_{i_2}\tv^{i_2} + \p_{i_2}\tv^{i_1}\p_{i_1}\tv^{i_2}\Big)\tl{f}-2\zg \p_{i_1}\tv^{i_2} \p_{i_1}\nu_{i_2}^{(1),f} + 2\zg \p_{i_1}\tv^{i_2}\p_{i_1}v^{i_3} \mu_{i_2i_3}^{(0),f} \nn\\
&= 2\p_{i_1}(\zg \p_{i_2}\tv^{i_2}\nu_{i_1}^{(1),f}) -2\p_{i_2}\Big(\zg \p_{i_1}\tv^{i_1} \p_{i_3}\tv^{i_2}\nu_{i_3}^{(0),f}\Big)  +2\p_{i_3}\Big(\zg\p_{i_1}\tv^{i_1}\p_{i_2}\tv^{i_2}\nu_{i_3}^{(0),f}\Big) \nn\\
&\ph+2\p_{i_2}\Big(\zg\p_{i_3}\tv^{i_1}\p_{i_1}\tv^{i_2}\nu_{i_3}^{(0),f}\Big) -\p_{i_3}\Big(\zg (\p_{i_{1}}\tv^{i_1} \p_{i_2}\tv^{i_2} + \p_{i_2}\tv^{i_1}\p_{i_1}\tv^{i_2})\nu_{i_3}^{(0),f}\Big) \nn\\
&=2\p_{i_1}(\zg\p_{i_2}\tv^{i_2}\nu_{i_1}^{(1),f}) -2 \p_{i_2}\Big(\zg\p_{i_1}\tv^{i_1} \p_{i_3}\tv^{i_2}\nu_{i_3}^{(0),f}\Big)  +2\p_{i_2}\Big(\zg\p_{i_3}\tv^{i_1}\p_{i_1}\tv^{i_2}\nu_{i_3}^{(0),f}\Big) \nn\\
&\ph + \p_{i_3}\Big(\zg\p_{i_1}\tv^{i_1}\p_{i_2}\tv^{i_2}\nu_{i_3}^{(0),f}\Big) - \p_{i_3}\Big(\zg\p_{i_2}\tv^{i_1}\p_{i_1}\tv^{i_2}\nu_{i_3}^{(0),f}\Big). 
\end{align}
Recalling our starting point \eqref{eq:Lkappa2sp}, relabeling indices, and recalling that $\nu^{(0),f} = \nab h^f$, we conclude the desired \eqref{eq:Lkap2f}.
\end{proof}

Integrating both sides of the equation \eqref{eq:Lkap2f} against a test function $\phi$ over $\R^{\d+\k}$, we obtain
\begin{multline}
\int_{\R^{\d+\k}}\phi L\ka^{(2),f} = \int_{\R^{\d+\k}}\phi\Bigg(- 2\p_{i_1}(\zg \p_{i_1}\tv^{i_2}\nu_{i_2}^{(1),f}) -2\p_{i_2}(\zg \p_{i_1}\tv^{i_2}\nu_{i_1}^{(1),f}) \\
+2\p_{i_1}(\zg \p_{i_2}\tv^{i_2}\nu_{i_1}^{(1),f})-2 \p_{i_2}\Big(\zg\p_{i_1}\tv^{i_1} \p_{i_3}\tv^{i_2}\p_{i_3}h^f\Big)  +2\p_{i_2}\Big(\zg\p_{i_3}\tv^{i_1}\p_{i_1}\tv^{i_2}\p_{i_3}h^{f}\Big)  \\
+ \p_{i_3}\Big(\zg\p_{i_1}\tv^{i_1}\p_{i_2}\tv^{i_2}\p_{i_3}h^f\Big) - \p_{i_3}\Big(\zg\p_{i_2}\tv^{i_1}\p_{i_1}\tv^{i_2}\p_{i_3}h^f\Big)\Bigg).
\end{multline}
Integrating by parts in the right-hand side and using the triangle inequality plus Cauchy-Schwarz, we obtain
\begin{multline}
\int_{\R^{\d+\k}}\phi L\ka^{(2),f} \le C\|\nab\tv\|_{L^\infty}\|\nab\phi\|_{L_\ga^2(\supp\nab \tv)}\Big(\|\nab\tv\|_{L^\infty} \|\nab h^f\|_{L_\ga^2(\supp\nab\tv)} \\
+  \| \nu^{(1),f}\|_{L_\ga^2(\supp\nab \tv)}\Big).
\end{multline}
Inserting the bound \eqref{eq:nu1L2} for $\|\nu^{(1),f}\|_{L_\ga^2}$, we arrive at
\begin{align}
\int_{\R^{\d+\k}}\phi L\ka^{(2),f} \le C\|\nab\tv\|_{L^\infty}^2 \|\nab\phi\|_{L_\ga^2(\supp\nab \tv)} \|\nab h^f\|_{L_\ga^2(\supp\nab\tv)}.
\end{align}
Taking $\phi = \ka^{(2),f}$ and integrating by parts in the left-hand side yields \eqref{eq:comm2'}. Taking $\phi = \frac{1}{\cd}h^w$, for $w\in\Sc_0(\R^\d)$, integrating by parts, and using that $Lh^w = \cd\tl{w}$ yields \eqref{eq:comm2dual'}.

\subsection{(Unlocalized) $L^2$ commutator estimates}\label{ssec:L2regunloc}
In this subsection, we prove the estimates \eqref{eq:comm2''}, \eqref{eq:comm2dual''}, which will then complete the proof of \cref{thm:comm2}. The difference between these estimates and the preceding ones is that there is no localization to the support of $\nab\tv$ in the right-hand side, which is suitable when we do not care about localizing the modulated energy. There is a quick way to obtain unlocalized $L^2$ commutator estimates (without needing the extension to $\R^{\d+\k}$) following the approach of the authors' work \cite{NRS2021} with Q.H. Nguyen. Although the cited paper only considered estimates up to second order, the argument works just as well at higher order. Accordingly, we only sketch it, omitting the steps to justify the integration by parts and instead referring to \cite[Section 6.2]{NRS2021} for the details.


To compactify the notation, we set
\begin{align}
\k_v(x,y) \coloneqq (v(x)-v(y))^{\otimes n} : \nab^{\otimes n}\g(x-y), \qquad \forall x,y\in(\R^\d)^2\setminus\triangle
\end{align} 
for the kernel of $\ka^{(n)}$. For $f,g\in\Sc_0(\R^\d)$, write
\begin{align}
f = \div \nab \Delta^{-1}f \eqqcolon \div f_1,\label{eq:f1def}\\
g = \div \nab\Delta^{-1}g \eqqcolon \div g_1, \label{eq:g1def}
\end{align}
where $f_1,g_1$ are vector-valued.

Integrating by parts in $x$ and $y$, we see that
\begin{align}\label{eq:f1g1presym}
\int_{(\R^\d)^2}\k_v(x,y)f(x)g(y) = \int_{(\R^\d)^2}\nab_x\nab_y\k_v(x,y) : f_1(x)\otimes g_1(y).
\end{align}
Since
\begin{align}
\int_{\R^\d}\nab_x\nab_y\k_v(x,y)dx =\int_{\R^\d} \nab_x\nab_y\k_v(x,y) dy = 0,
\end{align}
by the fundamental theorem of calculus, we may symmetrize the right-hand side of \eqref{eq:f1g1presym} to obtain
\begin{align}\label{eq:f1g1sym}
\int_{(\R^\d)^2}\k_v(x,y)f(x)g(y) = \frac12\int_{(\R^\d)^2}\nab_x\nab_y\k_v(x,y) : (f_1(x)-f_1(y))\otimes (g_1(x)-g_1(y)).
\end{align} 

Next, observe from the product rule that
\begin{multline}\label{eq:nabxykv}
\nab_x\nab_y \k_v(x,y) = -\nab^{\otimes( n+2)}\g(x-y) : (v(x)-v(y))^{\otimes n} \\
-n\nab^{\otimes(n+1)}\g(x-y) : (\nab v(x)+\nab v(y))\otimes (v(x)-v(y))^{\otimes( n-1)} \\
-n(n-1) \nab^{\otimes n}\g(x-y) : \nab v(x)\otimes \nab v(y) \otimes (v(x)-v(y))^{\otimes(n-2)}.
\end{multline}
So, by the triangle inequality and mean-value theorem, it follows that
\begin{align}
|\nab_x\nab_y\k_v(x,y)| \le C\frac{\|\nab v\|_{L^\infty}^{n}}{|x-y|^{\s+2}}.
\end{align}

In the super-Coulomb case $\d-2<\s<\d$, we may directly estimate
\begin{align}
&\int_{(\R^\d)^2}|\nab_x\nab_y\k_v(x,y) : (f_1(x)-f_1(y))\otimes (g_1(x)-g_1(y))| \nn\\
&\leq C\|\nab v\|_{L^\infty}^n \int_{(\R^\d)^2} \frac{|f_1(x)-f_1(y)| |g_1(x)-g_1(y)|}{|x-y|^{\s+2}} \nn\\
&\le  C\|\nab v\|_{L^\infty}^n \Big(\int_{(\R^\d)^2} \frac{|f_1(x)-f_1(y)|^2}{|x-y|^{\s+2}}\Big)^{1/2}\Big(\int_{(\R^\d)^2} \frac{|g_1(x)-g_1(y)|^2}{|x-y|^{\s+2}}\Big)^{1/2} \nn\\
&\le C\|\nab v\|_{L^\infty}^n  \|f_1\|_{\dot{H}^{\frac{2+\s-\d}{2}}}\|g_1\|_{\dot{H}^\frac{2+\s-\d}{2}} \label{eq:kvf1g1bnd}
\end{align}
where the third line follows from Cauchy-Schwarz and the fourth line from the difference quotient formulation of the $\dot{H}^{\frac{2+\s-\d}{2}}$ seminorm (e.g. see \cite[Proposition 3.4]{dNPV2012}). Recalling the definitions \eqref{eq:f1def}, \eqref{eq:g1def} of $f_1,g_1$ and appealing to Plancherel's theorem,
\begin{align}
\|f_1\|_{\dot{H}^{\frac{2+\s-\d}{2}}} \lesssim \|f\|_{\dot{H}^{\frac{\s-\d}{2}}} = \|h^f\|_{\dot{H}^{\frac{\d-\s}{2}}}, \label{eq:f1Hdot}\\
\|g_1\|_{\dot{H}^{\frac{2+\s-\d}{2}}} \lesssim \|g\|_{\dot{H}^{\frac{\s-\d}{2}}} = \|h^g\|_{\dot{H}^{\frac{\d-\s}{2}}}. \label{eq:g1Hdot}
\end{align}
Combining \eqref{eq:f1g1sym}, \eqref{eq:kvf1g1bnd}, we arrive at the desired conclusion.

In the more delicate Coulomb case $\s=\d-2$, we use the fundamental theorem of calculus to rewrite the identity \eqref{eq:nabxykv} as 
\begin{multline}
\p_{x_i}\p_{y_j}\k_v(x,y) = -\underbrace{\k_{v,ijl_1\cdots l_n}^{p_1\cdots p_n}(x-y) \prod_{r=1}^n \int_0^1 \p_{p_r}v^{l_r}(tx+(1-t)y)dt}_{\eqqcolon \k_{v,ij}^{1}(x,y)} \\
-n\p_i v^{l_1}(x)\underbrace{\k_{v,jl_1\cdots l_n}^{p_2\cdots p_n}(x-y) \prod_{r=2}^n \int_0^1 \p_{p_r}v^{l_r}(tx+(1-t)y)dt}_{\eqqcolon \k_{v,jl_1}^{2}(x,y)} \\
-n\p_j v^{l_1}(y)\k_{v,il_1\cdots l_n}^{p_2\cdots p_n}(x-y) \prod_{r=2}^n \int_0^1 \p_{p_r}v^{l_r}(tx+(1-t)y)dt\\
-n(n-1)\p_i v^{l_1}(x)\p_j v^{l_2}(y) \underbrace{\k_{v,l_1\cdots l_n}^{p_3\cdots p_n}(x-y)\prod_{r=3}^n \int_0^1 \p_{p_r}v^{l_r}(tx+(1-t)y)dt}_{\eqqcolon \k_{v,l_1l_2}^{3}(x,y)},
\end{multline}
where
\begin{align}
\k_{v,ijl_1\cdots l_n}^{p_1\cdots p_n}(x-y) \coloneqq  \p_i\p_j\p_{l_1}\cdots\p_{l_n}\g(x-y) \prod_{r=1}^n (x-y)^{p_r} \\
\k_{v,il_1\cdots l_n}^{p_2\cdots p_n}(x-y) \coloneqq  \p_i\p_{l_1}\cdots\p_{l_n}\g(x-y)\prod_{r=2}^n (x-y)^{p_r} \\
\k_{v,l_1\cdots l_n}^{p_3\cdots p_n}(x-y) \coloneqq\p_{l_1}\cdots\p_{l_n}\g(x-y)\prod_{r=3}^n (x-y)^{p_r} 
\end{align}
are standard kernels associated to Calder\'{o}n-Zygmund operators \cite[Theorem 5.4.1]{Grafakos2014c}. Consequently, the kernels $\k_{v,ij}^{1}(x,y), \k_{v,il_1}^{2}(x,y), \k_{v,l_1l_2}^{3}(x,y)$ respectively define Calder\'{o}n $\d$-commutators $T_{v,ij}^{1},T_{v,il_1}^{2},T_{v,l_1l_2}^{3}$, to which we can apply the Christ-Journ\'{e} theorem \cite{CJ1987}. Using this result together with Cauchy-Schwarz, we find that
\begin{align}
\int_{(\R^\d)^2}\k_v(x,y)f(x)g(y) &= -\int_{\R^\d}f_1^i T_{v,ij}^{1}(g_1^j)   - n\int_{\R^\d}\p_i v^{l_1} f_1^i T_{v,jl_1}^{2}(g_1^j) \nn\\
&\ph - n\int_{\R^\d}\p_j v^{l_1}g_1^j T_{v,il_1}^{2}(f_1^i)  -n(n-1)\int_{\R^\d}\p_j v^{l_2}g_1^j T_{v,l_1l_2}^{3}(\p_i v^{l_1} f_1^i) \nn\\
&\le C\|\nab v\|_{L^\infty}^n \|f_1\|_{L^2} \|g_1\|_{L^2},
\end{align}
where we have implicitly used the $L^2$ operator norm bounds
\begin{align}
\|T_{v,ij}^{1}\|_{L^2\rightarrow L^2} \leq (C\|\nab v\|_{L^\infty})^n ,\\
\| T_{v,il_1}^{2} \|_{L^2\rightarrow L^2} \le (C\|\nab v\|_{L^\infty})^{n-1},\\
\| T_{v,l_1l_2}^{3}\|_{L^2\rightarrow L^2} \le (C\|\nab v\|_{L^\infty})^{n-2},
\end{align}
where $C>0$ depends only on $\d$. Inserting the bounds \eqref{eq:f1Hdot}, \eqref{eq:g1Hdot} for $\|f_1\|_{L^2},\|g_1\|_{L^2}$, we arrive at the desired conclusion. This completes the proof.

\subsection{$L^\infty$ local regularity theory for fractional Laplacians in extended space form}\label{ssec:L2regLinf}
In this subsection, we turn to establishing the local regularity theory for commutators based on the elliptic regularity theory for divergence-form operators with $A_2$ weights developed by Fabes \emph{et al.} \cite{FKS1982}.

The following regularity result for the operator $L$ is  adapted from \cite{FKS1982}.

\begin{lemma} \label{lem:regularity}
Suppose that $w$ satisfies
\begin{align}\label{eq:Lwdivphi}
Lw =   \div \phi\quad \text{in} \ B(0,\eta)\subset \R^{\d+\k},
\end{align}
where $|z|^{-\ga}\phi $ is $m$-times differentiable in the $x$ variables,  for $m\ge 1$.  Then for any ball $B(0,\eta)\subset \R^{\d+\k}$, we have
\begin{equation}\label{resultl}
\|\nab_x^{\otimes m} w\|_{L^\infty(B(0, \eta/2))} \le \frac{C}{\eta^{\frac{\s}2+m}}\|\nab w\|_{L_\ga^2(B(0,\eta))}  +C  \sum_{k=1}^m \eta^{k-m+1} \||z|^{-\ga} \nab^{\otimes k}_x\phi\|_{L^\infty(B(0,\eta))},
\end{equation}
where $\nab_x\coloneqq(\p_1,\ldots,\p_{\d})$.

\end{lemma}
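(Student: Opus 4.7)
I would prove the lemma in two steps: first, a scaling argument reduces matters to $\eta = 1$; then, exploiting that $L$ commutes with horizontal derivatives and that $|z|^\gamma$ is an $A_2$ weight with scale-invariant constant, I combine the local $L^\infty$ (Moser) estimate and the Caccioppoli inequality from the De Giorgi--Nash--Moser theory for weighted divergence-form equations of Fabes--Kenig--Serapioni \cite{FKS1982}.

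For the scaling step, set $\tilde w(x,z) \coloneqq w(\eta x,\eta z)$ and $\tilde\phi(x,z) \coloneqq \phi(\eta x,\eta z)$. A direct computation using $\gamma + \d + \k = \s + 2$ shows that $\tilde w$ satisfies $L\tilde w = \eta^{1-\gamma}\div\tilde\phi$ on $B(0,1)$ and that each term in \eqref{resultl} carries an explicit power of $\eta$, namely $\|\nabla_x^{\otimes m}w\|_{L^\infty(B(0,\eta/2))} = \eta^{-m}\|\nabla_x^{\otimes m}\tilde w\|_{L^\infty(B(0,1/2))}$, $\|\nabla w\|_{L^2_\gamma(B(0,\eta))} = \eta^{\s/2}\|\nabla\tilde w\|_{L^2_\gamma(B(0,1))}$, and $\||z|^{-\gamma}\nabla_x^{\otimes k}(\eta^{1-\gamma}\tilde\phi)\|_{L^\infty(B(0,1))} = \eta^{k+1}\||z|^{-\gamma}\nabla_x^{\otimes k}\phi\|_{L^\infty(B(0,\eta))}$. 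Hence it suffices to establish the estimate
\[
\|\nabla_x^{\otimes m} \tilde w\|_{L^\infty(B(0,1/2))} \le C\|\nabla \tilde w\|_{L^2_\gamma(B(0,1))} + C\sum_{k=1}^m \||z|^{-\gamma}\nabla_x^{\otimes k}\psi\|_{L^\infty(B(0,1))},
\]
where $\psi = \eta^{1-\gamma}\tilde\phi$ is the right-hand side of the rescaled equation $L\tilde w = \div\psi$.

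The crucial structural observation is that $|z|^\gamma$ is independent of $x$, so $L$ commutes with every horizontal derivative $\partial_{x_i}$ ($i\in[\d]$). Thus for any horizontal multi-index $\alpha$ with $|\alpha|\le m$, $\partial_x^\alpha\tilde w$ is a weak solution of $L(\partial_x^\alpha\tilde w) = \div(\partial_x^\alpha\psi)$ on $B(0,1)$, as can be justified by mollification in the horizontal variables. The theory of \cite{FKS1982} then provides, for any weak solution of $Lu = \div\zeta$ on $B(0,R)$ and $0<r<R\le 1$, a local $L^\infty$ bound $\|u\|_{L^\infty(B(0,r))} \le C(R-r)^{-\beta_0}\|u\|_{L^2_\gamma(B(0,R))} + C(R-r)\||z|^{-\gamma}\zeta\|_{L^\infty(B(0,R))}$ of Moser type, together with a Caccioppoli inequality $\|\nabla u\|_{L^2_\gamma(B(0,r))} \le C(R-r)^{-1}\|u - c\|_{L^2_\gamma(B(0,R))} + C\||z|^{-\gamma}\zeta\|_{L^2_\gamma(B(0,R))}$, with constants depending only on $\d,\k,\gamma$. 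Applying Moser's bound to $u = \partial_x^\alpha\tilde w$ with $|\alpha|=m$ on $B(0,1/2)\subset B(0,3/4)$ controls $\|\nabla_x^{\otimes m}\tilde w\|_{L^\infty(B(0,1/2))}$ by $\|\nabla_x^{\otimes m}\tilde w\|_{L^2_\gamma(B(0,3/4))}$ plus $\||z|^{-\gamma}\nabla_x^{\otimes m}\psi\|_{L^\infty(B(0,1))}$. Iterating Caccioppoli $m-1$ times on a nested chain of balls interpolating between $B(0,3/4)$ and $B(0,1)$ then trades horizontal derivatives of $\tilde w$ for horizontal derivatives of $\psi$, picking up at each step a forcing contribution dominated by $\||z|^{-\gamma}\nabla_x^{\otimes k}\psi\|_{L^\infty(B(0,1))}$ for $1\le k < m$ (using that $L^2_\gamma$ on a bounded ball is controlled by $L^\infty$ of $|z|^{-\gamma}\zeta$ times the finite factor $\|1\|_{L^2_\gamma(B(0,1))}$). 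Reassembling yields the desired bound at $\eta=1$.

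The main technical obstacle is the Moser $L^\infty$ estimate in divergence-right-hand-side form with forcing measured in $\||z|^{-\gamma}\zeta\|_{L^\infty}$ rather than in $L^2_\gamma$: this follows by running the standard Moser iteration on top of the weighted Sobolev inequality of \cite{FKS1982}, but one must track carefully the polynomial dependence in $R-r$ and the precise quantitative role of the forcing. The scale-invariant $A_2$ constant of $|z|^\gamma$ (a consequence of the dilation invariance of the $A_2$ class and the fact that $\gamma\in(-1,1)$) is what guarantees that all constants are uniform and depend only on $\d,\k,\gamma$.
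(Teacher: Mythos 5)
Your proposal is correct in substance and, at its core, follows the same architecture as the paper's proof: since the weight $|z|^\ga$ is independent of $x$, the operator $L$ commutes with horizontal derivatives, so each $\p_x^\alpha w$ solves the equation with differentiated data; one then gets an $L^\infty$ bound at top order from the weighted elliptic theory of \cite{FKS1982} and descends to $\|\nab w\|_{L^2_\ga}$ by iterating a Caccioppoli inequality on nested balls (the paper proves exactly this Caccioppoli estimate by testing against $\chi^2\p^{\alpha}w$). The two genuine differences are these. First, you rescale to $\eta=1$ up front; the paper instead tracks the powers of $\eta$ directly, but your scaling computation (using $\ga=\s+2-\d-\k$, so that $\|\nab w\|_{L^2_\ga(B(0,\eta))}=\eta^{\s/2}\|\nab \tilde w\|_{L^2_\ga(B(0,1))}$ and $L\tilde w=\eta^{1-\ga}\div\tilde\phi$) is consistent with the stated estimate, so this is purely cosmetic. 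Second, and more substantively, at the top-order step you invoke a single interior Moser-type bound for the \emph{inhomogeneous} equation $Lu=\div\zeta$ with forcing measured by $\||z|^{-\ga}\zeta\|_{L^\infty}$; this is not literally a theorem of \cite{FKS1982}, and you correctly flag it as the main technical obstacle but leave it asserted. The paper avoids having to run a weighted Moser iteration by splitting $\p_j w=u+v$, where $u$ solves the inhomogeneous equation in $B(0,\tfrac34\eta)$ with zero boundary data (so the global sup bound of \cite[Theorem 2.2.3]{FKS1982} applies, yielding the $\eta\||z|^{-\ga}\nab\phi\|_{L^\infty}$ term) and $v$ is $L$-harmonic (so the interior H\"older estimate of \cite[Theorem 2.3.12]{FKS1982} applies, with the mean $[\p_j w]$ then controlled by Cauchy--Schwarz); this decomposition delivers exactly the interior estimate you postulate, using only results stated in \cite{FKS1982}. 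So your route works and is arguably more streamlined to present, but to make it complete you should either carry out the weighted Moser iteration you allude to or simply substitute the paper's homogeneous-plus-Dirichlet splitting as the proof of your claimed interior estimate; the Caccioppoli descent (with the observation that for intermediate orders one can take the constant $c=0$ because the functions involved are themselves derivatives, landing finally on $\|\nab w\|_{L^2_\ga(B(0,\eta))}$) then goes through as you describe.
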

\begin{proof}
First, we note that each partial derivative $\partial_{i} w$ satisfies the equation \eqref{eq:Lwdivphi} with $\partial_{i}\phi$ instead of $\phi$. Next, we  write $\partial_{j} w=u+v$ where 
\begin{align}\label{equ}
\left\{\begin{array}{ll}
Lu= \div \partial_{j}\phi, &\text{in } B(0,\frac34\eta)\\
u=0, & \text{on} \ \partial B(0,\frac34\eta)\end{array}\right.
\end{align}
  and $v$ solves
\begin{equation}\label{eqv}
Lv=0 \quad  \text{in} \ B(0, \frac34\eta).
\end{equation}

As $\zg$ is an $A_2$ weight, \cite[Theorem 2.3.12]{FKS1982} applies, and with the help of  \eqref{eq:defgamma},  we find there exists $\sigma>0$ such that 
\begin{equation}\label{eqv'}
\| v\|_{\dot{C}^{0,\sigma}(B(0, \frac\eta2 ) ) } \le \frac{C}{\eta^{\sigma+ \frac{\s}2} } \|v\|_{L_\ga^2(B(0,\frac{3\eta}{4}))} \le \frac{C}{\eta^{\sigma+ \frac{\s}2} }\Big( \|\nab w\|_{L_\ga^2(B(0,\frac{3\eta}{4}))} + \|u\|_{L_\ga^2(B(0,\frac{3\eta}{4}))}\Big).
\end{equation}
where we remind the reader that $\dot{C}^{0,\sigma}$ is the homogeneous H\"older space.
On the other hand, \cite[Theorem 2.2.3]{FKS1982} ensures that 
\begin{equation}
 \|u\|_{L^\infty(B(0,\frac34\eta))}\le C\eta \| |z|^{-\ga}\nab \phi\|_{L^\infty (B(0,\eta))}.
 \end{equation}
Using this bound to majorize $\|u\|_{L_\ga^2(B(0,3\eta/4))}$ in the right-hand side of \eqref{eqv'}, we obtain
\begin{equation}
\label{eqv''}
\|v\|_{\dot{C}^{0,\sigma}(B(0, \frac\eta2 ) ) } \le \frac{C}{\eta^{\sigma+ \frac{\s}2+1} } \Big( \|\nab w\|_{L_\ga^2(B(0,\frac{3\eta}{4}))}+  \| |z|^{-\gamma} \nab \phi\|_{L^\infty (B(0,\eta))}\eta^{1+\frac{\d+\k+\gamma}{2}}   \Big).
\end{equation}
Letting $[\partial_{j} w]$ denote the average of $\partial_{j} w$ in the ball  $B(0,\frac12\eta)$, we  deduce from \eqref{eqv} and the definition \eqref{eq:defgamma} of $\ga$ that 
\begin{align}\label{wwav}
\|\partial_{j} w-[\partial_{j} w]\|_{L^\infty(B(0, \frac12\eta))} &\le \|v-[v]\|_{L^\infty(B(0,\frac12\eta))} + 2\|u\|_{L^\infty(B(0,\frac12\eta))}\nn\\
&\le \frac{C}{\eta^{\frac{\s}2+1}} \|\nab w\|_{L_\ga^2(B(0,\frac34\eta))}+C\eta \| |z|^{-\gamma} \nab \phi\|_{L^\infty (B(0,\eta))}.
\end{align}
Using Cauchy-Schwarz's inequality, we moreover have 
\begin{align}
|[\partial_{j} w]|\le \frac{C}{\eta^{\d+\k} } \|\nab w\|_{L_\ga^2(B(0,\frac34\eta))}\Big( \int_{B(0, \frac34\eta)} \frac{1}{\zg}\Big)^{\frac12}\le \frac{C}{\eta^{\frac{\s}2+1}} \|\nab w\|_{L_\ga^2(B(0,\frac34\eta))}.
 \end{align} 
From \eqref{wwav} and the triangle inequality, it then follows that
\begin{equation}\label{wwav2}
\|\partial_{j} w\|_{L^\infty(B(0, \frac12\eta))} \le \frac{C}{\eta^{\frac{\s}2+1}}\|\nab w\|_{L_\ga^2(B(0,\frac34\eta))} +C\eta \| |z|^{-\gamma} \nab \phi\|_{L^\infty (B(0,\eta))}.
\end{equation}
Differentiating the equation \eqref{eq:Lwdivphi} $m$ times, with $m\ge 2$, we obtain in the same way that
\begin{equation}\label{Dmconc}
\|\nab^{\otimes m}w\|_{L^\infty(B(0, \frac12\eta))} \le \frac{C}{\eta^{\frac{\s}2+1}}\|\nab_x^{\otimes m}w\|_{L_\ga^2(B(0,\frac34\eta))}+C \eta \| |z|^{-\gamma} \nab_x^{\otimes m} \phi\|_{L^\infty(B(0,\eta))}.
\end{equation}
We next prove a Caccioppoli inequality to control $\|\nab_x^{\otimes m}w\|_{L_\ga^2(B(0,\frac34\eta))}$ in terms of $\|\nab_x^{\otimes (m-1)}w\|_{L_\ga^2(B(0,\frac78\eta))}$.

To this end, let $\chi$ be a smooth cutoff function equal to $1$ in $B(0, \frac34 \eta)$ and vanishing outside $B(0,\frac78\eta)$, with $|\nab \chi|\le \frac{C}{\eta}$.
Let  $f=\p_{i_1}\cdots\p_{i_{m-1}}w$ be some $(m-1)$-th order partial derivative of $w$ for $i_1,\ldots,i_{m-1}\in[\d]$, and let $\varphi= \p_{i_1}\cdots\p_{i_{m-1}} \phi$. Testing the equation 
\begin{align}
Lf = \div \varphi
\end{align}
against $\chi^2 f$, we obtain, after integrating by parts, 
\begin{align}
\int_{\R^{\d+\k}} \zg\chi^2| \nab f|^2 +\int_{\R^{\d+\k}}\zg f\nab(\chi^2) \cdot \nab f =  \int_{\R^{\d+\k}}\chi^2 fLf &=\int_{\R^{\d+\k}}\chi^2f\div\varphi \nn\\
&=-\int_{\R^{\d+\k}}\Big(\nab(\chi^2) f\cdot \varphi +\chi^2 \nab f \cdot \varphi\Big).
\end{align}
Moving the second term on the left-hand side over to the right-hand side, then applying Cauchy-Schwarz to the new right-hand side, we obtain
\begin{multline}
\int_{\R^{\d+\k}} \zg\chi^2| \nab f|^2  \le 2\|\chi\nab f\|_{L_\ga^2} \|f\nab \chi\|_{L_\ga^2} \\
  + \Big(\frac{C}{\eta}\|\chi f\|_{L_\ga^2} + \|\chi\nab f\|_{L_\ga^2} \Big)  \||z|^{-\ga} \varphi\|_{L^\infty(B(0,\frac78\eta))}\Big(\int_{B(0,\eta)} \zg\Big)^{\frac12},
\end{multline}
where we have implicitly used $\|\nab\chi\|_{L^\infty}\le C\eta^{-1}$. Hence, evaluating the last integral on the right-hand side and using $ab \le \frac12(\epsilon a + \epsilon^{-1}b)$ to absorb the factors of $\|\chi \nab f\|_{L_\ga^2}$ on the right-hand side into the left,
\begin{align}
\int_{\R^{\d+\k}} \zg\chi^2| \nab f|^2  & \le \frac{C}{\eta^2} \int_{B(0, \frac78\eta)} 
\zg f^2   +C \eta^{\s+2} \||z|^{-\ga} \varphi\|_{L^\infty (B(0,\frac78\eta))}^2.
\end{align}
Summing both sides of this inequality over all choices $i_1,\ldots,i_{m-1}$ in the definition of $f$ and recalling that $\chi\equiv 1$ on $B(0,3/4)$ then yields
\begin{equation} 
\int_{B(0, \frac34\eta)} \zg|\nab_x^{\otimes m} w|^2 \le  \frac{C}{\eta^2} \int_{B(0, \frac78\eta)} 
\zg |\nab_x^{\otimes (m-1)} w|^2  +C \eta^{\s+2} \||z|^{-\ga} \nab_x^{\otimes (m-1)}\phi\|_{L^\infty(B(0,\frac78\eta))}^2.
\end{equation}
Applying this bound into the right-hand side of \eqref{Dmconc}, we obtain
\begin{multline}
\|\nab_x^{\otimes m} w\|_{L^\infty(B(0, \frac12\eta))} \le \frac{C}{\eta^{\frac{\s}2+2}} 
\|\nab_x^{\otimes (m-1)} w\|_{L_\ga^2(B(0,\frac78\eta))} \\
 +C  \||z|^{-\ga} \nab_x^{\otimes (m-1)}\phi\|_{L^\infty(B(0,\frac78\eta))}
+C \eta \| |z|^{-\gamma} \nab_x^{\otimes m} \phi\|_{L^\infty (B(0,\frac78\eta))}.
\end{multline}
We may then iterate the argument to obtain \eqref{resultl}, {replacing $B(0,\frac78 \eta)$ by $B(0,(\frac12 + \frac{3}{8(m-1)})\eta)$.}
\end{proof}

In the next subsection, we will use \cref{lem:regularity} in the following form. 
\begin{cor}\label{reglemma}
Assume $w$ satisfies
\begin{equation}
Lw=   \div(\zg\varphi_1) + \zg \varphi_2\quad \text{in} \ B(0,\eta) \subset \R^{\d+\k} .
\end{equation}   Then, we have 
\begin{multline}\label{resultl'}
\|\nab_x^{\otimes m} w\|_{L^\infty(B(0, \frac12\eta))} \le \frac{C}{\eta^{\frac{\s}2+m}}\|\nab w\|_{L_\ga^2(B(0,\eta))} + C\sum_{k=1}^m \eta^{k-m+1}\|\nab_x^{\otimes k}\varphi_1\|_{L^\infty(B(0,\eta))}\\
+ C  \sum_{k=0}^m \eta^{k-m+2} \|\nab_x^{\otimes k}\varphi_2\|_{L^\infty(B(0, \eta))}.
\end{multline}
\end{cor}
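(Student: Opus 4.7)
The plan is to reduce the corollary to \cref{lem:regularity} by expressing the non-divergence source $\zg\varphi_2$ as the divergence of an auxiliary vector field, at the cost of one $\eta$-power and one derivative. Concretely, on $B(0,\eta)$ I would define
\begin{equation*}
\Psi(x,z) \coloneqq \left(G(x,z), 0, \ldots, 0\right), \qquad G(x,z) \coloneqq \int_{-\eta}^{x_1} \zg \widetilde\varphi_2(t, x_2, \ldots, x_\d, z)\,dt,
\end{equation*}
where $\widetilde\varphi_2$ denotes the extension of $\varphi_2$ by $0$ outside $B(0,\eta)$ (smoothing by a cutoff if necessary so that all $x$-derivatives behave as expected). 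By construction, $\partial_{x_1} G = \zg \varphi_2$ on $B(0,\eta)$, so $\div\Psi = \zg\varphi_2$ there. Consequently, setting $\phi \coloneqq \zg\varphi_1 + \Psi$, we can rewrite the assumed equation as
\begin{equation*}
Lw = \div \phi \quad \text{in} \ B(0,\eta),
\end{equation*}
which is exactly the form treated by \cref{lem:regularity}.

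Next, I would verify that $|z|^{-\ga}\phi$ is $m$-times differentiable in the $x$-variables with the appropriate bounds. For the first piece, $|z|^{-\ga}(\zg\varphi_1) = \varphi_1$, which directly produces the terms $\eta^{k-m+1}\|\nab_x^{\otimes k}\varphi_1\|_{L^\infty}$ in \eqref{resultl} for $k=1,\ldots,m$. For the $\Psi$ piece, the explicit formula for $G$ shows that any derivative $\nab_x^{\otimes k}\Psi$ either (i) involves only tangential derivatives $\partial_{x_2}, \ldots, \partial_{x_\d}$, in which case the integral is preserved and $|z|^{-\ga}\nab_x^{\otimes k}\Psi \lesssim \eta \|\nab_x^{\otimes k}\varphi_2\|_{L^\infty}$, or (ii) applies at least one $\partial_{x_1}$, consuming the integral and yielding $|z|^{-\ga}\nab_x^{\otimes k}\Psi \lesssim \|\nab_x^{\otimes (k-1)}\varphi_2\|_{L^\infty}$. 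Plugging these into the bound of \cref{lem:regularity}, the contribution from $\Psi$ to the $k$-th term is controlled by $\eta^{k-m+2}\|\nab_x^{\otimes k}\varphi_2\|_{L^\infty} + \eta^{k-m+1}\|\nab_x^{\otimes (k-1)}\varphi_2\|_{L^\infty}$, and summing over $k=1,\ldots,m$ yields precisely the claimed $\sum_{k=0}^{m} \eta^{k-m+2}\|\nab_x^{\otimes k}\varphi_2\|_{L^\infty(B(0,\eta))}$ (the $k=0$ term in the corollary arising from the case when $\partial_{x_1}$ strips $G$ at the lowest order).

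I do not expect any serious obstacle here, since the argument is essentially a bookkeeping exercise built on top of \cref{lem:regularity}. The only point requiring mild care is the extension/cutoff of $\varphi_2$ needed to make $G$ unambiguously defined on $B(0,\eta)$; this can be handled by a standard smooth truncation at scale $\eta$, whose derivatives scale as $\eta^{-k}$ and are absorbed into the lower-order terms of the stated bound. A more symmetric alternative would be to instead solve an auxiliary Poisson problem for $G$ in $B(0,\eta)$ with zero boundary data; however, the integration-in-one-direction choice above is the most transparent and avoids any further elliptic regularity step.
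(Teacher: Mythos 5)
Your proposal is correct and follows essentially the same route as the paper: rewrite $\zg\varphi_2$ as a divergence by integrating in the $x^1$-direction, set $\phi=\zg\varphi_1+\phi_2$, and feed the resulting bounds on $|z|^{-\ga}\nab_x^{\otimes k}\phi$ into \cref{lem:regularity}. The only difference is that the paper anchors the integral at $0$ rather than at $-\eta$, i.e.\ takes $\phi_2 = \big(\zg\int_0^{x^1}\varphi_2(t,x^2,\dots,x^{\d+\k})\,dt,0,\dots,0\big)$, so the integration segment stays inside $B(0,\eta)$ (since $|t|\le |x^1|$); this removes the need for the zero-extension/cutoff you flag and keeps all $L^\infty$ norms exactly over $B(0,\eta)$.
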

\begin{proof}
Define a  vector field in $\R^{\d+\k}$ by
\begin{align}
\phi_2(x^1,\ldots,x^{\d+\k}) \coloneqq \Big(\zg \int_0^{x^1} \varphi_2(t, x^2, \dots, x^{\d+\k}) dt, 0, \dots, 0\Big).
\end{align}
One easily checks that $\div \phi_2= \zg \varphi_2$ and 
\begin{equation}\label{eq:zganabxphi2}
\| |z|^{-\ga} \nab_x^{\otimes k}  \phi_2\|_{L^\infty(B(0,\eta))} \le \eta\| \nab_x^{\otimes k} \varphi_2\|_{L^\infty(B(0,\eta))}+  \|\nab_x^{\otimes (k-1)}\varphi_2\|_{L^\infty(B(0,\eta))}.
\end{equation}

Now define the vector field $\phi\coloneqq \zg\varphi_1 + \phi_2$ in $\R^{\d+\k}$, which, by the triangle inequality and \eqref{eq:zganabxphi2}, satisfies
\begin{align}
\||z|^{-\ga}\nab_x^{\otimes k}\phi\|_{L^\infty(B(0,\eta))} \le \|\nab_x^{\otimes k}\varphi_1\|_{L^\infty(B(0,\eta))} + \eta\| \nab_x^{\otimes k} \varphi_2\|_{L^\infty(B(0,\eta))}+  \|\nab_x^{\otimes (k-1)}\varphi_2\|_{L^\infty(B(0,\eta))}.
\end{align}
Applying the result of \cref{lem:regularity}, we obtain
\begin{align}
\|\nab_x^{\otimes m} w\|_{L^\infty(B(0, \frac12\eta))} &\le \frac{C}{\eta^{\frac{\s}2+m}}\|\nab w\|_{L_\ga^2(B(0,\eta))}  +C  \sum_{k=1}^m \eta^{k-m+1} \||z|^{-\ga} \nab^{\otimes k}_x\phi\|_{L^\infty(B(0,\eta))} \nn\\
&\le \frac{C}{\eta^{\frac{\s}2+m}}\|\nab w\|_{L_\ga^2(B(0,\eta))}  + C\sum_{k=1}^m \eta^{k-m+1}\Big(\|\nab_x^{\otimes k}\varphi_1\|_{L^\infty(B(0,\eta))} \nn\\
&\ph\qquad+ \eta\| \nab_x^{\otimes k} \varphi_2\|_{L^\infty(B(0,\eta))}+  \|\nab_x^{\otimes (k-1)}\varphi_2\|_{L^\infty(B(0,\eta))}\Big).
\end{align}
Observing that
\begin{multline}
\sum_{k=1}^m \eta^{k-m+1}\Big(\eta\| \nab_x^{\otimes k} \varphi_2\|_{L^\infty(B(0,\eta))}+  \|\nab_x^{\otimes (k-1)}\varphi_2\|_{L^\infty(B(0,\eta))}\Big)\\
 = \sum_{k=1}^m \eta^{k-m+2}\|\nab_x^{\otimes k} \varphi_2\|_{L^\infty (B(0,\eta))}+ \sum_{k=0}^{m-1}\|\nab_x^{\otimes k} \varphi_2\|_{L^\infty(B(0,\eta))},
\end{multline}
we arrive at the desired \eqref{resultl'}.
\end{proof}

\begin{remark}\label{lemstandard}
By standard considerations, the estimate \eqref{resultl'} holds in any ball $B(0, \rho\eta)$ with $\rho<1$, up to changing the constant in the right-hand side.
\end{remark}

\subsection{Regularity for commutators by induction}\label{ssec:L2regind}
The following theorem shows that the commutators enjoy  {\it local} regularity properties similar to those of $h^f$. This is a vast generalization of the prototypical result for $h^f$ in \cite[Lemma A.2]{LS2018,Serfaty2023}. We are showing it under the strong assumption that $f$ vanishes in $B(x, \eta)$, which will be sufficient for our purposes. However, an analogous result would be true without this assumption if $f$ is regular enough, up to additional terms including derivatives of $f$ in the estimate.

It will be crucial for us later that the control can be obtained in terms of integrals restricted to arbitrarily small balls.
\begin{thm}\label{th44}
Let $v:\R^\d\rightarrow\R^\d$ be a vector field and $\tv$ be an extension to $\R^{\d+\k}$ as above. Let $f \in \Sc_0(\R^\d)$ and $\kappa^{(n), f}$ be as in \eqref{eq:kandef}. If, for any $x\in\R^\d$ and $\eta>0$,  $f$ vanishes in $B(x, \eta) \subset \R^\d$, then for any integer $m\ge 1$,  
\begin{multline}\label{regkappa}
\| \nab_x^{\otimes m} \kappa^{(n), f}\|_{L^\infty(B(x, \frac12 \eta))} \\ \le \frac{C}{\eta^{\frac{\s}2+m}} \sum_{p=0}^n    \sum_{\substack{c_1,\dots,  c_{n-p}\ge 0\\
\sum c_i \le m-1+2n} } \eta^{\sum c_i  } \|\nab^{\otimes(1+c_1)}\tv\|_{L^\infty(B(x,\eta))} \dots \|\nab ^{\otimes(1+c_{n-p})}\tv\|_{L^\infty(B(x,\eta))} \|\nab\ka^{(p),f}\|_{L_\ga^2(B(x,\eta))},
 \end{multline}
where $C>0$ depends only on  $m,n,\d,\s$ and the summation over the $c_i$ is vacuous if $p=n$.
\end{thm}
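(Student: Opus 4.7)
The plan is to prove \cref{th44} by induction on the commutator order $n \ge 0$, with $m \ge 1$ arbitrary at each stage. The base case $n=0$ is immediate: one has $\ka^{(0),f} = h^f$, and since $f$ vanishes on $B(x,\eta)\cap \R^\d$, the extended distribution $\tl f$ vanishes on $B(x,\eta)\subset \R^{\d+\k}$, so $Lh^f = \cd \tl f = 0$ in $B(x,\eta)$. Applying \cref{reglemma} with $\varphi_1 = \varphi_2 = 0$ then gives
\begin{equation*}
\|\nab_x^{\otimes m}h^f\|_{L^\infty(B(x,\eta/2))} \le \frac{C}{\eta^{\s/2+m}}\|\nab h^f\|_{L_\ga^2(B(x,\eta))},
\end{equation*}
which is precisely \eqref{regkappa} at $n=0$, as the product over the $c_i$'s is vacuous.

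For the inductive step, I assume \eqref{regkappa} at all orders $\le n-1$ and all values of $m$. By \cref{lem:Lkapnf} and the hypothesis that $\tl f = 0$ in $B(x,\eta)$, the commutator $w = \ka^{(n),f}$ satisfies an equation of the form $Lw = \div(\zg\varphi_1) + \zg\varphi_2$ in $B(x,\eta)$, with $\varphi_1^i \coloneqq -n\p_i\tv\cdot\nu^{(n-1),f}$ and $\varphi_2 \coloneqq -n\p_i\tv\cdot\p_i\nu^{(n-1),f} + n(n-1)(\p_i\tv)^{\otimes 2}:\mu^{(n-2),f}$. Feeding this into \cref{reglemma} (together with \cref{lemstandard} to allow slightly shrunken radii) yields the leading contribution $\frac{C}{\eta^{\s/2+m}}\|\nab\ka^{(n),f}\|_{L_\ga^2(B(x,\eta))}$, which is precisely the $p=n$ summand of \eqref{regkappa} (empty product over $c_i$), together with additive corrections of the form $\eta^{k-m+1}\|\nab_x^{\otimes k}\varphi_1\|_{L^\infty(B(x,\eta))}$ for $1\le k\le m$ and $\eta^{k-m+2}\|\nab_x^{\otimes k}\varphi_2\|_{L^\infty(B(x,\eta))}$ for $0\le k\le m$ that remain to be controlled.

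To control these corrections, the plan is to iterate the recursions \eqref{recnu} and \eqref{recmu} to express $\nu^{(n-1),f}$ (respectively $\mu^{(n-2),f}$) as a sum of products of $\nab\tv$-factors with $\nab\ka^{(p'),f}$ (respectively $\nab^{\otimes 2}\ka^{(p'),f}$) for $p' \le n-1$, distribute $\nab_x^{\otimes k}$ through these products by Leibniz, and apply the induction hypothesis to each $\|\nab_x^{\otimes s}\ka^{(p'),f}\|_{L^\infty(B(x,\eta/2))}$. This reduces every correction term to a sum indexed by $p\le n-1$ of expressions of the form $\eta^{\sum c_i} \prod_{i=1}^{n-p}\|\nab^{\otimes(1+c_i)}\tv\|_{L^\infty(B(x,\eta))} \|\nab\ka^{(p),f}\|_{L_\ga^2(B(x,\eta))}$. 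The number $n-p$ of $\tv$-factors matches the statement of \cref{th44}: one factor comes from the outer $\p_i\tv$ in $\varphi_1,\varphi_2$, a further $(n-1-p')$ factors are produced by iterating the recursion down to order $p'$, and up to $(p'-p)$ more are inherited from the induction hypothesis applied to $\ka^{(p'),f}$. The main obstacle will be the combinatorial bookkeeping to verify the degree constraint $\sum c_i \le m-1+2n$ and the compatibility of the weights $\eta^{k-m+1}, \eta^{k-m+2}$ from \cref{reglemma} with the $\eta^{-\s/2-s}$ normalizations produced by the induction hypothesis, which must combine to the $\eta^{-\s/2-m}$ prefactor of \eqref{regkappa}. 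Heuristically, the $m-1$ allowance absorbs the $m$ Leibniz derivatives—one of which is always charged to an underlying $\ka^{(p),f}$—while the $2n$ slack accommodates the up-to-two extra differentiations introduced at each of the $n$ levels of \eqref{recnu}, \eqref{recmu}. Once this accounting is carried out, the induction closes at order $n$.
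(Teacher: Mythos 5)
Your plan captures the essential architecture of the paper's proof: induction on $n$, the equation $L\ka^{(n),f} = \div(\zg\varphi_1) + \zg\varphi_2$ drawn from \cref{lem:Lkapnf}, and the weighted elliptic regularity of \cref{reglemma}. The paper, however, reinforces the induction with two auxiliary statements carried in parallel---bounds of type \eqref{indnu} for $\nab_x^{\otimes m}\nu^{(n),f}$ and \eqref{indmu} for $\nab_x^{\otimes m}\mu^{(n),f}$---which are deduced at each level from the freshly proved $\ka$ estimate via the recursions \eqref{recnu}, \eqref{recmu}. When the corrections $\|\nab_x^{\otimes k}\varphi_1\|_{L^\infty}$, $\|\nab_x^{\otimes k}\varphi_2\|_{L^\infty}$ appear, one simply cites the auxiliary hypotheses for $\nu^{(n-1),f}$ and $\mu^{(n-2),f}$ and applies Leibniz once. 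Your alternative of fully unrolling the recursions down to $\nab\ka^{(p'),f}$ and $\nab^{\otimes 2}\ka^{(p'),f}$ before differentiating is logically equivalent, but it multiplies the combinatorial burden: the $\mu$ recursion mixes $\nu$ and $\mu$ at each level, and differentiating the unrolled products generates iterated $\nab^{\otimes 2}\tv$ factors in ways that make the constraint $\sum c_i\le m-1+2n$---which you defer---genuinely nontrivial to verify.

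The more substantive gap is the radius management. You state your induction hypothesis on $B(x,\eta/2)$, yet the correction terms produced by \cref{reglemma} are $L^\infty$ norms on $B(x,\eta)$ (or, after \cref{lemstandard}, on a ball of radius $\rho\eta$ with $\rho<1$ still strictly larger than $\tfrac12$). A hypothesis valid only on $B(x,\eta/2)$ cannot be cited to control quantities on the larger ball, and invoking it with a larger parameter $\eta'$ would require $f$ to vanish on $B(x,\eta')$---which is not granted. The paper circumvents this by fixing $n_0$, setting $\rho = \frac{1}{8(n_0+1)}$, and phrasing all three induction hypotheses on the nested balls $B(x,(1-(n+1)\rho)\eta)$, so each inductive step sacrifices only a $\rho\eta$-thick shell and the cumulative loss over $n_0+1$ levels stays bounded below $\tfrac12\eta$. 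Without an analogous scheme, your induction does not close.
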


\begin{remark}\label{rem:th44}
For the particular extension $\tv$ introduced in \cref{rem:vextell}, if $\eta \le \ell$, then $\chi \equiv 0$ in $B(0,\eta)$. Hence, $\tv$ coincides with the trivial extension in $B(x,\eta)$. Consequently, all factors $\|\nab^{\otimes(1+c_i)}\tv\|_{L^\infty(B(x,\eta))}$ may be replaced by $\|\nab^{\otimes(1+c_i)}v\|_{L^\infty(B(x,\eta))}$.
\end{remark}

\begin{proof}[Proof of \cref{th44}]
Fix an integer $n_0 \ge 0$, and set $\rho\coloneqq \frac{1}{8(n_0+1)}$. For $0\le n\le n_0$, we inductively prove the following statement: for any $m\ge 1$, it holds that
\begin{multline}\label{indkappa} 
\|\nab_x^{\otimes m} \kappa^{(n), f}\|_{L^\infty(B(x, (1- (n+1)\rho ) \eta))}  \\
\le \frac{C_{n,m}}{\eta^{\frac{\s}2+m}} \sum_{p=0}^n    \sum_{\substack{c_1,\dots,  c_{n-p}\ge 0\\\sum c_i \le m-1+ 2n} } \eta^{\sum c_i  } \|\nab^{\otimes(1+c_1)}\tv \|_{L^\infty(B(x,\eta))} \dots \|\nab ^{\otimes(1+c_{n-p})} \tv\|_{L^\infty(B(x,\eta))} \|\nab\ka^{(p),f}\|_{L_\ga^2(B(x,\eta))},
\end{multline}
\begin{multline}\label{indnu}
\max_{i\in[\d]}\|\nab_x^{\otimes m}\nu_i^{(n), f}\|_{L^\infty(B(x, (1-(n+1)\rho )\eta))} \\
\le \frac{C_{n,m}}{\eta^{\frac{\s}2+m+1}} \sum_{p=0}^n    \sum_{\substack{c_1,\dots,  c_{n-p}\ge 0\\
\sum c_i \le m+ 2n} } \eta^{\sum c_i } \|\nab^{\otimes(1+c_1)}\tv \|_{L^\infty(B(x,\eta))}\dots \|\nab ^{\otimes(1+c_{n-p})} v\|_{L^\infty(B(x,\eta))}
\|\nab\ka^{(p),f}\|_{L_\ga^2(B(x,\eta))},
\end{multline}
\begin{multline}\label{indmu}
\max_{i,j\in[\d]}\| \nab_x^{\otimes m} \mu_{ij}^{(n), f}\|_{L^\infty(B(x, (1-(n+1)\rho)  \eta))}, \\
\le \frac{C_{n,m}}{\eta^{\frac{\s}2+m+2}} \sum_{p=0}^n    \sum_{\substack{c_1,\dots,  c_{n-p}\ge 0\\
\sum c_i \le m+1 +2n} } \eta^{\sum c_i  } \|\nab^{\otimes(1+c_1)}\tv \|_{L^\infty(B(x,\eta))} \dots \|\nab ^{\otimes(1+c_{n-p})}\tv\|_{L^\infty(B(x,\eta))} 
\|\nab\ka^{(p),f}\|_{L_\ga^2(B(x,\eta))}.
\end{multline}

Consider the base case $n=0$. For $p=0$, $\kappa^{(p),f}= h^f $ which solves $L h^f = \cd \tl{f} = 0$ in $B(x, \eta)\subset \R^{\d+\k}$ by assumption on $f$. Applying \eqref{resultl'} ($\varphi=0$) with \cref{lemstandard}, we find that 
\begin{equation}
\|\nab_x^{\otimes m} h^f\|_{L^\infty(B(x, (1-\rho)\eta))} \le  \frac{C_{0,m}}{\eta^{\frac{\s}2+m}} \|\nab h^f\|_{L_\ga^2(B(x,\eta))},
\end{equation}
 with $C_{0,m}$ also depending on $\rho$. Thus, the result \eqref{indkappa} is true for $n=0$. Since $\nu^{(0),f}= \nab h^{ f}$, $\mu^{(0), f}= \nab^{\otimes 2} h^f$, the inequalities  \eqref{indnu}, \eqref{indmu} for $n=0$ follow from \eqref{indkappa} for $n=0$. Suppose  that for some $n \ge 1$, the relations \eqref{indkappa}, \eqref{indnu}, \eqref{indmu}  are true up to order $n-1$. We will now show they are true for order $n$.
 
Recalling the identity \eqref{eqLk}, we may write

\begin{align}\label{eq:Lkanfvphi}
L\ka^{(n),f} =\div(\zg\varphi_1) + \zg\varphi_2,
\end{align}
where
\begin{align}
& \varphi_1^i \coloneqq -n\p_i\tv\cdot\nu^{(n-1),f}, \\
& \varphi_2 \coloneqq   - n\p_i\tv \cdot \p_i\nu^{(n-1),f} + n(n-1)(\p_i \tv)^{\otimes 2}:\mu^{(n-2),f}.
\end{align}
 We note that since $\tv^{\d+\k}=0$ if $\k=1$ by assumption, the preceding right-hand only depends on the components $\nu_i^{(n-1),f}, \mu_{ij}^{(n-1),f}$ for $i,j\in[\d]$.

We apply the estimate \eqref{resultl'} of \cref{reglemma} to $\kappa^{(n), f}$  as a solution of \eqref{eq:Lkanfvphi} in $B(x, (1-n\rho)\eta)$ to obtain
\begin{multline}\label{eq:inducknab0}
\|\nab_x^{\otimes m} \kappa^{(n), f} \|_{L^\infty(B(x, (1-(n+1)\rho)\eta))} \\
\le \frac{C}{\eta^{\frac{\s}2+m}}\|\nab \ka^{(n),f}\|_{L_\ga^2(B(x,\eta))} + C\sum_{k=1}^m \eta^{k-m+1} \|\nab_x^{\otimes k}(\nab\tv\cdot\nu^{(n-1),f})\|_{L^\infty(B(x,(1-n\rho)\eta))}\\  
 + C  \sum_{k=0}^m \eta^{k-m+2}\Big(\|\nab_x^{\otimes k}(\p_i\tv \cdot \p_i\nu^{(n-1),f})\|_{L^\infty(B(x,(1-n\rho)\eta))}  + \|\nab_x^{\otimes k}((\p_i\tv)^{\otimes 2}  :\mu^{(n-2), f} )   \|_{L^\infty(B(x, (1-n\rho)\eta))} \Big).
\end{multline}
By the Leibniz rule and the induction hypothesis \eqref{indnu} for $\nu^{(n-1),f}$, we see that
\begin{align}
 &\sum_{k=1}^m \eta^{k-m+1}\|\nab_x^{\otimes k}(\nab\tv\cdot\nu^{(n-1),f})\|_{L^\infty(B(x,(1-n\rho)\eta))}\nn\\
 &\le  C\sum_{k=1}^m \eta^{k-m+1}\sum_{q=0}^k \|\nab_x^{\otimes q}\nab\tv\|_{L^\infty(B(x,(1-n\rho)\eta))} \max_{i\in [\d]} \|\nab_x^{\otimes (k-q)} \nu_i^{(n-1),f}\|_{L^\infty(B(x,(1-n\rho)\eta))}  \nn\\
 &\le C\sum_{k=1}^m \sum_{q=0}^k \|\nab_x^{\otimes q}\nab\tv\|_{L^\infty(B(x,(1-n\rho)\eta))} \frac{\eta^{k-m+1}}{\eta^{\frac{\s}{2}+k-q+1}}\sum_{p=0}^{n-1}\sum_{\substack{c_1,\ldots,c_{n-1-p}\ge 0 \\ \sum c_i\le k-q+2(n-1)}} \nn\\
 &\ph\qquad \eta^{\sum c_i} \|\nab^{\otimes(1+c_1)}\tv \|_{L^\infty(B(x,\eta))} \dots \|\nab ^{\otimes(1+c_{n-1-p})}\tv\|_{L^\infty(B(x,\eta))} \|\nab\ka^{(p),f}\|_{L_\ga^2(B(x,\eta))}.
\end{align}
Letting $c_{n-p}\coloneqq q$ and interchanging the order of summations, we obtain
\begin{multline}
\sum_{k=1}^m \eta^{k-m+1}\|\nab_x^{\otimes k}(\nab\tv\cdot\nu^{(n-1),f})\|_{L^\infty(B(x,(1-n\rho)\eta))} \\
\le \frac{C}{\eta^{\frac{\s}{2}+m}}\sum_{p=0}^{n-1} \sum_{\substack{c_1,\ldots,c_{n-p}\ge 0 \\ \sum c_i\le m+2n-1}}\eta^{\sum c_i}\|\nab^{\otimes(1+c_1)}\tv \|_{L^\infty(B(x,\eta))} \dots \|\nab ^{\otimes(1+c_{n-p})}\tv\|_{L^\infty(B(x,\eta))} \|\nab\ka^{(p),f}\|_{L_\ga^2(B(x,\eta))}. \label{eq:inducknab1}
\end{multline}
By similar reasoning,
\begin{align}
 &\sum_{k=0}^m \eta^{k-m+2}\|\nab_x^{\otimes k}(\p_i\tv \cdot \p_i\nu^{(n-1),f})\|_{L^\infty(B(x,(1-n\rho)\eta))} \nn\\
 &\le  C\sum_{k=0}^m \sum_{q=0}^k \|\nab_x^{\otimes q}\nab \tv\|_{L^\infty(B(x,(1-n\rho)\eta))} \frac{\eta^{k-m+2}}{\eta^{\frac{\s}{2}+k-q+2}}\sum_{p=0}^{n-1}\sum_{\substack{c_1,\dots,  c_{n-1-p}\ge 0\\ \sum c_i \le k-q+1+ 2(n-1)} } \nn\\
&\ph\qquad \eta^{\sum c_i  } \|\nab^{\otimes(1+c_1)}\tv\|_{L^\infty(B(x,\eta))} \dots \|\nab ^{\otimes(1+c_{n-1-p})}\tv\|_{L^\infty(B(x,\eta))}\|\nab\ka^{(p),f}\|_{L_\ga^2(B(x,\eta))} \nn\\
&\le \frac{C}{\eta^{\frac{\s}{2}+m}}\sum_{p=0}^{n-1}\sum_{\substack{c_1,\dots,  c_{n-p}\ge 0\\ \sum c_i \le m+2n-1}}\eta^{\sum c_i  } \|\nab^{\otimes(1+c_1)}\tv \|_{L^\infty(B(x,\eta))} \dots \|\nab ^{\otimes(1+c_{n-p})}\tv\|_{L^\infty(B(x,\eta))}\|\nab\ka^{(p),f}\|_{L_\ga^2(B(x,\eta))}, \label{eq:inducknab2}
\end{align}
where we again let $c_{n-p} \coloneqq q$ to obtain the final inequality. Finally, proceeding as before, except now using the induction hypothesis \eqref{indmu} for $\mu^{(n-2),f}$, we obtain 
\begin{align}
&\sum_{k=0}^m \eta^{k-m+2} \|\nab_x^{\otimes k}((\p_i \tv)^{\otimes 2}  :\mu^{(n-2), f} )   \|_{L^\infty(B(x, (1-n\rho)\eta))} \nn\\
&\le C\sum_{k=0}^m \sum_{0\le r\le q\le k}\|\nab_x^{\otimes r}\p_i \tv\|_{L^\infty(B(x, (1-n\rho)\eta))} \|\nab_x^{\otimes(q-r)}\p_i\tv\|_{L^\infty(B(x, (1-n\rho)\eta))} \frac{\eta^{k-m+2}}{\eta^{\frac{\s}{2}+k-q+2}}\sum_{p=0}^{n-2} \sum_{\substack{c_1,\dots,  c_{n-2-p}\ge 0  \\  
\sum c_i \le k-q+1+ 2(n-2)} } \nn\\
&\ph\qquad \eta^{\sum c_i  } \|\nab^{\otimes(1+c_1)}\tv \|_{L^\infty(B(x,\eta))} \dots \|\nab ^{\otimes(1+c_{n-2-p})}\tv\|_{L^\infty(B(x, \eta))}\|\nab\ka^{(p),f}\|_{L_\ga^2(B(x,\eta))} \nn\\
&\le \frac{C}{\eta^{\frac{\s}{2}+m}}\sum_{p=0}^{n-2}\sum_{\substack{c_1,\dots,  c_{n-p}\ge 0  \\ \sum c_i \le m+2n-1} } \eta^{\sum c_i  }\|\nab^{\otimes(1+c_1)}\tv \|_{L^\infty(B(x,\eta))} \dots \|\nab ^{\otimes(1+c_{n-p})} \tv\|_{L^\infty(B(x,\eta))}\|\nab\ka^{(p),f}\|_{L_\ga^2(B(x,\eta))}, \label{eq:inducknab3}
\end{align}
where we let $c_{n-p-1} \coloneqq r$, $c_{n-p}\coloneqq q-r$ to obtain the final inequality. Applying the estimates \eqref{eq:inducknab1}, \eqref{eq:inducknab2}, \eqref{eq:inducknab3} to the right-hand side of our starting point \eqref{eq:inducknab0} then shows that \eqref{induck} holds at order $n$. The relations \eqref{indnu} and \eqref{indmu} then follow from \eqref{recnu} and  \eqref{recmu}, respectively, thereby completing the proof of the induction step.
\end{proof}

\section{The higher-order functional inequalities}\label{sec:FI}
We now prove the sharp functional inequalities of  \cref{thm:FI} for the $n$-th order variation of the modulated energy by linear transport for $n\ge 2$. As described in the introduction, this proof is based on a combination of the inequalities of \cref{sec:comm} with a renormalization procedure using the potential truncation/charge smearing of \cref{sec:ME}. One decomposes the main quantity we seek to estimate into a first term corresponding to the same quantity with the singular charges replaced by smeared charges, and second and third terms corresponding to the errors generated by this smearing/renormalization. The first term is handled by  \cref{thm:comm2}. The remaining terms require the full power of the regularity theory of \cref{th44} if one wishes to preserve the estimate's sharpness and localization to the support of $v$.  As shown later in \cref{ssec:FImac}, cruder estimates, which would be sufficient when the support of $v$ is macroscopic (i.e. $\ell \asymp 1$), can be obtained by bypassing this regularity theory. {We also show analogous estimates without localization in \cref{ssec:FIunloc}.}
 

Of course, we cannot apply \cref{thm:comm2} directly with $f=\frac{1}{N}\sum_{i=1}^N\delta_{x_i}-\mu$, since then $h^{f}\notin L_\ga^2$. Therefore, much of the hard work lies in renormalizing the commutator expressions by smearing the Dirac masses $\delta_{x_i}$, so that $f$ is now sufficiently regular, and then optimally estimating the error introduced by the smearing. A significant difficulty is that in the non-Coulomb Riesz case, we cannot use the smearing $\delta_{x_i}^{(\eta_i)}$ from \eqref{defdeta} because it is not supported in $\R^\d\times\{0\}^\k$ and would preclude applying  \cref{thm:comm2}.

As in the statement of \cref{thm:FI}, we assume throughout this section that $\Om$ contains a closed $5\ell$-neighborhood of $\Omega'$, where $\Omega'$ is a ball of radius $\ell$ containing  a $2\lambda$-neighborhood of $\supp v$.  In particular, this implies that if $x_i\in\Omega'$, then $\dist(x_i,\p\Omega) \ge 4\ell$. Moreover, $\min(\la,\la/\ell)<\frac12$.

For $1\le i\le N$, we now let 
\begin{equation}\label{choiceeta}
\eta_i \coloneqq \begin{cases}\frac12\cre{ \rs_i}, & \text{if} \  x_i\in \Omega'\\
0, & \text{otherwise,}\end{cases}
\end{equation}
where ${\rs_i}$ is as in \eqref{eq:defri}. Importantly, the Dirac masses corresponding to points not in $\Omega'$ are not smeared. Thus, there will be no contribution to the renormalization error for points outside $\Omega'$, which is crucial to obtaining localized estimates.

To compactify the notation, we abbreviate
\begin{equation}\label{defXi}
\Xi \coloneqq { C\cre \int_{(\supp \nab \tl v)\cup(\Omega \times [-\la,\la]^\k)} \zg |\nab h_{N,\vec\rs}|^2
} + C \frac{\#I_\Omega\|\mu\|_{L^\infty(\hat \Omega)}\lambda^{\d-\s}}{N},
\end{equation}
which is the factor we wish to obtain on the right-hand side of \eqref{mainn}.  

\subsection{Preliminary estimates}\label{ssec:FIpre}
{Let $\rho^{(\eta)}$ be a mollifier in $\R^\d$ at lengthscale $\eta$ supported in $B(0,\frac12\eta)$. We will assume throughout this section that the moments of mollifier $\rho^{(\eta)}$ vanish up to order $m$. The exact value of $m$ needed will be specified later. Such a mollifier may be constructed as follows.

Fix $m\ge 1$. Given a smooth bump function $\psi$ with $\int\psi = 1$, define a new function $\rho$ in Fourier space by
\begin{align}
\hat{\rho}(\xi) \coloneqq \hat{\psi}(\xi)  - \hat{\zeta}(\xi)\sum_{k=1}^m \frac{1}{k!} \xi^{\otimes k}\cdot\nab^{\otimes k}\hat{\psi}(0),
\end{align}
where $\zeta$ is $C^\infty$ with compact support and such that $\hat{\zeta}(0) = 1$. By Fourier inversion, it follows that $\rho$ is $C^\infty$ with support in $B(0,r)$, for some $r>0$, has unit mean, and $\int x^{\otimes k}\rho = 0$ for $1\le k\le m$. Now take $\rho^{(\eta)} \coloneqq (\frac{2r}{\eta})^{\d}\rho(\frac{2r}{\eta}\cdot)$.}

We set
\begin{align}\label{eq:trhodef}
\rho_{x_0}^{(\eta)} \coloneqq \rho^{(\eta)}(\cdot-x_0), \qquad \tilde\rho_{x_0}^{(\eta)} \coloneqq \widetilde{\rho_{x_0}^{(\eta)}}.
\end{align}
 The essential point is that $\tilde \rho^{(\eta)}_{x_0}$, unlike $\delta_{x_0}^{(\eta)}$, is supported in $\R^\d \times \{0\}^\k$. 
Consequently, we have, by \eqref{relhf}, that
\begin{equation}
\label{eqcrucre}
L(\g\ast\rho^{(\eta)}_{x_0}) = \cd \tl\rho^{(\eta)}_{x_0}.
\end{equation}

The following lemma allows us to compare the electric potentials generated by the two different regularizations and will be needed only in the non-Coulomb case.
 
\begin{lemma}\label{lem:barHN}
For any pairwise distinct configuration $\ux_N \in (\R^\d)^N$, let 
\begin{align}
\bar h_{N,\vec{\eta}} \coloneqq \G* \Big( \frac{1}{N}\sum_{i=1}^N \rho_{x_i}^{(\eta_i)} -\mu\Big),
\end{align}
where $\vec{\eta}$ is as in \eqref{choiceeta}. We have
\begin{equation} \label{HH}
 \int_{{( \supp \nab \tl v)\cup (\Omega\times  [-\la,\la]^\k)}}|z|^\gamma |\nabla \bar h_{N,\vec{\eta}}|^2 \le C \Xi,
\end{equation}
 where $C>0$ depends only on $\d,\s$.

\end{lemma}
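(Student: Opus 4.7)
\textbf{Plan of proof for \cref{lem:barHN}.}

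The strategy is to compare $\bar h_{N,\vec\eta}$ (smeared via the mollifier $\rho$) with the already-analyzed truncated potential $h_{N,\vec\eta}$ (associated to the sphere-supported smearing $\delta_0^{(\eta)}$). Writing $\bar h_{N,\vec\eta} = h_{N,\vec\eta} + W$ where
\begin{equation*}
W \coloneqq \frac{1}{N}\sum_{i=1}^N w_i, \qquad w_i \coloneqq \G*\tilde\rho_{x_i}^{(\eta_i)} - \G_{\eta_i}(\cdot-x_i),
\end{equation*}
(note $w_i \equiv 0$ whenever $\eta_i = 0$, since then $\tilde\rho_{x_i}^{(\eta_i)}$ is the Dirac and $\G_0 = \G$), the triangle inequality in $L_\ga^2$ reduces the proof to bounding the two pieces separately. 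The first satisfies $\|\nabla h_{N,\vec\eta}\|_{L^2_\ga(\Omega\times\R^\k)}^2 \le C\Xi$ by \eqref{eq:14} of \cref{prop:MElb}, so the core task is to control $\|\nabla W\|_{L^2_\ga(\R^{\d+\k})}^2$.

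Observe that $LW = \cd\sigma$ with $\sigma \coloneqq \frac{1}{N}\sum_i(\tilde\rho_{x_i}^{(\eta_i)} - \delta_{x_i}^{(\eta_i)})$, since $\G*\tilde\rho_{x_i}^{(\eta_i)}$ and $\G_{\eta_i}(\cdot-x_i)$ are both fundamental-solution type potentials generated by unit-mass measures. Using the decay of $W$ at infinity to justify integration by parts gives
\begin{equation*}
\int_{\R^{\d+\k}} \zg |\nabla W|^2 = \cd \int_{\R^{\d+\k}} W\,d\sigma = \frac{\cd}{N^2}\sum_{i,j}\int w_j\,d(\tilde\rho_{x_i}^{(\eta_i)} - \delta_{x_i}^{(\eta_i)}).
\end{equation*}
For the diagonal $i=j$, I would use \cref{lem:lemme} together with the fact that $\rho_{x_i}^{(\eta_i)}$ is supported in $B(x_i,\eta_i/2)$, where $\G_{\eta_i}(\cdot-x_i)$ is the constant $\G(\eta_i)$, and that $\delta_{x_i}^{(\eta_i)}$ lives on the sphere of radius $\eta_i$, to reduce the diagonal term to $\mathcal E(\tilde\rho_{x_i}^{(\eta_i)},\tilde\rho_{x_i}^{(\eta_i)}) - \G(\eta_i)$; rescaling the mollifier shows this is $O(\eta_i^{-\s})$ (or logarithmic when $\s=0$).

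For the off-diagonal $i\neq j$, the fact that $\eta_i \le \tfrac12\rrc_i \le \tfrac12\rs_i$ enforces the separation $|x_i-x_j| \ge 8\max(\eta_i,\eta_j)$, so neither of $\tilde\rho_{x_j}^{(\eta_j)}$, $\delta_{x_j}^{(\eta_j)}$ intersects the truncation region $B(x_i,\eta_i)$. Applying Taylor's theorem to $\G(\cdot-Y)$ successively in the two source variables, around $(x_j,0)$ and $(x_i,0)$, and using the vanishing moments of $\rho$ up to order $m$, we obtain
\begin{equation*}
\Big|\int w_j\,d(\tilde\rho_{x_i}^{(\eta_i)} - \delta_{x_i}^{(\eta_i)})\Big| \le C\eta_i^{m+1}\eta_j^{m+1}|x_i-x_j|^{-\s-2(m+1)}.
\end{equation*}

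Finally I would assemble the two contributions. The diagonal part contributes $\frac{1}{N^2}\sum_i\rrc_i^{-\s}$, bounded by $C\Xi$ directly via \eqref{eq:11}. The off-diagonal sum $\lambda^{2(m+1)}\frac{1}{N^2}\sum_{i\neq j}|x_i-x_j|^{-\s-2(m+1)}$ is split according to microscale ($|x_i-x_j|\leq\lambda$, handled through \cref{cor:coro3} and the lower bound $|x_i-x_j|\geq 8\eta_{i\vee j}$) and mesoscale ($|x_i-x_j|\geq\lambda$, handled via \cref{prop:multiscale} with $a = 2(m+1)$), giving a total bound $\le C\Xi$ provided $m$ is chosen large enough in terms of $\d$ and $\s$. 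The main obstacle is this choice of $m$: one must ensure $\s + 2(m+1)$ exceeds the critical summability threshold for both the microscale counting bound and the mesoscale dyadic sum, and verify that the resulting power of $\lambda$ in the off-diagonal bound absorbs into the $N^{-1}\lambda^{\d-\s}$ error absorbed in $\Xi$ without any logarithmic loss (including in the $\s=0$ case, which requires the more delicate second case of \cref{prop:multiscale}).
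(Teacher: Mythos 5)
Your route is essentially the paper's: compare $\bar h_{N,\vec\eta}$ with $h_{N,\vec\eta}$ by the triangle inequality, control $\|\nabla h_{N,\vec\eta}\|_{L^2_\ga(\Omega\times\R^\k)}^2$ by $\Xi$ via \cref{prop:MElb}, write the energy of the difference as $\frac{\cd}{N^2}\sum_{i,j}\int w_j\,d(\tilde\rho_{x_i}^{(\eta_i)}-\delta_{x_i}^{(\eta_i)})$ by integration by parts, handle the diagonal by scaling and \eqref{eq:11}, the close pairs by \cref{cor:coro3}, and the far pairs by Taylor expansion plus \cref{prop:multiscale}.

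The one genuine flaw is your claimed off-diagonal bound $C\eta_i^{m+1}\eta_j^{m+1}|x_i-x_j|^{-\s-2(m+1)}$. The gain $\eta_j^{m+1}$ on the $j$-side is fine: away from $B(x_j,\eta_j)$ one has $w_j=\G*(\rho_{x_j}^{(\eta_j)}-\delta_{x_j})$, and the moments of $\rho_{x_j}^{(\eta_j)}-\delta_{x_j}$ vanish up to order $m$. But on the $i$-side you test against $\tilde\rho_{x_i}^{(\eta_i)}-\delta_{x_i}^{(\eta_i)}$, and the sphere measure $\delta_{x_i}^{(\eta_i)}$ does \emph{not} have vanishing moments beyond order one: its even moments are nonzero (e.g. $\int|X-x_i|^2\,d\delta_{x_i}^{(\eta_i)}=\eta_i^2$), so only the zeroth and first moments of the difference cancel, no matter how many moments of $\rho$ you kill. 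The correct gain on the $i$-side is thus $\eta_i^2$, giving at best $C\eta_i^2\eta_j^{m+1}|x_i-x_j|^{-\s-m-3}$. Fortunately this does not sink the argument: the lemma is only needed when $\s>\d-2$ (for $\s=\d-2$ one has $\k=0$ and $\delta^{(\eta)}$ already lives on $\R^\d$), so $\d-\s-a<0$ already for $a=2$, and \cref{prop:multiscale} applied with any $a\ge 2$ yields $\la^{a}\big(\la^{-a}\Xi+\cdots\big)\le C\Xi$ with the $\ell$-dependent term absorbed into $\la^{\d-\s}$; this is precisely what the paper does, using only the $\eta_i^2$ gain and $a=2$, so no large $m$ is needed for this lemma (the higher vanishing moments are only exploited later, for the $\Te_4$-type terms). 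For the near pairs, keep the factors $\eta_i,\eta_j\lesssim|x_i-x_j|$ before discarding them (reduce to $|x_i-x_j|^{-\s}$ and then apply \cref{cor:coro3}), rather than first replacing them by $\la$ as in your final display. A last cosmetic point: $\rrc_i\le\rs_i$ is not literally true in general (the cap $\la=(N\|\mu\|_{L^\infty(\Omega)})^{-1/\d}$ may exceed $(N\|\mu\|_{L^\infty})^{-1/\d}$), but for $i\in I_{\Omega'}$ one has $\rrc_i\le\frac14\min_{j\ne i}|x_i-x_j|$ directly from \eqref{rrh}, which is all you need for the separation $|x_i-x_j|\ge 8\max(\eta_i,\eta_j)$.
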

\begin{proof}
Using integration by parts and the identity \eqref{eqcrucre}, we have, by definition of $\eta_i$, 
\begin{align}\label{et1}
\int_{\R^{\d+\k}}|z|^\gamma |\nabla (\bar h_{N,\vec{\eta}}- h_{N,\vec{\eta}})|^2 
&=  \int_{\R^{\d+\k}} ( \bar h_{N,\vec{\eta}}- h_{N,\vec{\eta}}) L(\bar h_{N,\vec{\eta}}- h_{N,\vec{\eta}}) \nn\\
&= \frac{\cd}N\sum_{i\in I_{\Omega'}} \int_{\R^{\d+\k}} ( \bar h_{N,\vec{\eta}}- h_{N,\vec{\eta}})d(\tilde\rho_{x_i}^{(\eta_i)} - \delta_{x_i}^{(\eta_i)}) \nn\\
&= \frac{\cd}{N^2}  \sum_{i,j\in I_{\Omega'}}\int_{\R^{\d+\k} }(\G*\rho_{x_j}^{(\eta_j)} -  \G_{\eta_j}(\cdot-x_j)) d(\tilde\rho_{x_i}^{(\eta_i)} - \delta_{x_i}^{(\eta_i)}).
\end{align}
For the terms corresponding to $i=j$,  we easily check (by scaling) that 
\begin{equation}
|\G_{\eta_j}(\cdot-x_j) - \G*\rho_{x_j}^{(\eta_j)}|\le C \eta_i^{-\s}.
\end{equation}
Thus, we can bound
\begin{equation}\label{et2}
\frac1{N^2}  \sum_{i\in I_{\Omega'}}\Big|\int_{\R^{\d+\k} }( \G_{\eta_j}(\cdot-x_i) - \G* \rho_{x_i}^{(\eta_i)}) d(\tilde\rho_{x_i}^{(\eta_i)} - \delta_{x_i}^{(\eta_i)})\Big|\le 
\frac{C}{N^2} \sum_{i \in I_{\Omega'} } \eta_i^{-\s}\le C \Xi,
\end{equation}
where we used \eqref{eq:13} for the last inequality. For $j\neq i$ with $|x_i-x_j|\le \lambda$, we instead use that {$\eta_j\le \frac14\min_{i\ne j}|x_i-x_j|$}, hence
\begin{equation}
|\G_{\eta_j}(\cdot-x_j) - \G* \rho_{x_j}^{(\eta_j)}|\le \frac{C}{|x_i-x_j|^\s} \quad\text{in} \ B(x_i, \eta_i).
\end{equation}
Thus, 
\begin{multline}\label{et3}
\frac1{N^2}  \sum_{i,j \in I_{\Omega'} : i\ne j, |x_i-x_j|\le \lambda} \Big|\int_{\R^{\d+\k} }( \G_{\eta_j}(\cdot-x_i) - \G*\rho_{x_i}^{(\eta_i)}) d(\tilde\rho_{x_i}^{(\eta_i)} - \delta_{x_i}^{(\eta_i)})\Big|\\
\le \frac{C}{N^2}\sum_{i,j \in I_{\Omega'} : i\ne j, |x_i-x_j|\le \lambda} \frac{1}{|x_i-x_j|^\s}\le C \Xi,
\end{multline} where this time, we used \cref{cor:coro3} to conclude. 

{{
We now turn to the remaining terms, for which $|x_i-x_j|> \lambda$. Using that $\g_{\eta_j}(\cdot-x_j) = \g(\cdot-x_j)$ in $B(x_i,\eta_i)$, approximating $\g(\cdot-x_j)$ by its degree one Taylor polynomial centered at $x_i$, and using that $\rho_0^{(\eta)}, \delta_0^{(\eta_i)}$ have vanishing first moment by assumption (if $m\ge 1$) and rotational symmetry, respectively, we find that in $B(x_i,\eta_i)$,}
\begin{align}
&\frac1{N^2}  \sum_{i,j \in I_{\Omega'}: i\ne j, |x_i-x_j|\ge \lambda} \Big|\int_{\R^{\d+\k} }(\G*\rho_{x_j}^{(\eta_j)} -  \G_{\eta_j}(\cdot-x_j)) d(\tilde\rho_{x_i}^{(\eta_i)} - \delta_{x_i}^{(\eta_i)})\Big| \nn \\
&\le \frac{C}{N^2}  \sum_{ i,j \in I_{\Omega'}: i\ne j,  |x_i-x_j|\ge \lambda} \|\nab^{\otimes 2}\g(\cdot-x_j)\|_{L^\infty(B(x_i,\eta_i+\eta_j))} \eta_i^2 \nn\\
&\le \frac{C}{N^2} \sum_{i,j \in I_{\Omega'}: i\ne j,  |x_i-x_j|\ge \lambda}\frac{\eta_i^2}{|x_i-x_j|^{\s+2}}\le C\Xi. \label{et4}
\end{align}
In obtaining the penultimate inequality, we have used \cref{prop:multiscale}  (noting that we only need to consider $\s \neq \d-2$ for this lemma)  and $\eta_i\le \lambda= (N\|\mu\|_{L^\infty(\Omega)})^{-1/\d}$, $\ell \ge 2\lambda$ to obtain that
\begin{multline}\label{eq5}
\frac1{N^2}\sum_{i,j \in I_{\Omega'}: i\ne j,  |x_i-x_j|\ge \lambda}\frac{\eta_i^2}{|x_i-x_j|^{\s+2}}\le C
 {\int_{\Omega \times [-\ell,\ell]^\k} \zg|\nab h_{N,\vec\rs}|^2 } \\ + C\frac{\lambda^{\d-\s}\#I_\Omega \|\mu\|_{L^\infty(\hat{\Omega})}}{N} + C\frac{\lambda^2  \ell^{\d-\s-2} \#I_\Omega \|\mu\|_{L^\infty(\hat \Omega)}}{N} \le C \Xi.
\end{multline}
Here, we have implicitly used that $\d-\s-2\le 0$ and $\la \le \ell$, hence $\la^2\ell^{\d-\s-2} \le \la^{\d-\s}$.} Applying the bounds \eqref{et2}, \eqref{et3}, and \eqref{et4} together with the triangle inequality to \eqref{et1}, we conclude that
\begin{align}
\int_{\R^{\d+\k}} \zg |\nab (\bar h_{N,\vec{\eta}}-h_{N,\vec{\eta}})|^2\le C \Xi.
\end{align}
The desired conclusion \eqref{HH} follows readily from the estimate \eqref{eq:pr2} of \cref{prop:MElb} and the $L_\ga^2$ triangle inequality.
\end{proof}

\subsection{Main proof}\label{ssec:FImain}
With \cref{lem:barHN} in hand, let us now turn to the main proof of \eqref{mainn}. We decompose
\begin{multline}
\int_{(\R^\d)^2\setminus\triangle}\nabla^{\otimes n}\g(x-y) : (v(x)-v(y))^{\otimes n} d\Big(\frac{1}{N}\sum_{i=1}^N\delta_{x_i}-\mu\Big)^{\otimes 2}(x,y)\\
 \eqqcolon \Te_1+\Te_2+\Te_3 \label{eq:rcommhorhs}
\end{multline}
where 
\begin{equation}
\Te_1 \coloneqq \int_{(\R^\d)^2} \nabla^{\otimes n}\g(x-y) : (v(x)-v(y))^{\otimes n} 
 d \Big(\frac{1}{N}\sum_{i=1}^N\rho_{x_i}^{(\eta_i)}-\mu\Big)^{\otimes 2}(x,y),
 \end{equation}
\begin{equation}
\Te_2\coloneqq \frac1N\sum_{i\in I_{\Omega'}} \int_{(\R^\d)^2}\nabla^{\otimes n}\g(x-y) : (v(x)-v(y))^{\otimes n}  d\Big(\frac{1}{N}\sum_{j=1}^N\rho_{x_j}^{(\eta_j)}-\mu\Big)(x) d( \delta_{x_i}-\rho_{x_i}^{(\eta_i)}) (y),
\end{equation}
and 
\begin{equation}
\Te_3\coloneqq \frac1N\sum_{i\in I_{\Omega'}}  \int_{(\R^\d)^2\setminus \triangle} \nabla^{\otimes n}\g(x-y) : (v(x)-v(y))^{\otimes n} d \Big(\frac{1}{N}\sum_{j=1}^N\delta_{x_j}-\mu\Big) (x)d( \delta_{x_i}-\rho_{x_i}^{(\eta_i)}) (y).
\end{equation}

Throughout the proof, we make the specific choice of extended vector field $\tv$ from \cref{rem:vextell}.

\medskip

{\bf Step 1: the first term}.
Abbreviating
\begin{align}\label{eq:fabrev}
f \coloneqq \frac1N\sum_{j=1}^N \rho_{x_j}^{(\eta_j)}-\mu,
\end{align}
{
we may apply the estimate \eqref{eq:comm2dual} of \cref{thm:comm2}  to obtain
\begin{align}
|\Te_1| \le C(\ell\|\nab^{\otimes 2}\tv\|_{L^\infty})^n \|\nab h^{ f}\|_{L_\ga^2(\supp\nab\tv)}^2 
\end{align}
where the constant $C>0$ depends only on $\d,\s,n$. Noting that $h^f = \bar h_{N,\vec\eta}$, and combining with \eqref{HH} and \eqref{eq:13}, we deduce that
\begin{equation}\label{eq:HOFIT1fin}
|\Te_1|\le  C{(\ell\|\nab^{\otimes 2}\tv\|_{L^\infty})^n}\Xi,
\end{equation}
with $\Xi$ as in \eqref{defXi}.}



{\bf Step 2: the second term}.
Continuing to use the notation \eqref{eq:fabrev}, we rewrite $\Te_2$ as 
\begin{equation}\label{redeft2}
\Te_2= \frac1N\sum_{i\in I_{\Omega'}} \int_{\R^{\d+\k}} \kappa^{(n),f} d(\delta_{x_i}-\tl\rho_{x_i}^{(\eta_i)}).
\end{equation} 

For each $i \in I_{\Omega'}$, let us split 
\begin{align}\label{eq:ffisplit}
f= \frac1N \rho_{x_i}^{(\eta_i)} - \mu\indic_{B(x_i, R_i)} + f_i,\qquad f_i \coloneqq \frac1N\sum_{j\neq i}\rho_{x_j}^{(\eta_j)}-\mu \indic_{B(x_i ,R_i)^c},
\end{align}
and
where the radius $R_i$ is chosen so that $\int_{B(x_i, R_i)} d\mu=\frac1N$.  This choice ensures that each difference $\frac1N\rho_{x_i}^{(\eta_i)} - \mu\indic_{B(x_i, R_i)}$ has zero average in $\R^\d$. We claim that $R_i \ge 2 \eta_i$. Indeed, by definition \eqref{choiceeta} of $\eta_i$ and definition of $\mathsf{r}_i$, we have
\begin{align}
\eta_i \le \frac12 \mathsf{r}_i \le \frac{\la}{8}  \Longrightarrow \int_{B(x_i, 2\eta_i) } d\mu \le \frac{\pi^{\d/2} \|\mu\|_{L^\infty(\Omega)} (2\eta_i)^\d}{\Gamma(\d/2+1)}  \le \frac{\pi^{\d/2}\|\mu\|_{L^\infty(\Omega)} \lambda^\d}{\Gamma(\d/2+1) 4^{\d} }   < \frac{1}{N}.
\end{align}

Using this decomposition, we bound
\begin{multline}\label{eq:T2kndcomp}
\Big|\int_{\R^{\d+\k}} \kappa^{(n), f}    d(\delta_{x_i}-\tl\rho_{x_i}^{(\eta_i)})\Big| \le \Big|\int_{\R^{\d+\k}} \kappa^{(n), f_i}    d(\delta_{x_i}-\tl\rho_{x_i}^{(\eta_i)})\Big| \\
+ \Big|\int_{\R^{\d+\k}} \kappa^{(n),(\frac{1}{N}\rho_{x_i}^{(\eta_i)}-\mu\indic_{B(x_i,R_i)})}    d(\delta_{x_i}-\tl\rho_{x_i}^{(\eta_i)})\Big|.
\end{multline}
We separately estimate each of the two terms on the right-hand side, beginning with the second one.

By the mean-value theorem,
\begin{align}\label{eq:Kvbnd}
|\nab^{\otimes n}\g(X-Y)\cdot (\tv(X)-\tv(Y))^{{\otimes n}}| \leq C\frac{\|\nab\tv\|_{L^\infty}^n}{|X-Y|^{\s}}, \qquad X\neq Y\in \R^{\d+\k}.
\end{align}
Therefore, we have the bound
\begin{align}
\Big|\ka^{(n),(\frac{1}{N}\rho_{x_i}^{(\eta_i)}-\mu\indic_{B(x_i,R_i)})}\Big|\le  C\int_{\R^\d}\frac{\|\nab\tv\|_{L^\infty}^n}{|\cdot-y|^\s}d\Big(\frac1N\rho_{x_i}^{(\eta_i)} + \mu\indic_{B(x_i,R_i)}\Big)(y).
\end{align}
If $\s=0$, the preceding right-hand side is evidently $\le C\|\nab\tv\|_{L^\infty}/N$, using that $\int_{B(x_i,R_i)}d\mu = \frac 1N$. If $\s\ne 0$, then dividing into regions $|X-y|\le \eta_i$ and $|X-y|>\eta_i$:
\begin{align}
\int_{|X-y|\le \eta_i}|X-y|^{-\s}d\Big(\frac1N\rho_{x_i}^{(\eta_i)} + \mu\indic_{B(x_i,R_i)}\Big)(y) &\le C\Big(\frac1N\|\rho_{x_i}^{(\eta_i)}\|_{L^\infty} + \|\mu\|_{L^\infty(\Omega)}\Big)\eta_i^{\d-\s} \nn\\
&\le C\Big(\frac{\eta^{-\d}}{N}+\|\mu\|_{L^\infty(\Omega)}\Big)\eta_i^{\d-\s}\nn\\
&\le \frac{C}{N\eta_i^{\s}},
\end{align} 
where the final inequality follows from $\|\mu\|_{L^\infty(\Omega)}\eta^{\d}\le 1/N$, and
\begin{align}
\int_{|X-y|> \eta_i}|X-y|^{-\s}d\Big(\frac1N\rho_{x_i}^{(\eta_i)} + \mu\indic_{B(x_i,R_i)}\Big)(y) \le \eta_i^{-\s}\int_{\R^\d}d\Big(\frac1N\rho_{x_i}^{(\eta_i)} + \mu\indic_{B(x_i,R_i)}\Big)(y) = \frac{2}{N\eta_i^{\s}}.
\end{align}
All together, we conclude that
\begin{equation} \label{propk2}
\|\ka^{(n), \frac1N \rho_{x_i}^{(\eta_i)} - \mu\indic_{B(x_i, R_i)} }\|_{L^\infty}\le \frac{C \|\nab\tv\|_{L^\infty}^n}{N \eta_i^{\s}}.
\end{equation}
Hence,
\begin{align}\label{propk2'}
\Big|\int_{\R^{\d+\k}} \kappa^{(n),(\frac{1}{N}\rho_{x_i}^{(\eta_i)}-\mu\indic_{B(x_i,R_i)})}    d(\delta_{x_i}-\tl\rho_{x_i}^{(\eta_i)})\Big| &\le \| \kappa^{(n),(\frac{1}{N}\rho_{x_i}^{(\eta_i)}-\mu\indic_{B(x_i,R_i)})}\|_{L^\infty}\int_{\R^{\d+\k}}d(\delta_{x_i}+\tl\rho_{x_i}^{(\eta_i)}) \nn\\
&\le \frac{C \|\nab\tv\|_{L^\infty}^n}{N \eta_i^{\s}}.
\end{align}

For the remaining term on the right-hand side of \eqref{eq:T2kndcomp}, we use the mean-value theorem to crudely estimate
\begin{align}\label{eq:T2knfiMVT}
\Big|\int_{\R^{\d+\k}} \kappa^{(n), f_i}    d(\delta_{x_i}-\tl\rho_{x_i}^{(\eta_i)})\Big| \le \|\nab_x\kappa^{(n), f_i} \|_{L^\infty}\eta_i,
\end{align}
where we have implicitly used that $\rho_{x_i}^{(\eta_i)}$ is supported in $B(x_i, \eta_i/2)$. We note that $f_i$ vanishes in $B(x_i, \eta_i)$, by disjointness of the balls, that $f_i$ has zero average in $\R^\d$, and that $\eta_i \le \lambda \le \ell$. Thus, \cref{th44} applies to $\ka^{(n),f_i}$, yielding    
\begin{multline}\label{kfi}
\|\nab_x\kappa^{(n), f_i} \|_{L^\infty (B(x_i, \eta_i/2))} \\ \le
\frac{C}{\eta_i^{\frac{\s}{2}+1}} \sum_{p=0}^n    \sum_{\substack{c_1,\dots,  c_{n-p}\ge 0\\
\sum c_k \le 2n} } \eta_i^{\sum c_k  } \|\nab^{\otimes(1+c_1)}\tv\|_{L^\infty(B(x_i,\eta_i))} \dots \|\nab ^{\otimes(1+c_{n-p})}\tv\|_{L^\infty(B(x_i,\eta_i))} \|\nab\ka^{(p),f_i}\|_{L_\ga^2(B(x_i,\eta_i))}.
\end{multline}
Note that by our specific choice of extension $\tv$ and \cref{rem:th44}, the factor $\|\nab^{\otimes(1+c_i)}\tv\|_{L^\infty(B(x_i,\eta_i))} \le \|\nab^{\otimes(1+c_i)}v\|_{L^\infty}$. By the $L_\ga^2$ triangle inequality,
\begin{align}\label{eq:nabkpfiTI}
 \|\nab\ka^{(p),f_i}\|_{L_\ga^2(B(x_i,\eta_i))} \le \|\nab\ka^{(p),f}\|_{L_\ga^2(B(x_i,\eta_i))} + \|\nab\ka^{(p),f-f_i}\|_{L_\ga^2(B(x_i,\eta_i))}.
\end{align}
In view of the estimate \eqref{eq:comm2} from \cref{thm:comm2}, we have
\begin{align}\label{propk1}
\int_{\R^{\d+\k}} \zg |\nab \kappa^{(p),f-f_i }|^2 &= \int_{\R^{\d+\k}} \zg |\nab \kappa^{(p),(\frac1N \rho_{x_i}^{(\eta_i)} - \mu\indic_{B(x_i, R_i)}) }|^2  \nn\\
&\le C(\ell\|\nab^{\otimes 2}\tv\|_{L^\infty})^{2p}\int_{\supp\nab\tv} \zg |\nab h^{(\frac1N \rho_{x_i}^{(\eta_i)} - \mu\indic_{B(x_i, R_i)}) }|^2 \nn\\
&\le C (\ell\|\nab^{\otimes 2}\tv\|_{L^\infty})^{2p}\int_{(\R^\d)^2}\g(x-y)d\Big(\frac1N\rho_{x_i}^{(\eta_i)}- \mu\indic_{B(x_i, R_i)}\Big)^{\otimes 2}(x,y).
\end{align}
To bound this last line, we argue as follows. Set $\mu_{\eta_i}(x) \coloneqq \eta_i^\d \mu(\eta_i x)$. Then rescaling and using that $\int d\Big(\frac1N\rho_{x_i}^{(\eta_i)}- \mu\indic_{B(x_i, R_i)}\Big) = 0$, we see that
\begin{multline}
\int_{(\R^\d)^2}\g(x-y)d\Big(\frac1N\rho_{x_i}^{(\eta_i)}- \mu\indic_{B(x_i, R_i)}\Big)^{\otimes 2}(x,y)  \\
=\eta_i^{-\s}\int_{(\R^\d)^2}\g(x-y)d\Big(\frac1N\rho_{x_i}^{(1)}- \mu_{\eta_i}\indic_{B(x_i, R_i/\eta_i)}\Big)^{\otimes 2}(x,y). \label{propk1'}
\end{multline}
Using that $\|\g\ast\rho_0^{(1)}\|_{L^\infty} \le C$, we directly bound
\begin{align}
&\Big|\int_{(\R^\d)^2}\g(x-y)d\Big(\frac1N\rho_{x_i}^{(1)}\Big)(y)d\Big(\frac1N\rho_{x_i}^{(1)}- \mu_{\eta_i}\indic_{B(x_i, R_i/\eta_i)}\Big)(x)\Big| \nn\\
&\le \frac1N\int_{\R^\d}\|\g\ast\rho_{x_i}^{(1)}\|_{L^\infty}d\Big|\frac1N\rho_{x_i}^{(1)}- \mu_{\eta_i}\indic_{B(x_i, R_i/\eta_i)}\Big| \nn\\
&\le \frac{C}{N^2}.\label{propk1''}
\end{align}
For the remaining term, we use that $\g(x-y)\le C$, for some $C\ge 0$, if $|x-y|\ge 1$ to estimate
\begin{align}
&\int_{(\R^\d)^2}\g(x-y)d\Big(\mu_{\eta_i}\indic_{B(x_i, R_i/\eta_i)}\Big)^{\otimes 2}(x,y) \nn\\
&\le  \int_{|x-y|\le 1}\g(x-y) d\Big(\mu_{\eta_i}\indic_{B(x_i, R_i/\eta_i)}\Big)^{\otimes 2}(x,y)  +  C\int_{|x-y|\ge 1}d\Big(\mu_{\eta_i}\indic_{B(x_i, R_i/\eta_i)}\Big)^{\otimes 2}(x,y)\nn\\
&\le \|\mu_{\eta_i}\|_{L^\infty}\int_{\R^\d}d\Big(\mu_{\eta_i}\indic_{B(x_i, R_i/\eta_i)}\Big)(x)\int_{|x-y|\le 1}\g(x-y) + \frac{C}{N^2} \nn\\
&\le \frac{C\|\mu_{\eta_i}\|_{L^\infty}}{N} + \frac{C}{N^2} \le \frac{C}{N^2}, \label{propk1'''}
\end{align}
where the second line follows from Fubini-Tonelli, the third line from $\int_{B(x_i,R_i)}d\mu= \frac1N$, and the fourth line from $\eta^{\d}\|\mu\|_{L^\infty(\Omega)} \le \la^\d\|\mu\|_{L^\infty(\Omega)} = \frac1N$. Combining \eqref{propk1'}, \eqref{propk1''}, \eqref{propk1'''} and recalling our starting point \eqref{propk1}, we arrive arrive at
\begin{align}\label{propk1''''}
\int_{\R^{\d+\k}} \zg |\nab \kappa^{(p),f-f_i }|^2  \le \frac{C (\ell\|\nab^{\otimes 2}v\|_{L^\infty})^{2p}}{N^2\eta_i^{\s}}.
\end{align}
Applying this bound to the second term on the right-hand side of \eqref{eq:nabkpfiTI}, then inserting into the right-hand side of \eqref{kfi}, we obtain
\begin{multline}\label{eq:nabxknfi}
\|\nab_x\kappa^{(n), f_i} \|_{L^\infty (B(x_i, \eta_i/2))} \le \frac{C}{\eta_i^{\frac{\s}{2}+1}} \sum_{p=0}^n    \sum_{\substack{c_1,\dots,  c_{n-p}\ge 0\\
\sum c_k \le 2n} } \eta_i^{\sum c_k  } \|\nab^{\otimes(1+c_1)}v \|_{L^\infty} \dots \|\nab ^{\otimes(1+c_{n-p})} v\|_{L^\infty}\\
\times\Big(\|\nab\ka^{(p),f}\|_{L_\ga^2(B(x_i,\eta_i))} + \Big(\frac{C(\ell\|\nab^{\otimes 2}v\|_{L^\infty})^{2p}}{N^2\eta_i^{\s}}\Big)^{1/2}\Big).
\end{multline}
Abbreviating 
\begin{align}\label{eq:Apdef}
A_{v,p} \coloneqq  \sum_{\substack{c_1,\dots,  c_{n-p}\ge 0\\
\sum c_k \le 2  n} } \lambda^{\sum c_k  } \|\nab^{\otimes(1+c_1)}v \|_{L^\infty} \dots \|\nab ^{\otimes(1+c_{n-p})} v\|_{L^\infty},
\end{align}
with the convention that $A_{v,0}\coloneqq 1$, it follows from applying the bound \eqref{eq:nabxknfi} to the right-hand side of \eqref{eq:T2knfiMVT} that
\begin{align}\label{propk3}
\Big|\int_{\R^{\d+\k}} \kappa^{(n), f_i}    d(\delta_{x_i}-\tl\rho_{x_i}^{(\eta_i)})\Big| \le  \frac{C}{\eta_i^{\frac{\s}{2}}}\sum_{p=0}^{n} A_{v,p} \Big(\|\nab\ka^{(p),f}\|_{L_\ga^2(B(x_i,\eta_i))} + \Big(\frac{(\ell\|\nab^{\otimes 2}v\|_{L^\infty})^{2p}}{N^2\eta_i^{\s}}\Big)^{1/2}\Big).
\end{align}

Applying \eqref{propk3} and \eqref{propk2} to the first and second terms on the right-hand side of \eqref{eq:T2kndcomp}, respectively, and then summing over $i\in I_{\Omega'}$, we obtain that 
\begin{equation}
|\Te_2| \le C\sum_{i\in I_{\Omega'}} \frac{\|\nab\tv\|_{L^\infty}^n}{N^2 \eta_i^\s} 
+ \frac{C}N   \sum_{p=0}^n \sum_{i\in I_{\Omega'}} \frac{ A_{v,p}}{\eta_i^{\s/2}}   
\Big(\|\nab\ka^{(p),f}\|_{L_\ga^2(B(x_i,\eta_i))}+ \Big(\frac{ (\ell\|\nab^{\otimes 2}v\|_{L^\infty})^{2p}}{N^2 \eta_i^\s}\Big)^{1/2} \Big) .
\end{equation}
Using Cauchy-Schwarz and the disjointness of the balls $B(x_i, \eta_i)$,
\begin{align}
&\sum_{i\in I_{\Omega'}} \frac{ A_{v,p}}{\eta_i^{\s/2}} \Big(\|\nab\ka^{(p),f}\|_{L_\ga^2(B(x_i,\eta_i))}+ \Big(\frac{(\ell\|\nab^{\otimes 2}v\|_{L^\infty})^{2p}}{N^2 \eta_i^\s}\Big)^{1/2} \Big)\nn\\
&\le A_{v,p}\Big(\sum_{i\in I_{\Omega'}}\eta_i^{-\s}\Big)^{1/2}\Big(\sum_{i\in I_{\Omega'}} \|\nab\ka^{(p),f}\|_{L_\ga^2(B(x_i,\eta_i))}^2\Big)^{1/2} + \sum_{i\in I_{\Omega'}} \frac{A_{v,p}(\ell\|\nab^{\otimes 2}v\|_{L^\infty})^{p}}{N\eta_i^{\s}} \nn\\
&\le A_{v,p}\Big(\sum_{i\in I_{\Omega'}}\eta_i^{-\s}\Big)^{1/2}\|\nab\ka^{(p),f}\|_{L_\ga^2(\Omega\times\R^\k)}  + \sum_{i\in I_{\Omega'}} \frac{A_{v,p}(\ell\|\nab^{\otimes 2}v\|_{L^\infty})^{p}}{N\eta_i^{\s}} \nn\\
&\le CA_{v,p}\Big(\sum_{i\in I_{\Omega'}}\eta_i^{-\s}\Big)^{1/2} (\ell\|\nab^{\otimes 2}v\|_{L^\infty})^{p} \|\nab\bar{h}_{N,\vec\eta}\|_{L_\ga^2(\Omega\times\R^\k)} + \sum_{i\in I_{\Omega'}} \frac{A_{v,p}(\ell\|\nab^{\otimes 2}v\|_{L^\infty})^{p}}{N\eta_i^{\s}},
\end{align}
where the final inequality follows from applying the estimate \eqref{eq:comm2} of \cref{thm:comm2} to $\|\nab\ka^{(p),f}\|_{L_\ga^2(\Omega\times\R^\k)}$ and the fact that $h^f= \bar h_{N, \vec{\eta}}$. Using \eqref{eq:13} to bound $\sum_{i\in I_{\Omega'}}\eta_i^{-\s}$ and \eqref{HH} to bound  $\|\nab\bar{h}_{N,\vec\eta}\|_{L_\ga^2(\Omega\times\R^\k)}$, it follows that
\begin{multline}
\sum_{i\in I_{\Omega'}} \frac{\|\nab\tv\|_{L^\infty}^n}{N^2 \eta_i^\s}  + \frac1N\sum_{p=0}^n\sum_{i\in I_{\Omega'}} \frac{ A_{v,p}}{\eta_i^{\s/2}} \Big(\|\nab\ka^{(p),f}\|_{L_\ga^2(B(x_i,\eta_i))}+ \Big(\frac{ (\ell\|\nab^{\otimes 2}v\|_{L^\infty})^{2p}}{N^2 \eta_i^\s}\Big)^{1/2} \Big)\\
 \le C\sum_{p=0}^n A_{v,p}(\ell\|\nab^{\otimes 2}v\|_{L^\infty})^{p}  \Xi.
\end{multline} 
Ultimately,
\begin{equation}\label{eq:HOFIT2fin}
|\Te_2| \le C \sum_{p=0}^n A_{v,p}(\ell\|\nab^{\otimes 2}v\|_{L^\infty})^{p}  \Xi.
\end{equation}

{\bf Step 3: the third term}.

We reduce $\Te_3$ to $\Te_2+\Te_4$, where 
\begin{align}
\Te_4  &\coloneqq \frac{1}{N^2} \sum_{i,j\in I_{\Omega'} : i\neq j} \int_{(\R^\d)^2}\nabla^{\otimes n}\g(x-y) : (v(x)-v(y))^{\otimes n} (\rho_{x_j}^{(\eta_j)} -\delta_{x_j})(x)  (\rho_{x_j}^{(\eta_i)} -\delta_{x_i})(y) \nn\\
 &=  \frac{1}{N^2}\sum_{i, j \in I_{\Omega'} :  i\neq j} \int_{\R^{\d+\k}} \kappa^{(n), \rho_{x_j}^{(\eta_j)} - \delta_{x_j}} d(\tl\rho_{x_i}^{(\eta_i)}- \delta_{x_i}). \label{eq:HOFIT4def}
\end{align}
 We break the sum into $i,j$ such that $|x_i-x_j|\le \lambda$ and $i,j$ such that $|x_i-x_j|> \lambda$.

From using \eqref{eq:Kvbnd} and $\eta_i \le \frac14\min_{j\ne i}|x_i-x_j|$,
\begin{align}\label{eq:kannear}
&\frac{1}{N^2}\sum_{i, j \in I_{\Omega'} : i\neq j, |x_i-x_j|\le \la} \Big|\int_{\R^{\d+\k}} \kappa^{(n), \rho_{x_j}^{(\eta_j)} - \delta_{x_j}} d(\tl\rho_{x_i}^{(\eta_i)}- \delta_{x_i})\Big| \nn\\
&\le \frac{C}{N^2} \|\nab v\|_{L^\infty}^n\sum_{i, j \in I_{\Omega'}:  i\ne j,  |x_i-x_j|\le \lambda}\frac{1}{|x_i-x_j|^{\s}} \nn\\
&\le C \|\nab v\|_{L^\infty}^n \Xi,
\end{align}
where the final line follows from \cref{cor:coro3}.

Recycling notation, let us abbreviate $f_j \coloneqq  \rho_{x_j}^{(\eta_j)} - \delta_{x_j}$. We write
\begin{align}
\int_{\R^{\d+\k}} \kappa^{(n), f_j} d(\tl\rho_{x_i}^{(\eta_i)}- \delta_{x_i}) = \int_{\R^{\d+\k}}\Big( \kappa^{(n),f_j}- \kappa^{(n), f_j}(x_i)\Big) d\tl\rho_{x_i}^{(\eta_i)}.
\end{align}
{Taylor expanding $\ka^{(n),f_j}$ around $x_i$ to order $m$ and using that $\rho_0^{(\eta_i)}$ has vanishing moments up to order $m$, we find that}
\begin{align}\label{eq:kanfjfarpre}
\Big|\int_{\R^{\d+\k}} \kappa^{(n), f_j} d(\tl\rho_{x_i}^{(\eta_i)}- \delta_{x_i})\Big| \le C\eta_i^{m+1}\|\nab_x^{\otimes(m+1)}\ka^{(n),f_j}\|_{L^\infty(B(x_i,\eta_i))}.
\end{align}
Unpacking the definition of $\ka^{(n),f_j}$ and using the Leibniz rule, we see that
\begin{multline}
|\nab_x^{\otimes(m+1)}\ka^{(n),f_j}| \le C\sum_{p=0}^{m+1} \sum_{r=1}^{m+1-p}\sum_{\substack{1\le c_1,\ldots,c_r \\ c_1+\cdots+c_r = m+1-p}}\\
\Big|\int_{\R^{\d+\k}}\nab_x^{\otimes p}\nab^{\otimes n}\g(\cdot-y): \nab_x^{\otimes c_1}\tv\otimes \cdots \otimes \nab_x^{\otimes c_r}\tv \otimes (\tv-\tv(y))^{\otimes n-r}df_j(y)\Big|,
\end{multline}
where sums and products are vacuous if $p=m+1$. It follows from the mean-value theorem that in $B(x_i,\eta_i)$,
\begin{multline}
\Big|\int_{\R^{\d+\k}}\nab_x^{\otimes p}\nab^{\otimes n}\g(\cdot-y): \nab_x^{\otimes c_1}\tv\otimes \cdots \otimes \nab_x^{\otimes c_r}\tv \otimes (\tv-\tv(y))^{\otimes n-r}df_j(y)\Big| \\
\le C\|\nab^{\otimes c_1}v\|_{L^\infty}\cdots\|\nab^{\otimes c_r}v\|_{L^\infty}\|\nab v\|_{L^\infty}^{n-r}\int_{\R^{\d}}|\cdot-y|^{-\s-(p+r)}d|f_j|(y).
\end{multline}
Implicitly, we have used that $\tv$ is the trivial extension in $\R^{\d}\times\{|z|\le \ell\}^\k$ to replace the norms of $\tv$ by norms of $v$. Hence, making the change of index $q\coloneqq p+r$, we find in $B(x_i,\eta_i)$
\begin{multline}
|\nab_x^{\otimes(m+1)}\ka^{(n),f_j}|  \le C\sum_{q=1}^{m+1}\sum_{r=0}^{{q}} \sum_{\substack{1\le c_1,\ldots,c_r \\ c_1+\cdots+c_r = m+1-(q-r)}}\|\nab^{\otimes c_1}v\|_{L^\infty}\cdots\|\nab^{\otimes c_r}v\|_{L^\infty}\|\nab v\|_{L^\infty}^{n-r}\\
\times \int_{\R^{\d}}|\cdot-y|^{-\s-q}d|f_j|(y),
\end{multline}
with the convention that ranges are vacuous when they are nonsensical. After unpacking the definition of $f_j$, the preceding bound implies that
\begin{multline}
\|\nab_x^{\otimes(m+1)}\ka^{(n),f_j}\|_{L^\infty(B(x_i,\eta_i))} \le C\sum_{q=1}^{m+1}\sum_{r=0}^{{q}} \sum_{\substack{1\le c_1,\ldots,c_r \\ c_1+\cdots+c_r = m+1-(q-r)}}  \\ 
\|\nab^{\otimes c_1}v\|_{L^\infty}\cdots\|\nab^{\otimes c_r}v\|_{L^\infty}\|\nab v\|_{L^\infty}^{n-r}|x_i-x_j|^{-\s-q}.
\end{multline}
Hence, recalling our starting point \eqref{eq:kanfjfarpre}, we obtain
\begin{multline}
\frac{1}{N^2}\sum_{i,j\in I_{\Omega'} :i\ne j, |x_i-x_j|>\la} \Big|\int_{\R^{\d+\k}} \kappa^{(n), \rho_{x_j}^{(\eta_j)} - \delta_{x_j}} d(\tl\rho_{x_i}^{(\eta_i)}- \delta_{x_i})\Big| \le  \sum_{q=1}^{m+1}\sum_{r=0}^{{q}} \sum_{\substack{1\le c_1,\ldots,c_r \\ c_1+\cdots+c_r = m+1-(q-r)}}  \\ 
\|\nab^{\otimes c_1}v\|_{L^\infty}\cdots\|\nab^{\otimes c_r}v\|_{L^\infty}\|\nab v\|_{L^\infty}^{n-r} \frac{C}{N^2}\sum_{i,j\in I_{\Omega'} :i\ne j, |x_i-x_j|>\la}\frac{\eta_i^{m+1}}{|x_i-x_j|^{\s+q}}.
\end{multline}
Applying \cref{prop:multiscale} with $a=q$ (recall that, by choice of $\Omega',\Omega$, the condition $i,j \in I_{\Omega'}$ implies that $|x_i-x_j|\le \ell$ and $\dist(x_i,\p\Omega)\ge 4\ell$), it follows that
\begin{multline}\label{eq:kanfar}
\frac{1}{N^2}\sum_{i,j\in I_{\Omega'} :i\ne j, |x_i-x_j|>\la} \Big|\int_{\R^{\d+\k}} \kappa^{(n), \rho_{x_j}^{(\eta_j)} - \delta_{x_j}} d(\tl\rho_{x_i}^{(\eta_i)}- \delta_{x_i})\Big| \le \sum_{q=1}^{m+1}\sum_{r=0}^{{q}} \sum_{\substack{1\le c_1,\ldots,c_r \\ c_1+\cdots+c_r = m+1-(q-r)}}  \\ 
\|\nab^{\otimes c_1}v\|_{L^\infty}\cdots\|\nab^{\otimes c_r}v\|_{L^\infty}\|\nab v\|_{L^\infty}^{n-r}\Bigg( \lambda ^{m+1-q} { \int_{\Omega \times [-\ell,\ell]^\k} 
\zg |\nab h_{N,\vec\rs}|^2 }\Bigg)\\
 + C \frac{ \#I_\Omega \|\mu\|_{L^\infty(\hat{\Omega})}\la^{\d+m+1-\s-q}}{N}  + C\frac{\#I_\Omega \|\mu\|_{L^\infty(\hat{\Omega})}\la^{m+1}}{N}\begin{cases} \ell^{\d-\s-q}, & \s+q\neq \d\\  \log (\ell/\lambda),  & \s+q=\d \end{cases}\Bigg).
\end{multline}
We want
\begin{align}
\la^{m+1}\begin{cases} \ell^{\d-\s-q}, &  \s+q\neq \d\\  \log (\ell/\lambda),  & \s+q=\d \end{cases} \ \le\  C{\la^{\d-\s}}.
\end{align}
If $\s>\d-1$, it suffices to take $m=0$; if $\d-2<\s\le \d-1$, then it suffices to take $m=1$; and if $\s=\d-2$, then it suffices to take $m=2$. 

Applying the estimates \eqref{eq:kannear}, \eqref{eq:kanfar} to \eqref{eq:HOFIT4def}, we conclude that
\begin{align}\label{eq:HOFIT4fin}
|\Te_4| \le  C\Big(\|\nab v\|_{L^\infty}^n + B_{v,m}\Big) \Xi \le CB_{v,m}\Xi,
\end{align}
where we have abbreviated
\begin{align}\label{eq:defBvm}
B_{v,m} \coloneqq \sum_{q=1}^{m+1}\sum_{r=0}^{{q}} \sum_{\substack{1\le c_1,\ldots,c_r \\ c_1+\cdots+c_r = m+1-(q-r)}}\la^{m+1-q}\|\nab^{\otimes c_1}v\|_{L^\infty}\cdots\|\nab^{\otimes c_r}v\|_{L^\infty}\|\nab v\|_{L^\infty}^{n-r}.
\end{align}
Recalling the definition \eqref{eq:Apdef} of $A_{v,p}$, one readily checks that $B_{v,m} \le  A_{v,0}$, for the choices of $m$ above. Hence, combining with \eqref{eq:HOFIT2fin}, we obtain
\begin{align}\label{eq:HOFIT3fin}
|\Te_3| \le C B_{v,m}\Xi+  C \sum_{p=0}^n A_{v,p}(\ell\|\nab^{\otimes 2}v\|_{L^\infty})^p \Xi \le C \sum_{p=0}^n A_{v,p}(\ell\|\nab^{\otimes 2}v\|_{L^\infty})^p\Xi.
\end{align}
Combining the estimates \eqref{eq:HOFIT1fin}, \eqref{eq:HOFIT2fin}, \eqref{eq:HOFIT3fin}, \eqref{eq:HOFIT4fin} for $\Te_1$, $\Te_2$, $\Te_3$, $\Te_4$, respectively, the proof of \eqref{mainn} is then complete. This completes the proof of \cref{thm:FI}.

\subsection{Improved estimates for macroscopic $\ell$}\label{ssec:FImac}
When $\ell \asymp 1$, we can improve the higher-order estimate \eqref{mainn} of \cref{thm:FI} in terms of the regularity dependence on $v$.

\begin{prop}\label{prop:impHOFI}
Under the same assumptions as above, for any $m\ge 0$, we have
\begin{multline}\label{eq:impHOFI}
\Big|\int_{(\R^\d)^2\setminus\triangle}\nabla^{\otimes n}\g(x-y) : (v(x)-v(y))^{\otimes n} d\Big(\frac{1}{N}\sum_{i=1}^N\delta_{x_i}-\mu\Big)^{\otimes 2}(x,y)\Big| \\ 
\le C(\ell\|\nab^{\otimes 2}v\|_{L^\infty})^n\Xi + C\sum_{q=1}^{m+1}\sum_{r=0}^{{ q}}D_{v,q,r}\Bigg(\frac{\# I_{\Omega'}\la^{m+1}\ell^{-\s-q}}{N} +\la^{m+1-q}\Xi \\
 + \frac{\la^{m+1}\#I_\Omega \|\mu\|_{L^\infty(\hat {\Omega})} }{N} \begin{cases}  \ell^{\d-\s-q}, &  \s+q\neq \d \\
\log (\ell/\lambda),  & \s+q=\d \end{cases} \\
+\frac{\la^{m+1}\#I_{\Omega'}}{N}\begin{cases}\|\mu\|_{L^\infty}\la^{\d-\s-q}, & \s+q>\d \\ \|\mu\|_{L^\infty}( \ell^{\d-\s-q} - \la^{\d-\s-q}) +  \|\mu\|_{L^1}\ell^{-\s-q}, & {\s+q<\d} \\ \|\mu\|_{L^\infty}\log(\ell/\la) + \|\mu\|_{L^1}\ell^{-\s-q}, & {\s+q=\d} \end{cases}\Bigg),
\end{multline}
where the constant $C>0$ depends only on $\d,\s,n,m$ and
\begin{align}\label{eq:Dvqrdef}
D_{v,q,r} \coloneqq \sum_{\substack{1\le c_1,\ldots,c_r \\ c_1+\cdots+c_r = m+1-(q-r)}}\|\nab^{\otimes c_1}v\|_{L^\infty}\cdots\|\nab^{\otimes c_r}v\|_{L^\infty}\|\nab v\|_{L^\infty}^{n-r}.
\end{align}
\end{prop}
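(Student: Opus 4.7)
The plan is to follow the same decomposition \eqref{eq:rcommhorhs} as in the proof of \cref{thm:FI}, keeping the bound on $\Te_1$ (via \cref{thm:comm2}) and on $\Te_4$ (via Taylor expansion and \cref{prop:multiscale}) exactly as there. These already contribute the $(\ell\|\nab^{\otimes 2}v\|_{L^\infty})^n\Xi$ summand and the first three families of terms appearing on the right-hand side of \eqref{eq:impHOFI}, so the sole remaining task is to re-estimate $\Te_2$ without appealing to the $L^\infty$-local regularity theory of \cref{th44}. This is precisely where the regularity demand $v\in C^{2n-1,1}$ entered in \cref{thm:FI}, since invoking \cref{th44} on $\kappa^{(n),f_i}$ pulled in derivatives of $v$ up to order $2n$.

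For $\Te_2$, I would start from the representation \eqref{redeft2}. Using that $\rho^{(\eta_i)}$ has vanishing moments up to order $m$, Taylor expand $\kappa^{(n),f}$ around $x_i$ to order $m$ to get $\big|\int\kappa^{(n),f}\,d(\delta_{x_i}-\tl\rho_{x_i}^{(\eta_i)})\big|\le C\eta_i^{m+1}\|\nab_x^{\otimes(m+1)}\kappa^{(n),f}\|_{L^\infty(B(x_i,\eta_i))}$. Then, differentiating under the integral sign in the definition \eqref{eq:kandef} of $\kappa^{(n),f}$ and applying the Leibniz rule exactly as for $\nab_x^{\otimes(m+1)}\kappa^{(n),f_j}$ in Step 3 of the proof of \cref{thm:FI}, and using the mean-value bound $|\tv(X)-\tv(y)|^{n-r}\le\|\nab v\|_{L^\infty}^{n-r}|X-y|^{n-r}$ together with $|\nab^{\otimes(n+p)}\g(X-y)|\lesssim|X-y|^{-\s-n-p}$, one arrives at the pointwise inequality $|\nab_x^{\otimes(m+1)}\kappa^{(n),f}(X)|\le C\sum_{q=1}^{m+1}\sum_{r=0}^{q}D_{v,q,r}\int_{\R^\d}|X-y|^{-\s-q}d|f|(y)$ for $X\in B(x_i,\eta_i)$, where $f=\frac1N\sum_j\rho_{x_j}^{(\eta_j)}-\mu$ and $D_{v,q,r}$ is as in \eqref{eq:Dvqrdef}. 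Crucially, since $q-r\le q\le m+1$, only derivatives of $v$ of order $\le m+1$ appear, which is the desired regularity reduction.

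The next step is to bound $\int|X-y|^{-\s-q}d|f|(y)$ by splitting $|f|\le\frac1N\rho_{x_i}^{(\eta_i)}+\frac1N\sum_{j\ne i}\rho_{x_j}^{(\eta_j)}+|\mu|$. The self-term ($j=i$) yields a contribution of size $C\eta_i^{-\s-q}/N$ via the smeared-convolution estimate $\|\nab^{\otimes q}\g\ast\rho^{(\eta_i)}\|_{L^\infty}\le C\eta_i^{-\s-q}$. For the cross terms, disjointness of the balls $B(x_j,\eta_j)$ gives $\int|X-y|^{-\s-q}d\rho_{x_j}^{(\eta_j)}(y)\le C|x_i-x_j|^{-\s-q}$ when $X\in B(x_i,\eta_i)$; after multiplying by $\eta_i^{m+1}\le\lambda^{m+1}$, summing over $i,j\in I_{\Omega'}$ with $i\ne j$, and splitting $|x_i-x_j|\le\lambda$ versus $|x_i-x_j|>\lambda$, I would apply \cref{cor:coro3} and \cref{prop:multiscale} with $a=q$ respectively to recover precisely the first three families of terms in \eqref{eq:impHOFI}, exactly as in the $\Te_4$ analysis. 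For the $\mu$-part, I would split $\int|X-y|^{-\s-q}d|\mu|(y)$ into the three annular regions $\{|X-y|\le\lambda\}$, $\{\lambda<|X-y|\le\ell\}$, and $\{|X-y|>\ell\}$: in the near region I would use $\|\mu\|_{L^\infty}\lambda^{\d-\s-q}$ when $\s+q<\d$ and the smeared convolution bound $\|\nab^{\otimes q}\g\ast\mu\|_{L^\infty}\le C\|\mu\|_{L^\infty}\lambda^{\d-\s-q}$ when $\s+q>\d$ (with a logarithmic variant at equality); the middle region gives $\|\mu\|_{L^\infty}$ times an explicit radial integral producing the three indicated piecewise cases; and the far region contributes $\|\mu\|_{L^1}\ell^{-\s-q}$. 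Summing $\eta_i^{m+1}$ over $i\in I_{\Omega'}$ then yields the fourth piecewise family on the right-hand side of \eqref{eq:impHOFI}.

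The main obstacle is the bookkeeping for the $\mu$-contribution: matching the three-region split with the three-case piecewise form of the final estimate, particularly handling the critical case $\s+q=\d$ and ensuring that the singular near-field in the $\s+q\ge\d$ regime is controlled via the smeared-convolution bound rather than the bare integral. Everything else is immediate from the arguments already carried out in the proof of \cref{thm:FI}, with the only genuinely new input being the higher-order Taylor expansion (enabled by the vanishing moments of $\rho^{(\eta)}$) replacing the mean-value step used in \eqref{eq:T2knfiMVT} there.
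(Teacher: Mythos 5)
Your overall plan is indeed the paper's plan: keep the bounds on $\Te_1$ and $\Te_4$ from the proof of \cref{thm:FI}, and redo $\Te_2$ by an order-$m$ Taylor expansion (enabled by the vanishing moments of $\rho^{(\eta)}$) in place of \cref{th44}. The gap is in how you control $\|\nab_x^{\otimes(m+1)}\kappa^{(n),f}\|_{L^\infty(B(x_i,\eta_i))}$ with the \emph{full} $f=\frac1N\sum_j\rho_{x_j}^{(\eta_j)}-\mu$. Once you take absolute values inside the integral to obtain the pointwise bound $\sum_{q,r}D_{v,q,r}\int|X-y|^{-\s-q}\,d|f|(y)$, the contributions of the mass of $|f|$ located near $X$ are divergent whenever $\s+q\ge\d$; this already happens at $q=1$ when $\s>\d-1$ and at $q=2$ for all $\s\ge\d-2$, and these values of $q$ are unavoidable since the sharp rate forces $m\ge 1$ (resp.\ $m=2$) exactly when $\d-2<\s\le\d-1$ (resp.\ $\s=\d-2$). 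Your proposed repairs do not close this. For the self-term $\frac1N\rho_{x_i}^{(\eta_i)}$, the relevant quantity is not $\nab^{\otimes q}\g\ast\rho^{(\eta_i)}$: the singular kernel is entangled with the factors $(v(X)-v(y))^{\otimes(n-r)}$, so the smeared-convolution estimate does not apply directly, and exploiting the smoothness of $\rho^{(\eta_i)}$ by integrating by parts in $y$ sends derivatives onto the $v$-difference factors, producing derivatives of $v$ that fall outside $D_{v,q,r}$ and thus defeat the regularity reduction that is the point of the proposition. For the near-field of $\mu$, the claimed bound $\|\nab^{\otimes q}\g\ast\mu\|_{L^\infty}\le C\|\mu\|_{L^\infty}\la^{\d-\s-q}$ for $\s+q>\d$ is false for general $\mu\in L^1\cap L^\infty$ (no scale $\la$ is attached to $\mu$, and already $\nab(\g\ast\mu)$ need not be bounded when $\s>\d-1$), while the truncated bare integral $\int_{|X-y|\le\la}|X-y|^{-\s-q}\,d\mu(y)$ is infinite when $\s+q\ge\d$.

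The paper closes precisely this gap with the splitting \eqref{eq:ffisplit}: write $f=\big(\frac1N\rho_{x_i}^{(\eta_i)}-\mu\indic_{B(x_i,R_i)}\big)+f_i$, with $R_i\gtrsim\la$ chosen so that $\int_{B(x_i,R_i)}d\mu=\frac1N$. The near piece is estimated at \emph{zeroth} order by the crude sup bound \eqref{propk2}--\eqref{propk2'}, which costs only $\|\nab v\|_{L^\infty}^n/(N\eta_i^{\s})$ (the $n$ vanishing factors keep the kernel at the integrable power $|X-y|^{-\s}$, and no Taylor expansion is performed on this piece), and it sums to $C\|\nab v\|_{L^\infty}^n\Xi$ via \eqref{eq:11}. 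The Taylor expansion of order $m$ is applied only to $\kappa^{(n),f_i}$, whose source vanishes in a neighborhood of $x_i$ of radius $\gtrsim\la$; then all the bare-kernel integrals you write are finite and, after \cref{cor:coro3}, \cref{prop:multiscale} with $a=q$, and the annular decomposition of the $\mu$-integral over $\{|x_i-y|\ge\la\}$, they yield exactly the piecewise expressions in \eqref{eq:impHOFI}. (A minor additional point: the particle sum must run over all $j\ne i$, not only $j\in I_{\Omega'}$; the particles with $j\notin I_\Omega$ satisfy $|x_i-x_j|\ge\ell$ and account for the term $\# I_{\Omega'}\la^{m+1}\ell^{-\s-q}/N$.) With this splitting inserted, the rest of your argument goes through as you describe.
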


\begin{remark}
When $\ell \asymp 1$, the additive error in the estimate \eqref{mainn} is $\asymp \la^{\d-\s}$. We can achieve the same error in \eqref{eq:impHOFI} by taking $m$ large enough. Namely, if $\s>\d-1$, then it suffices to take $m=0$, which means our estimate \eqref{eq:impHOFI} only requires up to two derivatives of $v$. If $\d-2<\s\le \d-1$, then it suffices to take $m=1$, meaning the estimate still only requires up to two derivatives of $v$. And if $\s=\d-2$, then it suffices to take $m=2$, meaning the estimate only requires up to three derivatives of $v$. This is a much better regularity dependence than in \eqref{mainn}, particularly when $n$ is large.

\end{remark}

\begin{proof}[Proof of \cref{prop:impHOFI}]
The strategy is the same as in the previous subsection. The only step that requires modification is the estimate for $\Te_2$, in particular avoiding the use of \cref{th44}.

Using the decomposition \eqref{eq:ffisplit} and the bounds \eqref{eq:T2kndcomp}, \eqref{propk2'}, we see that
\begin{align}\label{eq:impHOT2pre}
|\Te_2| \le \frac{1}{N}\sum_{i\in I_{\Omega'}}\Big|\int_{\R^{\d+\k}} \kappa^{(n), f_i}    d(\delta_{x_i}-\tl\rho_{x_i}^{(\eta_i)})\Big| +\sum_{i\in I_{\Omega'}} \frac{C \|\nab\tv\|_{L^\infty}^n}{N^2 \eta_i^{\s}}.
\end{align}
For the first term on the right-hand side, we use the same Taylor expansion argument as in the previous subsection for $\Te_4$ to obtain
\begin{multline}\label{eq:impHOT20}
 \frac{1}{N}\sum_{i\in I_{\Omega'}}\Big|\int_{\R^{\d+\k}} \kappa^{(n), f_i}    d(\delta_{x_i}-\tl\rho_{x_i}^{(\eta_i)})\Big| \le  C\sum_{q=1}^{m+1}\sum_{r=0}^{{ q}} \sum_{\substack{1\le c_1,\ldots,c_r \\ c_1+\cdots+c_r = m+1-(q-r)}}\\
\|\nab^{\otimes c_1}v\|_{L^\infty}\cdots\|\nab^{\otimes c_r}v\|_{L^\infty}\|\nab v\|_{L^\infty}^{n-r}  \frac{1}{N}\sum_{i\in I_{\Omega'}}\eta_i^{m+1}\int_{\R^{\d+\k}}|\cdot-y|^{-\s-q}d|f_i|(y).
\end{multline}
Unpacking the definition \eqref{eq:ffisplit} of $f_i$ and using that $\eta_i\le \frac14\min_{j\ne i}|x_i-x_j|$, we need to estimate
\begin{align}\label{eq:impHOT21}
\frac1N\sum_{i\in I_{\Omega'}}\frac{\eta_i^{m+1}}{N}\sum_{1\le j\le N : j\ne i} |x_i-x_j|^{-\s-q}
\end{align}
and
\begin{align}\label{eq:impHOT22}
\frac1N\sum_{i\in I_{\Omega'}}\eta_i^{m+1}\int_{|x_i-y| \ge \la} |x_i-y|^{-\s-q}d\mu(y).
\end{align}

For \eqref{eq:impHOT21}, we note that $j\notin I_\Omega$ implies $|x_i-x_j| \ge \ell$. So,
\begin{multline}
\frac1N\sum_{i\in I_{\Omega'}}\frac{\eta_i^{m+1}}N\sum_{1\le j\le N : j\ne i} |x_i-x_j|^{-\s-q} \le \sum_{\substack{i\in I_{\Omega'}, j \in I_{\Omega} \\ |x_i-x_j|\le \la}}\frac{\eta_i^{m+1}}{N^2}|x_i-x_j|^{-\s-q} \\
+ \sum_{\substack{i\in I_{\Omega'}, j \in I_{\Omega} \\ \la \le |x_i-x_j| \le \ell}}\frac{\eta_i^{m+1}}{N^2}|x_i-x_j|^{-\s-q} + \sum_{\substack{i\in I_{\Omega'}, j\\ \ell < |x_i-x_j| }}\frac{\eta_i^{m+1}}{N^2} |x_i-x_j|^{-\s-q}.
\end{multline}
For the first right-hand side term, we use \cref{cor:coro3} (note that $\eta_i^{m+1}|x_i-x_j|^{-q}\le \la^{m+1-q}$ by definition of $\eta_i$); for the second term, we use \cref{prop:multiscale}; and for the third term, we crudely use $|x_i-x_j|\ge \ell$. All together,
\begin{multline}\label{eq:impHOT21fin}
\frac{1}{N^2}\sum_{i\in I_{\Omega'}}\sum_{1\le j\le N : j\ne i}\frac{\eta_{i}^{m+1}}{ |x_i-x_j|^{\s+q}} \le C\frac{\# I_{\Omega'}\la^{m+1}\ell^{-\s-q} }{N} \\
+C\la^{m+1-q}{\int_{\Omega\times [-\ell,\ell]^\k} \zg|\nab h_{N,\vec\rs}|^2 }+ C\frac{\#I_\Omega\|\mu\|_{L^\infty(\hat \Omega)}\la^{\d+m+1-\s-q}}{N}\\
+ C\frac{\#I_\Omega \|\mu\|_{L^\infty(\hat {\Omega})}\la^{m+1}}{N}\begin{cases}    \ell^{\d-\s-q}, & \s+q\neq \d\\
\log (\ell/\lambda),  &  \s+q=\d. \end{cases}
\end{multline}

For \eqref{eq:impHOT22}, if $\s+q > \d$, we may directly bound
\begin{align}
\int_{|x_i-y| \ge \la} |x_i-y|^{-\s-q}d\mu(y) \le C\|\mu\|_{L^\infty}\la^{\d-\s-q}.
\end{align}
Since $q\ge 1$, we see that this is always the case if $\s>\d+1$. 
Otherwise, we divide the integration into regions $\la \le |x_i-y|\le \ell$ and $|x_i-y|>\ell$ to obtain
\begin{multline}
\int_{|x_i-y| \ge \la} |x_i-y|^{-\s-q}d\mu(y) \le C\|\mu\|_{L^\infty}\Big( \ell^{\d-\s-q} - \la^{\d-\s-q} \\
+ \log(\ell/\la)\indic_{\s+q=\d}\Big) + C\ell^{-\s-q}\|\mu\|_{L^1}.
\end{multline}
Combining cases, we conclude that
\begin{multline}\label{eq:impHOT22fin}
\eqref{eq:impHOT22} \le C\frac{\la^{m+1}\#I_{\Omega'}\|\mu\|_{L^\infty}}{N}\Big(\la^{\d-\s-q}\indic_{\s+q>\d}+( \ell^{\d-\s-q} - \la^{\d-\s-q})\indic_{\s+q<\d} \\
+ \log(\ell/\la)\indic_{\s+q=\d}\Big) +  C\frac{\la^{m+1}\#I_{\Omega'}\ell^{-\s-q}\|\mu\|_{L^1}}{N}\indic_{\s+q\le \d}.
\end{multline}

Hence, combining \eqref{eq:impHOT21fin}, \eqref{eq:impHOT22fin} with \eqref{eq:impHOT20}, \eqref{eq:impHOT2pre}, we obtain that
\begin{multline}
\frac1N\sum_{i\in I_{\Omega'}}\Big|\int_{\R^{\d+\k}} \kappa^{(n), f_i}    d(\delta_{x_i}-\tl\rho_{x_i}^{(\eta_i)})\Big| \le C\sum_{q=1}^{m+1}\sum_{r=0}^{{ q}}D_{v,q,r}\Bigg(\frac{\# I_{\Omega'}\la^{m+1}\ell^{-\s-q}}{N}  \\
+\la^{m+1-q}\Xi + \frac{\la^{m+1}\#I_\Omega \|\mu\|_{L^\infty(\hat {\Omega})} }{N} \begin{cases}  \ell^{\d-\s-q}, &  \s+q\neq \d \\
\log (\ell/\lambda),  & \s+q=\d \end{cases} \\
+\frac{\la^{m+1}\#I_{\Omega'}}{N}\begin{cases}\|\mu\|_{L^\infty}\la^{\d-\s-q}, & \s+q>\d \\ \|\mu\|_{L^\infty}( \ell^{\d-\s-q} - \la^{\d-\s-q}) +  \|\mu\|_{L^1}\ell^{-\s-q}, & {\s+q<\d} \\ \|\mu\|_{L^\infty}\log(\ell/\la) + \|\mu\|_{L^1}\ell^{-\s-q}, & {\s+q=\d} \end{cases}\Bigg),
\end{multline}
where $D_{v,q,r}$ is as defined in \eqref{eq:Dvqrdef}. Applying this bound to the right-hand side of \eqref{eq:impHOT2pre} and using \eqref{eq:13} to handle the first term, we conclude that
\begin{multline}\label{eq:impHOFIT2fin}
|\Te_2| \le C\|\nab v\|_{L^\infty}^n\Xi + C\sum_{q=1}^{m+1}\sum_{r=0}^{{q}}D_{v,q,r}\Bigg(\frac{\# I_{\Omega'}\la^{m+1}\ell^{-\s-q}}{N}  \\
+\la^{m+1-q}\Xi + \frac{\la^{m+1}\#I_\Omega \|\mu\|_{L^\infty(\hat {\Omega})} }{N} \begin{cases}  \ell^{\d-\s-q}, &  \s+q\neq \d \\
\log (\ell/\lambda),  & \s+q=\d \end{cases} \\
+\frac{\la^{m+1}\#I_{\Omega'}}{N}\begin{cases}\|\mu\|_{L^\infty}\la^{\d-\s-q}, & \s+q>\d \\ \|\mu\|_{L^\infty}( \ell^{\d-\s-q} - \la^{\d-\s-q}) +  \|\mu\|_{L^1}\ell^{-\s-q}, & {\s+q<\d} \\ \|\mu\|_{L^\infty}\log(\ell/\la) + \|\mu\|_{L^1}\ell^{-\s-q}, & {\s+q=\d} \end{cases}\Bigg).
\end{multline}

Combining this new bound for $|\Te_2|$ with the bounds for $|\Te_1|$ and $|\Te_3|$ from the previous subsection, the proof is complete.
\end{proof}

{
\subsection{Unlocalized estimates}\label{ssec:FIunloc}
We close out this section by proving the estimate \eqref{eq:HOFIunloc}, which is the analogue of the estimate \eqref{mainn} of \cref{thm:FI} when $\ell=\infty$ (i.e. the set $\Om=\R^\d$). This then completes the proof of \cref{thm:mainunloc}.

\begin{proof}
We follow the same $\Te_1,\Te_2,\Te_3$ decomposition as before with small modifications that we now sketch.

For $\Te_1$, instead of the estimate \eqref{eq:comm2dual} of \cref{thm:comm2}, we use the unlocalized estimate \eqref{eq:comm2dual''} to obtain
\begin{align}
|\Te_1| \le C\|\nab v\|_{L^\infty}^n \Xi,
\end{align}
where $\Xi$ is as in \eqref{defXi} but with $\Om=\hat\Om=\R^\d$.

For $\Te_2$, we the bound \eqref{eq:impHOFIT2fin} from \cref{ssec:FImac} carries over unchanged ($\ell$ is now just an arbitrary parameter such that $\ell\ge\la$, with no relation to $\supp v$, which we relabel as $R$). In it, we take $\Om'=\hat\Om=\R^\d$.

For $\Te_3$, we use the same reduction to $\Te_2+\Te_4$, where the estimate for $\Te_4$ is unchanged from \cref{ssec:FImain}, and we again set $\Om'=\R^\d$.

Combining these estimates and taking $R=1$, which is allowed by our assumption that $\la<1$, we arrive at the stated inequality. 
\end{proof}

\begin{remark}
If we further assume that $\la\le\Big(\frac{\|\mu\|_{L^1}}{\|\mu\|_{L^\infty}}\Big)^{1/\d}$, then we may take $R = \Big(\frac{\|\mu\|_{L^1}}{\|\mu\|_{L^\infty}}\Big)^{1/\d}$ at the conclusion of the preceding proof, which yields a final estimate with terms that are better balanced in terms of their $\mu$ dependence. 
\end{remark}

}
\section{Application: optimal mean-field convergence rate}\label{sec:appMF}
Using our main technical result \cref{thm:mainunloc}, we now show convergence of the empirical measure for the mean-field particle dynamics \eqref{eq:MFode} to a (necessarily unique) solution of the limiting PDE \eqref{eq:MFlim} in the modulated energy metric with the optimal rate $N^{\frac{\s}{\d}-1}$. This proves \cref{thm:mainMF}.

\begin{proof}[Proof of \cref{thm:mainMF}]
We recall (e.g. see \cite[Lemma 2.1]{Serfaty2020} or \cite[Lemma 3.6]{RS2024ss}) that $\Fr_N(\ux_N^t,\mu^t)$ satisfies the differential inequality
\begin{align}
\frac{d}{dt}\Fr_N(\ux_N^t,\mu^t) \leq \int_{(\R^\d)^2\setminus\triangle}\nabla\g(x-y)\cdot\pa*{u^t(x)-u^t(y)}d\Big(\frac{1}{N}\sum_{i=1}^N\delta_{x_i^t}-\mu^t\Big)^{\otimes2}(x,y) .
\end{align}
where $u^t\coloneqq -\M\nabla\g\ast\mu^t+\mathsf{V}$. Applying the first-order estimate \cref{main2} of \cref{thm:mainunloc} pointwise in $t$ to the preceding right-hand side, then integrating with respect to time both sides of the resulting inequality and applying the fundamental theorem of calculus, it follows that
\begin{multline}
\Fr_N(\ux_N^t,\mu^t) + \frac{\log (N\|\mu^t\|_{L^\infty}) }{2 N\d} \indic_{\s=0} + C\|\mu^t\|_{L^\infty}^{\frac{\s}{\d}} N^{\frac{\s}{\d}-1} \leq \Fr_N(\ux_N^0,\mu^0) + \frac{\log (N\|\mu^t\|_{L^\infty}) }{2 N\d} \indic_{\s=0} \\ 
+C\|\mu^t\|_{L^\infty}^{\frac{\s}{\d}}N^{\frac{\s}{\d}-1}+ C\int_0^t \|\nabla u^\tau \|_{L^\infty}\Big(\Fr_N(\ux_N^\tau,\mu^\tau) +  \frac{\log (N\|\mu^\tau\|_{L^\infty})}{2N\d}\indic_{\s=0} + \frac{C(N\|\mu^\tau\|_{L^\infty})^{\frac{\s}{\d}}}{N}\Big)d\tau
\end{multline}
for some constant $C>0$ depending only on $|\M|,\s,\d$. 
Assuming that the constant $C>0$ above is sufficiently large depending on $\d,\s,$ the left-hand side defines a nonnegative quantity  in view of \eqref{eq:pr2}. An application of the Gr\"onwall-Bellman lemma then completes the proof.
\end{proof}

\appendix
\section{An alternative proof in the Coulomb case}\label{appA}
We show in this appendix how the estimates \eqref{eq:comm2}, \eqref{eq:comm2dual} of \cref{thm:comm2} may be obtained, under stronger demands on the regularity of $v$, via an iterated stress-energy tensor structure when  $\s=\d-2, \d-1$, corresponding to the Coulomb case in $\R^\d$ and $\R^{\d+1}$, respectively.  Remark that $\ga=0$ in these two cases. 

\begin{prop}\label{prop:commalt}
Let $\s=\d+\k-2$. Let $v:\R^{\d+\k}\rightarrow\R^{\d+\k}$ be a smooth vector field. Given a Schwartz function $f\in\Sc_0(\R^{\d+\k})$, let $h^f=\g*f$ and $\kappa^{(n),f}$ be as in \eqref{eq:kandef}, where $\g$ is viewed as a function on $\R^{\d+\k}$. There is a constant $C>0$ depending only on $\d+\k,\s,n$, such that
\begin{equation}\label{eq:commalt}
\|\nab\ka^{(n),f}\|_{L^2(\R^{\d+\k})} \leq C \sum_{\substack{0\leq c_1,\ldots,c_{n}\leq n \\ c_1+\cdots+c_{n}=n}}\|\nabla^{\otimes c_1}v\|_{L^\infty}\cdots\|\nabla^{\otimes c_{n}}v\|_{L^\infty}   \|\nab h^f\|_{L^2(\supp\nab v)}.
\end{equation}
Consequently, for any $f,w\in\Sc_0(\R^{\d+\k})$, it holds that
\begin{multline}\label{eq:commaltdual}
\left|\int_{(\R^{\d+\k})^2}\nabla^{\otimes n}\G(x-y)\cdot(v(x)-v(y))^{\otimes n} df(y)dw(x)\right|\\
\leq C  \sum_{\substack{0\leq c_1,\ldots,c_{n}\leq n \\ c_1+\cdots+c_{n}=n}}\|\nabla^{\otimes c_1}v\|_{L^\infty}\cdots\|\nabla^{\otimes c_{n}}v\|_{L^\infty}\|\nab h^f\|_{L^2(\supp \nab v)} \|\nab h^w\|_{L^2(\supp \nab v)} .
\end{multline}
\end{prop}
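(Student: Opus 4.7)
\medskip

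The plan is to proceed by induction on $n$, with base case $n=1$ being Proposition \ref{prop:comm} directly. The conceptual input for the inductive step is that, in the Coulomb case ($\ga=0$, $L=-\Delta$ in $\R^{\d+\k}$), every $\ka^{(n),f}$ can be produced as an $n$-th time derivative along the ``linear'' transport flow $\Phi_t(x)=x+tv(x)$, and at each order of differentiation the stress-energy tensor identity from \cref{lem:Lka1}--namely $\int \nab \ka^{(1),g}\cdot \nab h^w = \int \nab v:\comm{\nab h^g}{\nab h^w}$--can be invoked to ``trade'' one unit of $\nab v$ regularity for control in terms of $L^2$ norms of gradients of lower-order commutators.

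Concretely, introduce the transported commutators
\begin{equation*}
\ka_t^{(n),f}(x)\coloneqq \int_{\R^{\d+\k}} \nab^{\otimes n}\G(x-y-tv(y)):(v(x)-v(y))^{\otimes n}\,df(y),
\end{equation*}
which satisfy (as in \eqref{iteratek}) the hierarchy $\p_t\ka_t^{(n),f}=\ka_t^{(n+1),f}-v\cdot\nab\ka_t^{(n),f}$. Evaluating at $t=0$, this gives the algebraic recursion
\begin{equation*}
\ka^{(n+1),f}=\p_t\ka_t^{(n),f}\big|_{t=0}+v\cdot\nab\ka^{(n),f},
\end{equation*}
and differentiating the defining integral shows that $\p_t\ka_t^{(n),f}|_{t=0}$ is itself an integral operator of ``commutator type'' applied to $v\otimes f$, so it can be rewritten (up to lower-order terms involving $\nu^{(n-1),f}$, $\mu^{(n-1),f}$, and products of $\nab^{\otimes \alpha}v$ with $|\alpha|\ge 1$) as a first-order commutator $\ka^{(1),g_n}$ for an explicit source $g_n$ built from $\nab v$, the lower-order commutators, and derivatives of $h^f$. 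This is really the Coulomb-case specialization of \cref{lem:Lkapnf}: since $\ga=0$, the factor $\zg\equiv 1$ disappears and all the terms on the right-hand side of \eqref{eqLk} can be written in divergence form involving $\nab v$ and $\nu^{(n-1),f}$, $\mu^{(n-2),f}$.

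For the induction step one then tests the identity $-\Delta\ka^{(n),f}=\text{(recursive RHS)}$ against $\ka^{(n),f}$, integrates by parts to obtain $\int|\nab\ka^{(n),f}|^2$ on the left, and estimates the right-hand side by Cauchy--Schwarz in combination with the inductive hypothesis applied to $\nu^{(n-1),f}=\nab\ka^{(n-1),f}-(n-1)\nab v\cdot \nu^{(n-2),f}$ and $\mu^{(n-2),f}$. Each such bound introduces at most one extra factor of $\|\nab^{\otimes c}v\|_{L^\infty}$ (with $c\ge 0$), and tracking the multinomial bookkeeping produces exactly the sum over $c_1+\cdots+c_n=n$ with $0\le c_i\le n$ appearing on the right-hand side of \eqref{eq:commalt}. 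The dual statement \eqref{eq:commaltdual} is then immediate from Cauchy--Schwarz and the identity $\int\nab\ka^{(n),f}\cdot\nab h^w=\cd\int(v(x)-v(y))^{\otimes n}:\nab^{\otimes n}\G(x-y)\,df(x)\,dw(y)$, which follows from integrating $-\Delta\ka^{(n),f}$ against $h^w$ and using $-\Delta h^w=\cd w$ (valid because $\k=0$ or $\k=1$ but $\ga=0$).

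The main obstacle is the combinatorial bookkeeping: one must verify that the recursive expansion of $\p_t\ka_t^{(n),f}|_{t=0}$ produces, after iteration, precisely the set of multinomials $\|\nab^{\otimes c_1}v\|_{L^\infty}\cdots\|\nab^{\otimes c_n}v\|_{L^\infty}$ with $\sum c_i=n$, and that the ``boundary terms'' $c_i=0$ (which are the source of the $\|v\|_{L^\infty}$ appearances and hence of the stronger regularity assumption compared to \cref{thm:comm2}) do not proliferate out of control. A secondary point is that at each induction level one invokes \cref{prop:comm} with a source supported in $\supp\nab v$; the support property is preserved by the recursion because every factor of $\nab v$, $\nab^{\otimes k}v$, or $\nabla^{\otimes k}v$ vanishes outside $\supp\nab v$, so all integrals on the right-hand side of the recursive identity can be localized to $\supp\nab v$, yielding the localized $L^2$ norm $\|\nab h^f\|_{L^2(\supp\nab v)}$ on the right-hand side of \eqref{eq:commalt}.
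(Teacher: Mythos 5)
Your base case and your overall philosophy (iterate along the transport $\I+tv$, exploit the stress-tensor structure) are in the spirit of the paper, but your induction step is not the paper's argument, and as written it contains a genuine gap. You propose to test the equation $-\Delta\ka^{(n),f}=$ (right-hand side of \eqref{eqLk} with $\ga=0$) against $\ka^{(n),f}$, and you justify this by asserting that ``since $\ga=0$\dots all the terms on the right-hand side of \eqref{eqLk} can be written in divergence form involving $\nab v$ and $\nu^{(n-1),f}$, $\mu^{(n-2),f}$.'' That assertion is precisely the conjectured identity \eqref{eq:introLkanfdiv}, which the paper states it can only prove for $n\le 2$ (\cref{lem:Lka1}, \cref{lem:Lka2}); the weight $\zg$ is not the obstruction, since $\tv^{\d+\k}=0$ means derivatives essentially never hit the weight in the problematic terms, so setting $\ga=0$ buys you nothing here. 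Without the divergence form, the term $-n\,\p_i v\cdot\p_i\nu^{(n-1),f}$ in \eqref{eqLk}, once tested against $\ka^{(n),f}$ and integrated by parts, leaves a contribution of the schematic form $\int \ka^{(n),f}\,\p_i^2 v\cdot\nu^{(n-1),f}$ in which $\ka^{(n),f}$ appears undifferentiated; there is no way to control $\|\ka^{(n),f}\|_{L^2(\supp\nab v)}$ by $\|\nab\ka^{(n),f}\|_{L^2}$ without a Poincar\'e inequality, which forces the localization of $v$ to a ball of radius $\ell$ and yields the $(\ell\|\nab^{\otimes 2}\tv\|_{L^\infty})^n$-type constant of \cref{thm:comm2} rather than the multinomial constant $\sum_{c_1+\cdots+c_n=n}\|\nab^{\otimes c_1}v\|_{L^\infty}\cdots\|\nab^{\otimes c_n}v\|_{L^\infty}$ claimed in \eqref{eq:commalt}, where no scale $\ell$ is available. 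So your inductive step either rests on an unproven identity or degenerates into the main-text estimate, not the one you are asked to prove.

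For comparison, the paper's Appendix~\ref{appA} proof avoids the equation for $L\ka^{(n),f}$ altogether. It realizes the dual pairing $\int\ka^{(n+1),f}\,dw$ as the $(n+1)$-th time derivative of the interaction energy of $f_t,w_t$, writes the first derivative via the stress tensor as $\frac{1}{\cd}\int\nab\bigl(v\circ(\I+tv)^{-1}\bigr):\comm{\nab h_t^w}{\nab h_t^f}$, and then proves by induction (\cref{lem:pthka}, \cref{lem:dtnrep}) a representation of the remaining $n$ derivatives as sums $\int\phi_{ablm n}\,\p_a\ka_t^{(l),f}\p_b\ka_t^{(m-l),w}$ with quantitative control of the coefficients $\phi_{ablmn}$; the multinomial bookkeeping you gesture at is carried out there via \eqref{eq:phidk} together with a Fa\`a di Bruno estimate for derivatives of $v\circ(\I+tv)^{-1}$. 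The restriction $\s=\d+\k-2$ enters only at the very end, in the duality step $w=\frac{1}{\cd}L\ka^{(n+1),f}$, where $h^{Lw}=\cd w$ requires $\ga=0$ because $L\ka^{(n+1),f}$ is not supported on $\R^\d\times\{0\}^\k$. If you want to salvage your route, you would need to actually prove the divergence-form identity \eqref{eq:introLkanfdiv} for all $n$ (which would indeed be a nice result, but is open in the paper), or switch to the variation-of-energy representation as in the appendix.
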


\begin{remark}
When $\s=\d-1$ and $\k=1$, the estimates \eqref{eq:commalt}, \eqref{eq:commaltdual} are still valid when $f = f_0\delta_{\R^\d\times\{0\}^\k}$ and $w = w_0\delta_{\R^\d\times\{0\}^\k}$, for $f_0,w_0\in\Sc(\R^\d)$. Indeed, we may approximate such measures by Schwartz functions in $\R^{\d+\k}$ and, as remarked in \cref{sec:FO}, $\|\nab h^f\|_{L^2(\R^{\d+\k})}, \|\nab h^w\|_{L^2(\R^{\d+\k})}$ are finite if $f_0,w_0 \in \dot{H}^{-1/2}(\R^{\d})$. 
\end{remark}

We now turn to the proof of \cref{prop:commalt}, which we break into a series of lemmas. 

By Cauchy-Schwarz, it is  an immediate consequence of \cref{prop:comm} that
\begin{multline}\label{eq:foduce}
\Big|\int_{(\R^{\d+\k})^2}(v(x)-v(y))\cdot\nabla\G(x-y)df(x)dw(y)\Big|\\
\leq C \|\nabla v\|_{L^\infty} \|\nab h^f\|_{L^2(\supp\nab v)} \|\nab h^w\|_{L^2(\supp \nab v)},
\end{multline}
where the constant $C>0$ depends only on $\d,\k,\s$. We now seek to inductively prove that $\ka^{(m),f}$ satisfies the announced $L^2$ estimate for all $m\in\N$.
To accomplish this goal, we use a variation-of-energy argument inspired by the prior work \cite{Serfaty2023}, which was limited to estimates corresponding to $m\leq 2$.

It is a straightforward calculus exercise to check that for any $n\in\N$,
\begin{align}
\int_{(\R^{\d+\k})^2}\nabla^{\otimes n}\G(x-y): (v(x)-v(y))^{\otimes n} df^{\otimes 2}(x,y) &= \frac{d^n}{dt^n}\Big|_{t=0}\int_{(\R^{\d})^2}\G(x-y)df_t^{\otimes 2}(x,y), \label{eq:dtnrhs}
\end{align}
where $f_t\coloneqq (\I+tv)\#f$ and
\begin{equation}\label{eq:ka1tfdef}
\ka_t^{(1),f} \coloneqq \int_{\R^{\d+\k}}\nabla\G(\cdot-y-tv(y))\cdot (v-v(y))d{ f}(y).
\end{equation}
We want to show that each time we differentiate with respect to $t$, we can massage the right-hand side into a finite combination of integrals involving only lower-order commutators, the estimates for which we will have already established as part of the induction. 
This part of the proof is largely algebraic. To make the computations manageable, we introduce the following notation: for integer $m\ge 0$ and a test function $f$, define the transported commutator
\begin{equation}\label{eq:kamtfdef}
\ka_{t}^{(m),f} \coloneqq \int_{\R^{\d+\k}}\nabla^{\otimes m}\G(\cdot-y-tv(y)) : (v-v(y))^{\otimes m}df(y).
\end{equation}
We adopt the convention that $\ka_t^{(0),f} \coloneqq h_t^f$. Note that when there is no ambiguity (i.e.~ $f$ is fixed), we will omit the dependence on $f$ in the superscript. Similarly, we set $\ka^{(m),f} \coloneqq \ka_0^{(m),f}$. With this notation, the following lemma gives some useful identities, in particular that $\{\ka_t^{(m)}\}_m$ satisfies a hierarchy of transport equations.

\begin{lemma}\label{lem:pthka}
For integer $m\in\N$, it holds that
\begin{align}
\p_t \ka_{t}^{(m)} + v\cdot\nabla\ka_t^{(m)} &= \ka_t^{(m+1)}  \label{eq:ptkam}
\end{align}
and for any coordinate indices $a,b$ and integer $0\leq l\leq m$, it holds that
\begin{multline}\label{eq:ptka}
\p_t\Big(\p_{a}\ka_t^{(l),f}\p_{ b}\ka_t^{(m-l),w}\Big) = \p_{ a}\ka_t^{(l+1),f}\p_{ b}\ka_t^{(m-l),w} +\p_{ a}\ka_t^{(l),f}\p_{ b}\ka_t^{(m+1-l),w} - v\cdot\nabla\Big(\p_{ a}\ka_t^{(l),f}\p_{ b}\ka_t^{(m-l),w}\Big) \\
- \p_{ a}v\cdot\nabla\ka_t^{(l),f}\p_{ b}\ka_t^{(m-l),w} - \p_{ b}v\cdot\p_{ a}\ka_t^{(l),f}\nabla\ka_t^{(m-l),f}.
\end{multline}
\end{lemma}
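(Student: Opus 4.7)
\textbf{Proof plan for Lemma~\ref{lem:pthka}.} Both identities are pointwise algebraic, so the natural approach is to establish \eqref{eq:ptkam} first by direct differentiation under the integral sign, and then derive \eqref{eq:ptka} from it by the Leibniz rule together with the commutations $[\p_t,\p_a]=[\p_t,\p_b]=0$. No function-analytic input is needed; the whole argument rests on the chain rule plus the symmetry of $\nabla^{\otimes m}\G$ in its indices.

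For \eqref{eq:ptkam}, starting from the definition \eqref{eq:kamtfdef}, I note that $t$ enters only through the shift $x-y-tv(y)$ in the argument of $\nabla^{\otimes m}\G$, while the tensor $(v-v(y))^{\otimes m}$ is $t$-independent. The chain rule therefore gives
\[
\p_t\ka_t^{(m),f}(x) = -\int v^a(y)\,[\p_a\nabla^{\otimes m}\G](x-y-tv(y)):(v(x)-v(y))^{\otimes m}\,df(y).
\]
Decomposing $-v^a(y)=(v^a(x)-v^a(y))-v^a(x)$ splits this into two pieces: the first, by symmetry of $\nabla^{\otimes(m+1)}\G$, reassembles $(v(x)-v(y))^{\otimes(m+1)}$ and reproduces $\ka_t^{(m+1)}(x)$; the second is what is needed to cancel $v(x)\cdot\nabla\ka_t^{(m),f}(x)$, which I compute by a parallel chain rule on both the $\G$-factor and the $(v(x)-v(y))^{\otimes m}$-factor, consolidating the $m$ symmetric terms produced when $\p_a$ hits $(v-v(y))^{\otimes m}$.

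For \eqref{eq:ptka}, I apply Leibniz in $t$ to the product $\p_a\ka_t^{(l),f}\,\p_b\ka_t^{(m-l),w}$, commute $\p_t$ past $\p_a$ and $\p_b$, then substitute \eqref{eq:ptkam} to rewrite $\p_t\ka_t^{(l),f}=\ka_t^{(l+1),f}-v\cdot\nabla\ka_t^{(l),f}$ and analogously for the $w$-factor. Expanding $\p_a(v\cdot\nabla\ka_t^{(l),f})=\p_a v\cdot\nabla\ka_t^{(l),f}+v\cdot\nabla\p_a\ka_t^{(l),f}$ by the product rule, and similarly in the $b$-slot, the two pure advection pieces recombine via Leibniz into $v\cdot\nabla(\p_a\ka_t^{(l),f}\p_b\ka_t^{(m-l),w})$, while the $\p_a v$- and $\p_b v$-contributions give precisely the cross terms appearing in the right-hand side of \eqref{eq:ptka}. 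The only real obstacle is bookkeeping: tracking which derivatives act in the $x$-slot versus the $y$-slot, consolidating symmetric summands, and verifying that the advection contributions from the two factors reassemble cleanly into a single $v\cdot\nabla$ applied to the product.
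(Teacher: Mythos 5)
Your overall route is the paper's: differentiate under the integral in $t$, split $-v(y)=(v(x)-v(y))-v(x)$ so that the first piece reassembles $\ka_t^{(m+1)}$, and then obtain \eqref{eq:ptka} from \eqref{eq:ptkam} by Leibniz in $t$ together with $\p_a(v\cdot\nabla\ka_t^{(l),f})=\p_a v\cdot\nabla\ka_t^{(l),f}+v\cdot\nabla\p_a\ka_t^{(l),f}$ and the analogous expansion in the $b$-slot; your treatment of \eqref{eq:ptka} is exactly the computation in the paper and is fine granted \eqref{eq:ptkam}.

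The place where your plan does not close as written is the cancellation you invoke for \eqref{eq:ptkam}. The $t$-derivative acts only on the kernel, so the second piece of your decomposition is precisely $-v^a(x)\int[\p_a\nabla^{\otimes m}\G](x-y-tv(y)):(v(x)-v(y))^{\otimes m}\,df(y)$; no term is produced in which a derivative falls on the tensor factor, since $(v(x)-v(y))^{\otimes m}$ is $t$-independent. By contrast, $v(x)\cdot\nabla\ka_t^{(m),f}(x)$, computed as you propose by the product rule on both factors, contains in addition the $m$ symmetric contributions from $\p_a$ hitting $(v-v(y))^{\otimes m}$, which after contraction with $v^a(x)$ equal
\[
m\int_{\R^{\d+\k}}\nabla^{\otimes m}\G(x-y-tv(y)) : \big((v\cdot\nabla)v\big)(x)\otimes(v(x)-v(y))^{\otimes(m-1)}\,df(y),
\]
and there is nothing in $\p_t\ka_t^{(m),f}$ for these to cancel against. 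Carrying out your bookkeeping literally therefore yields $\p_t\ka_t^{(m)}+v\cdot\nabla\ka_t^{(m)}=\ka_t^{(m+1)}+(\text{the term displayed above})$: the second piece matches only the part of $v\cdot\nabla\ka_t^{(m)}$ in which the derivative hits the kernel, and the extra term vanishes identically only when $m=0$ (the case $\ka_t^{(0)}=h_t^f$, where the recursion is exact) or when $(v\cdot\nabla)v\equiv 0$. The paper's one-line proof performs the same identification tersely; since your write-up explicitly enlists the tensor-factor terms in the cancellation, you must either explain why they are absent (they are not, for general smooth $v$) or track this additional contribution through the statement and its later uses rather than asserting that the consolidation "reassembles cleanly."
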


\begin{remark}
Since, in applications, $v^{\d+\k} = 0$ if $\k=1$, \eqref{eq:ptkam}, \eqref{eq:ptka} only depend on the first $\d$ partial derivatives of $\ka$. 
\end{remark}

\begin{proof}[Proof of \cref{lem:pthka}]
The proof is a direct computation. For the identity \eqref{eq:ptkam}, we observe that
\begin{equation}
\p_t \ka_{t}^{(m)} =-\int_{\R^{\d+\k}}\nabla^{\otimes m+1}\G(\cdot-y-tv(y)): v(y) \otimes (v-v(y))^{\otimes m}df(y),
\end{equation}
and then write $v(y)=(v(y)-v)+ v$.  For the identity \eqref{eq:ptka}, we use the identity \eqref{eq:ptkam} and the product rule:
\begin{align}
\p_t\Big(\p_{{a}}\ka_t^{(l)}\p_{{b}}\ka_t^{(m-l)}\Big) &= \p_{{a}}\Big(\ka_t^{(l+1)}-v\cdot\nabla\ka_t^{(l)}\Big)\p_{{b}}\ka_t^{(m-l)} + \p_{{a}}\ka_t^{(l)}\p_{{b}}\Big(\ka_t^{(m+1-l)}-v\cdot\nabla\ka_t^{(m-l)}\Big) \nn\\
&=-\p_{{a}}\ka_t^{(l+1)}\p_{{b}}\ka_t^{(m-l)} - \p_{{a}}v\cdot\nabla\ka_t^{(l)}\p_{{b}}\ka_t^{(m-l)} -v\cdot\nabla\p_{{a}}\ka_t^{(l)}\p_{{b}}\ka_t^{(m-l)} \nn\\
&\ph- \p_{{a}}\ka_t^{(l)}\p_{{b}}\ka_t^{(m+1-l)}-\p_{{a}}\ka_t^{(l)}\p_{ b} v \cdot\nab\ka_t^{(m-l)} - \p_{{a}}\ka_t^{(l)}v\cdot\nabla\p_{{b}}\ka_t^{(m-l)} \nn\\
&=\p_{{a}}\ka_t^{(l+1)}\p_{{b}}\ka_t^{(m-l)} +\p_{{a}}\ka_t^{(l)}\p_{{b}}\ka_t^{(m+1-l)} - v\cdot\nabla\Big(\p_{{a}}\ka_t^{(l)}\p_{{b}}\ka_t^{(m-l)}\Big) \nn\\
&\ph- \p_{{a}}v\cdot\nabla\ka_t^{(l)}\p_{{b}}\ka_t^{(m-l)} - \p_{{b}}v\cdot\p_{{a}}\ka_t^{(l)}\nabla\ka_t^{(m-l)}.
\end{align}
\end{proof}

The next lemma uses the identities of \cref{lem:pthka} to give the desired representation formula of the right-hand side of \eqref{eq:dtnrhs} in terms of expressions involving lower-order commutators.

\begin{lemma}\label{lem:dtnrep}
For any $n\in\N_0$, test function $\psi$, and $i,j\in [\d+\k]$, there exist a family of functions
\begin{equation}
\{\phi_{ablm n} : 1\leq a,b\leq \d, \ 0\leq m\leq n, \ 0\leq l\leq m\},
\end{equation}
with the property
\begin{align}
\|\nabla^{\otimes k}\phi_{abl mn}\|_{L^\infty} \leq C\sum_{\substack{0\leq c_0,\ldots,c_m\leq n-m+k \\c_0+\cdots+c_{m}=n-m+k}} \|\nabla^{\otimes c_0}  \psi \|_{L^\infty} \|\nabla^{\otimes c_1}v\|_{L^\infty}\cdots\|\nabla^{\otimes c_{m}}v\|_{L^\infty}, \label{eq:phidk}
\end{align}
for some constant $C$ depending only $\d,k,l,m,n$, such that
\begin{equation}\label{eq:dtmrep}
\frac{d^n}{dt^n} \int_{\R^{\d+\k}}\psi\comm{\nab h_t^f}{\nab h_t^w}_{ij} = \sum_{m=0}^n\sum_{a,b=1}^{\d+\k} \sum_{l=0}^m \int_{\R^{\d+\k}}\phi_{ablm n}\p_a\ka_t^{(l),f}\p_b\ka_t^{(m-l),w}.
\end{equation}
\end{lemma}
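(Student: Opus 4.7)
The plan is to prove the lemma by induction on $n$, using the two identities of \cref{lem:pthka} to propagate the representation formula from order $n$ to order $n+1$.

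For the base case $n=0$, the Coulomb hypothesis gives $\ga=0$, so the stress-energy tensor simplifies to $\comm{\nab h_t^f}{\nab h_t^w}_{ij} = \p_i h_t^f\p_j h_t^w + \p_j h_t^f\p_i h_t^w - \delta_{ij}\nab h_t^f\cdot\nab h_t^w$. Each summand is already of the form $\p_a\ka_t^{(0),f}\p_b\ka_t^{(0),w}$, so setting $\phi_{ab000}$ equal to $\psi$, $-\psi$, or $0$ with appropriate signs depending on whether $(a,b)\in\{(i,j),(j,i)\}$ or $a=b$ with $i=j$ supplies the representation. The bound \eqref{eq:phidk} then reduces to $\|\nabla^{\otimes k}\phi_{ab000}\|_{L^\infty}\le \|\nabla^{\otimes k}\psi\|_{L^\infty}$, matching the single-slot sum $c_0=k$.

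For the inductive step, assume \eqref{eq:dtmrep} holds at level $n$. Differentiating in $t$ and applying \eqref{eq:ptka} to each summand yields five kinds of contributions. The first two, namely $\p_a\ka_t^{(l+1),f}\p_b\ka_t^{(m-l),w}$ and $\p_a\ka_t^{(l),f}\p_b\ka_t^{(m-l+1),w}$, are already in the required form and contribute to indices $(a,b,l+1,m+1)$ and $(a,b,l,m+1)$ at level $n+1$ with coefficient $\phi_{ablmn}$. The advection term $-\int\phi_{ablmn}v\cdot\nabla(\p_a\ka_t^{(l),f}\p_b\ka_t^{(m-l),w})$ is handled by integration by parts, producing $\int\nabla\!\cdot\!(\phi_{ablmn}v)\,\p_a\ka_t^{(l),f}\p_b\ka_t^{(m-l),w}$ and therefore contributing $\nabla\!\cdot\!(\phi_{ablmn}v)$ to the same index $(a,b,l,m)$ at level $n+1$. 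Finally, the terms $-\phi_{ablmn}\p_av^c\p_c\ka_t^{(l),f}\p_b\ka_t^{(m-l),w}$ and $-\phi_{ablmn}\p_bv^c\p_a\ka_t^{(l),f}\p_c\ka_t^{(m-l),w}$ contribute to indices $(c,b,l,m)$ and $(a,c,l,m)$ with coefficients $-\phi_{ablmn}\p_av^c$ and $-\phi_{ablmn}\p_bv^c$ respectively. Summing over all sources gives an explicit recursive formula expressing $\phi_{a'b'l'm',n+1}$ as a finite linear combination of level-$n$ functions, their spatial derivatives, and derivatives of $v$.

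Verifying the derivative bounds at level $n+1$ is then a direct application of the Leibniz rule together with the induction hypothesis. The combinatorics are governed by two simple scaling observations: a commutator-type step advances $(n,m)\mapsto(n+1,m+1)$, keeping $n-m+k$ unchanged and preserving the product structure; whereas advection or vector-field-type steps advance $(n,m)\mapsto(n+1,m)$, increasing the budget $n-m+k$ by one and introducing an extra derivative on $\psi$ or $v$ (from $\nabla\!\cdot\!(\phi v)$ or $\phi\,\nabla v$), which is absorbed into the new budget. Consequently, the recursive formula produces products $\nabla^{\otimes c_0}\psi\cdot\nabla^{\otimes c_1}v\cdots$ whose total derivative count matches $(n+1)-m'+k$, as required.

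The main obstacle is the bookkeeping required to organize the recursion for the $\phi_{ablmn}$ and to confirm that every product arising at level $n+1$ indeed falls into the index set of the claimed bound. This is mechanical but lengthy, and the argument's cleanliness rests entirely on the two identities in \cref{lem:pthka}: the hierarchy \eqref{eq:ptkam}, which lets us trade $\p_t$ for an increment in the commutator order (plus an advection remainder), and its product counterpart \eqref{eq:ptka}, which shows that differentiating a bilinear expression in $\ka_t^{(l),f}$ and $\ka_t^{(m-l),w}$ closes within the same algebra of such expressions after a single integration by parts.
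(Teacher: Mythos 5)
Your proposal is correct and follows essentially the same route as the paper: induction on $n$, using the transport identities of \cref{lem:pthka}, integrating the advection term by parts into $\div(v\phi)$, reindexing the $\nabla v$ terms, and verifying the derivative bounds for the recursively defined coefficients via the Leibniz rule. The only cosmetic difference is that you explicitly carry the factor $\psi$ in the base-case coefficients, which is consistent with the stated bound \eqref{eq:phidk}.
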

\begin{proof}
We prove the lemma by induction on $n$. For the base case $n=0$, we have by definition \eqref{eq:stdef} of the stress-energy tensor that 
\begin{equation}
\comm{\nab h_t^f}{\nab h_t^w}_{ij} = \pa*{\p_ih_t^f\p_jh_t^w +\p_jh_t^f\p_ih_t^w-\nabla h_t^f\cdot\nabla h_t^w\delta_{ij}},
\end{equation}
so the desired representation holds with constant functions
\begin{equation}
\phi_{ab000} \coloneqq \begin{cases}2\delta_{ia}\delta_{jb}, & {i\neq j} \\0 ,& {i=j \ \text{and} \ a\neq b}\\ -1, & {i= j \ \text{and} \ a=b \ \text{and} \ a\neq i} \\ 1, & {i=j=a=b}. \end{cases}
\end{equation}

For the induction step, suppose that the representation \eqref{eq:dtmrep} holds for all test functions $\psi$ with $0\leq n'\leq n$, for some $n\in\N$. Then
\begin{align}
&\frac{d^{n+1}}{dt^{n+1}}\int_{\R^{\d+\k}}\psi\comm{\nab h_t^f}{\nab h_t^w}_{ij} \nn\\
&= \frac{d}{dt}\Bigg(\sum_{a,b=1}^{\d+\k} \sum_{m=0}^n\sum_{l=0}^m \int_{\R^{\d+\k}}\phi_{ablmn}\p_a\ka_t^{(l),f}\p_b\ka_t^{(m-l),w}\Bigg)\nn\\
&= \sum_{a,b=1}^{\d+\k} \sum_{m=0}^n\sum_{l=0}^m \int_{\R^{\d+\k}}\phi_{abl mn}\Big(\p_a\ka_t^{(l+1),f}\p_b\ka_t^{(m-l),w}+\p_a\ka_t^{(l),f}\p_b\ka_t^{(m+1-l),w} \nn\\
&\quad - v\cdot\nabla\Big(\p_a\ka_t^{(l),f}\p_b\ka_t^{(m-l),w}\Big)- \p_a v\cdot\nabla\ka_t^{(l),f}\p_b\ka_t^{(m-l),w} - \p_b v\cdot\p_a\ka_t^{(l),f}\nabla\ka_t^{(m-l),w}\Big),\label{eq:repinduc}
\end{align}
where we use \cref{lem:pthka} to obtain the second equality. The terms without $v$ have the desired form.  For the remaining terms, integrating by parts, we see that
\begin{equation}
-\int_{\R^{\d+\k}}\phi_{abl mn}v\cdot\nabla\Big(\p_a\ka_t^{(l),f}\p_b\ka_t^{(m-l),w}\Big) =\int_{\R^{\d+\k}}\div(v\phi_{ablmn}) \p_a\ka_t^{(l),f}\p_b\ka_t^{(m-l),w}.
\end{equation}
Next, observe that
\begin{multline}
-\int_{\R^{\d+\k}}\phi_{ablmn}\Big((\p_a v\cdot\nabla\ka_t^{(l),f})\p_b\ka_t^{(m-l),w} + (\p_b v\cdot\nabla\ka_t^{(m-l),w})\p_a\ka_t^{(l),f}\Big) \\
= -\sum_{c=1}^{\d+\k} \int_{\R^{\d+\k}}\Big((\phi_{ablmn}\p_a v^c)\p_c \ka_t^{(l),f}\p_b\ka_t^{(m-l),w} +(\phi_{abl mn}\p_b v^c)\p_c\ka_t^{(m-l),w}\p_a\ka_t^{(l),f}\Big).
\end{multline}
Therefore,
\begin{multline}
-\sum_{a,b=1}^{\d+\k}\int_{\R^{\d+\k}}\phi_{abl mn}\Big(v\cdot\nabla\Big(\p_a\ka_t^{(l),f}\p_b\ka_t^{(m-l),w}\Big) + (\p_a v\cdot\nabla\ka_t^{(l),f}) \p_b\ka_t^{(m-l),w} + (\p_b v\cdot\nabla \ka_t^{(m-l),w})\p_a\ka_t^{(l),f} \Big)\\
=\sum_{a,b,c=1}^{\d+\k} \Bigg(\frac{1}{\d+\k}\int_{\R^{\d+\k}}\div(v\phi_{ablmn}) \p_a\ka_t^{(l),f}\p_b\ka_t^{(m-l),w} \\
- \int_{\R^{\d+\k}}\Big((\phi_{ablmn}\p_a v^c)\p_c\ka_t^{(l),f}\p_b\ka_t^{(m-l),w} -(\phi_{ablmn}\p_b v^c)\p_c \ka_t^{(m-l),w}\p_a\ka_t^{(l),f}\Big)\Bigg).
\end{multline}
Making the change of index $(a,b,c)\mapsto (c,b,a)$ and $(a,b,c)\mapsto (a,c,b)$ in the second and third terms, respectively, the right-hand side equals
\begin{equation}
\sum_{a,b=1}^{\d+\k} \int_{\R^{\d+\k}}\sum_{c=1}^{\d+\k}\Bigg(\frac{1}{\d+\k}\div(v\phi_{abl mn})  - \phi_{cblmn}\p_c v^{a} - \phi_{aclmn}\p_c v^b\Bigg)\p_a\ka_t^{(l),f}\p_b\ka_t^{(m-l),w}.
\end{equation}
Finally, we make a change of index to write
\begin{multline}
\sum_{a,b=1}^{\d+\k}\sum_{m=0}^n\sum_{l=0}^m \int_{\R^{\d+\k}}\phi_{abl m n}\Big(\p_a\ka_t^{(l+1),f}\p_b\ka_t^{(m-l),w}+ \p_a\ka_t^{(l),f}\p_b\ka_t^{(m+1-l),w}\Big) \\
=\sum_{a,b=1}^{\d+\k}\Bigg(\sum_{m=1}^{n+1}\sum_{l=1}^{m} \int_{\R^{\d+\k}}\phi_{ab(l-1)(m-1)n}\p_a\ka_t^{(l),f}\p_b\ka_t^{(m-l),w}\\
+\sum_{m=1}^{n+1}\sum_{l=0}^m\int_{\R^{\d+\k}}\phi_{ab\ga(m-1)n}\p_a\ka_t^{(l),f}\p_b\ka_t^{(m-l),w}\Bigg).
\end{multline}
Therefore, defining
\begin{multline}
\phi_{abl m(n+1)} \coloneqq\sum_{c=1}^{\d+\k}\Bigg(\frac{1}{\d+\k}\div(v\phi_{abl mn})  - \phi_{cblmn}\p_c v^{a} - \phi_{acl mn}\p_c v^b \\
+\phi_{ab(l-1)(m-1)n}+\phi_{ab\ga(m-1)n}\Bigg),
\end{multline}
with the understanding that the last two terms are zero if $l-1=-1$ or $m-1=-1$, we see that the right-hand side of \eqref{eq:repinduc} equals
\begin{equation}
\sum_{a,b=1}^{\d+\k}\sum_{m=0}^{n+1}\sum_{l=0}^m \int_{\R^{\d+\k}}\phi_{abl m(n+1)}\p_a\ka_t^{(l),f}\p_b\ka_t^{(m-l),w},
\end{equation}
as desired.

To complete the induction step, it remains to check that the functions $\phi_{abl m(n+1)}$ satisfy the derivative bounds \eqref{eq:phidk} with $n$ replaced by $n+1$. It is straightforward from the Leibniz rule that for any integer $p\geq 0$,
\begin{align}
&\|\nabla^{\otimes p}\phi_{abl m(n+1)}\|_{L^\infty} \nn\\
&\leq C\sum_{i=0}^p\sum_{a',b'=1}^{\d+\k}\Big(\|\nabla^{\otimes i}v\|_{L^\infty}\|\nabla^{\otimes p+i-l}\phi_{a'b'l mn}\|_{L^\infty} + \|\nabla^{\otimes i+1}v\|_{L^\infty}\|\nabla^{\otimes p-l}\phi_{a'b'l mn}\|_{L^\infty} \Big) \nn\\
&\quad+ C\Big(\|\nabla^{\otimes p}\phi_{ab(l-1)(m-1)n}\|_{L^\infty} + \|\nabla^{\otimes p}\phi_{abl(m-1)n}\|_{L^\infty}\Big) \nn\\
&\leq C\sum_{i=0}^p \Bigg(\|\nabla^{\otimes i}v\|_{L^\infty}\sum_{c_0+\cdots+c_{n-m}=n-m+p+1-i}\|\nabla^{\otimes c_0}\psi\|_{L^\infty}\|\nabla^{\otimes c_1}v\|\cdots\|\nabla^{\otimes c_{n-m}}v\|_{L^\infty} \nn\\
&\quad+ \|\nabla^{\otimes i+1}v\|_{L^\infty}\sum_{c_0+\cdots+c_{n-m}=n-m+p-i}\|\nabla^{\otimes c_0}\psi\|_{L^\infty}\|\nabla^{\otimes c_1}v\|\cdots\|\nabla^{\otimes c_{n-m}}v\|_{L^\infty}\Bigg) \nn\\
&\quad+\sum_{c_0+\cdots+c_{n-(m-1)}=n-(m-1)+p} \|\nabla^{\otimes c_0}\psi\|_{L^\infty}\|\nabla^{\otimes c_1}v\|_{L^\infty}\cdots \|\nabla^{\otimes c_{n-(m-1)}}v\|_{L^\infty}\nn\\
&\leq C\sum_{c_0+\cdots+c_{(n+1)-m}=(n+1)-m+p} \|\nabla^{\otimes c_0}\psi\|_{L^\infty}\|\nabla^{\otimes c_1}v\|\cdots\|\nabla^{\otimes c_{(n+1)-m}}v\|_{L^\infty},
\end{align}
where between lines the constant $C$ is taken possibly larger. This completes the proof of the lemma.
\end{proof}

\begin{remark}\label{rem:phirep}
In fact, in \cref{lem:dtnrep}, we can say more about the functions $\phi_{abl mn}$ than just the derivative estimates of \eqref{eq:phidk}. An examination of the proof of the lemma reveals that  given a test function $\psi$ in the left-hand side of \eqref{eq:dtmrep}, each $\phi_{abl mn}$ is a finite linear combination of expressions of the form
\begin{equation}
\Big(\p_{1}^{c_{01}}\cdots\p_{\d+\k}^{c_{0(\d+\k)}}\psi\Big)\Big(\p_{1}^{c_{11}}\cdots\p_{\d+\k}^{c_{1(\d+\k)}}v\Big)\cdots\Big(\p_1^{c_{(n-m)1}}\cdots\p_{\d+\k}^{c_{(n-m)(\d+\k)}}v\Big),
\end{equation}
where for each $0\leq n-m$, $c_{i1}+\cdots+c_{i(\d+\k)}=c_i$, and $c_0+\cdots+c_{n-m} = n-m$. In particular, $\supp\phi_{ablmn} \subset \supp\psi$.
\end{remark}

As a consequence of \cref{lem:dtnrep}, we have an $L^2$ bound for the first $m$ commutators implies an $L^2$ bound for the $(m+1)$-th commutator. In other words, the induction step has been proved.

\begin{lemma}\label{lem:kainduc}
Let $f,w\in\Sc_0(\R^{\d+\k})$. Then 
\begin{align}\label{firstclaim}
\int_{\R^{\d+\k}} \ka^{(n+1), f} d{w} &=\int_{(\R^{\d+\k})^2}\nabla^{\otimes n+1}\G(x-y):\pa*{v(x)-v(y)}^{\otimes n+1}df(x)dw(y)\nn \\ &=
\sum_{p=0}^n {n\choose p}\sum_{m=0}^{n-p}\sum_{a,b=1}^{\d+\k}\sum_{l=0}^m \int_{\R^{\d+\k}}\phi_{abl m(n-p)}\p_a\ka^{(l),f}\p_b\ka^{(m-l),w},
\end{align}
where the functions $\phi_{ablm(n-p)} $ satisfy \eqref{eq:phidk}.
Moreover, suppose that for every $1\leq m\leq n$, it holds that
\begin{equation}\label{eq:gradkam}
\|\nab\ka^{(m),f} \|_{L^2(\R^{\d+\k})}  \leq C\|\nab h^f\|_{L^2(\supp\nab v)} \sum_{\substack{0\leq c_1,\ldots,c_m\leq m\\ c_1+\cdots+c_m=m}} \|\nabla^{\otimes c_1}v\|_{L^\infty}\cdots\|\nabla^{\otimes c_m}v\|_{L^\infty},
\end{equation}
where $C>0$ depends only on $\d+\k,\s,n$. Then it holds that
\begin{equation}
\|\nabla\ka^{(n+1),f}\|_{L^2(\R^{\d+\k})} 
\leq C\|\nab h^f\|_{L^2(\supp\nab v)} \sum_{\substack{0\leq c_1,\ldots,c_{n+1}\leq n+1 \\ c_1+\cdots+c_{n+1}=n+1}}\|\nabla^{\otimes c_1}v\|_{L^\infty}\cdots\|\nabla^{\otimes c_{n+1}}v\|_{L^\infty},
\end{equation}
where $C$ depends again only on $\d+\k,\s,n$.
\end{lemma}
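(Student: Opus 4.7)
The identity \eqref{firstclaim} will be derived from the first-order stress-energy tensor identity and the Leibniz rule, combined with \cref{lem:dtnrep}. Set $F(t) \coloneqq \int_{(\R^{\d+\k})^2} \G(x-y) df_t(x)dw_t(y)$ with $f_t \coloneqq (\I+tv)\# f$ and $w_t \coloneqq (\I+tv)\# w$. Differentiating under the integral sign gives $F^{(n+1)}(0) = \int \ka^{(n+1),f}dw$. Computing $F'(t)$ directly and then changing variables $X = X_t(x)$, $Y = X_t(y)$ with $X_t \coloneqq \I+tv$ (a diffeomorphism for small $|t|$) yields
\begin{equation*}
F'(t) = \int_{(\R^{\d+\k})^2}\bigl(v^{(t)}(X)-v^{(t)}(Y)\bigr) \cdot \nab \G(X-Y) \, df_t(X)dw_t(Y),
\end{equation*}
where $v^{(t)} \coloneqq v \circ X_t^{-1}$. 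This is the first-order commutator bilinear form with measures $f_t, w_t$ and vector field $v^{(t)}$, so \eqref{eq:commst} of \cref{lem:Lka1} applied with $v^{(t)}, f_t, w_t$ in place of $\tv, f, w$ gives
\begin{equation*}
F'(t) = \frac{1}{\cd}\int_{\R^{\d+\k}} \nab v^{(t)} : \comm{\nab h_t^f}{\nab h_t^w}.
\end{equation*}
Applying the Leibniz rule to $F^{(n+1)}(0) = \frac{d^n}{dt^n}\big|_{t=0} F'(t)$ then produces the binomial coefficient in \eqref{firstclaim}:
\begin{equation*}
\int \ka^{(n+1),f} dw = \frac{1}{\cd}\sum_{p=0}^n \binom{n}{p} \int \partial_t^p \nab v^{(t)}\big|_{t=0} : \partial_t^{n-p}\comm{\nab h_t^f}{\nab h_t^w}\big|_{t=0}.
\end{equation*}
For each $(i,j,p)$ I would then invoke \cref{lem:dtnrep} at order $n-p$ with test function $\psi_{p,ij} \coloneqq (\partial_t^p \nab v^{(t)}|_{t=0})_{ij}$, and sum over $(i,j,p)$ to obtain \eqref{firstclaim}, absorbing $\cd^{-1}$ and the $(i,j)$-sum into $\phi$.

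The key computation is the derivative expansion of $v^{(t)}$. Implicit differentiation of $X_t(X_t^{-1}(x))=x$, together with Taylor expansion, shows inductively that $\partial_t^p v^{(t)}|_{t=0}$ is a polynomial in $v,\nab v,\dots,\nab^{\otimes p}v$ with constant coefficients; consequently $\psi_{p,ij}$ is smooth, supported in $\supp v$, and satisfies $\|\nab^{\otimes c_0}\psi_{p,ij}\|_{L^\infty} \lesssim \sum \|\nab^{\otimes e_1}v\|_{L^\infty}\cdots \|\nab^{\otimes e_j}v\|_{L^\infty}$ whenever $e_1+\cdots+e_j = c_0+p+1$. Inserted into the derivative estimate \eqref{eq:phidk} from \cref{lem:dtnrep} applied at order $n-p$, the total order of $v$-derivatives in the bound for $\|\nab^{\otimes k}\phi_{ablm(n-p)}\|_{L^\infty}$ comes out to $(n-p-m+k)+(p+1) = (n+1)-m+k$, exactly matching \eqref{eq:phidk} with $n$ replaced by $n+1$.

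For the $L^2$ bound, I would proceed by duality. Since \eqref{eq:nabmka1dcay} implies that $\ka^{(n+1),f}$ is smooth and its derivatives decay rapidly at infinity, in the Coulomb setting ($\ga=0$, $L=-\Delta$) the choice $w\coloneqq -\cd^{-1}\Delta \ka^{(n+1),f}$ defines an element of $\Sc_0(\R^{\d+\k})$ with $h^w = \ka^{(n+1),f}$, so that $\int \ka^{(n+1),f}dw = \cd^{-1}\|\nab\ka^{(n+1),f}\|_{L^2}^2$. Applying Cauchy-Schwarz to each summand of \eqref{firstclaim} and using the induction hypothesis \eqref{eq:gradkam} (valid since $l,m-l\le n$) to bound $\|\partial_a\ka^{(l),f}\|_{L^2}$ in terms of $\|\nab h^f\|_{L^2(\supp\nab v)}$ and $\|\partial_b\ka^{(m-l),w}\|_{L^2}$ in terms of $\|\nab h^w\|_{L^2(\supp\nab v)}\le \|\nab\ka^{(n+1),f}\|_{L^2}$, I obtain (after dividing through by $\|\nab\ka^{(n+1),f}\|_{L^2}$) the announced bound, with the $v$-derivative contributions of total order $m$ from the induction plus $(n+1)-m$ from $\phi$ combining to $n+1$. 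The main obstacle I anticipate is the bookkeeping of the derivative expansion of $v^{(t)}$ and verifying that \cref{lem:dtnrep} still applies with the bounded but not compactly supported $\psi_{p,ij}$; the latter point is settled by the decay \eqref{eq:nabmka1dcay} of the $\ka^{(m)}$'s, which ensures that every integration by parts in the proof of \cref{lem:dtnrep} remains valid.
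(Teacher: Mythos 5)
Your proposal is correct and follows essentially the same route as the paper's proof: differentiate the transported energy once to get a first-order stress-tensor identity with the pulled-back vector field $v^{(t)} = v\circ(\I+tv)^{-1}$, apply Leibniz to the $n$-th derivative to get the binomial sum, feed each $\psi_p = \p_t^p \nab v^{(t)}|_{t=0}$ into \cref{lem:dtnrep} at order $n-p$, and close the $L^2$ estimate by duality against $w = -\cd^{-1}\Delta\ka^{(n+1),f}$ using the Coulomb identity $h^{Lw}=\cd w$. The only cosmetic difference is that you compute $F'(t)$ in a single change of variables $X=X_t(x), Y=X_t(y)$, whereas the paper first splits $F'(t)$ into two integrals (one for $f$, one for $w$) and changes variables in each before recombining via \eqref{eq:commst}; the result is identical.
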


\begin{remark}
By iterating \eqref{firstclaim}, one can in principle deduce an expression for $\kappa^{(n),f}$ in terms only of combinations of first derivatives of $h^f$. The coefficients $\phi_{ablmn}$ effectively represent the coefficients  of the ``next-order stress-energy tensor" behind $\ka^{(n),f}$ which generalizes \eqref{eq:commst} to higher order. This representation should be compared with the equation \eqref{eqLk} for $L\ka^{(n),f}$.
\end{remark}

\begin{proof}[Proof of \cref{lem:kainduc}]
By the definition of $\ka^{(n+1),f}$ and Fubini-Tonelli, we have that
\begin{align}\label{eq:stIDinduc}
\int_{(\R^{\d+\k})^2}\nabla^{\otimes(n+1)}\G(x-y)  : ( v(x)-v(y))^{\otimes(n+1)}df(x)dw(y) = \int_{\R^{\d+\k}}\ka^{(n+1),f}d{w}.
\end{align}
 We now write this expression in a different way, which is amenable to application of \cref{lem:dtnrep}.

First, we write $\frac{d}{dt^{n+1}}=\frac{d}{dt^n}\frac{d}{dt}$ and observe
\begin{multline}
\frac{d}{dt} \int_{(\R^{\d+\k})^2} \G(x-y)df_t(x)dw_t(y) = \int_{\R^{\d+\k}} v(x)\cdot\nabla h^{w_t}(x+tv(x))df(x) \\
+ \int_{\R^\d} v(x)\cdot\nabla h^{f_t}(x+tv(x))dw(x),
\end{multline}
so that \eqref{eq:stIDinduc} equals
\begin{equation}
\frac{d^n}{dt^n}\Big|_{t=0}\Bigg(\int_{\R^{\d+\k}} v(x)\cdot h_t^{ w}(x+tv(x))df(x) + \int_{\R^{\d}} v(x)\cdot h_t^{ f}(x+tv(x))dw(x)\Bigg).
\end{equation}
By approximation, we may impose the qualitative assumption that $v$ is $C^\infty$. Now for any $x\in\R^{\d+\k}$, $\nabla(\I+t v)(x) = \I+t\nabla v(x)$. Since $\|\nabla v\|_{L^\infty}<\infty$, we see from the inverse function theorem that for $t$ sufficiently small depending only on $\|\nabla v\|_{L^\infty}$, the map $\I+t\nabla v(x):\R^{\d+\k}\rightarrow\R^{\d+\k}$ is invertible. Furthermore, there exists some $\ka>0$ such that
\begin{equation}
\inf_{\R^{\d+\k}}\det(\I+t\nabla v)\geq \ka \quad \text{and} \quad (\I+t\nabla v)^{-1} \in C^\infty(\R^{\d+\k}).
\end{equation}
Thus,
\begin{align}
\int_{\R^{\d+\k}} v(x)\cdot \nab h_t^{w}(x+tv(x))df(x) = \int_{\R^{\d+\k}}\Big(v\circ (\I+tv)^{-1}\Big)\cdot\nabla h_t^w d{f}_t, \\
\int_{\R^{\d+\k}} v(x)\cdot \nab h_t^{f}(x+tv(x))dw(x) = \int_{\R^{\d+\k}}\Big(v\circ (\I+tv)^{-1}\Big)\cdot\nabla h_t^fd{w}_t.
\end{align}
Writing $d{f}_t = -\frac{1}{\cd}\div(\nabla h_t^f)$ in the first line and $dw_t = -\frac{1}{\cd}\div(\nabla h_t^w)$ in the second line, we obtain from the identity \eqref{eq:commst} of \cref{lem:Lka1} that
\begin{multline}
\int_{\R^{\d+\k}} v(x)\cdot \nab h_t^{w}(x+tv(x))df(x) +\int_{\R^{\d+\k}} v(x)\cdot \nab h_t^{f}(x+tv(x))dw(x) \\
= \frac{1}{\cd} \int_{\R^{\d+\k}}\nabla\Big(v\circ (\I+tv)^{-1}\Big):  \comm{\nab h_t^w}{\nab h_t^f}. 
\end{multline}
Recalling our starting point, we have shown that
\begin{align}
&\int_{(\R^{\d+\k})^2}\nabla^{\otimes(n+1)}\G(x-y):\pa*{v(x)-v(y)}^{\otimes(n+1)}df(x)dw(y) \nn\\
&= \frac{1}{\cd}\frac{d^n}{dt^n}\Big|_{t=0}\int_{\R^{\d+\k}}\nabla\Big(v\circ (\I+t v)^{-1}\Big) : \comm{\nab h_t^w}{\nab h_t^f}. \label{eq:dtn+1preLeib}
\end{align}

Applying the Leibniz rule (and dropping the constant factor), the right-hand side of \eqref{eq:dtn+1preLeib} equals
\begin{equation}\label{eq:dtpsum}
\sum_{p=0}^n {n\choose p}\int_{\R^{\d+\k}} \frac{d^p}{dt^p}\Big|_{t=0}\nabla\Big(v\circ (\I+t v)^{-1}\Big) : \frac{d^{n-p}}{dt^{n-p}}\Big|_{t=0}\comm{\nab h_t^w}{\nab h_t^f}.
\end{equation}
For every $0\leq p\leq n$, we use the representation \eqref{eq:dtmrep} with $t=0$, $n$ replaced by $n-p$, and $\psi$ replaced by
\begin{equation}
\psi_{p}\coloneqq \frac{d^p}{dt^p}\Big|_{t=0}\nabla\Big(v\circ (\I+tv)^{-1}\Big)
\end{equation}
to find a family of functions $\phi_{ablm(n-p)}$ satisfying the estimates \eqref{eq:phidk} (with $n$ replaced by $n-p$) and such that
\begin{multline}\label{eq:dtprep}
\int_{\R^{\d+\k}} \frac{d^p}{dt^p}\Big|_{t=0}\nabla\Big(v\circ (\I+tv)^{-1}\Big): \frac{d^{n-p}}{dt^{n-p}}\Big|_{t=0}\comm{\nab h_t^w}{\nab h_t^f}\\
=\sum_{m=0}^{n-p}\sum_{a,b=1}^{\d+\k} \sum_{l=0}^m \int_{\R^{\d+\k}}\phi_{abl m(n-p)}\p_a\ka^{(l),f}\p_b\ka^{(m-l),w}.
\end{multline}
Using \cref{rem:phirep}, the integral over $\R^{\d+\k}$ in the right-hand side may be taken over $\supp\nabla v$.  This establishes \eqref{firstclaim}.

Suppose now that the estimate \eqref{eq:gradkam} holds for all $1\leq m\leq n$, for some $n\in\N$. We will show that \eqref{eq:gradkam} also holds for $m=n+1$. By Cauchy-Schwarz, we have
\begin{multline}
\Big|\int_{\R^{\d+\k}}\phi_{abl m(n-p)}\p_a\ka^{(l),f}\p_b\ka^{(m-l),w}\Big| \\
\leq \|\phi_{abl m(n-p)}\|_{L^\infty} \|\p_a\ka^{(l),f}\|_{L^2(\supp\nab v)} \|\p_b\ka^{(m-l),w}\|_{L^2(\supp\nab v)}.
\end{multline}
To control the $L^\infty$ norm factor, we argue as follows. Using Fa\`{a} di Bruno's formula and the Leibniz rule, one can show that for any $q\in\N$,
\begin{equation}\label{eq:dqdtp}
\left\|\nabla^{\otimes q} \frac{d^p}{dt^p}\Big|_{t=0}\nabla\pa*{v\circ (\I+tv)^{-1}}\right\|_{L^\infty} \leq C\sum_{\substack{0\leq c_0',\ldots,c_p'\leq q+p+1 \\ c_0'+\cdots+c_p'=q+p+1}} \|\nabla^{\otimes c_0'}v\|_{L^\infty}\cdots \|\nabla^{\otimes c_p'}v\|_{L^\infty}.
\end{equation}
It follows now from the property \eqref{eq:phidk}  for the functions $\phi_{abl m(n-p)}$ that for any $0\leq m\leq n-p$,
\begin{align}\label{eq:phinpest}
\|\phi_{ablm(n-p)}\|_{L^\infty} &\leq C\sum_{c_0+\cdots+c_{m}=n-p-m+k} \|\nabla^{\otimes c_0}{ \psi_p} \|_{L^\infty} \|\nabla^{\otimes c_1}v\|_{L^\infty}\cdots\|\nabla^{\otimes c_{m}}v\|_{L^\infty} \nn\\
&\leq C\sum_{c_0+\cdots+c_{m}=n-p-m}\sum_{c_0'+\cdots+c_p'=c_0+p} \Big(\|\nabla^{\otimes c_1}v\|_{L^\infty}\cdots\|\nabla^{\otimes c_{m}}v\|_{L^\infty} \nn\\
&\qquad \times\|\nabla^{\otimes c_0'+1}v\|_{L^\infty}\cdots \|\nabla^{\otimes c_p'}v\|_{L^\infty} \Big),
\end{align}
where to obtain the second inequality, we use \eqref{eq:dqdtp} with $q=c_0$. Combining \eqref{eq:dtpsum}, \eqref{eq:dtprep}, \eqref{eq:dqdtp} with \eqref{eq:phinpest}, we arrive at
\begin{multline}\label{eq:rhsinducsub}
\int_{(\R^{\d+\k})^2}\nabla^{\otimes(n+1)}\G(x-y):\pa*{v(x)-v(y)}^{\otimes(n+1)}df(x)dw(y) \\
\leq C\sum_{p=0}^n \sum_{m=0}^{n-p}\sum_{a,b=1}^{\d+\k} \sum_{l=0}^m \sum_{c_0+\cdots+c_{m}=n-p-m}\sum_{c_0'+\cdots+c_p'=c_0+p} \Big(\|\nabla^{\otimes c_1}v\|_{L^\infty}\cdots\|\nabla^{\otimes c_{m}}v\|_{L^\infty} \\
\times\|\nabla^{\otimes c_0'+1}v\|_{L^\infty}\cdots \|\nabla^{\otimes c_p'}v\|_{L^\infty} \Big) \|\p_a\ka^{(l),f}\|_{L^2(\supp\nab v)} \|\p_b\ka^{(m-l),w}\|_{L^2(\supp\nab v)}.
\end{multline}
We note that $\max(l,m-l) \leq n$ above. Therefore, we may apply the induction hypothesis to obtain that
\begin{align}
\|\nabla\ka^{(l),f}\|_{L^2} &\leq C\|\nabla h^f\|_{L^2(\supp\nabla v)}\sum_{d_1+\cdots+d_l=l} \|\nabla^{\otimes d_1}v\|_{L^\infty}\cdots\|\nabla^{\otimes d_l}v\|_{L^\infty},\\
\|\nabla\ka^{(m-l),w}\|_{L^2} &\leq C\|\nabla h^w\|_{L^2(\supp\nabla v)}\sum_{e_1+\cdots+e_{m-l}=m-l} \|\nabla^{\otimes e_1}v\|_{L^\infty}\cdots\|\nabla^{\otimes e_{m-l}}v\|_{L^\infty}.
\end{align}
Recalling our starting identity \eqref{eq:stIDinduc}, we see after substituting these bounds into the right-hand side of \eqref{eq:rhsinducsub}, making a change of index, and taking $C$ possibly larger that
\begin{multline}\label{eq:kan1}
\Big|\int_{\R^{\d+\k}}\ka^{(n+1),f}dw\Big| \leq C \|\nabla h^f\|_{L^2(\supp\nabla v)}\|\nabla h^w\|_{L^2(\supp\nabla v)}\\
\times\sum_{c_1+\cdots+c_{n+1} = n+1} \|\nabla^{c_1}v\|_{L^\infty}\cdots\|\nabla^{\otimes c_{n+1}}v\|_{L^\infty}.
\end{multline}

To deduce a bound for $\|\nabla\ka^{(n+1),f}\|_{L^2}$ from the estimate \eqref{eq:kan1}, we use a duality argument. Since $L\g = \cd\delta_0$ in $\R^{\d+\k}$, we have that\footnote{ {As mentioned in the introduction}, the property $\frac{1}{\cd}h^{Lw} = w$ is no longer true if $\s \ne \d+\k-2$ or $w$ is not supported in $\R^{\d}\times \{0\}^\k$, which is the case for $\ka^{(n+1),f}$. This is the reason why this proof is restricted to the special cases $\s=\d+\k-2$.} 
\begin{align}
\frac{1}{\cd} h^{L\ka^{(n+1),f}} = -\frac{1}{\cd}\G\ast\div(\nab\ka^{(n+1),f}) = \ka^{(n+1),f}.
\end{align}
Taking $w=\frac{1}{\cd}L\ka^{(n+1),f}$, we find from integration by parts that
\begin{align}
&\frac{1}{\cd}\int_{\R^{\d+\k}}|\nabla\ka^{(n+1),f}|^2 \nn\\
&= \int_{\R^{\d+\k}}\ka^{(n+1),f}w\nn\\
&\leq C\|\nabla h^f\|_{L^2(\supp\nabla v)}\|\nabla h^{w}\|_{L^2(\supp\nabla v)}\sum_{c_1+\cdots+c_{n+1} = n+1} \|\nabla^{c_1}v\|_{L^\infty}\cdots\|\nabla^{\otimes c_{n+1}}v\|_{L^\infty} \nn\\
&=C\|\nabla h^f\|_{L^2(\supp\nabla v)} \|\nabla\ka^{(n+1),f}\|_{L^2(\supp\nabla v)}\sum_{c_1+\cdots+c_{n+1} = n+1} \|\nabla^{c_1}v\|_{L^\infty}\cdots\|\nabla^{\otimes c_{n+1}}v\|_{L^\infty},
\end{align}
to obtain the final line. Dividing both sides by $\|\nabla\ka^{(n+1),f}\|_{L^2(\supp\nabla v)}$, we see that the proof of the lemma is complete.

\end{proof}

All of the ingredients for the proof of \cref{prop:commalt} are present. We now combine them to conclude this section.
\begin{proof}[Proof of \cref{prop:commalt}]
By \cref{lem:kainduc}, the desired estimate \eqref{eq:commalt} holds for any $n\in\N$, if it holds for $n=1$. By estimate \eqref{eq:comm} of \cref{prop:comm}, \eqref{eq:commalt} holds for $n=1$, therefore it holds for all $n\in\N$. The dualized estimate \eqref{eq:commaltdual} holds by virtue of \eqref{eq:kan1}. 
\end{proof}

\bibliographystyle{alpha}\bibliography{../../MASTER}
\end{document}